\def\MR#1{} 
\newcommand{\tcoev}{\stackrel{\longleftarrow}{\operatorname{coev}}}
\newcommand{\tev}{\stackrel{\longleftarrow}{\operatorname{ev}}}
\newcommand{\ev}{\stackrel{\longrightarrow}{\operatorname{ev}}}
\newcommand{\coev}{\stackrel{\longrightarrow}{\operatorname{coev}}}
\newcommand{\p}[1]{\ensuremath{\bar {#1}}}
\newcommand{\ms}[1]{\mbox{\tiny$#1$}}
\tikzset{anchorbase/.style={baseline={([yshift=-0.5ex]current bounding box.center)}},
  int/.style={thick},
  cross line/.style={preaction={draw=white,line width=6pt,-}},
  wall/.style={thin,double,blue},
  middlearrow/.style={postaction=decorate,decoration={markings,mark=at
    position .55 with {\arrow{stealth};}}},
  middlearrowrev/.style={postaction=decorate,decoration={markings,mark=at
    position .55 with {\arrowreversed{stealth};}}},
  ev/.style={shape=rectangle, draw}
}
\newcommand{\C}{\mathbb{C}}
\newcommand{\R}{\mathbb{R}}
\newcommand{\Z}{\mathbb{Z}}
\newcommand{\Q}{\mathbb{Q}}
\newcommand{\cat}{\mathcal{C}}
\newcommand{\catint}{\mathcal{C}_R}
\newcommand{\mcat}{\widetilde{\cat}}
\newcommand{\FR}{\mathsf{Z}}
\newcommand{\Gr}{\mathsf{G}}
\newcommand{\GrH}{\mathsf{H}}
\newcommand{\SSS}{\mathsf{X}}
\newcommand{\SSSY}{\mathsf{Y}}
\newcommand{\Hom}{\textup{\text{Hom}}}
\newcommand{\End}{\textup{\text{End}}}
\renewcommand{\span}{\text{\textnormal{span}}}
\newcommand{\id}{\textup{\text{id}}}
\newcommand{\ima}{\text{\textnormal{im}}}
\newcommand{\coker}{\operatorname{coker}}
\newcommand{\qd}{\mathsf{d}}
\newcommand{\qdim}{\textup{\text{qdim}}\,}
\newcommand{\mt}{\mathsf{tr}}
\newcommand{\str}{\textup{\text{str}}}
\newcommand{\ptr}{\operatorname{ptr}}
\newcommand{\can}{\operatorname{can}}
\newcommand{\Ztwo}{\mathbb{Z} \slash 2 \mathbb{Z}}
\newcommand{\bos}{\mathfrak{t}}
\newcommand{\Bos}{T}
\newcommand{\bosSm}{\mathfrak{s}}
\newcommand{\GW}{\bos_R}
\newcommand{\GWH}{\bos_H}
\newcommand{\Uq}{U_q^{\bos}(\GW)}
\newcommand{\Uqm}{\widetilde{U}_q^{\bos}(\GW)}
\newcommand{\Uqmin}{\widetilde{U}_{\sqrt{-1}}^{\bos}(\GW)}
\newcommand{\UqminNn}{\widetilde{U}_{\sqrt{-1}}^{\bos,\leq 0}(\GW)}
\newcommand{\UqTor}{\widetilde{U}_{\sqrt{-1}}^{\bos}(\bos)}
\newcommand{\UqN}{U_q^{\bos,\geq 0}(\GW)}
\newcommand{\gloo}{\mathfrak{gl}(1 \vert 1)}
\newcommand{\psloo}{\mathfrak{psl}(1 \vert 1)}
\newcommand{\Uoo}{U(1 \vert 1)}
\newcommand{\uoo}{\mathfrak{u}(1 \vert 1)}
\newcommand{\met}{\kappa}
\newcommand{\eff}{\textup{\text{eff}}}
\newcommand{\emet}{\met_{\eff}}
\newcommand{\vect}{\mathsf{vect}}
\newcommand{\ZVect}{\mathsf{V}\mathsf{ect}^{\FR}}
\newcommand{\Cob}{\mathsf{C}\mathsf{ob}}
\newcommand{\CobAd}{\mathsf{C}\mathsf{ob}^{\textnormal{ad}}}
\newcommand{\CobAdExt}{\check{\mathsf{C}}\mathsf{ob}^{\mathsf{ad}}}
\newcommand{\ZCat}{\check{\mathsf{C}}\mathsf{at}^{\FR}}
\newcommand{\TQFT}{\mathcal{Z}}
\newcommand{\CS}{{\mathcal{S}}}
\newcommand{\coh}{\omega}
\renewcommand{\D}{{\mathscr{D}}}
\newcommand{\CGP}{{\rm CGP}}
\newcommand{\sgn}{\textup{\text{sgn}}}
\newcommand{\Int}{\operatorname{int}}
\def\pser#1{[\![#1]\!]} 
\newtheorem{Lem}{Lemma}[section]
\newtheorem{Prop}[Lem]{Proposition}
\newtheorem{Def}[Lem]{Definition}
\theoremstyle{plain}
\newtheorem{Thm}[Lem]{Theorem}
\newtheorem{ThmIntro}{Theorem}
\newtheorem{Assump}{Assumption}
\theoremstyle{definition}
\newenvironment{Ex}
  {\pushQED{\qed}\examplex}
  {\popQED\endexamplex}
 \theoremstyle{definition}
\newenvironment{Rem}
  {\pushQED{\qed}\remarkx}
  {\popQED\endremarkx}
\theoremstyle{definition}
\newcommand{\epsh}[2]
         {\begin{array}{c} \hspace{-1.3mm}
        \raisebox{-4pt}{\epsfig{figure=#1,height=#2}}
        \hspace{-1.9mm}\end{array}}
\newcommand{\comm}[1]{}
\author[N. Garner]{Niklas Garner}
\address{Department of Physics \\ University of Washington\\
Seattle, Washington 98195 \\ USA}
\email{nkgarner@uw.edu}
\author[N. Geer]{Nathan Geer}
\address{Department of Mathematics and Statistics \\ Utah State University\\
Logan, Utah 84322 \\ USA}
\email{nathan.geer@usu.edu}
\author[M.\,B. Young]{Matthew B. Young}
\address{Department of Mathematics and Statistics \\ Utah State University\\
Logan, Utah 84322 \\ USA}
\email{matthew.young@usu.edu}
\title[Gaiotto--Witten theory and TQFT]{B-twisted Gaiotto--Witten theory and topological quantum field theory}
\date{\today}
\keywords{Topological field theory. Representation theory of Lie superalgebras. Tensor categories.}
\subjclass[2010]{Primary: 57R56; Secondary 17B10.}
\begin{document}

\begin{abstract}
We develop representation theoretic techniques to construct three dimensional non-semisimple topological quantum field theories which model homologically truncated topological B-twists of abelian Gaiotto--Witten theory with linear matter. Our constructions are based on relative modular structures on the category of weight modules over an unrolled quantization of a Lie superalgebra. The Lie superalgebra, originally defined by Gaiotto and Witten, is associated to a complex symplectic representation of a metric abelian Lie algebra. The physical theories we model admit alternative realizations as Chern--Simons-Rozansky--Witten theories and supergroup Chern--Simons theories and include as particular examples global forms of $\gloo$-Chern--Simons theory and toral Chern--Simons theory. Fundamental to our approach is the systematic incorporation of non-genuine line operators which source flat connections for the topological flavour symmetry of the theory.
\end{abstract}

\maketitle

\setcounter{tocdepth}{1}

\tableofcontents

\section*{Introduction}
\addtocontents{toc}{\protect\setcounter{tocdepth}{1}}

We construct three dimensional non-semisimple topological quantum field theories (TQFTs) which model topological B-twists of Gaiotto--Witten theories with abelian gauge group and linear hypermultiplet matter. Our constructions are based on the representation theory of an unrolled quantization of a Lie superalgebra associated to a symplectic representation of a metric abelian Lie algebra.

\subsection*{Background and motivation}
The notion of a topological twist is a tool used by physicists to extract TQFTs from supersymmetric quantum field theories. Much like the more familiar two dimensional quantum field theories with $\mathcal{N}=(2,2)$ supersymmetry and the resulting topological A- and B-models \cite{witten1988a,witten1992}, three dimensional quantum field theories with $\mathcal{N}=4$ supersymmetry admit two topological twists. The topological A-twist is a dimensional reduction of the four dimensional twist giving rise to Donaldson--Witten theory \cite{witten1988b} and for gauge theories with Yang--Mills kinetic term leads to the Coulomb branch constructions of Braverman, Finkelberg and Nakajima \cite{nakajima2016,braverman2018}. The topological B-twist, as we describe in more detail below, gives rise to Rozansky--Witten theory \cite{rozansky1997}.

By a mechanism originally discovered by Gaiotto and Witten in the study of four dimensional $\mathcal{N}=4$ supersymmetric Yang--Mills theory with a spatially varying $\theta$-angle \cite{gaiotto2010b}, it is possible to construct Chern--Simons-matter theories with $\mathcal{N}=4$ supersymmetry, and hence admitting topological twists. The input data for the simplest Gaiotto--Witten theories includes a gauge group $G$ and a holomorphic Hamiltonian $G$-manifold $X$ satisfying some additional conditions. The A-twists of these theories were formulated in \cite{koh2009} but are still not well understood. Their B-twists were first described by Kapustin and Saulina \cite{kapustin2009b} and naturally interpolate between two particularly important three dimensional quantum field theories: they become Chern--Simons theories when $X$ is a point and Rozansky--Witten theories when $G$ is a point. Correspondingly, B-twists of these Gaiotto--Witten theories are known as \emph{Chern--Simons-Rozansky--Witten theories}.

Chern--Simons theory is a quantum gauge theory defined by its compact gauge Lie group $G$ and level $\met \in H^4(BG; \Z)$ \cite{witten1989}. At a formal level, partition functions in Chern--Simons theory are invariants of closed oriented $3$-manifolds. More generally, the inclusion of Wilson loops leads to invariants of $3$-manifolds containing a link coloured by representations of $G$. As such, Chern--Simons theory is a natural physical setting for many constructions in quantum topology, including the Jones, Kauffman and HOMFLYPT polynomials. A key feature of Chern--Simons theory is the finite semisimplicity of its category of (Wilson) line operators. This allows for a physical understanding of Chern--Simons theory via rational conformal field theory \cite{witten1989,elitzur1989}, namely, chiral Wess--Zumino--Witten theory, and underlies its mathematical construction, in the form of an Atiyah--Segal-style TQFT, as the Reshetikhin--Turaev theory of a modular tensor category associated to the pair $(G,\met)$ \cite{turaev1994}. See \cite{dijkgraaf1990,freed1993,freed1994}, \cite{reshetikhin1991,andersen1995} and \cite{joyal1993,belov2005,stirling2008} when $G$ finite, semisimple and toral, respectively, and \cite{freed2010,henriques2017} for general proposals. When $G$ is simple simply connected, the relevant modular tensor category is the semisimplified category of modules over the small quantum group of $\mathfrak{g}_{\C}$ at a $\met$-dependent root of unity.

Much less understood is Rozansky--Witten theory, a topological B-twist of a $\mathcal{N}=4$ supersymmetric $\sigma$-model with target a holomorphic symplectic manifold $(X, \omega)$ \cite{rozansky1997}. In view of its central role in three dimensional mirror symmetry \cite{intriligator1996} and related mathematics \cite{teleman2014,braverman2019}, it is an important open problem to construct Rozansky--Witten theory as a TQFT. A significant obstacle to doing so is that Rozansky--Witten theory is neither finite nor semisimple: its category of line operators is expected to be the $2$-periodic derived category of coherent sheaves on $X$ \cite{kapustin2009}. On the other hand, the lack of finiteness and semisimplicity leads to rich mathematical structures in the theory. Derived Lie theoretic aspects of Rozansky--Witten theory were studied in \cite{kapranov1999,kontsevich1999}, formalizing the intuition that Rozansky--Witten theory is a fermionic analogue of Chern--Simons theory, with $X$ and $\omega$ playing the roles of (the classifying stack of) $G$ and $\met$, respectively. Perturbative studies of Rozansky--Witten theory are found in \cite{qiu2009,qiu2010,roberts2010,chan2017}. The structure of topological boundary conditions and defects in Rozansky--Witten theory was explored in \cite{kapustin2009,kapustin2010,teleman2020,oblomkov2023} and led, via the Cobordism Hypothesis \cite{lurie2009}, to the construction of partially defined TQFTs when $X$ a cotangent bundle using derived algebraic geometry \cite{benzvi2010} and $X=T^{\vee} \C^n$ using categories of matrix factorizations \cite{brunner2023}.

The Chern--Simons-Rozansky--Witten theories of \cite{kapustin2009b} are an amalgam of these two theories. We assume that $(X,\omega)$ admits an action of a compact Lie group $G$ by holomorphic Hamiltonian automorphisms. As described in \cite{gaiotto2010b}, it is only possible couple Chern--Simons gauge fields to this flavour symmetry $G$ when it satisfies the \emph{Fundamental Identity}, discussed below. When the theory has a linear target space, that is, $X$ is a complex symplectic representation of $G$, the Fundamental Identity allows for the construction of complex Lie superalgebra $\mathfrak{g}_X$ from the complexified Lie algebra $\mathfrak{g}_{\C}$ of $G$ and $X$. The resulting Chern--Simons-Rozansky--Witten theory is equivalent to a supergroup Chern--Simons theory based on a global form of $\mathfrak{g}_X$. Perturbative aspects of Chern--Simons-Rozansky--Witten theory were studied in \cite{kallen2013}, where it was reformulated as an AKSZ theory \cite{alexandrov1997}. In view of the utility of the boundary Wess--Zumino--Witten models in ordinary Chern--Simons theories, the first author recently proposed boundary logarithmic vertex operator algebras whose modules model the categories of line operators in A- and B-twisted Gaiotto--Witten theories \cite{garner2023}, building on earlier work in the setting of three dimensional $\mathcal{N}=4$ gauge theories with Yang--Mills kinetic term \cite{costello2019,costello2019b}.

\subsection*{Main results}

We construct three dimensional non-semisimple TQFTs which model B-twisted Gaiotto--Witten theory with the following input data:
\begin{itemize}
\item Gauge group $G=T$, a non-trivial connected abelian Lie group with metric $\met$.
\item Hypermultiplet matter representation $X=T^{\vee} R \simeq R \oplus R^{\vee}$, a linear holomorphic symplectic manifold, where $R$ is a complex representation of $T$.
\end{itemize}
Since the gauge group is abelian and the target is linear, we call these theories linear abelian Gaiotto--Witten theories. The input data is required to satisfy the Fundamental Identity along with further technical conditions (Assumptions \ref{assump:quantAssumptDualInt}, \ref{assump:noZeroRoots} and \ref{assump:unimodular} in the body of the paper). As indicated above, the theory we construct can alternatively be seen as the Chern--Simons-Rozansky--Witten theory of the Hamiltonian $T$-manifold $T^{\vee} R$ at level $\met$ or the Chern--Simons theory whose gauge Lie supergroup has complexified Lie superalgebra $\GW$, defined below, and bosonic subgroup $T$.

Our approach to non-semisimple TQFT is via the theory of relative modular tensor categories \cite{costantino2014,derenzi2022}. As reviewed in Section \ref{sec:relModTQFT}, this theory yields a non-finite, non-semisimple generalization of the Reshetikhin--Turaev construction of a TQFT from a modular tensor category \cite{turaev1994}. Relative modular categories have been used successfully to model topological A-twists of certain three dimensional $\mathcal{N}=4$ Chern--Simons-matter theories \cite{blanchet2016,creutzig2021} and $\Uoo$-Chern--Simons theory \cite{geerYoung2022}; the Rozansky--Witten-theoretic analysis of \cite{gukov2021} is expected to be related to the construction of \cite{creutzig2021} by three dimensional mirror symmetry. The fundamental new idea in physical realizations of relative modular categories is the introduction of background fields coupling to flavour symmetries of the underlying physical theory \cite{gaiotto2019}. In the present context, this involves considering non-genuine line operators which source non-trivial background flat connections for topological flavour symmetry.

To construct relative modular categories relevant to B-twisted Gaiotto--Witten theories, following Kapustin and Saulina \cite{kapustin2009b} and Gaiotto and Witten \cite{gaiotto2010b}, in Section \ref{sec:abGW} we recall that associated to the input data is a metric Lie superalgebra
\[
\GW = \bos \oplus \Pi T^{\vee} R.
\]
Here $\bos$ is the complexified Lie algebra of $T$ and $\Pi T^{\vee} R$ is the representation $T^{\vee} R$ placed in odd degree. The Lie bracket of two even elements is zero, of one even with one odd element is determined by the representation $T^{\vee}R$ and of two odd elements is determined by the metric $\met$ and the holomorphic moment map. The Fundamental Identity is precisely the statement that the bracket satisfies the super Jacobi identity for three odd elements. Under Assumption \ref{assump:quantAssumptDualInt}, which is a mild integrality condition on the pair $(\met,R)$, we introduce in Definition \ref{def:unrolledGW} the primary algebraic object of this paper: an unrolled quantization $\Uq$ of $\GW$ at parameter $q \in \C^{\times} \setminus \{\pm 1\}$. The infinite dimensional Hopf superalgebra $\Uq$ is a semidirect product of a standard quantization of $\GW$ with the enveloping algebra of $\bos$. Let $\cat_R$ be the category of finite dimensional $\Uq$-modules on which $\bos$ acts semisimply and the actions of $Z \in \bos$ and its corresponding grouplike element $K(Z) \in \Uq$ satisfy $K(Z)=q^Z$. The category $\cat_R$ has infinitely many isomorphism classes of simple objects and, unless $R$ is the zero representation, is non-semisimple.
 
In Section \ref{sec:weightMod} we develop the representation theory of $\Uq$. After establishing various structural properties of Verma modules and classifying the simple objects of $\cat_R$ via highest $\bos$-weights (Theorem \ref{thm:simplesSplitAb}), we prove our first main result.

\begin{ThmIntro}[{Theorem \ref{thm:ribbonCat}}]
\label{thm:ribbonCatInt}
The category $\catint$ is $\C$-linear, locally finite, abelian and ribbon and has enough projectives and injectives.
\end{ThmIntro}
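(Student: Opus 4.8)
The plan is to verify each of the stated properties of $\catint$ in turn, mostly by transporting standard structural facts about categories of finite-dimensional modules over a Hopf algebra to the present (super, unrolled) setting. Since $\Uq$ is a Hopf superalgebra, the category of all $\Uq$-modules is monoidal with duals, and $\catint$ is the full subcategory cut out by the weight condition ($\bos$ acts semisimply with $K(Z)$ acting as $q^Z$); so the first task is to check that $\catint$ is closed under the relevant operations. $\C$-linearity is immediate. For local finiteness one notes that objects are finite-dimensional, hence of finite length, and that $\Hom$-spaces are finite-dimensional subquotients of $\Hom_\C$; the only subtlety is that $\catint$ have enough room — i.e.\ that the simple objects are genuinely objects of $\catint$ — which is exactly what the classification in Theorem~\ref{thm:simplesSplitAb} provides.

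For abelianness, the point is that $\catint$ is closed under kernels and cokernels inside $\Uq$-mod: a submodule or quotient of a weight module is again a weight module, and the grouplike/semisimplicity constraints are preserved under subquotients because $\bos$ acts diagonalizably on any submodule of a module on which it acts diagonalizably. For the ribbon (hence in particular braided monoidal rigid) structure, I would invoke the construction of the braiding, ribbon element and (co)evaluations from the unrolled quantization: the $R$-matrix is built, as usual for unrolled quantum groups, from a "diagonal" part $q^{\sum H_i \otimes H^i}$ using the metric $\met$ together with the quasi-$R$-matrix coming from the nilpotent (here odd) part, and one checks it is well-defined on tensor products of weight modules and satisfies the hexagon axioms; the Koszul sign rule must be tracked carefully throughout. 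The ribbon twist is defined via the usual formula involving the $R$-matrix and a grouplike pivotal element; one must confirm it lies in (a completion acting on) $\catint$ and is compatible with duals.

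For enough projectives and injectives, the strategy is to exhibit, for each simple object, a projective cover via induction. Concretely, Verma-type modules induced from the "Borel" $\UqN$ are projective in an appropriate subcategory, and a suitable quotient or the modules induced from the even part $\UqTor$ (the enveloping algebra of $\bos$ together with its group part) are projective in $\catint$ because $\Uq$ is free as a module over this subalgebra; then every object, being finite-dimensional, is a quotient of a finite direct sum of such projectives. Injectives follow by applying the duality functor of the rigid structure, which is exact and sends projectives to injectives. The main obstacle I anticipate is the ribbon structure: verifying that the $R$-matrix and twist are well-defined and satisfy all the axioms on $\catint$ — rather than merely formally — requires controlling the completion in which $q^{\sum H_i \otimes H^i}$ lives (this is why the weight/semisimplicity condition on $\bos$ is imposed) and carefully checking that Assumption~\ref{assump:quantAssumptDualInt} makes all the relevant exponents integral, together with a disciplined bookkeeping of signs from the $\Ztwo$-grading. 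The projectivity argument is the second most delicate point, since one must make sure the induced modules actually lie in $\catint$ and decompose correctly over the semisimple $\bos$-action.
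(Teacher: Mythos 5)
Your overall architecture matches the paper's: the braiding is built from a diagonal exponential factor determined by $\met$ together with a finite quasi-$R$-matrix from the odd nilpotent generators (Proposition \ref{prop:braiding}), the twist is the partial trace of the braiding with respect to a pivotal structure whose pivotal element is the grouplike $\mathcal{K}=\mathcal{K}_{\{1,\dots,n\}}$ (Lemma \ref{lem:catQPivot}, Theorem \ref{thm:ribbonCat}), projectives come from induction and tensoring, and injectives from exactness of duality. However, there is a genuine gap at the crux of the ribbon claim. With your definition of the twist, the balancing conditions are automatic, but the remaining axiom $\theta_{V^{\vee}}=\theta_V^{\vee}$ must be verified for \emph{all} objects of a non-semisimple, non-finite category, and you give no method for doing so: in the unrolled setting there is no genuine ribbon element in (a completion of) $\Uq\otimes\Uq$ whose invariance under the antipode would deliver this for free, since the diagonal factor $q^{-\met^{\vee}(\lambda_v,\lambda_w)}$ exists only as an operator on weight modules. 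The paper's essential device, absent from your plan, is Proposition \ref{prop:genericSSZWt}: the grading of $\catint$ by characters of the central elements $\mathcal{Z}_i$ is generically semisimple (this is where Assumption \ref{assump:noZeroRoots} enters), so by \cite[Theorem 2]{geer2018} it suffices to check $\theta_{V^{\vee}}=\theta_V^{\vee}$ on generic simple objects, i.e.\ on typical Verma modules, where an explicit computation combined with Lemma \ref{lem:dualVerma} and the Fundamental Identity closes the argument. Without this reduction (or an equivalent one), your step ``confirm it is compatible with duals'' has no proof.

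Two smaller corrections. First, Verma modules induced from the Borel $\UqN$ are \emph{not} all projective in $\catint$ — only the typical (hence simple) ones are (Lemma \ref{lem:projVerma}); your alternative of inducing from the even (Cartan) subalgebra, over which $\Uq$ is free of finite rank and every weight module is projective, is the argument that actually works for enough projectives. Second, note that projectives and injectives coincide here by \cite[Proposition 6.1.3]{etingof2015}, which is how the paper passes between the two notions.
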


The proof of Theorem \ref{thm:ribbonCatInt} begins with a direct construction of a braiding on $\cat_R$; see Proposition \ref{prop:braiding}. The exponential factor of the braiding is determined by the metric $\met$. A key technical result, Proposition \ref{prop:genericSSZWt}, asserts that the category $\cat_R$ is naturally graded by characters of certain distinguished central elements of $\Uq$ in such a way that generic homogenous subcategories of $\cat_R$ are semisimple (although non-finite). This relies on Assumption \ref{assump:noZeroRoots}, which states that the trivial representation does not appear in $R$. (Assumption \ref{assump:noZeroRoots} can effectively be removed; see Section \ref{sec:pslooCS}.) With these preliminaries, Theorem \ref{thm:ribbonCatInt} is proved by verifying that the candidate ribbon structure on $\cat_R$, given by the partial trace of the braiding with respect to a carefully chosen pivotal structure, is balanced on generic homogeneous subcategories of $\cat_R$. By a general argument of \cite{geer2018}, this suffices to ensure balancing globally.

The ribbon category $\cat_R$ or better, its derived category, is a model for the category of line operators in the perturbative Gaiotto--Witten theory associated to $(T,\met,R)$. As an indication of its perturbative nature, note that $\cat_R$ depends on $T$ only through its Lie algebra. To model the non-perturbative theory, including the global form of the gauge group, we equip $\cat_R$ with the additional data of a relative modular structure. This structure includes a grading of $\cat_R$ by an abelian group $\Gr$ for which generic homogenous subcategories are semisimple, a modified trace on the ideal of projective objects of $\cat_R$ and a monoidal functor from an abelian group $\FR$ into the degree $0 \in \Gr$ subcategory $\cat_{R, 0}$ such that generic homogeneous subcategories of $\cat_R$ have only finitely many $\FR$-orbits of simple objects. By work of De Renzi \cite{derenzi2022}, a relative modular structure on $\cat_R$ defines a decorated $\FR$-graded TQFT, that is, a symmetric monoidal functor
\[
\TQFT_{\cat_R}: \CobAd_{\cat_R} \rightarrow \ZVect_{\C}.
\]
The codomain is the category of $\FR$-graded vector spaces with symmetric braiding determined by the relative modular data. The domain is the category of decorated closed surfaces and their admissible bordisms. The decorations include $\cat_R$-coloured ribbon graphs and flat $\Gr$-connections. Admissibility can be seen as a genericity condition. Unlike the Atiyah--Segal framework, the category $\CobAd_{\cat_R}$ is usually not rigid.

For concreteness, in the introduction we focus on a particular class of relative modular structures on $\cat_R$. Other classes are constructed Section \ref{sec:absRelMod}. Recall that in compact Chern--Simons theory, the level $\met$ receives quantum corrections, resulting in the shift of $\met$ by the dual Coxeter number \cite{witten1989}. There is a similar mechanism in Gaiotto--Witten theory which leads to an effective, or quantum-corrected, metric $\emet$ which differs from $\met$ by the quadratic Casimir of $R$.
 
\begin{ThmIntro}[{Theorem \ref{thm:relModCpt}}]
\label{thm:redModInt}
The choice of an even integral lattice $\Gamma \subset (\bos, \emet)$ of full rank which contains the weights of the representation $R$ determines a relative modular structure on $\cat_R$. The resulting TQFT $\TQFT_{\cat_R}$ has the property that each of its state spaces is finite dimensional.
\end{ThmIntro}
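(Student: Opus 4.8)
The plan is to extract from the lattice $\Gamma$ the three ingredients of a relative modular structure on the ribbon category $\cat_R$ of Theorem~\ref{thm:ribbonCat} --- a $\Gr$-grading whose generic homogeneous subcategories are semisimple, a modified trace on the tensor ideal $\Proj$ of projective objects, and a \emph{periodicity} monoidal functor $\FR \to \cat_{R,0}$ under which generic homogeneous subcategories have only finitely many orbits of simple objects --- then to check the remaining axioms of a relative modular category, and finally to read finite-dimensionality of the state spaces off De~Renzi's construction \cite{derenzi2022}. The modified trace is the most formal point: Theorem~\ref{thm:ribbonCat} already gives that $\cat_R$ is ribbon with enough projectives, so the general existence and uniqueness theory of modified traces applies once $\cat_R$ is shown to be unimodular, i.e.\ the projective cover of $\mathbb 1$ is self-dual. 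I would verify this by computing that projective cover directly, its composition series being governed by the Verma module analysis of Section~\ref{sec:weightMod}; this is precisely what Assumption~\ref{assump:unimodular} is designed to control.

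For the grading, Proposition~\ref{prop:genericSSZWt} already presents $\cat_R$ as graded by the joint characters of the distinguished central elements of $\Uq$, with generic homogeneous subcategories semisimple; using $\emet$ to identify $\bos \cong \bos^*$, this character group is a quotient of $\bos$ by a lattice containing the weights of $R$. I take $\Gr := \bos/\Gamma$. Since $\Gamma$ contains the weights of $R$, hence the root lattice they generate, the rule sending a simple object to its highest weight modulo $\Gamma$ descends to a $\Gr$-grading compatible with the tensor product; and generic homogeneous subcategories remain semisimple because the non-semisimple locus, pulled back from Proposition~\ref{prop:genericSSZWt}, lies in a countable union of translates of proper subtori of $\Gr$, whose complement is generic.

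The periodicity datum is where the hypotheses on $\Gamma$ are essential. The invertible objects of $\cat_{R,0}$ are one-dimensional weight modules, classified --- through the relation $K(Z)=q^Z$ --- by a subquotient of $\Gamma$, and they act on the simple objects of a generic homogeneous subcategory $\cat_{R,g}$, which by Theorem~\ref{thm:simplesSplitAb} are indexed by a coset of $\Gamma$ together with a finite discrete label, by translation of highest weight; hence there are finitely many orbits. Upgrading this to a \emph{monoidal} functor $\FR \to \cat_{R,0}$ requires trivializing the associativity constraint of the resulting pointed subcategory, whose obstruction is a class in $H^3(\FR;\C^\times)$ --- equivalently, the failure of a quadratic refinement of the associated pairing --- and this obstruction vanishes precisely because $(\Gamma,\emet)$ is an \emph{even integral} lattice; this is the same mechanism as in lattice conformal field theory and is the structural reason the hypothesis on $\Gamma$ reads as it does. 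Assembling the $\Gr$-grading, the modified trace and $\FR$, and checking the surviving non-degeneracy axioms (a non-degenerate braiding on each generic homogeneous subcategory, from Proposition~\ref{prop:braiding}, together with the existence of generic objects, from the modified trace), yields the relative modular structure and the TQFT $\TQFT_{\cat_R}$.

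Finite-dimensionality of the state spaces is then extracted from De~Renzi's surgery and gluing presentation: the state space of an admissible decorated surface is a subquotient of a finite direct sum of $\Hom$-spaces indexed by colorings of a skeleton by simple objects of the generic homogeneous subcategories singled out by the flat $\Gr$-connection, taken modulo the $\FR$-action. Because each such subcategory has only finitely many $\FR$-orbits of simples, and because the $\Gr$-connection together with the ribbon-graph colors pins the $\FR$-degree of any contributing coloring to a single coset, only finitely many isomorphism classes of colorings survive and the state space is finite-dimensional. I expect the main obstacle to be exactly this last finiteness --- controlling the $\FR$-support, which amounts to showing the relevant Kirby colors decompose into finitely many simples up to $\FR$ and that the $\FR$-grading is conserved across the gluings --- and it is again the even-lattice structure that forces the support to be finite rather than merely locally finite.
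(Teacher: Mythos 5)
Your overall architecture (grading, modified trace via unimodularity, free realization by one-dimensional modules, then TQFT finiteness in the sense of Definition \ref{def:relModFinite}) matches the paper's, and your treatment of the modified trace and of finite-dimensionality is essentially correct. But there are three genuine problems. First, and most seriously, you never actually prove the relative modularity axiom, i.e.\ the identity \eqref{eq:mod} of Definition \ref{def:modG}. ``A non-degenerate braiding on each generic homogeneous subcategory, from Proposition \ref{prop:braiding}'' is not a substitute: Proposition \ref{prop:braiding} only constructs the braiding, and relative modularity is a statement about the double braiding with the Kirby colour $\Omega_h$, not about generic homogeneous blocks in isolation. The paper's proof of Theorem \ref{thm:relModCpt} requires two nontrivial inputs you omit: Lemma \ref{lem:transparentCpt}, which shows that an endomorphism transparent in $\mcat_{R,\p 0}$ factors through one-dimensional modules (this uses the explicit form of the braiding on Verma and anti-Verma modules), and then an explicit evaluation of the resulting scalar via open Hopf link invariants (Lemma \ref{lem:Phis}) and Fourier analysis on the discriminant group $D=\coker(\met^{\flat}\colon\Gamma\to\Gamma^{\vee})$, which is what forces the off-diagonal terms $i\neq j$ to vanish and produces $\zeta=(-1)^n\vert D\vert$. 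Without this, the pre-modular structure is established but modularity is not.

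Second, you misattribute the role of the evenness hypothesis. The pointed subcategory generated by the one-dimensional modules $\C^{\bos}_{(k,\p p)}$ has canonical, strictly associative tensor isomorphisms (Example \ref{ex:oneDimMod}); there is no $H^3(\FR;\C^{\times})$ obstruction to make $\sigma$ monoidal. What even integrality of $(\Gamma,\emet)$ actually buys is condition \ref{ite:ribbCoch} of Proposition \ref{prop:freeRealCoch}: the ribbon twist $\theta_{\sigma_k}=q^{-2\met^{\vee}(k,k)+2\sum_i\chi_i(k)}\id$ must equal the identity at $q=\sqrt{-1}$, which is exactly the condition $-\met^{\vee}(k,k)+\sum_i\chi_i(k)\in 2\Z$ required by Definition \ref{def:free}(2). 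Third, your grading group is off: the paper grades by $\Gr=\bos^{\vee}/\Gamma^{\vee}$ (highest weight modulo the \emph{dual} lattice), whereas $\bos/\Gamma$, transported by $\met^{\flat}$, is $\bos^{\vee}/\met^{\flat}(\Gamma)$, a $\vert D\vert$-fold cover; for that finer quotient to define a grading you would need $\Lambda_R\subset\met^{\flat}(\Gamma)$, which the hypothesis $\Lambda_R\subset\Gamma^{\vee}$ does not give. With the correct $\Gr$, the free realization $\FR_0=\ima\met^{\flat}$ lands in degree $\p 0$ and each generic block has exactly $\vert D\vert$ many $\FR$-orbits of simples, as in Propositions \ref{prop:genericSSCpt} and \ref{prop:relPreModCompact}.
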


The relative modular structure of Theorem \ref{thm:redModInt} is constructed as follows. Tracking $\bos$-weights modulo the dual lattice $\Gamma^{\vee}$ grades $\cat_R$ by the torus $\Gr = \bos^{\vee} \slash \Gamma^{\vee}$ which is Langlands dual to $T_{\Gamma} = (\Gamma \otimes_{\Z} \C) \slash \Gamma$. In particular, the subcategory $\cat_{R,\p 0}$ labels $\Uq$-modules which deform representations of the complex Lie supergroup with Lie superalgebra $\GW$ and bosonic subgroup $T_{\Gamma}$. Assumption \ref{assump:unimodular}, which is a mild genericity condition on $R$, ensures that $\cat_R$ is unimoduar. Since $\cat_R$ is generically semisimple and ribbon, this guarantees the existence of a modified trace on $\cat_R$; see Lemma \ref{lem:catUnimod} and Proposition \ref{prop:mTrace}. We take for $\FR$ the group of one dimensional $\Uq$-modules whose weights lie in the image of the adjoint map $\met^{\flat}: \Gamma \rightarrow \Gamma^{\vee}$.

In physical terms, the TQFT $\TQFT_{\cat_R}$ models the homological truncation of the B-twisted Gaiotto--Witten theory with compact gauge group $T_{c,\Gamma} = (\Gamma \otimes_{\Z} \R) \slash \Gamma$ at level $\met$ and hypermultiplet matter in the representation $T^{\vee} R$. In Section \ref{sec:nonzeroHyp}, we outline various consistency checks of this model. From the physical perspective, the ribbon category $\cat_R$ can be thought of as the category of line operators in the perturbative theory or, equivalently, the theory with simply connected gauge group of type $\GW$. The grading group $\Gr$ agrees with the topological flavour symmetry group of the physical theory. Objects in the homogeneous category $\cat_{R,\p \lambda}$, $\p \lambda \in \Gr$, correspond to non-genuine line operators which couple to flat $\Gr$-connection with holonomy $\p \lambda$ around the support of the operator. Objects of the group $\FR$ label gauge vortex lines and are a key non-perturbative ingredient. Much like in $T_{c,\Gamma}$-Chern--Simons theory, one role of gauge vortices is to screen line operators, resulting in a truncation of the spectrum of line operators. Said differently, line operators whose labels differ by the action of $\FR$ are physically equivalent. From the perspective of vertex operator algebras of boundary local operators, these gauge vortex lines end on boundary monopole operators and are entirely analogous to the modules that extend the perturbative Heisenberg vertex operator algebra to the full, non-perturbative lattice vertex operator algebra which models $T_{c,\Gamma}$-Chern--Simons theory \cite{ballin2022,ballin2023,garner2023b}.

In Theorem \ref{thm:verlindeCpt} we prove a Verlinde formula for the TQFT $\TQFT_{\cat_R}$, relating the partition function of a trivial circle bundle over a genus $g$ surface $\Sigma_g$ to the graded dimensions of the spate space $\TQFT_{\cat_R}(\Sigma_g)$. As an application, we compute the Euler characteristic of the underlying super vector space of $\TQFT_{\cat_R}(\Sigma_g)$:
\[
\chi(\TQFT_{\cat_R}(\Sigma_g))
=
\vert D \vert^{g-1} \sum_{k \in D} \prod_{i=1}^{\dim_{\C} R}\Big( 2 \sin \big(\met^{\vee}(Q_i,k) \pi \big) \Big)^{2g-2}.
\]
Here $D$ is the discriminant group of the metric $\met$ and $Q_i \in \bos^{\vee} $ are the weights of $R$. As we explain in Remark \ref{rem:Bethe}, the above expression for $\chi(\TQFT_{\cat_R}(\Sigma_g))$ is in agreement with physical computations by way of supersymmetric localization.

We summarize the other relative modular structures and corresponding physical theories considered in the body of the paper.
\begin{itemize}
\item In Section \ref{sec:uooCS} we construct models of Chern--Simons theory whose gauge supergroup has compact connected bosonic subgroup and Lie superalgebra $\uoo$. This gives a different approach to the mathematical realizations of particular global forms of $\Uoo$ Chern--Simons theory from \cite{geerYoung2022}, following earlier physical work of Rozansky and Saluer \cite{rozansky1992,rozansky1993,rozansky1994} and Mikhaylov \cite{mikhaylov2015}. The results of this section also provide TQFT realizations of the vertex operator algebraic constructions of \cite{garner2023b}.

\item By considering the degenerate case in which $R$ is the zero representation, we give in Section \ref{sec:toralCS} a new construction of Chern--Simons theory with compact torus gauge group. A novel feature of our construction is that it incorporates Wilson line operators that are attached to topological surface defects or, equivalently, sourcing a non-trivial background flat connection for the topological flavour symmetry $\Gr$ of the theory. Standard approaches to toral Chern--Simons theory incorporate only ordinary Wilson line operators \cite{belov2005,stirling2008}.

\item In Section \ref{sec:abelianHypers} we compare our setting to that of abelian gauged $\mathcal{N}=4$ hypermultiplets, as studied in \cite{ballin2022,ballin2023}. While we argue that the ribbon category $\cat_R$ models a portion of the category of line operators, we do not find a relative modular structure that models the non-perturbative theory. More precisely, when $\FR$ is taken to be the group suggested by \cite{ballin2023} there are infinitely many $\FR$-orbits of simple objects in each homogeneous summand of $\cat_R$. This suggests that an extension of the relative modular framework is required to treat such theories.

\item In Section \ref{sec:pslooCS} we construct models of Chern--Simons theory with gauge Lie superalgebra $\psloo$ coupled to flat $\C^{\times}$-connections, equivalently, Rozansky--Witten theory of $T^{\vee} \C$ with Higgs branch flavour symmetry $\C^{\times}$, recovering earlier results of \cite{geerYoung2022}. From the perspective of constructing Gaiotto--Witten theories, the results of Section \ref{sec:pslooCS} allow us to remove Assumption \ref{assump:noZeroRoots}.
\end{itemize}

As already mentioned, we expect the TQFTs of this paper to model homological truncations of particular physical theories. Indeed, the theories we consider arise as topological twists of supersymmetric quantum field theories. On general grounds, the category of line operators in such a theory is expected to form a differential graded category. It is an interesting mathematical problem to construct from such a differential category a TQFT.

Traditional representation theoretic approaches to compact Chern--Simons theory encode the level $\met$ in a quantum group through the quantum parameter $q$. This approach applies most naturally when there is a basic level, such as the Killing form on a simple Lie algebra, to which all other levels are related by scaling. Since there is no such basic level in our setting, we instead encode the level in the relative modular structure on a $q$-independent category $\cat_R$. In other words, we aim to find a suitable specialization of $q$ which works for all levels $\met$. However, in some cases, the natural specialization is $q=-1$, at which $\Uq$ is not defined. This leads us to consider $\Uqm$, an unrolled quantization of a non-standard presentation of $\GW$, whose specialization $\Uqmin$ serves as a replacement for the desired $q=-1$ specialization of $\Uq$. By working with the category $\mcat_R$ of weight $\Uqmin$-modules we are able to treat all levels uniformly. The TQFT constructions of the paper are therefore in terms of $\mcat_R$. Nevertheless, to ease the comparison with the mathematics literature, we develop the general theory for $\Uq$ and mention the modifications required for $\Uqm$.

We end this introduction by mentioning some natural extensions of the present work which seem to be accessible with existing techniques.
\begin{itemize}
\item It is natural to extend our results to B-twisted non-abelian Gaiotto--Witten theories, again with linear matter. The Fundamental Identity highly constrains the relevant Lie superalgebras, being, up to subquotients and summands with abelian bosonic subalgebra, direct sums of $\mathfrak{gl}(m \vert n)$ and $\mathfrak{osp}(m \vert 2n)$. In some cases, weight modules over unrolled quantizations of such Lie superalgebras are known to admit relative modular structures \cite{blanchet2016,derenzi2020,anghel2021,ha2022,geerYoung2022}, but their connection to physics deserves further study.

\item The supergroup Chern--Simons theories arising from B-twisted $\mathcal{N}=4$ gauge theories can have non-compact bosonic subgroups. For example, the B-twists of SQED \cite{ballin2022} and abelian gauged $\mathcal{N}=4$ hypermultiplets \cite{ballin2023} are of this type. To treat non-compact abelian gauge groups, our results must be extended to the category of $\Uq$-modules in which only the subalgebra of $\bos$ dual to the compact subgroup is required to act semisimply.
\end{itemize}
 
\subsection*{Acknowledgements}
The authors thank T.\, Creutzig, P.\, Crooks, T.\, Dimofte and W.\, Niu for discussions. N.\,Garner is supported by funds from the Department of Physics and the College of Arts \& Sciences at the University of Washington, Seattle. N.\,Geer is partially supported by National Science Foundation grant DMS-2104497. M.\,B.\,Young is partially supported by National Science Foundation grant DMS-2302363 and a Simons Foundation Travel Support for Mathematicians grant (Award ID 853541).

\section{Relative modular categories and topological quantum field theory}
\label{sec:relModTQFT}

We recall background material on relative modular categories and the topological quantum field theories (TQFTs) they define, following \cite{costantino2014,derenzi2022}. For background material on monoidal categories we refer the reader to \cite{etingof2015}.

\subsection{Ribbon categories}
\label{sec:ribbCat}

Let $(\cat,\otimes, \mathbb{I})$ be a $\C$-linear monoidal category. We assume that the functor $\otimes$ is $\C$-bilinear and the $\C$-algebra map $\C \to \End_\cat(\mathbb{I})$ is an isomorphism. If $\cat$ is rigid with braiding $\{c_{V,W}: V \otimes W \rightarrow W \otimes V \mid V,W \in \cat\}$ and compatible twist $\{\theta_V: V \rightarrow V \mid V \in \cat\}$, then $\cat$ is called a \emph{$\C$-linear ribbon category}. The dual of an object $V$ is denoted by $V^{\vee}$. Our diagrammatic conventions for ribbon categories are that diagrams are read left to right, bottom to top and
\[
\id_V
=
\begin{tikzpicture}[anchorbase]
\draw[->,thick] (0,0) -- node[left] {\small$V$} (0,1);
\end{tikzpicture}
\qquad , \qquad
\id_{V^{\vee}}
=
\begin{tikzpicture}[anchorbase]
\draw[<-,thick] (0,0) -- node[left] {\small$V$} (0,1);
\end{tikzpicture}
\]
\[
\tev_V
=
\begin{tikzpicture}[anchorbase]
\draw[->,thick] (0,0)  arc (0:180:0.5 and 0.75);
\node at (-1.4,0)  {$V$};
\end{tikzpicture}
\qquad, \qquad
\tcoev_V
=
\begin{tikzpicture}[anchorbase]
\draw[<-,thick] (0,0)  arc (180:360:0.5 and 0.75);
\node at (-0.5,-0.1)  {$V$};
\end{tikzpicture}
\]
\[
\ev_V
=
\begin{tikzpicture}[anchorbase]
\draw[<-,thick] (0,0)  arc (0:180:0.5 and 0.75);
\node at (0.4,0)  {$V$};
\end{tikzpicture}
\qquad, \qquad
\coev_V
=
\begin{tikzpicture}[anchorbase]
\draw[->,thick] (0,0)  arc (180:360:0.5 and 0.75);
\node at (1.5,-0.1)  {$V$};
\end{tikzpicture}
\]
\[
c_{V,W}
=
\begin{tikzpicture}[anchorbase]
\draw[->,thick] (0.5,0) -- node[right,near start] {\small $W$} (0,1);
\draw[->,thick,cross line] (0,0) -- node[left,near start] {\small$V$} (0.5,1);
\end{tikzpicture}
\qquad, \qquad
\theta_V
=
\begin{tikzpicture}[anchorbase]
\draw[->,thick,rounded corners=8pt] (0.25,0.25) -- (0,0.5) -- (0,1);
\draw[thick,rounded corners=8pt,cross line] (0,0) -- (0,0.5) -- (0.25,0.75);
\draw[thick] (0.25,0.75) to [out=30,in=330] (0.25,0.25);
\node at (-0.2,0.2)  {$V$};
\end{tikzpicture}
\]
for objects $V, W \in \cat$. An object $V\in\cat$ is \emph{regular} if $\tev_V:V^{\vee} \otimes V \rightarrow \mathbb{I}$ is an epimorphism. 

\subsection{Modified traces}
\label{sec:mTrace}

Let $V$ and $W$ be objects of a $\C$-linear ribbon category $\cat$. The right partial trace is the map
\begin{eqnarray*}
\ptr_W : \End_{\cat}(V \otimes W) & \rightarrow & \End_{\cat}(V) \\
f & \mapsto & (\id_V \otimes \ev_W) \circ (f \otimes \id_{W^{\vee}}) \circ (\id_V \otimes \tcoev_W).
\end{eqnarray*}

\begin{Def}
A full subcategory $\mathcal{I} \subset \cat$ is an {\em ideal} if $U\otimes V \in \mathcal{I}$ whenever $U\in \mathcal{I}$ and $V \in \cat$ and if $U \in \mathcal{I}$ and $V \in \cat$ are such that there exist morphisms $f:V\to U$ and $g:U\to V$ satisfying $g \circ f=\id_{V}$, then $V\in \mathcal{I}$.
\end{Def}

\begin{Def}[{\cite[\S 3.2]{geer2011}}]\label{def:mtrace}
\begin{enumerate}
\item A \emph{modified trace} on an ideal $\mathcal{I} \subset \cat$ is a collection of $\C$-linear functions 
$\mt=\{\mt_V:\End_\cat(V) \rightarrow \C \mid V \in \mathcal{I} \}$ which satisfies:
\begin{enumerate}
 \item \emph{Cyclicity}: $\mt_V(f \circ g)=\mt_W(g \circ f)$ for all $V,W \in \mathcal{I}$ and $f \in \Hom_{\cat}(W,V)$ and $g \in \Hom_{\cat}(V,W)$.
\item \emph{Partial trace property}: 
$\mt_{V\otimes W}(f)=\mt_V(\ptr_W(f))$ for all $V \in \mathcal{I}$, $W \in \cat$ and $f\in\End_{\cat}(V\otimes W)$.
\end{enumerate}

\item The \emph{modified dimension} of $V\in \mathcal{I}$ is $\qd(V)=\mt_V(\id_V)$.
\end{enumerate}
\end{Def}

\subsection{Relative modular categories}\label{sec:relModDef}

Let $\cat$ be a $\C$-linear ribbon category.

\begin{Def}
\begin{enumerate}
\item A set $\mathcal{E}=\{ V_i \mid i \in J \}$ of objects of $\cat$ is \emph{dominating} if for any $V \in \cat$ there exist $\{i_1,\dots,i_m \} \subseteq J $ and morphisms $\iota_k \in \Hom_{\cat}(V_{i_k},V)$ and $s_k \in \Hom_{\cat}(V,V_{i_k})$ such that $\id_{V}=\sum_{k=1}^m \iota_k \circ s_k$.

\item A dominating set $\mathcal{E}$ is \emph{completely reduced} if $\dim_\C \Hom_{\cat}(V_i,V_j)=\delta_{ij}$ for all $i,j \in J$.
\end{enumerate}
\end{Def}

Let $\FR$ be an abelian group, written additively. We often identify $\FR$ with the corresponding discrete monoidal category with object set $\FR$.

\begin{Def}
\label{def:free}
A \emph{free realization of $\FR$ in $\cat$} is a monoidal functor $\sigma: \FR \rightarrow \cat$, $k \mapsto \sigma_k$, such that
\begin{enumerate}
\item $\sigma_0=\mathbb{I}$,
\item $\theta_{\sigma_k}=\id_{\sigma_k} \text{ for all } k \in \FR$, and
\item if $V \otimes \sigma_k \simeq V$ for a simple object $V \in \cat$, then $k=0$.  
\end{enumerate}
\end{Def}

We often identify $\sigma : \FR \rightarrow \cat$ with the collection of objects $\sigma_{\FR} := \{\sigma_k \mid k \in \FR\}$, omitting from the notation the monoidal coherence data $\{\sigma_k \otimes \sigma_l \xrightarrow[]{\sim} \sigma_{k+l} \mid k,l \in \FR\}$.

\begin{Def}
\label{def:Gstr}
Let $\Gr$ be an abelian group.  A \emph{$\Gr$-grading on $\cat$} is an equivalence of $\C$-linear categories $\cat \simeq \bigoplus_{g \in \Gr} \cat_g$, where $\{\cat_g \mid g \in \Gr\}$ are full subcategories of $\cat$ such that $\mathbb{I} \in \cat_0$, if $V\in\cat_g$,  then  $V^{\vee}\in\cat_{-g}$ and if $V\in\cat_g$ and $V^{\prime}\in\cat_{g'}$, then $V\otimes V^{\prime}\in\cat_{g+g'}$.
\end{Def}

\begin{Def}\label{def:smallsymm}
A subset $\SSS$ of an abelian group $\Gr$ is \emph{symmetric} if $\SSS=-\SSS$ and \emph{small} if $\bigcup_{i=1}^n (g_i+\SSS) \neq \Gr$ for all $g_1,\ldots ,g_n\in \Gr$ and $n \geq 1$.
\end{Def}

We will use the following basic result in Section \ref{sec:kerGrad}.

\begin{Lem}
\label{lem:sssPullback}
Let $f: \Gr \rightarrow \GrH$ be a surjective homomorphism of abelian groups and $\SSSY \subset \GrH$ a small symmetric subset. Then $f^{-1}(\SSSY) \subset \Gr$ is a small symmetric subset.
\end{Lem}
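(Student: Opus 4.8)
The plan is to verify the two defining properties of a small symmetric subset directly from the definition. Symmetry is immediate: since $\SSSY = -\SSSY$ and $f$ is a homomorphism, we have $f^{-1}(\SSSY) = f^{-1}(-\SSSY) = -f^{-1}(\SSSY)$, using that $x \in f^{-1}(-\SSSY)$ iff $f(x) \in -\SSSY$ iff $-f(x) = f(-x) \in \SSSY$ iff $-x \in f^{-1}(\SSSY)$. So the only real content is smallness.

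For smallness, I would argue by contraposition. Suppose $f^{-1}(\SSSY)$ is \emph{not} small, so there exist $g_1, \dots, g_n \in \Gr$ with $\bigcup_{i=1}^n (g_i + f^{-1}(\SSSY)) = \Gr$. The key observation is that $f^{-1}(\SSSY)$ is a union of fibres of $f$ (it is $f$-saturated), hence so is each translate $g_i + f^{-1}(\SSSY) = f^{-1}(f(g_i) + \SSSY)$; here I use that $g + f^{-1}(S) = f^{-1}(f(g) + S)$ for any $S \subseteq \GrH$, which holds because $f$ is a homomorphism. Applying $f$ to the assumed covering and using surjectivity of $f$, together with the fact that $f$ commutes with unions and that $f(f^{-1}(T)) = T$ for $f$ surjective, gives
\[
\GrH = f(\Gr) = f\Big( \bigcup_{i=1}^n f^{-1}(f(g_i) + \SSSY) \Big) = \bigcup_{i=1}^n \big( f(g_i) + \SSSY \big),
\]
which contradicts smallness of $\SSSY$ in $\GrH$ (with the elements $f(g_1), \dots, f(g_n) \in \GrH$). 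Hence $f^{-1}(\SSSY)$ is small.

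I do not anticipate a genuine obstacle here; the statement is elementary set-theoretic bookkeeping about homomorphisms and saturated subsets. The only point requiring a moment's care is the identity $g + f^{-1}(S) = f^{-1}(f(g) + S)$ and the use of surjectivity in $f(f^{-1}(T)) = T$ — surjectivity is exactly what is needed so that pushing the covering forward along $f$ still covers all of $\GrH$. Everything else is formal.
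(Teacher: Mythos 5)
Your proof is correct and follows essentially the same route as the paper: symmetry is checked directly from the homomorphism property, and smallness is obtained by pushing a putative finite cover of $\Gr$ forward along the surjection $f$ to contradict smallness of $\SSSY$. The only cosmetic difference is that you invoke saturation and the identity $f(f^{-1}(T))=T$, while the paper simply uses $f(f^{-1}(\SSSY)) \subseteq \SSSY$; both are fine.
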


\begin{proof}
Let $\SSS = f^{-1}(\SSSY)$. If $x \in \SSS$, then $f(-x)=-f(x) \in \SSSY$, since $\SSSY$ is symmetric. This proves that $\SSS$ is symmetric. To prove that $\SSS$ is small, we proceed by contradiction. Suppose that $\Gr= \bigcup_{i=1}^n (\SSS + g_i)$ for some $g_1, \dots, g_n \in \Gr$. Applying $f$ gives
\[
\GrH
=
f(\Gr)
=
f(\bigcup_{i=1}^n (\SSS + g_i))
=
\bigcup_{i=1}^n (f(\SSS) + f(g_i))
\subseteq
\bigcup_{i=1}^n (\SSSY + f(g_i)) \subseteq \GrH,
\]
contradicting smallness of $\SSSY$.
\end{proof}

\begin{Def}
\label{def:preMod}
Let $\Gr$ and $\FR$ be abelian groups and $\SSS \subset \Gr$ a small symmetric subset. A \emph{pre-modular $\Gr$-category relative to $(\FR,\SSS)$} is a $\Gr$-graded $\C$-linear ribbon category $\cat$ with a non-zero modified trace $\mt$ on its ideal of projective objects and a free realization $\sigma: \FR \rightarrow \cat_0$ with the following properties:
\begin{enumerate}
\item \emph{$\FR$-finite generic semisimplicity}: \label{def:genSS} For every $g \in \Gr \setminus \SSS$, there exists a finite set of regular simple\footnote{Since the category $\cat$ is not assumed to be abelian, we take \emph{simple} to mean that endomorphisms consist only of scalars. In the body of the paper, the category $\cat$ will be abelian and the objects of $\Theta(g)$ will be simple in the sense that they have no non-trivial subobjects. Schur's Lemma ensures the compatibility of these two approaches.} objects $\Theta(g):=\{ V_i \mid i \in I_g  \}$ such that
\[
\Theta(g) \otimes \sigma_{\FR}:=\{ V_i \otimes \sigma_k \mid i \in I_g, \; k \in \FR \}
\]
is a completely reduced dominating set for $\cat_g$.

\item \label{def:compat}
There exists a bicharacter $\psi: \Gr \times \FR \rightarrow \C^{\times}$ such that
\begin{equation}
\label{eq:psi}
c_{\sigma_k,V}\circ c_{V,\sigma_k}= \psi(g,k) \cdot  \id_{V \otimes \sigma_k}
\end{equation}
for all $g\in \Gr$, $V \in \cat_g$ and $k \in \FR$.
\end{enumerate}
 \end{Def}
 
\begin{Rem}
Property \eqref{def:genSS} of Definition \ref{def:preMod} implies that the $\Gr$-graded category $\cat$ is generically semisimple with singular locus $\SSS$; see \cite[Definition 3.3]{geer2018}.
\end{Rem}

\begin{Def}
\label{def:ndeg}
Let $\cat$ be a pre-modular $\Gr$-category relative to $(\FR,\SSS)$.
\begin{enumerate}
\item For each $g \in \Gr \setminus \SSS$, the \emph{Kirby colour of index $g$} is $\Omega_g:= \sum_{i \in I_g}\qd(V_i) \cdot V_i$.
\item The \emph{stabilization coefficients} $\Delta_\pm \in \C$ are defined by
\[
\epsh{Fig-nondeg}{15ex}
\]
where $g \in \Gr \setminus \SSS$ and $V\in \cat_g$. Here $\dot{=}$ denotes equality after application of the Reshetikhin--Turaev functor $F_{\cat}$.
\item The pre-modular $\Gr$-category $\cat$ is \emph{non-degenerate} if $\Delta_{+}\Delta_{-}\neq 0$. 
\end{enumerate}
\end{Def}

As suggested by the notation, $\Delta_+$ and $\Delta_-$ are independent of the choice of $g \in \Gr \setminus \SSS$ and $V \in \cat_g$ used in their definition \cite[Lemma 5.10]{costantino2014}.

\begin{Def}
\label{def:modG}
A \emph{modular $\Gr$-category relative to $(\FR,\SSS)$} is a pre-modular $\Gr$-category $\mathcal{C}$ relative to $(\FR,\SSS)$ for which there exists a scalar $\zeta \in \C^{\times}$, called the \emph{relative modularity parameter}, such that
\begin{equation}\label{eq:mod}
    \epsh{relative_modularity}{10ex}
\end{equation}
for any $g,h \in \Gr \setminus \SSS$ and $i,j \in I_g$.
\end{Def}

The relative modularity parameter satisfies $\zeta = \Delta_+ \Delta_-$ \cite[Proposition 1.2]{derenzi2022}, whence relative modular categories are non-degenerate relative pre-modular.

\subsection{Topological quantum field theories from relative modular categories}
\label{sec:CGPTQFT}

In this paper all manifolds are assumed to be oriented.

Let $\cat$ be a modular $\Gr$-category relative to $(\FR,\SSS)$.

Let $\Cob_{\cat}$ be the category of decorated surfaces and their diffeomorphism classes of decorated bordisms, as defined in \cite[\S 2]{derenzi2022}. Briefly, objects of $\Cob_{\cat}$ are tuples $\CS=(\Sigma, \{p_i\}, \coh, \mathcal{L})$ consisting of
\begin{itemize}
\item  a closed surface $\Sigma$ with a set $*$ of distinguished base points, exactly one for each connected component,
\item a finite set $\{p_i\} \subset \Sigma \setminus *$ of oriented framed $\cat$-coloured points,
\item a relative cohomology class $\coh \in H^1(\Sigma\setminus\{p_i\}, * ;\Gr)$ such that $\omega(m_i) = g_i$ is the colour of $p_i$, where $m_i$ is the oriented boundary of a regular neighbourhood of $p_i$, and
\item a Lagrangian subspace ${\mathcal L}\subset H_1(\Sigma; \R)$.
\end{itemize}
Morphisms in $\Cob_{\cat}$ are tuples $\mathcal M = (M,T,\coh,m) : \CS_1 \rightarrow \CS_2$ consisting of
\begin{itemize}
\item a bordism $M: \Sigma_1 \rightarrow \Sigma_2$,
\item a $\cat$-coloured ribbon graph $T \subset M$ whose colouring is compatible with those of the marked points of $\CS_j$, $j=1,2$,
\item a class $\coh \in H^1(M\setminus T, *_1 \cup *_2; \Gr)$ which restricts to $\omega_j$ on $\Sigma_j$, $j=1,2$, and such the colour of each connected component $T_c$ of $T$ is in degree $\omega(m_c) \in \Gr$, where $m_c$ is an oriented meridian of $T_c$, and
\item an integer $m \in \Z$ called the \emph{signature defect}.
\end{itemize}
The morphism $\mathcal{M} :\CS_1 \rightarrow \CS_2$ is called \emph{admissible} if for each connected component $M_c$ of $M$ which is disjoint from the incoming boundary $\Sigma_1$, at least one edge of $T \cap M_c$ is coloured by a projective object of $\cat$ or there exists an embedded closed oriented curve $\gamma \subset M_c$ such that $\coh(\gamma) \in \Gr \setminus \SSS$. Let $\CobAd_{\cat}$ be the subcategory of admissible morphisms. Disjoint union gives $\Cob_{\cat}$ and $\CobAd_{\cat}$ the structure of a symmetric monoidal categories.

Let $\ZVect_{\C}$ be the monoidal category of $\FR$-graded vector spaces and their degree preserving linear maps. We consider $\ZVect_{\C}$ with the symmetric braiding determined by the unique pairing $\gamma: \FR \times \FR \rightarrow \{\pm 1\}$ which makes the diagram
\[
\begin{tikzpicture}[baseline= (a).base]
\node[scale=1.0] (a) at (0,0){
\begin{tikzcd}[column sep=7.0em,row sep=2.0em]
\sigma_{k_1} \otimes \sigma_{k_2} \arrow{r}[above]{c_{\sigma_{k_1},\sigma_{k_2}}} \arrow{d}[left]{\wr} & \sigma_{k_2} \otimes \sigma_{k_1} \arrow{d}[right]{\wr} \\
\sigma_{k_1+k_2} \arrow{r}[below]{\gamma(k_1,k_2) \cdot \id_{\sigma_{k_1+k_2}}} & \sigma_{k_2+k_1}
\end{tikzcd}
};
\end{tikzpicture}
\]
commute for all $k_1, k_2 \in \FR$.

\begin{Thm}[{\cite[Theorem 6.2]{derenzi2022}}]
\label{thm:relModTQFT}
A modular $\Gr$-category $\cat$ relative to $(\FR,\SSS)$ with a choice $\D \in \C^{\times}$ of square root of the relative modularity parameter $\zeta$ defines a symmetric monoidal functor $\TQFT_{\cat}: \CobAd_{\cat} \rightarrow \ZVect_{\C}$.
\end{Thm}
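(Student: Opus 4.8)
The plan is to follow the Reshetikhin--Turaev surgery strategy, modified in the style of Costantino--Geer--Patureau-Mirand \cite{costantino2014} so as to accommodate the absence of semisimplicity and of non-vanishing quantum dimensions. First I would upgrade the Reshetikhin--Turaev functor $F_{\cat}$ on $\cat$-coloured ribbon graphs to a \emph{renormalized} invariant of closed ribbon graphs: given a closed $\cat$-coloured ribbon graph in $S^3$ at least one of whose edges is coloured by a projective object $V$, cut that edge to present the graph as the closure of some $f \in \End_{\cat}(V)$ and declare the invariant to be $\mt_V(f)$. Cyclicity and the partial-trace property of the modified trace (Definition \ref{def:mtrace}) guarantee independence of the cut edge; this is precisely the point at which non-semisimplicity is handled, since $\mt$ substitutes for the identically-zero categorical trace on projectives.

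Next I would define an invariant of a closed oriented $3$-manifold $M$ equipped with a $\cat$-coloured ribbon graph $T$ and a class $\coh \in H^1(M \setminus T;\Gr)$. One chooses a surgery presentation of $M$ along a framed link $L \subset S^3$; the class $\coh$ assigns elements $g_c \in \Gr$ to the meridians of the components $L_c$, and admissibility (after an isotopy of $L$) lets us assume either $g_c \notin \SSS$ for every $c$, or that some edge of $T$ is projectively coloured, so that the renormalized $F_{\cat}$ of the preceding step applies. Colour each $L_c$ by the Kirby colour $\Omega_{g_c}$ of Definition \ref{def:ndeg}, apply the renormalized $F_{\cat}$, and normalise by $\Delta_+^{-b_+(L)}\Delta_-^{-b_-(L)}$ together with a power of $\D$ dictated by the signature defect, where $b_\pm(L)$ are the numbers of positive and negative eigenvalues of the linking matrix of $L$. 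Invariance under the two Kirby moves is the crux: the handle-slide move is exactly the relative modularity identity \eqref{eq:mod} together with the compatibility bicharacter $\psi$ of \eqref{eq:psi}, while the blow-up/blow-down move is governed by the stabilization coefficients $\Delta_\pm$. Non-degeneracy $\Delta_+\Delta_- \neq 0$ makes the normalisation meaningful, and the chosen square root $\D$ of $\zeta = \Delta_+\Delta_-$ absorbs the resulting framing anomaly at the cost of recording the integer $m$.

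Finally I would build the TQFT by the universal construction. For a decorated surface $\CS$, let $\mathcal{V}(\CS)$ be the $\C$-span of decorated bordisms $\emptyset \to \CS$, equip $\mathcal{V}(\CS) \otimes \mathcal{V}(\overline{\CS}) \to \C$ with the bilinear pairing obtained by gluing two such bordisms along $\CS$ and applying the $3$-manifold invariant, and set $\TQFT_{\cat}(\CS)$ to be the quotient of $\mathcal{V}(\CS)$ by the left kernel of this pairing; gluing of bordisms then descends to linear maps, yielding a functor on $\CobAd_{\cat}$. The $\FR$-grading on state spaces is produced by the free realization $\sigma$: inserting $\sigma_k$-coloured graph components refines $\mathcal{V}(\CS)$ into $\FR$-homogeneous pieces, and property (3) of Definition \ref{def:free} forces the gluing pairing to pair $\sigma_k$- with $\sigma_{-k}$-components only, so the quotient is $\FR$-graded; the sign bicharacter $\gamma$ then makes the functor symmetric monoidal into $\ZVect_{\C}$. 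Monoidality under disjoint union and compatibility with the symmetric braidings are formal once the $3$-manifold invariant is known to be multiplicative under connected sum and to behave correctly under orientation reversal and relabelling.

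The main obstacle throughout is the well-definedness of the $3$-manifold invariant: proving independence of the renormalized graph invariant from the chosen projective edge, and invariance of the normalised surgery formula under the Kirby moves and under stabilisation of the surgery link. These are exactly the statements that the relative modular axioms --- the modified trace, the Kirby colours, non-degeneracy, and relative modularity --- are engineered to supply, and verifying them is where essentially all of the work lies; the universal construction itself, granted the invariant, is routine.
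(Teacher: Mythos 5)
This theorem is not proved in the present paper: it is cited verbatim as \cite[Theorem 6.2]{derenzi2022}, so there is no internal proof to compare against. Your sketch does reconstruct the architecture of the cited proof (and of its predecessor \cite{costantino2014}): a renormalized Reshetikhin--Turaev graph invariant built from the modified trace, a $3$-manifold invariant via surgery with components coloured by Kirby colours, handle-slide invariance coming from the relative modularity identity \eqref{eq:mod} together with the bicharacter $\psi$, stabilization invariance from $\Delta_\pm$, non-degeneracy making the normalization well posed, and the universal (pairing/quotient) construction to produce state spaces, with the $\FR$-grading induced by insertions of the free realization $\sigma$. This is the right skeleton.

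Two points deserve more care. First, your normalization $\Delta_+^{-b_+}\Delta_-^{-b_-}$ with ``a power of $\D$ for the signature defect'' is slightly under-specified; the paper records (equation \eqref{eq:cgpInvt}) the precise factor $\D^{-1-l}\bigl(\D/\Delta_-\bigr)^{m-\sigma(L)}$, and the extra $\D^{-1}$ and the interplay of the number of link components $l$ with the signature $\sigma(L)$ and defect $m$ are exactly what makes multiplicativity under disjoint union and composition come out correctly; it is worth checking these match via $\D^2=\Delta_+\Delta_-$. Second, in De Renzi's actual argument the admissibility condition on $\CobAd_{\cat}$ is not a convenience but a structural hypothesis: it is what guarantees that every closed $3$-manifold appearing after gluing has a computable surgery presentation to which the renormalized invariant applies, and the resulting functor is defined only on $\CobAd_{\cat}$, not on all of $\Cob_{\cat}$. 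You mention admissibility in passing when setting up the invariant but do not flag that it also has to be propagated through the universal construction (the pairing is only defined on admissible bordisms, and one must check the quotient is compatible with composition of admissible morphisms). Finally, De Renzi proves Theorem 6.2 as the truncation of a once-extended TQFT (\cite[Theorem 6.1]{derenzi2022}), whereas you go directly by the universal construction at the level of surfaces; this is a legitimate shorter route in the spirit of \cite{costantino2014}, but it is a genuinely different packaging from the cited reference.
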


We refer to $\TQFT_{\cat}$ as the (decorated) TQFT assocaited to $\cat$. The values of $\TQFT_{\cat}$ on closed bordisms $\varnothing \rightarrow \varnothing$ coincide with a (renormalization of the) $3$-manifold invariants $\CGP_{\cat}$ of \cite{costantino2014}. Concretely, let $R$ be a $\cat$-coloured ribbon graph in $S^3$. Suppose that an edge of $R$ is coloured by a generic simple object $V \in \cat$, that is, $V$ is simple and $V \in \cat_g$ for some $g \in \Gr \setminus \SSS$. Let $R_V$ be the $(1,1)$-ribbon graph obtained from $R$ by cutting an edge labeled by $V$. Then $F^{\prime}_{\cat}(R) :=\mt_V(R_V) \in \C$ is an isotopy invariant of $R$ \cite{geer2009,geer2011}. With this notation, the partition function of a closed bordism $\mathcal{M}=(M,T,\coh,m)$ with computable surgery presentation $L \subset S^3$ is
\begin{equation}
\label{eq:cgpInvt}
\TQFT_{\cat}(\mathcal{M})
=
\D^{-1-l} \big( \frac{\D}{\Delta_{-}} \big)^{m-\sigma(L)} F'_{\cat}(L\cup T) \in \C.
\end{equation}
Here $l$ is the number of connected components of $L$, $\sigma(L)$ is the signature of the linking matrix of $L$ and each component $L_c$ of $L$ is coloured by the Kirby colour $\Omega_{\coh(m_c)}$.

The TQFT $\TQFT_{\cat}$ is the truncation of a once-extended TQFT $\check{\TQFT}_{\cat}: \CobAdExt_{\cat} \rightarrow \ZCat_{\C}$  \cite[Theorem 6.1]{derenzi2022}. Here $\CobAdExt_{\cat}$ is the bicategory of decorated $1$-manifolds, their decorated admissible $2$ dimensional bordisms and their equivalence classes of decorated admissible $3$ dimensional bordisms with corners and $\ZCat_{\C}$ is the bicategory of $\FR$-graded complete $\C$-linear categories with symmetric monoidal structure determined by $\gamma$. The theory $\check{\TQFT}_{\cat}$ assigns to the circle with cohomology class of holonomy $g \in \Gr$ a category Morita equivalent to the ideal of projective objects of $\cat_g$ \cite[Proposition 7.1]{derenzi2022}.

\begin{Def}[{\cite[\S 1.6]{geerYoung2022}}]
\label{def:relModFinite}
A modular $\Gr$-category $\cat$ relative to $(\FR,\SSS)$ is \emph{TQFT finite} if it has the following properties:
\begin{enumerate}[label=(F\arabic*)]
\item \label{ite:finCat1} For each $g\in \Gr$, there exists a finite set $\{P_j \mid j\in J_g\}$ of projective indecomposables of $\cat_g$ such that any projective indecomposable of $\cat_g$ is isomorphic to $P_j \otimes \sigma_k$ for some $j\in J_g$ and $k \in \FR$.
\item \label{ite:finCat2} For each projective $P \in \cat$, the vector space $\bigoplus_{k \in \FR} \Hom_{\cat}(\sigma_k,P)$ is finite dimensional.
\item \label{ite:finCat3} The full subcategory of projectives of $\cat$ is dominated by the set of projective indecomposables.
\end{enumerate}
\end{Def}

It is proved in \cite[Theorem 1.16]{geerYoung2022} that TQFT finiteness of $\cat$ ensures the finite dimensionality of all state spaces $\TQFT_{\cat}(\CS)$, $\CS \in \CobAd_{\cat}$.

\section{Quantum superalgebras for B-twisted abelian Gaiotto--Witten theory}
\label{sec:abGW}

\subsection{Superalgebra conventions}
\label{sec:conventions}

For a detailed discussion of superalgebra we refer the reader to \cite[I-Supersymmetry]{deligne1999}.

Let $\Ztwo = \{\p 0, \p 1\}$ be the additive group of order two. A \emph{super vector space} is a $\Ztwo$-graded complex vector space $V = V_{\p 0} \oplus V_{\p 1}$. The degree of a homogeneous element $v \in V$ is denoted $\p v \in \Ztwo$. Write $\Pi V$ for the super vector space with reversed parity, so that $(\Pi V)_{\p p} = V_{\p p + \p 1}$.  A \emph{morphism of super vector spaces of degree $\p d \in \Ztwo$} is a $\C$-linear map $f: V \rightarrow W$ which satisfies $\overline{f(v)} = \p v + \p d$ for each homogeneous $v \in V$. A (left) \emph{module} over a unital superalgebra $A$ is a super vector space $M$ together with a unital superalgebra homomorphism $A \rightarrow \End_{\C}(M)$ of degree $\p 0$. In particular, even (resp. odd) elements of $A$ act on $M$ by even (resp. odd) linear transformations. Write $[-,-]$ for the graded commutator in $A$, so that $[a,b] = ab - (-1)^{\p a \p b}ba$ for homogeneous elements $a, b \in A$.

\subsection{Gaiotto--Witten Lie superalgebras}
\label{sec:GWLieSuper}

Following Kapustin and Saulina \cite[\S 3.1]{kapustin2009b} and Gaiotto and Witten \cite[\S 3.2]{gaiotto2010b}, we introduce Lie superalgebras relevant to B-twisted Gaiotto--Witten theory.

Let $\bos$ be a complex Lie algebra with adjoint-invariant metric $\met: \bos \times \bos \rightarrow \C$. Let $\bos^{\vee} = \Hom_{\C}(\bos,\C)$ be the linear dual. There is an induced isomorphism
\[
\met^{\flat}: \bos \rightarrow \bos^{\vee},
\qquad
Z \mapsto \met(Z,-)
\]
with inverse $\met^{\sharp}: \bos^{\vee} \rightarrow \bos$. Let $\met^{\vee}$ be the metric on the linear dual $\bos^{\vee}$ induced by $\met^{\flat}$.

Let $(H,\omega)$ be a complex symplectic representation of $\bos$ and $\mu: H \rightarrow \bos^{\vee}$ a $\bos$-equivariant holomorphic moment map. We assume that the above data satisfies the \emph{Fundamental Identity} of Gaiotto and Witten \cite[\S 3.2]{gaiotto2010b}:
\begin{equation}
\label{eq:FundIdenGen}
\met^{\vee}(\mu,\mu)=0.
\end{equation}
For a general representation theoretic discussion of the Fundamental Identity and its variations, see \cite{medeiros2009}.

Let $\GWH$ be the super vector space $\bos \oplus \Pi H$. Define a Lie superalgebra structure on $\bos_H$ so that $\bos$ is a Lie subalgebra, the Lie bracket of $\bos$ and $\Pi H$ is given by the representation $\bos \rightarrow \mathfrak{sp}(H)$ and
\[
[h_1,h_2] = \met^{\sharp}(h_1(h_2(\mu))),
\qquad
h_1, h_2 \in H.
\]
On the right hand side we view $h_1$ and $h_2$ as constant holomorphic vector fields on $H$. Since $\mu$ is quadratic, $h_1(h_2(\mu))$ is an element $\bos^{\vee}$, whence $\met^{\sharp}(h_1(h_2(\mu)))$ is an element of $\bos$, as required. The only non-obvious task in verifying that the above definition is indeed a Lie superalgebra is the super Jacobi identity for three odd elements, which is seen to follow from equation \eqref{eq:FundIdenGen}. The metric $\met$ and symplectic form $\omega$ combine to define a non-degenerate supersymmetric adjoint-invariant bilinear form $(-,-)$ on $\GWH$.

\subsection{Lie superalgebras for abelian Gaiotto--Witten theory}
\label{sec:abGWClassical}

For the remainder of the paper, we work with the specialization of Section \ref{sec:GWLieSuper} to the case in which the Lie algebra $\bos$ is abelian. Let $r$ be the dimension of $\bos$ and $2n$ the dimension of $H$.

Since $\bos$ is abelian, the symplectic representation $H$ is polarizable, that is, there exists a complex representation $R$ of $\bos$ of dimension $n$ and an isomorphism of symplectic representations $(H,\omega) \simeq (T^{\vee} R=R \oplus R^{\vee},\left(\begin{smallmatrix} 0 & \id_{R^{\vee}} \\ -\can_R & 0 \end{smallmatrix} \right))$, where $\can_R: R \xrightarrow[]{\sim} R^{\vee \vee}$ is the canonical evaluation isomorphism. We fix such a choice of $R$ in what follows and refer to its weights $Q_1, \dots, Q_n \in \bos^{\vee}$ as \emph{roots} and the additive abelian group $\Lambda_R \subset \bos^{\vee}$ they generate as the \emph{root lattice}. Since $R$ is determined up to isomorphism by its roots, we often use the latter to describe the former. Choose linearly independent root vectors $E_1, \dots, E_n \in R$, that is
\[
Z \cdot E_i = Q_i(Z) E_i,
\qquad
Z \in \bos.
\]
Complete $\{E_1, \dots, E_n\}$ to a Darboux basis $\{E_1, \dots, E_n, F_1, \dots, F_n\}$ of $T^{\vee} R$, so that $\omega_{ij} := \omega(E_i,F_j) = \delta_{ij}$. It follows that
\[
Z \cdot F_i = - Q_i(Z) F_i,
\qquad
Z \in \bos.
\]
In terms of the roots, the Fundamental Identity \eqref{eq:FundIdenGen} reads
\begin{equation}
\label{eq:fundIdenAbs}
\met^{\vee}(Q_i, Q_j) =0,
\qquad
1 \leq i, j \leq n.
\end{equation}
In the Darboux basis, the non-trivial Lie bracket of odd elements of $\bos_H$ becomes
\[
[E_i, F_j] = \delta_{ij} \met^{\vee}(Q_i,-),
\]
where we view $\met^{\vee}(Q_i,-)$ as an element of $\bos$ via $\can_{\bos}: \bos \xrightarrow[]{\sim} \bos^{\vee \vee}$.

To describe $\GWH$ more explicitly, fix a basis $\{Z_a \mid 1 \leq a \leq r\}$ of $\bos$ and set $\met_{ab} = \met(Z_a, Z_b)$. In the dual basis $\{Z^{\vee}_a \mid 1 \leq a \leq r\}$ of $\bos^{\vee}$, the matrix of $\met^{\vee}$ is the inverse $(\met^{ab})$ of the matrix $(\met_{ab})$. Write $(Q_{1i}, \dots, Q_{ri}) \in \C^r$ for the image of $Q_i$ under the isomorphism $\bos^{\vee} \xrightarrow[]{\sim} \C^r$ and organize the roots of $R$ into an $r \times n$ matrix $Q=(Q_{ai})$. In the physics literature, the transpose of $Q$ is referred to as the \emph{charge matrix} of the theory. With this notation, the Fundamental Identity \eqref{eq:fundIdenAbs} reads
\begin{equation}
\label{eq:fundIden}
\sum_{a,b=1}^r \met^{ab}Q_{a i} Q_{b j} = 0,
\qquad
1 \leq i, j \leq n.
\end{equation}
We can now give a presentation of $\GWH$.

\begin{Prop}
\label{prop:aQGenRel}
The Lie superalgebra $\GWH$ is isomorphic to the Lie superalgebra $\GW$ with even generators $\{Z_a \mid 1 \leq a \leq r\}$, odd generators $\{E_i,F_i \mid 1 \leq i \leq n\}$ and defining relations
\begin{equation}
\label{eq:cartanCommuteClass}
[Z_a,Z_b]=0,
\end{equation}
\begin{equation}
\label{eq:rootsClass}
[Z_a,E_i]= Q_{ai} E_i,
\qquad
[Z_a,F_i]= -Q_{ai} F_i,
\end{equation}
\begin{equation}
\label{eq:nilpotentClass}
[E_i,E_j]= [F_i,F_j]=0,
\end{equation}
\begin{equation}
\label{eq:oddOddClass}
[E_i, F_j] = \delta_{ij} \sum_{a, b=1}^r \met^{ab} Q_{a i} Z_b.
\end{equation}
\end{Prop}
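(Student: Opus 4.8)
The plan is to exhibit the tautological assignment on generators as a Lie superalgebra homomorphism $\GW \rightarrow \GWH$ and then check that it is bijective by a dimension count; no explicit manipulation of the super Jacobi identity is needed beyond what is already recorded above.

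First I would define a parity-preserving linear map on generators by $Z_a \mapsto Z_a$, $E_i \mapsto E_i$, $F_i \mapsto F_i$, viewing the targets as the chosen basis elements of $\GWH = \bos \oplus \Pi H$ with $H = T^{\vee} R$ (here $Z_a$ is even and $E_i, F_i$ are odd, so parities match the declared parities of the generators of $\GW$). To see this extends to a homomorphism $\phi: \GW \rightarrow \GWH$ it suffices to verify that the images satisfy the defining relations \eqref{eq:cartanCommuteClass}--\eqref{eq:oddOddClass}. These are exactly the bracket formulas on $\GWH$ recorded immediately before the statement: \eqref{eq:cartanCommuteClass} is commutativity of $\bos$; \eqref{eq:rootsClass} is the weight-space description of the $\bos$-action on $R \oplus R^{\vee}$; \eqref{eq:nilpotentClass} holds because the cotangent-bundle moment map $\mu$ is bilinear in the $R$- and $R^{\vee}$-directions, so its second derivative in purely $R$- or purely $R^{\vee}$-directions vanishes; and \eqref{eq:oddOddClass} is the identity $[E_i,F_j] = \delta_{ij}\met^{\vee}(Q_i,-)$ rewritten in the bases $\{Z_a\}$ of $\bos$ and $\{Z_a^{\vee}\}$ of $\bos^{\vee}$, using that the matrix of $\met^{\vee}$ is the inverse $(\met^{ab})$ of $(\met_{ab})$. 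Thus $\phi$ is a well-defined Lie superalgebra homomorphism (recall that $\GW$, being a quotient of a free Lie superalgebra, is automatically a Lie superalgebra), and it is surjective since the images of the generators already span $\GWH$.

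For injectivity I would argue that $\dim_{\C} \GW \le r + 2n = \dim_{\C} \GWH$. Indeed, relations \eqref{eq:cartanCommuteClass}--\eqref{eq:oddOddClass} show that the $\C$-span of $\{Z_a, E_i, F_i\}$ inside $\GW$ is closed under the bracket: even--even brackets vanish, even--odd brackets land in the span of the $E_i$'s and $F_i$'s, and odd--odd brackets are either zero or a linear combination of the $Z_b$'s. This span is therefore a sub-superalgebra of $\GW$ containing all the generators, hence equals $\GW$, giving the desired bound. Combined with surjectivity of $\phi$, this forces $\phi$ to be an isomorphism. (Alternatively one could write down the inverse directly on the spanning set $\{Z_a, E_i, F_i\}$ of $\GWH$ and check it is a homomorphism using the same bracket formulas, but the dimension argument avoids re-deriving them.) There is no genuine obstacle here; the only point requiring care is the bookkeeping in \eqref{eq:oddOddClass}, i.e.\ the identification of $\met^{\vee}(Q_i,-) \in \bos$ with $\sum_{a,b} \met^{ab} Q_{ai} Z_b$ via the inverse metric matrix, and the Fundamental Identity \eqref{eq:fundIdenAbs} enters only indirectly, through the fact established in Section \ref{sec:GWLieSuper} that the target $\GWH$ is a Lie superalgebra at all.
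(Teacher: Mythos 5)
Your proposal is correct and follows essentially the same route as the paper: the substance in both is the verification that the Darboux-basis brackets of $\GWH$ realize relations \eqref{eq:cartanCommuteClass}--\eqref{eq:oddOddClass}, with \eqref{eq:nilpotentClass} and \eqref{eq:oddOddClass} coming from the explicit quadratic moment map $\mu=\sum_{k,a}Q_{ak}E_k^{\vee}F_k^{\vee}Z_a^{\vee}$ (your bilinearity observation is exactly this). The surjectivity-plus-dimension-count step you spell out is left implicit in the paper but is the standard and correct way to upgrade the resulting surjection to an isomorphism.
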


\begin{proof}
Only relations \eqref{eq:nilpotentClass} and \eqref{eq:oddOddClass} are non-obvious. These follow from the observation that, in the chosen Darboux basis, a moment map $\mu: H \rightarrow \bos^{\vee}$ is given by
\[
\mu = \sum_{k=1}^n \sum_{a=1}^r Q_{ak} E^{\vee}_k F^{\vee}_k Z^{\vee}_a,
\]
where $E^{\vee}_k$ and $F^{\vee}_k$ are seen as coordinates on $H$ and $Z^{\vee}_a$ as an element of $\bos^{\vee}$.
\end{proof}

As a consequence of Proposition \ref{prop:aQGenRel}, if $R_1$ and $R_2$ are two polarizations of $H$, then $\bos_{R_1} \simeq \bos_{R_2}$ as Lie superalgebras.

We begin with two degenerate examples.

\begin{Ex}
\label{ex:abelianSpecialization}
If $n =0$, so that $R$ is the zero representation, then $\GW=\bos$ is simply a metric abelian Lie algebra.
\end{Ex}

\begin{Ex}
\label{ex:pslSpecialization}
Let $\psloo$ be the projective special linear Lie superalgebra of $\C^{1 \vert 1}$, equivalently, the purely odd (abelian) Lie superalgebra of dimension two. If $r=0$, so that $\bos$ is the zero Lie algebra, then $\GW$ is isomorphic to $\psloo^{\oplus n}$ with its standard symplectic form.
\end{Ex}

\begin{Ex}
\label{ex:gloo}
Let $\gloo$ be the super vector space $\End_{\C}(\C^{1 \vert 1})$ with supercommutator as the Lie bracket. Let $r=2$ and $n=1$ with
\[
\met= \left(\begin{matrix} 0 & 1 \\ 1 & 0 \end{matrix}\right),
\qquad
Q= \left(\begin{matrix} 1 \\ 0 \end{matrix} \right).
\]
The assignments
\[
N = \left(\begin{matrix} 1 & 0 \\ 0 & 0 \end{matrix} \right) \mapsto Z_1,
\qquad
E = \left(\begin{matrix} 1 & 0 \\ 0 & 1 \end{matrix} \right) \mapsto Z_2,
\qquad
\psi^+ = \left(\begin{matrix} 0 & 1 \\ 0 & 0 \end{matrix} \right) \mapsto E_1,
\qquad
\psi^- = \left(\begin{matrix} 0 & 0 \\ 1 & 0 \end{matrix} \right) \mapsto F_1 
\]
extend to a Lie superalgebra isomorphism $\gloo \xrightarrow[]{\sim} \GW$. The induced bilinear form $(-,-)$ on $\gloo$ is that associated to $\str_{\C^{1 \vert 1}}$, the supertrace in the fundamental representation, and is determined by
\[
(N,E)=(E,N) = (\psi^+,\psi^-)=-(\psi^-,\psi^+) =1,
\]
with all other pairings of generators vanishing.
\end{Ex}

\begin{Ex}
\label{ex:gaugedAbHypers}
Let $\bosSm$ be a complex abelian Lie algebra of dimension $s$ and $R$ a representation of $\bosSm$ of dimension $n$ with weights $Q^{(\bosSm)}_1, \dots, Q^{(\bosSm)}_n \in \bosSm^{\vee}$. Let $\bos$ be the abelian Lie algebra $\bosSm \oplus \bosSm^{\vee}$ with metric $\met = \left(\begin{smallmatrix} 0 & \id_{\bosSm^{\vee}} \\ \can_{\bosSm} & 0 \end{smallmatrix} \right)$. View $T^{\vee} R$ as a symplectic representation of $\bos$ on which $\bosSm^{\vee}$ acts trivially. The root matrix of $R$ is $Q=\left( \begin{smallmatrix} Q^{(\bosSm)} \\ 0 \end{smallmatrix} \right)$. Since $\met$ is hyperbolic and $Q$ is concentrated in the top block, equation \eqref{eq:fundIdenAbs} imposes no constraints on $R$. With the additional assumption that $Q^{(\bosSm)}$ has integer entries, the Lie superalgebra $\GW$ appears in the context of three dimensional $\mathcal{N}=4$ abelian gauged hypermultiplets \cite[\S 4.2]{ballin2023}. When $s=n=1$ and $Q^{(\bosSm)}=(1)$, we again recover $\GW \simeq \gloo$ with its standard metric.
\end{Ex}

\begin{Lem}
\label{lem:centralElements}
For each $1 \leq i \leq n$, the element $\mathcal{Z}_i:=\sum_{a,b=1}^r \met^{ab} Q_{ai} Z_b \in \GW$ is central.
\end{Lem}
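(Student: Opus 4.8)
The plan is to verify directly from the presentation of $\GW$ in Proposition \ref{prop:aQGenRel} that $\mathcal{Z}_i$ graded-commutes with each generator. Since $\mathcal{Z}_i = \sum_{a,b} \met^{ab} Q_{ai} Z_b$ lies in the even (Cartan) part $\bos$, it suffices to check the bracket against the four types of generators $Z_c$, $E_j$, $F_j$.

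First, $[\mathcal{Z}_i, Z_c] = \sum_{a,b} \met^{ab} Q_{ai} [Z_b, Z_c] = 0$ by the abelianness relation \eqref{eq:cartanCommuteClass}, so $\mathcal{Z}_i$ is central in $\bos$. Next, using \eqref{eq:rootsClass} and bilinearity of the bracket,
\[
[\mathcal{Z}_i, E_j] = \sum_{a,b=1}^r \met^{ab} Q_{ai} [Z_b, E_j] = \sum_{a,b=1}^r \met^{ab} Q_{ai} Q_{bj} E_j = \Big( \sum_{a,b=1}^r \met^{ab} Q_{ai} Q_{bj} \Big) E_j.
\]
The scalar in parentheses vanishes by the Fundamental Identity in the form \eqref{eq:fundIden}, so $[\mathcal{Z}_i, E_j] = 0$ for all $j$. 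The computation for $F_j$ is identical up to an overall sign coming from the $-Q_{bj}$ in \eqref{eq:rootsClass}, so $[\mathcal{Z}_i, F_j] = -\big( \sum_{a,b} \met^{ab} Q_{ai} Q_{bj}\big) F_j = 0$ again by \eqref{eq:fundIden}.

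Since $\GW$ is generated by the $Z_c$, $E_j$, $F_j$ and $\mathcal{Z}_i$ graded-commutes with all of them, it graded-commutes with all of $\GW$, i.e. $\mathcal{Z}_i$ is central. There is no real obstacle here: the only substantive input is the Fundamental Identity \eqref{eq:fundIden}, which is exactly the statement that the relevant scalars $\sum_{a,b}\met^{ab}Q_{ai}Q_{bj}$ vanish; everything else is immediate from the defining relations. (In fact the argument shows slightly more: the $\mathcal{Z}_i$ span a subspace of the center, and one could note in passing that this is the image under $\met^{\sharp}$ of the span of the roots.)
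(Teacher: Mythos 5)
Your proof is correct and follows the same route as the paper: compute $[\mathcal{Z}_i,E_j]$ and $[\mathcal{Z}_i,F_j]$ via the presentation of Proposition \ref{prop:aQGenRel} and observe that both vanish by the Fundamental Identity \eqref{eq:fundIden}. The only difference is that you also write out the (trivial) check against the $Z_c$, which the paper leaves implicit since $\bos$ is abelian.
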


\begin{proof}
We have
\[
[\mathcal{Z}_i,E_j] = \sum_{a,b=1}^r \met^{ab} Q_{ai} Q_{bj} E_j,
\]
which vanishes due to equation \eqref{eq:fundIden}. The bracket $[\mathcal{Z}_i,F_j]$ vanishes for the same reason.
\end{proof}

\begin{Lem}
\label{lem:removeColumn}
Suppose that $R \simeq R^{\prime} \oplus \C$, where $\C$ is the trivial representation of $\bos$. Then $\GW \simeq \bos_{R^{\prime}} \oplus \psloo$ as metric Lie superalgebras.
\end{Lem}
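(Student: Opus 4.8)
The plan is to build an explicit isomorphism using the presentation of $\GW$ from Proposition \ref{prop:aQGenRel}. First I would use the hypothesis $R \simeq R^{\prime} \oplus \C$ to reorder the roots so that the trivial summand is carried by the last pair of odd generators, i.e. $Q_n = 0$; then $Q_1, \dots, Q_{n-1}$ are exactly the roots of $R^{\prime}$, and since they form a sub-collection of the roots of $R$ they automatically satisfy the Fundamental Identity \eqref{eq:fundIden}, so that $\bos_{R^{\prime}}$ is a legitimate Gaiotto--Witten Lie superalgebra for the metric Lie algebra $(\bos,\met)$.

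Next I would read off from relations \eqref{eq:cartanCommuteClass}--\eqref{eq:oddOddClass} with $Q_{an} = 0$ that $E_n$ and $F_n$ are central in $\GW$: their brackets with every $Z_a$, every $E_j$ and every $F_j$ vanish, including $[E_n,F_n] = \sum_{a,b} \met^{ab} Q_{an} Z_b = 0$. Hence $\mathfrak{b} := \C E_n \oplus \C F_n$ is a purely odd, two-dimensional central ideal, i.e. a copy of $\psloo$ as in Example \ref{ex:pslSpecialization}, and the subspace $\mathfrak{a}$ spanned by $\{Z_a \mid 1 \le a \le r\} \cup \{E_i,F_i \mid 1 \le i \le n-1\}$ is a Lie sub-superalgebra with all cross-brackets $[\mathfrak{a},\mathfrak{b}]$ vanishing, so that $\GW = \mathfrak{a} \oplus \mathfrak{b}$ as Lie superalgebras. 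Comparing the relations satisfied by the generators of $\mathfrak{a}$ with the defining presentation of $\bos_{R^{\prime}}$ gives a surjection $\bos_{R^{\prime}} \twoheadrightarrow \mathfrak{a}$, which is an isomorphism by the dimension count $\dim_\C \bos_{R^{\prime}} = r + 2(n-1) = \dim_\C \GW - 2 = \dim_\C \mathfrak{a}$.

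It remains to check that the decomposition $\GW = \mathfrak{a} \oplus \mathfrak{b}$ is orthogonal for the invariant form $(-,-)$ and that the induced forms are the stated ones. This is immediate from how $(-,-)$ is assembled: it restricts to $\met$ on $\bos$, to $\omega$ on $\Pi H$, and pairs $\bos$ trivially with $\Pi H$. Writing $H = T^{\vee} R^{\prime} \oplus T^{\vee}\C$ with $\omega$ block-diagonal in the Darboux basis, $\mathfrak{b} = \Pi T^{\vee}\C$ is $\omega$-orthogonal to $\bos \oplus \Pi T^{\vee} R^{\prime} = \mathfrak{a}$; the form on $\mathfrak{a}$ is precisely the metric defining $\bos_{R^{\prime}}$, and the form on $\mathfrak{b}$ is the standard symplectic pairing $\omega(E_n,F_n)=1$, which is the standard metric on $\psloo$.

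I do not expect a genuine obstacle here; the only steps requiring care are confirming that the sub-collection of roots inherits the Fundamental Identity (so that $\bos_{R^{\prime}}$ makes sense) and the bookkeeping for the metric, namely orthogonality of the two summands and matching of normalizations. Both are routine given Proposition \ref{prop:aQGenRel} and the explicit Darboux description of $(-,-)$.
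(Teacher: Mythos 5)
Your proof is correct and follows essentially the same route as the paper's (the paper's proof is a one-liner citing Proposition \ref{prop:aQGenRel} and Example \ref{ex:pslSpecialization}, which is exactly the argument you spell out: reorder so $Q_n=0$, read off from the presentation that $E_n,F_n$ span a central odd ideal isomorphic to $\psloo$, and identify the complementary subalgebra with $\bos_{R^{\prime}}$). Your additional care with the Fundamental Identity for the sub-collection of roots and with the orthogonality of the metric decomposition fills in details the paper leaves implicit, but introduces no new ideas.
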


\begin{proof}
The assumption $R \simeq R^{\prime}\oplus \C$ translates to the vanishing $Q_i =0$ for some $1 \leq i \leq n$. With this observation, the lemma follows from Proposition \ref{prop:aQGenRel} and Example \ref{ex:pslSpecialization}.
\end{proof}

\begin{Lem}
\label{lem:subAlg}
Let $1 \leq a \leq r$ and $1 \leq i \leq n$ be such that $Q_{ai} \neq 0$. Define
\[
Z_a^{\prime} = \frac{1}{Q_{ai}} Z_a - \frac{\met_{aa}}{2 Q_{ai}^2} \mathcal{Z}_i.
\]
Keeping the notation of Example \ref{ex:gloo}, the assignment $\{N, E, \psi^+, \psi^-\} \mapsto \{Z_a^{\prime}, \mathcal{Z}_i, E_i, F_i\}$ extends to a Lie superalgebra isometry
\[
\gloo \simeq \span_{\C} \{Z_a^{\prime}, \mathcal{Z}_i, E_i, F_i\} \subset \GW,
\]
where $\gloo$ is given the metric $\str_{\C^{1 \vert 1}}$.
\end{Lem}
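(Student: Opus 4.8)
The plan is to verify the statement by direct computation, using the presentation of $\GW$ in Proposition \ref{prop:aQGenRel}, the centrality of $\mathcal{Z}_i$ from Lemma \ref{lem:centralElements}, and the Fundamental Identity \eqref{eq:fundIdenAbs}. Under the correspondence $N \leftrightarrow Z_a^{\prime}$, $E \leftrightarrow \mathcal{Z}_i$, $\psi^+ \leftrightarrow E_i$, $\psi^- \leftrightarrow F_i$ I would check three things in turn: (i) the defining brackets of $\gloo$ recorded in Example \ref{ex:gloo} hold among $Z_a^{\prime},\mathcal{Z}_i,E_i,F_i$; (ii) these four elements are linearly independent; (iii) the induced pairings reproduce the metric of Example \ref{ex:gloo}.

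For (i), centrality of $\mathcal{Z}_i$ gives $[\mathcal{Z}_i,-]=0$ on all four elements, matching the centrality of $E$. Expanding $Z_a^{\prime}=\tfrac{1}{Q_{ai}}Z_a-\tfrac{\met_{aa}}{2Q_{ai}^2}\mathcal{Z}_i$ and using \eqref{eq:rootsClass}, the $\mathcal{Z}_i$-summand drops out by centrality and the factor $Q_{ai}$ cancels, so $[Z_a^{\prime},E_i]=E_i$ and $[Z_a^{\prime},F_i]=-F_i$; relations \eqref{eq:nilpotentClass} and \eqref{eq:oddOddClass} give $[E_i,E_i]=[F_i,F_i]=0$ and $[E_i,F_i]=\mathcal{Z}_i$. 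These are exactly the relations of $\gloo$, so $\span_{\C}\{Z_a^{\prime},\mathcal{Z}_i,E_i,F_i\}$ is closed under the bracket and the assignment extends to a surjective Lie superalgebra map $\gloo\twoheadrightarrow\span_{\C}\{Z_a^{\prime},\mathcal{Z}_i,E_i,F_i\}$.

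For (ii) and (iii) I would compute the pairings, recalling that $(-,-)$ restricts to $\met$ on $\bos$ and to $\omega$ on $\Pi T^{\vee}R$, so all mixed even--odd pairings vanish. Since $\mathcal{Z}_i=\met^{\sharp}(Q_i)$ one has $(Z_a,\mathcal{Z}_i)=Q_i(Z_a)=Q_{ai}$ and $(\mathcal{Z}_i,\mathcal{Z}_i)=\met^{\vee}(Q_i,Q_i)=0$ by the Fundamental Identity; hence $(Z_a^{\prime},\mathcal{Z}_i)=1$, and using also $(Z_a,Z_a)=\met_{aa}$ a short expansion gives $(Z_a^{\prime},Z_a^{\prime})=\tfrac{\met_{aa}}{Q_{ai}^2}-\tfrac{\met_{aa}}{Q_{ai}^2}=0$ — this last identity is precisely what forces the correction term in the definition of $Z_a^{\prime}$. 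Because $(Z_a^{\prime},\mathcal{Z}_i)=1\neq 0=(\mathcal{Z}_i,\mathcal{Z}_i)$ the even elements $Z_a^{\prime},\mathcal{Z}_i$ are independent and span a hyperbolic plane, and together with the odd (hence parity-distinct) and linearly independent $E_i,F_i$ (part of a Darboux basis of $\Pi T^{\vee}R$) this gives (ii), so the map of (i) is an isomorphism. Finally $(E_i,F_i)=\omega_{ii}=1=-(F_i,E_i)$ and $(E_i,E_i)=(F_i,F_i)=0$, so all pairings agree with those of Example \ref{ex:gloo} and the isomorphism is an isometry.

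I do not expect a serious obstacle: once Proposition \ref{prop:aQGenRel}, Lemma \ref{lem:centralElements} and the Fundamental Identity are in hand, this is bookkeeping. The one point deserving attention is the role of the Fundamental Identity, which enters through $(\mathcal{Z}_i,\mathcal{Z}_i)=\met^{\vee}(Q_i,Q_i)=0$: first to make $Z_a^{\prime}$ a null vector, so the correction term in its definition does its job, and second to match the degenerate pairing $(E,E)=0$ on the $\gloo$ side.
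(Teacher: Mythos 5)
Your proof is correct and follows essentially the same route as the paper: the paper's (very terse) proof just records the three key pairings $\met(\tfrac{1}{Q_{ai}}Z_a,\tfrac{1}{Q_{ai}}Z_a)=\tfrac{\met_{aa}}{Q_{ai}^2}$, $\met(\tfrac{1}{Q_{ai}}Z_a,\mathcal{Z}_i)=1$ and $\met(\mathcal{Z}_i,\mathcal{Z}_j)=0$ (the last by the Fundamental Identity), which are exactly the computations you carry out, together with the routine bracket and independence checks you make explicit. No gaps.
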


\begin{proof}
The lemma follows from the equalities
\[
\met(\frac{1}{Q_{ai}} Z_a,\frac{1}{Q_{ai}} Z_a) = \frac{\met_{aa}}{Q_{ai}^2},
\qquad
\met(\frac{1}{Q_{ai}} Z_a,\mathcal{Z}_i) = 1,
\qquad
\met(\mathcal{Z}_i,\mathcal{Z}_j) = 0,
\]
the last of which follows from equation \eqref{eq:fundIden}. In particular, the second equality implies that $\mathcal{Z}_i$ is non-zero.
\end{proof}

\subsection{Effective metrics}
\label{sec:effMet}

Recall that in compact Chern--Simons theory, the level $\met$ receives quantum corrections, leading to the famous dual Coxeter shift \cite[\S 2]{witten1989}. Similarly, the level $\met$ receives one-loop quantum corrections in B-twisted Gaiotto--Witten theory \cite[\S 7]{costello2023}, \cite[\S 3]{garner2023}. The resulting effective metric $\emet$ will play an important role in the construction of relative modular categories in Section \ref{sec:cptBosonic}.

The effective, or quantum corrected, metric on $\bos$ is defined by
\[
\emet = \met + \sum_{i=1}^n Q_i \otimes Q_i.
\]
Using equation \eqref{eq:fundIdenAbs}, we verify that $\emet$ is indeed non-degenerate. It follows that $\emet$ induces a metric $\emet^{\vee}$ on $\bos^{\vee}$. Explicitly, we have
\[
\emet^{\vee} = \met^{\vee} - \sum_{i=1}^n \met^{\sharp}(Q_i) \otimes \met^{\sharp}(Q_i).
\]

Given $\lambda \in \bos^{\vee}$ and $1 \leq i \leq n$, let
\[
\chi_i(\lambda) = \met^{\vee}(Q_i,\lambda).
\]
Equation \eqref{eq:fundIdenAbs} implies that $\emet^{\vee}(Q_i,\lambda) = \chi_i(\lambda)$. With this notation, we have
\begin{equation*}
\label{eq:effMetChi}
\emet^{\vee}(\lambda, \mu)
=
\met^{\vee}(\lambda, \mu) - \sum_{i=1}^n \chi_i(\lambda) \chi_i(\mu),
\qquad
\lambda, \mu \in \bos^{\vee}.
\end{equation*}

\subsection{Unrolled quantum abelian Gaiotto--Witten Lie superalgebras}
\label{sec:unrolledQuant}

Motivated by earlier unrolled quantizations of Lie (super)algebras \cite[\S 2]{costantino2015}, \cite[\S 3]{geer2018}, \cite[\S 2]{geerYoung2022}, in this section we define an unrolled quantization of $\GW$.

Fix a parameter $\hbar \in \C \setminus \pi \sqrt{-1} \Z$ and set $q=e^{\hbar} \in \C^{\times} \setminus \{\pm 1\}$. For $x \in \C$, define
\[
q^x = e^{\hbar x},
\qquad
[x]_q = \frac{q^x - q^{-x}}{q-q^{-1}}.
\]

In order to quantize $\GW$, we henceforth make the following assumption on the input data from Section \ref{sec:abGWClassical}.

\begin{Assump}
\label{assump:quantAssumptDualInt} For each $1 \leq a \leq r$ and $1 \leq i \leq n$, the quantity $\sum_{b=1}^r \met^{ab}Q_{bi}$ is an integer.
\end{Assump}

\begin{Def}
\label{def:unrolledGW}
The \emph{unrolled quantum group} $\Uq$ is the unital complex superalgebra with even generators $\{Z_a,K_a^{\pm 1} \mid 1 \leq a \leq r\}$, odd generators $\{E_i, F_i \mid 1 \leq i \leq n\}$ and defining relations 
\begin{equation}
\label{eq:KInvert}
K_a K_a^{-1} = K_a^{-1} K_a =1,
\end{equation}
\begin{equation}
\label{eq:cartanCommute}
[Z_a,Z_b]=[Z_a,K_b] =[K_a,K_b] =0,
\end{equation}
\begin{equation}
\label{eq:roots}
[Z_a,E_i]= Q_{ai} E_i,
\qquad
[Z_a,F_i]= -Q_{ai} F_i,
\end{equation}
\begin{equation}
\label{eq:quantumRoots}
K_a E_i = q^{Q_{ai}} E_i K_a,
\qquad
K_a F_i = q^{-Q_{ai}} F_i K_a,
\end{equation}
\begin{equation}
\label{eq:nilpotent}
[E_i,E_j] = [F_i,F_j]=0,
\end{equation}
\begin{equation}
\label{eq:oddOdd}
[E_i,F_j] = \delta_{ij} \frac{\prod_{a=1}^r K_a^{\sum_{b=1}^r\met^{ab} Q_{bi}} - \prod_{a=1}^r K_a^{-\sum_{b=1}^r\met^{ab} Q_{bi}}}{q-q^{-1}}.
\end{equation}
\end{Def}

For each $1 \leq i \leq n$, define
\begin{equation}
\label{eq:unrolledCentral}
\mathcal{K}_i = \prod_{a=1}^r K_a^{\sum_{b=1}^r\met^{ab} Q_{bi}} \in \Uq.
\end{equation}
With this notation, relation \eqref{eq:oddOdd} becomes $[E_i,F_j] = \delta_{ij} \frac{\mathcal{K}_i - \mathcal{K}_i^{-1}}{q-q^{-1}}$. Note also that relations \eqref{eq:nilpotent} imply the nilpotency of $E_i$ and $F_i$:
\[
E_i^2=F_i^2=0,
\qquad
1 \leq i \leq n.
\]

\begin{Rem}
\label{rem:weakenAssump}
Suppose that $n=0$. In this case, relations \eqref{eq:roots}-\eqref{eq:oddOdd} are not present and we may take $\hbar \in \pi \sqrt{-1} \Z$, and hence $q=-1$, in Definition \ref{def:unrolledGW}.
\end{Rem}

\begin{Lem}
\label{lem:centralK}
For each $1 \leq i \leq n$, the elements $\mathcal{Z}_i$ and $\mathcal{K}_i$ are central in $\Uq$.
\end{Lem}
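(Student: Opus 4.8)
The plan is to check directly that $\mathcal{Z}_i$ and $\mathcal{K}_i$ commute with each of the algebra generators $Z_c$, $K_c^{\pm 1}$, $E_j$, $F_j$ of $\Uq$; since these generate $\Uq$, centrality follows. The only non-routine input will be the Fundamental Identity in the form \eqref{eq:fundIden}, namely $\sum_{a,b=1}^r \met^{ab} Q_{ai} Q_{bj} = 0$, together with the symmetry $\met^{ab}=\met^{ba}$ of the inverse metric; everything else is bookkeeping.

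For $\mathcal{Z}_i = \sum_{a,b=1}^r \met^{ab} Q_{ai} Z_b$, commutation with $Z_c$ and $K_c^{\pm 1}$ is immediate from \eqref{eq:cartanCommute}. For $E_j$, relation \eqref{eq:roots} gives $[Z_b,E_j]=Q_{bj}E_j$, so $[\mathcal{Z}_i,E_j]=\big(\sum_{a,b=1}^r \met^{ab}Q_{ai}Q_{bj}\big)E_j$, which vanishes by \eqref{eq:fundIden}; the computation for $F_j$ is identical up to an overall sign. (This is the same argument as in Lemma \ref{lem:centralElements}, now carried out inside $\Uq$.) For $\mathcal{K}_i = \prod_{a=1}^r K_a^{m_{ai}}$ with $m_{ai}=\sum_{b=1}^r \met^{ab}Q_{bi}$, first note $m_{ai}\in\Z$ by Assumption \ref{assump:quantAssumptDualInt}, so the powers are well defined. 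Again \eqref{eq:cartanCommute} handles $Z_c$ and $K_c^{\pm 1}$. Iterating \eqref{eq:quantumRoots} gives $K_a^{m}E_j = q^{mQ_{aj}}E_j K_a^{m}$, so commuting $\mathcal{K}_i$ past $E_j$ produces the scalar $q^{\sum_{a=1}^r m_{ai}Q_{aj}} = q^{\sum_{a,b=1}^r \met^{ab}Q_{bi}Q_{aj}}$, whose exponent is $0$ by \eqref{eq:fundIden} (using $\met^{ab}=\met^{ba}$); the same argument with the reciprocal scalar handles $F_j$. Since each $K_a$ is invertible, $\mathcal{K}_i$ is an invertible central element.

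There is no genuine obstacle here: the proof is a short computation, and the one place where care is needed is tracking the exponent of $q$ when commuting $\mathcal{K}_i$ past the odd generators, where the vanishing is precisely the Fundamental Identity \eqref{eq:fundIden}.
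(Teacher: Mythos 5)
Your proof is correct and is essentially the paper's argument: the paper's proof is a one-line reference to the Fundamental Identity \eqref{eq:fundIden} via Lemma \ref{lem:centralElements}, and you have simply spelled out the generator-by-generator computation, including the (routine but worth noting) bookkeeping for the exponent of $q$ when commuting $\mathcal{K}_i$ past $E_j$ and $F_j$, where symmetry of $\met^{ab}$ is used.
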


\begin{proof}
This follows from equation \eqref{eq:fundIden}, as in the proof of Lemma \ref{lem:centralElements}.
\end{proof}

View $\Uq \otimes_{\C} \Uq$ as a superalgebra via the bilinear extension of the product
\[
(a_1 \otimes b_1) \cdot (a_2 \otimes b_2)
=
(-1)^{\p b_1 \p a_2} a_1 a_2 \otimes b_1 b_2,
\]
where $a_i, b_i \in \Uq$ are homogeneous.

\begin{Lem}
\label{lem:unrolledGWHopf}
The assignments
\[
\epsilon(Z_a)=0,
\qquad
\epsilon(K_a^{\pm 1})=1,
\qquad
\epsilon(E_i)=0,
\qquad
\epsilon(F_i)=0,
\]
\[
\Delta Z_a = Z_a \otimes 1 + 1 \otimes Z_a,
\qquad
\Delta (K_a^{\pm 1}) = K_a^{\pm 1} \otimes K_a^{\pm 1},
\]
\[
\Delta E_i = E_i \otimes \mathcal{K}_i^{-1} + 1 \otimes E_i,
\qquad
\Delta F_i = F_i \otimes 1 + \mathcal{K}_i \otimes F_i,
\]
\[
S(Z_a) = -Z_a,
\qquad
S(K_a^{\pm 1}) = K_a^{\mp 1},
\qquad
S(E_i) = -\mathcal{K}_i E_i,
\qquad
S(F_i) = - F_i \mathcal{K}_i^{-1}
\]
extend to a unique Hopf superalgebra structure on $\Uq$ with counit $\epsilon$, coproduct $\Delta$ and antipode $S$.
\end{Lem}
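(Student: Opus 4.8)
The plan is to verify that the stated formulas define a Hopf superalgebra structure by checking, in turn, that each map respects the defining relations of $\Uq$ from Definition \ref{def:unrolledGW} and then that the Hopf axioms hold. First I would establish that $\epsilon$, $\Delta$, and $S$ are well-defined algebra (resp. coalgebra-compatible) maps. For $\epsilon$ this is immediate: applying $\epsilon$ to each relation \eqref{eq:KInvert}--\eqref{eq:oddOdd} gives a trivially true identity in $\C$, since $\epsilon(\mathcal{K}_i)=1$ forces the right-hand side of \eqref{eq:oddOdd} to vanish. For $\Delta$, the main task is to check that $\Delta$ of each relation holds in $\Uq \otimes \Uq$ with the sign-twisted product. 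The relations \eqref{eq:KInvert}--\eqref{eq:quantumRoots} are routine since the $Z_a$ are primitive and the $K_a^{\pm1}$ grouplike, and $\Delta E_i$, $\Delta F_i$ are skew-primitive with respect to the grouplike $\mathcal{K}_i$ (here one uses Lemma \ref{lem:centralK}, centrality of $\mathcal{K}_i$, to move factors past each other and to compute $\Delta \mathcal{K}_i = \mathcal{K}_i \otimes \mathcal{K}_i$ from $\Delta K_a^{\pm1}$). The only genuinely substantive computation is verifying $\Delta$ of \eqref{eq:oddOdd}: one expands $\Delta(E_i)\Delta(F_j) - \Delta(F_j)\Delta(E_i)$ using the twisted multiplication, collects the cross terms using \eqref{eq:quantumRoots} (which yields $\mathcal{K}_i^{-1} F_j = q^{-\sum_b \met^{ab}Q_{bi}\cdot(\text{sign})} F_j \mathcal{K}_i^{-1}$, and crucially for $i=j$ the relation $\mathcal{K}_i E_i \mathcal{K}_i^{-1}$ introduces the factor $q^{2}$ that makes the telescoping work), and confirms the result equals $\delta_{ij}\big(\tfrac{\mathcal{K}_i - \mathcal{K}_i^{-1}}{q-q^{-1}} \otimes \mathcal{K}_i + \mathcal{K}_i \otimes \tfrac{\mathcal{K}_i - \mathcal{K}_i^{-1}}{q-q^{-1}}\big)$, which is precisely $\Delta$ applied to the right-hand side of \eqref{eq:oddOdd}. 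This last check is the main obstacle, though it is a standard quantum-group-style manipulation.

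Next I would check that $S$ is a (super)algebra anti-homomorphism respecting the relations: applying $S$ and reversing order, relations \eqref{eq:KInvert}--\eqref{eq:roots} are clear, \eqref{eq:quantumRoots} follows since $S(E_i)=-\mathcal{K}_i E_i$ and $S(K_a) = K_a^{-1}$ satisfy the same $q$-commutation, \eqref{eq:nilpotent} uses $E_i^2 = 0$ hence $(\mathcal{K}_i E_i)^2 = \mathcal{K}_i^2 E_i^2 \cdot q^{\pm} = 0$ (centrality again), and \eqref{eq:oddOdd} is verified by a short computation with $S(E_i)S(F_j) - (-1) S(F_j)S(E_i) = \mathcal{K}_i E_i F_j \mathcal{K}_j^{-1} + F_j \mathcal{K}_j^{-1}\mathcal{K}_i E_i$, which collapses using centrality of the $\mathcal{K}$'s and \eqref{eq:oddOdd} to $\delta_{ij}\tfrac{\mathcal{K}_i^{-1}-\mathcal{K}_i}{q-q^{-1}} = S\big(\delta_{ij}\tfrac{\mathcal{K}_i-\mathcal{K}_i^{-1}}{q-q^{-1}}\big)$. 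Finally I would confirm the Hopf axioms: coassociativity of $\Delta$ and counitality are verified on generators (for $E_i$: $(\Delta \otimes \id)\Delta E_i = E_i \otimes \mathcal{K}_i^{-1}\otimes \mathcal{K}_i^{-1} + 1\otimes E_i \otimes \mathcal{K}_i^{-1} + 1 \otimes 1 \otimes E_i = (\id\otimes\Delta)\Delta E_i$), and the antipode axiom $m(S\otimes\id)\Delta = \eta\epsilon = m(\id\otimes S)\Delta$ is checked on generators, e.g. $S(E_i)\mathcal{K}_i^{-1} + S(1)E_i = -\mathcal{K}_i E_i \mathcal{K}_i^{-1} + E_i = 0$ using \eqref{eq:quantumRoots} with $\mathcal{K}_i E_i \mathcal{K}_i^{-1} = q^{\sum_{a,b}\met^{ab}Q_{bi}Q_{ai}} E_i = q^0 E_i = E_i$ by the Fundamental Identity \eqref{eq:fundIden}. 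Since all structure maps are determined on generators and the relations are preserved, the Hopf superalgebra structure exists and is unique; by freeness of the generated superalgebra modulo the relations, this completes the proof. Throughout, the recurring technical point — and the only place care is genuinely needed — is the bookkeeping of Koszul signs in the twisted multiplication on $\Uq \otimes \Uq$ together with the repeated use of Lemma \ref{lem:centralK} and the Fundamental Identity to kill unwanted $q$-powers.
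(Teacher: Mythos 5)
Your proposal is correct and follows essentially the same route as the paper: check that $\epsilon$, $\Delta$ and $S$ preserve the defining relations (with relation \eqref{eq:oddOdd} the only substantive case) and verify the Hopf axioms on generators, using centrality of $\mathcal{K}_i$ throughout. One small correction: in the coproduct check of \eqref{eq:oddOdd} no factor $q^{2}$ appears — by the Fundamental Identity \eqref{eq:fundIden} the elements $\mathcal{K}_i$ commute with all $E_j$ and $F_j$ (Lemma \ref{lem:centralK}), so the cross terms cancel outright via the Koszul sign rather than by a $q$-telescoping as in $U_q(\mathfrak{sl}_2)$.
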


\begin{proof}
We need to prove that the defining relations of $\Uq$ are preserved by $\epsilon$, $\Delta$ and $S$. We only treat the most involved case, relation \eqref{eq:oddOdd}. The verification for the counit is immediate. For the coproduct, note that multiplicativity of $\Delta$ implies that $\Delta (\mathcal{K}_i^{\pm 1}) = \mathcal{K}_i^{\pm 1} \otimes \mathcal{K}_i^{\pm 1}$. Using this, we compute
\[
\Delta(E_i F_i)
=
E_i F_i \otimes \mathcal{K}_i^{-1} + \mathcal{K}_i E_i \otimes \mathcal{K}_i^{-1} F_i - F_i \otimes E_i + \mathcal{K}_i \otimes E_i F_i
\]
and
\[
\Delta(F_i E_i)
=F_i E_i \otimes \mathcal{K}_i^{-1} + F_i \otimes E_i - \mathcal{K}_i E_i \otimes \mathcal{K}_i^{-1} F_i + \mathcal{K}_i \otimes F_i E_i
\]
so that
\[
\Delta \big( [E_i,F_i] - \frac{\mathcal{K}_i- \mathcal{K}_i^{-1}}{q-q^{-1}} \big)
=
\big([E_i,F_i] + \frac{\mathcal{K}_i^{-1}}{q-q^{-1}} \big) \otimes \mathcal{K}_i^{-1}
+ \mathcal{K}_i \otimes \big( [E_i,F_i] - \frac{\mathcal{K}_i}{q-q^{-1}}\big).
\]
Subtract $\frac{\mathcal{K}_i \otimes \mathcal{K}_i^{-1}}{q-q^{-1}}$ from the first term and add it to the second to get
\[
\Delta \big( [E_i,F_i] - \frac{\mathcal{K}_i- \mathcal{K}_i^{-1}}{q-q^{-1}} \big)
=
\big( [E_i,F_i] - \frac{\mathcal{K}_i - \mathcal{K}_i^{-1}}{q-q^{-1}} \big) \otimes \mathcal{K}_i
+ \mathcal{K}_i \otimes \big( [E_i,F_i] - \frac{\mathcal{K}_i - \mathcal{K}_i^{-1}}{q-q^{-1}}\big),
\]
as required. For the antipode, after noting that the anti-homomorphism property of $S$ implies $S(\mathcal{K}^{\pm 1}_i)=\mathcal{K}_i^{\mp 1}$, we compute
\begin{eqnarray*}
S([E_i,F_i] - \frac{\mathcal{K}_i- \mathcal{K}_i^{-1}}{q-q^{-1}})
&=&
-S(F_i) S(E_i) - S(E_i) S(F_i) - \frac{S(\mathcal{K}_i)- S(\mathcal{K}_i^{-1})}{q-q^{-1}} \\
&=&
- \big( [E_i,F_i] - \frac{\mathcal{K}_i- \mathcal{K}_i^{-1}}{q-q^{-1}} \big) .
\end{eqnarray*}

That the above data satisfies the Hopf condition can also be verified directly. For example, writing $m$ and $\iota$ for multiplication and the unit of $\Uq$, respectively, we have
\[
(m \circ (\id \otimes S) \circ \Delta) (K_a)
=
m(K_a \otimes K_a^{-1})
=
1,
\]
which is equal to $\iota (\epsilon(K_a))$, and
\[
(m \circ (\id \otimes S) \circ \Delta) (E_i)
=
m(E_i \otimes \mathcal{K}_i - 1 \otimes \mathcal{K}_i E_i)
=
E_i \mathcal{K}_i - \mathcal{K}_i E_i
=
0,
\]
the last equality following from Lemma \ref{lem:centralK}, which is equal to $\iota (\epsilon(E_i))$.
\end{proof}

Let $I$ and $J$ be disjoint subsets of $\{1, \dots, n\}$. Denote by $\sgn(I,J) \in \{\pm 1\}$ the sign of the $(\vert I \vert, \vert J \vert)$-shuffle associated to the decomposition $I \sqcup J$. The characteristic function of $I$ is
\[
\delta_{l,I}
=
\begin{cases}
1 & \mbox{if } l \in I,\\
0 & \mbox{if } l \notin I,
\end{cases}
\qquad
1 \leq l \leq n.
\]
Writing $I = \{i_1 < \cdots < i_k\}$, set $E_I = E_{i_1} \cdots E_{i_k} \in \Uq$ and, analogously, $F_I$ and $\mathcal{K}_I$.

\begin{Lem}
\label{lem:basicReln}
Let $M \subset \{1, \dots, n\}$ and $1 \leq l \leq n$. The following equalities hold:
\begin{equation}
\label{eq:coprodEM}
\Delta(E_M)
=
\sum_{\substack{I, J \subset \{1, \dots, n\} \\ I \sqcup J = M}} \sgn(I,J) E_I \otimes E_J \mathcal{K}^{-1}_I.
\end{equation}
\begin{equation}
\label{eq:coprodFM}
\Delta(F_M)
=
\sum_{\substack{ I, J \subset \{1, \dots, n\} \\ I \sqcup J = M}} \sgn(I,J) \mathcal{K}_J F_I \otimes F_J.
\end{equation}
\begin{equation}
\label{eq:EwithE}
E_l E_M = (1 - \delta_{l,M}) \sgn(\{l\}, M) E_{M \sqcup l}.
\end{equation}
\begin{equation}
\label{eq:EwithFComm}
[E_l, F_M]
=
\delta_{l,M} \sgn(\{l\},M \setminus \{l\}) \frac{\mathcal{K}_l - \mathcal{K}_l^{-1}}{q-q^{-1}} F_{M \setminus l}.
\end{equation}
In equations \eqref{eq:coprodEM} and \eqref{eq:coprodFM}, the sums are over disjoint subsets of $I,J \subset \{1, \dots, n\}$ whose union is $M$.
\end{Lem}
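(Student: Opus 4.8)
The plan is to establish all four identities by direct computation from the Hopf structure of Lemma \ref{lem:unrolledGWHopf}, using only the nilpotency and anticommutativity of the odd generators ($E_i^2=0$ and $E_iE_j=-E_jE_i$ for $i\neq j$, and likewise for the $F_i$, both immediate from \eqref{eq:nilpotent}) together with the centrality of the $\mathcal{K}_i$ from Lemma \ref{lem:centralK}. I would treat \eqref{eq:EwithE} first, since it only concerns the subalgebra generated by the $E_i$; then deduce \eqref{eq:coprodEM} and \eqref{eq:coprodFM} from the coproducts on generators; and finally obtain \eqref{eq:EwithFComm} from the super Leibniz rule.

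\emph{Equation \eqref{eq:EwithE}.} If $l\in M$, then bringing $E_l$ into position (each transposition of distinct odd generators contributing a sign) produces a factor $E_l^2=0$, so both sides vanish, consistent with $1-\delta_{l,M}=0$. If $l\notin M$, sorting the product $E_lE_{i_1}\cdots E_{i_k}$ into increasing order produces exactly the shuffle sign $\sgn(\{l\},M)$ times $E_{M\sqcup l}$, which is the claim.

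\emph{Equations \eqref{eq:coprodEM} and \eqref{eq:coprodFM}.} Since $\Delta$ is an algebra homomorphism for the graded-tensor product on $\Uq\otimes_{\C}\Uq$, I would expand $\Delta(E_M)=\Delta(E_{i_1})\cdots\Delta(E_{i_k})$ using $\Delta E_i=E_i\otimes\mathcal{K}_i^{-1}+1\otimes E_i$. Each factor contributes either its ``left'' summand, putting its index into a set $I$, or its ``right'' summand, putting its index into $J=M\setminus I$. Because each $\mathcal{K}_i^{\pm1}$ is even and central (Lemma \ref{lem:centralK}), the $\mathcal{K}$-factors contribute no Koszul signs and may be collected at the far right of the second tensor leg as $\mathcal{K}_I^{-1}$; the only signs arising come from reordering the odd generators $E_i$ ($i\in I$) onto the first factor and $E_j$ ($j\in J$) onto the second, which is precisely $\sgn(I,J)$. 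Normalizing each block into increasing order via \eqref{eq:EwithE} yields \eqref{eq:coprodEM}. Equation \eqref{eq:coprodFM} is identical, starting from $\Delta F_i=F_i\otimes 1+\mathcal{K}_i\otimes F_i$; now the $\mathcal{K}$-factors accumulate on the left of the first leg as $\mathcal{K}_J$. I would double-check the base cases $|M|\leq 1$ against Lemma \ref{lem:unrolledGWHopf}, and, if the sign bookkeeping proves delicate, recast this step as an induction on $|M|$ using $E_M=E_lE_{M\setminus l}$ with $l=\min M$.

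\emph{Equation \eqref{eq:EwithFComm}.} Apply the super Leibniz rule $[a,bc]=[a,b]c+(-1)^{\bar a\bar b}b[a,c]$ repeatedly to $[E_l,F_{m_1}\cdots F_{m_k}]$. By \eqref{eq:oddOdd} the inner commutator $[E_l,F_{m_j}]$ vanishes unless $m_j=l$, in which case it is the even central element $\tfrac{\mathcal{K}_l-\mathcal{K}_l^{-1}}{q-q^{-1}}$. Hence $[E_l,F_M]=0$ when $l\notin M$, and when $l=m_p\in M$ only the $p$-th term survives; commuting the even central factor out past the $p-1$ odd generators $F_{m_1},\dots,F_{m_{p-1}}$ on its left costs the sign $(-1)^{p-1}=\sgn(\{l\},M\setminus l)$, since $l$ occupies position $p$ in the increasing enumeration of $M$. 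This is exactly \eqref{eq:EwithFComm}. None of these steps is genuinely hard; the only real care is needed in the systematic tracking of shuffle and Koszul signs in the middle step, which is where I would be most vigilant.
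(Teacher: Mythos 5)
Your proposal is correct in substance and follows exactly the route the paper gestures at: expand $\Delta(E_M)$ and $\Delta(F_M)$ multiplicatively from the coproducts on generators, collect the central $\mathcal{K}$'s, and use the nilpotency/anticommutativity of the odd generators together with the super Leibniz rule for the commutator identity. All four formulas come out right, and your computation of the shuffle sign from the graded tensor product is sound.

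One small but genuine slip in the explanation of \eqref{eq:EwithFComm}: you correctly quote the super Leibniz rule $[a,bc]=[a,b]c+(-1)^{\bar a\bar b}b[a,c]$, and iterating it already produces the Koszul sign $(-1)^{\bar E_l(\bar F_{m_1}+\cdots+\bar F_{m_{p-1}})}=(-1)^{p-1}$ in front of the surviving $p$-th summand. However, you then attribute the factor $(-1)^{p-1}$ to ``commuting the even central factor out past the $p-1$ odd generators''. That last step is in fact sign-free, since $\tfrac{\mathcal{K}_l-\mathcal{K}_l^{-1}}{q-q^{-1}}$ is even and central. The sign lives in the Leibniz rule itself, not in the final commutation; as written, a reader trying to reproduce your bookkeeping by applying an unsigned Leibniz rule and then picking up a sign from the central factor would get the wrong answer. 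The final formula you state is correct, but the accounting should credit the sign to the Koszul factor in the Leibniz expansion.
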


\begin{proof}
Equations \eqref{eq:coprodEM} and \eqref{eq:coprodFM} follow from repeated use of the definitions of $\Delta E_i$ and $\Delta F_i$ and centrality of $\mathcal{K}_i$ (Lemma \ref{lem:centralK}). Equations \eqref{eq:EwithE} and \eqref{eq:EwithFComm} follow from repeated application of relations \eqref{eq:nilpotent} and \eqref{eq:oddOdd}, respectively.
\end{proof}

There are obvious analogues of equations \eqref{eq:EwithE} and \eqref{eq:EwithFComm} with the roles of $E_i$ and $F_i$ reversed.

The following quantum counterpart of Lemma \ref{lem:removeColumn} can be proved directly from Definition \ref{def:unrolledGW}.

\begin{Lem}
\label{lem:removeColumnQuantum}
Suppose that $R \simeq R^{\prime} \oplus \C$, where $\C$ is the trivial representation of $\bos$. Then
\[
\Uq \simeq U^{\bos}_q(\bos_{R^{\prime}}) \otimes_{\C} U(\psloo)
\]
as Hopf superalgebras.
\end{Lem}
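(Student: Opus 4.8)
The plan is to leverage the presentation of Proposition~\ref{prop:aQGenRel}: the hypothesis $R\simeq R^{\prime}\oplus\C$ means, after relabelling, that one of the roots vanishes, say $Q_n=0$, i.e.\ $Q_{an}=0$ for all $1\leq a\leq r$. First I would record what this forces on the generators $E_n,F_n$. From \eqref{eq:unrolledCentral} one gets $\mathcal{K}_n=\prod_{a=1}^r K_a^{\sum_b\met^{ab}Q_{bn}}=1$, so \eqref{eq:oddOdd} degenerates to $[E_n,F_n]=0$, while \eqref{eq:roots}, \eqref{eq:quantumRoots}, \eqref{eq:nilpotent} and \eqref{eq:oddOdd} together show that $E_n$ and $F_n$ graded-commute with each of $Z_a$, $K_a^{\pm1}$ ($1\leq a\leq r$) and $E_i$, $F_i$ ($1\leq i\leq n-1$). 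Moreover, since $\mathcal{K}_n=1$, Lemma~\ref{lem:unrolledGWHopf} gives $\Delta E_n=E_n\otimes 1+1\otimes E_n$, $\Delta F_n=F_n\otimes 1+1\otimes F_n$, $\epsilon(E_n)=\epsilon(F_n)=0$, $S(E_n)=-E_n$ and $S(F_n)=-F_n$. Thus $E_n,F_n$ generate a sub-Hopf-superalgebra isomorphic to $U(\psloo)$, the exterior Hopf superalgebra on two odd primitive generators $\theta_1,\theta_2$; this is the quantum shadow of the $\psloo$ summand in Lemma~\ref{lem:removeColumn}.

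Next I would identify the complementary tensor factor. Applying Definition~\ref{def:unrolledGW} to the data $(\met,R^{\prime})$ makes sense, since $(\met,R^{\prime})$ satisfies both the Fundamental Identity \eqref{eq:fundIdenAbs} and Assumption~\ref{assump:quantAssumptDualInt}, being the restrictions to the first $n-1$ roots of the corresponding statements for $(\met,R)$. Comparing the resulting defining relations of $U^{\bos}_q(\bos_{R^{\prime}})$ with the relations \eqref{eq:KInvert}--\eqref{eq:oddOdd} among the generators $\{Z_a,K_a^{\pm1}\mid 1\leq a\leq r\}\cup\{E_i,F_i\mid 1\leq i\leq n-1\}$ of $\Uq$ shows that they coincide, yielding a superalgebra homomorphism $\alpha\colon U^{\bos}_q(\bos_{R^{\prime}})\to\Uq$ whose image $A$ is the subalgebra these generators generate. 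Because $A$ and the subalgebra $\langle E_n,F_n\rangle$ graded-commute, multiplication gives a superalgebra homomorphism
\[
\phi\colon\;U^{\bos}_q(\bos_{R^{\prime}})\otimes_{\C}U(\psloo)\longrightarrow\Uq.
\]
To show $\phi$ is bijective I would exhibit an explicit two-sided inverse $\psi$ defined on the generators of $\Uq$ by $Z_a\mapsto Z_a\otimes 1$, $K_a^{\pm1}\mapsto K_a^{\pm1}\otimes 1$, $E_i\mapsto E_i\otimes 1$, $F_i\mapsto F_i\otimes 1$ for $i<n$, and $E_n\mapsto 1\otimes\theta_1$, $F_n\mapsto 1\otimes\theta_2$. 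Checking that $\psi$ respects the defining relations of $\Uq$ is routine; essentially all of it is bookkeeping of Koszul signs in the product of the graded tensor product, the only genuinely new point being that $[1\otimes\theta_1,\,1\otimes\theta_2]=1\otimes[\theta_1,\theta_2]=0$ reproduces \eqref{eq:oddOdd} precisely because $\mathcal{K}_n$ is sent to $1\otimes 1$. One then checks $\phi\circ\psi$ and $\psi\circ\phi$ are the identity on generators, hence everywhere. (If instead one first establishes a triangular/PBW decomposition of $\Uq$, bijectivity of $\phi$ becomes a dimension count.)

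Finally I would upgrade $\phi$ to an isomorphism of Hopf superalgebras by verifying that it intertwines $\Delta$, $\epsilon$ and $S$. On the generators $E_i,F_i$ ($i<n$), $Z_a$ and $K_a^{\pm1}$ this is automatic, since $\mathcal{K}_i\in A$ for $i<n$ and the structure maps of Lemma~\ref{lem:unrolledGWHopf} restricted to $A$ are, by construction, those defining the Hopf structure on $U^{\bos}_q(\bos_{R^{\prime}})$; on $E_n,F_n$ it follows from the primitivity computation of the first paragraph together with the definition of the tensor product Hopf superalgebra (graded flip in the middle two factors). A surjective algebra map intertwining all structure maps on a generating set is a morphism of Hopf superalgebras, and since it is bijective it is an isomorphism. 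I expect the one point demanding care, beyond sign bookkeeping, to be the claim that $\alpha$ is injective---that the subalgebra $A\subseteq\Uq$ is a genuine copy of $U^{\bos}_q(\bos_{R^{\prime}})$ and not a proper quotient---which is exactly what the explicit inverse $\psi$ guarantees.
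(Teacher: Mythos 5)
Your proof is correct and is exactly the direct verification from Definition \ref{def:unrolledGW} that the paper asserts (without writing out) suffices: the vanishing root forces $\mathcal{K}_n=1$, making $E_n,F_n$ primitive odd elements that graded-commute with everything else, and the explicit mutually inverse maps $\phi,\psi$ settle both the algebra isomorphism and the injectivity of the embedding of $U_q^{\bos}(\bos_{R'})$. No gaps.
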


The quantum counterpart of Lemma \ref{lem:subAlg} is compatible with the Hopf structure of $\Uq$. Indeed, under the same assumptions, the assignment of Lemma \ref{lem:subAlg}, supplemented by $H \mapsto \mathcal{K}_i$, extends to a morphism of Hopf superalgebras $U_q^H(\gloo) \rightarrow \Uq$ which is an isomorphism onto its image. Here $U_q^H(\gloo) $ denotes the unrolled quantum group of $\gloo$, as defined in \cite[\S 2.2]{geerYoung2022}, with $H$ the grouplike element associated to the central generator $E$.

\subsection{Modified unrolled quantum groups}
\label{sec:modUnrolledQuant}

We introduce a modified version of the unrolled quantum group of Definition \ref{def:unrolledGW}. We continue to fix $\hbar \in \C \setminus \pi \sqrt{-1} \Z$ and set $q=e^{\hbar} \in \C^{\times} \setminus \{\pm 1\}$.

\begin{Def}
\label{def:modUnrolledGW}
The \emph{modified unrolled quantum group} $\Uqm$ is the unital complex superalgebra with even generators $\{Z_a,K_a^{\pm 1} \mid 1 \leq a \leq r\}$, odd generators $\{E_i, F_i \mid 1 \leq i \leq n\}$ and defining relations \eqref{eq:KInvert}-\eqref{eq:roots}, \eqref{eq:nilpotent} and \eqref{eq:oddOdd} and (replacing relations \eqref{eq:quantumRoots})
\begin{equation}
\label{eq:modQuantumRoots}
K_a E_i = q^{2Q_{ai}} E_i K_a,
\qquad
K_a F_i = q^{-2Q_{ai}} F_i K_a.
\end{equation}
\end{Def}

Continuing to define $\mathcal{K}_i$ by equation \eqref{eq:unrolledCentral}, all results from Section \ref{sec:unrolledQuant} hold for $\Uqm$ as stated. In particular, $\Uqm$ is a Hopf superalgebra with $\epsilon$, $\Delta$ and $S$ as in Lemma \ref{lem:unrolledGWHopf}.

\begin{Prop}
\label{prop:modifiedIso}
Assume that $q \neq \pm \sqrt{-1}$. The assignments
\[
Z_a \mapsto Z_a,
\qquad
K^{\pm 1}_a \mapsto K^{\pm 1}_a,
\qquad
E_i \mapsto E_i,
\qquad
F_i \mapsto [2]_q^{-1} F_i
\]
extend to a Hopf superalgebra isomorphism $U_{q^2}^{\bos}(\GW) \rightarrow \Uqm$.
\end{Prop}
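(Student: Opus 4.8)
The plan is to verify directly that the stated assignment respects the defining relations of $U_{q^2}^{\bos}(\GW)$, to exhibit an explicit two-sided inverse on generators, and finally to check compatibility with the Hopf structures.

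First I would unwind the presentation of $U_{q^2}^{\bos}(\GW)$: replacing $q$ by $q^2$ in Definition \ref{def:unrolledGW}, the quantum root relations \eqref{eq:quantumRoots} become $K_a E_i = q^{2Q_{ai}} E_i K_a$ and $K_a F_i = q^{-2Q_{ai}} F_i K_a$, which are precisely the relations \eqref{eq:modQuantumRoots} of $\Uqm$, and the central element $\mathcal{K}_i$ is given by the same monomial \eqref{eq:unrolledCentral} in both algebras. Consequently all relations involving only $Z_a$, $K_a^{\pm 1}$ and $E_i$ are preserved trivially, since the assignment is the identity on those generators. The relations \eqref{eq:roots}, \eqref{eq:nilpotent} and the $F$-half of the quantum root relations that involve $F_i$ are preserved because rescaling $F_i$ by the \emph{central} scalar $[2]_q^{-1}$ may be pulled through each of these identities.

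The only relation requiring a genuine computation is the analogue of \eqref{eq:oddOdd}. In $U_{q^2}^{\bos}(\GW)$ it reads $[E_i, F_j] = \delta_{ij}\frac{\mathcal{K}_i - \mathcal{K}_i^{-1}}{q^2 - q^{-2}}$. Applying the assignment and using relation \eqref{eq:oddOdd} of $\Uqm$, the left-hand side maps to $[E_i, [2]_q^{-1} F_j] = [2]_q^{-1}\,\delta_{ij}\frac{\mathcal{K}_i - \mathcal{K}_i^{-1}}{q - q^{-1}}$, and the elementary identity $q^2 - q^{-2} = [2]_q\,(q - q^{-1})$ shows this coincides with $\delta_{ij}\frac{\mathcal{K}_i - \mathcal{K}_i^{-1}}{q^2 - q^{-2}}$, the image of the right-hand side. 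Hence the assignment extends to an algebra homomorphism $\phi : U_{q^2}^{\bos}(\GW) \to \Uqm$. This is precisely where the hypothesis $q \neq \pm\sqrt{-1}$ enters: it guarantees $q^2 \notin \{\pm 1\}$, so that the source algebra is even defined, and it guarantees $[2]_q = q + q^{-1} \neq 0$, so that $[2]_q^{-1}$ makes sense and the opposite assignment $Z_a \mapsto Z_a$, $K_a^{\pm 1} \mapsto K_a^{\pm 1}$, $E_i \mapsto E_i$, $F_i \mapsto [2]_q F_i$ is likewise an algebra homomorphism $\Uqm \to U_{q^2}^{\bos}(\GW)$. The two compositions are the identity on generators, so $\phi$ is an isomorphism of superalgebras.

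It remains to check that $\phi$ intertwines the Hopf structures. By Section \ref{sec:modUnrolledQuant}, the counit, coproduct and antipode of $\Uqm$ are given by the formulas of Lemma \ref{lem:unrolledGWHopf}, and they agree on $Z_a$, $K_a^{\pm 1}$, $E_i$ with those of $U_{q^2}^{\bos}(\GW)$; since $\phi$ is the identity on these generators (and on $\mathcal{K}_i$), compatibility there is automatic. For $F_i$ one computes $\epsilon([2]_q^{-1} F_i) = 0$, $\Delta([2]_q^{-1}F_i) = [2]_q^{-1}F_i \otimes 1 + \mathcal{K}_i \otimes [2]_q^{-1}F_i = (\phi \otimes \phi)(\Delta F_i)$, and $S([2]_q^{-1}F_i) = -[2]_q^{-1}F_i \mathcal{K}_i^{-1} = \phi(S(F_i))$, again using that $[2]_q^{-1}$ is a central scalar. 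This completes the verification. There is no serious obstacle here; the only subtlety is the invertibility of $[2]_q$, which is exactly why $q = \pm\sqrt{-1}$ must be excluded, and everything else amounts to bookkeeping with the defining relations.
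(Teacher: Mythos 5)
Your proof is correct and is precisely the "direct calculation" that the paper's proof asserts without writing out: unwind the presentations, observe that only the $[E_i,F_j]$ relation requires the rescaling of $F_i$, use the identity $q^2-q^{-2}=[2]_q(q-q^{-1})$, and check compatibility with the (q-independent) coproduct and antipode formulas. The role of the hypothesis $q\neq\pm\sqrt{-1}$ --- ensuring both that $q^2\notin\{\pm 1\}$ and that $[2]_q$ is invertible --- is correctly identified.
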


\begin{proof}
This is a direct calculation.
\end{proof}

In particular, $\Uqm$ can be realized as $U^{\bos}_{q^{\prime}}(\GW)$ for some $q^{\prime} \in \C^{\times} \setminus \{\pm 1\}$ precisely when $q$ is not a primitive fourth root of unity. The TQFT constructions of this paper, given in Sections \ref{sec:cptBosonic} and \ref{sec:absRelMod}, are in terms of the superalgebras $\Uqmin$, hence our need for $\Uqm$.

To clarify the meaning of $\Uqm$, define the $h$-adic quantization $U_h(\GW)$ to be the unital topological $\C\pser{h}$-superalgebra with even generators $\{Z_a \mid 1 \leq i \leq r\}$, odd generators $\{E_i,F_i \mid 1 \leq i \leq n\}$ and defining relations \eqref{eq:cartanCommute}, \eqref{eq:roots}, \eqref{eq:nilpotent} and \eqref{eq:oddOdd}, where now $q=e^h$ and $K_a = e^{hZ_a}$, interpreted as power series in $h$. The classical limit of $U_h(\GW)$, obtained by working to order $h$, recovers the presentation of $\GW$ given in Proposition \ref{prop:aQGenRel}. Define $\widetilde{U}_h(\GW)$ similarly, where now $q = e^{\frac{h}{2}}$ and $K_a = q^{2 h Z_a} = e^{h Z_a}$. The classical limit of $\widetilde{U}_h(\GW)$ is the Lie superalgebra $\tilde{\bos}_R$ with even generators $\{Z_a \mid 1 \leq a \leq n\}$, odd generators $\{E_i, F_i \mid 1 \leq i \leq n\}$ and defining relations \eqref{eq:cartanCommuteClass}-\eqref{eq:nilpotentClass} and (replacing relation \eqref{eq:oddOddClass})
\[
[E_i, F_j] = 2\delta_{ij} \sum_{a, b=1}^r \met^{ab} Q_{a i} Z_b.
\]
In particular, the scaled basis $\{Z_a \mid 1 \leq a \leq r\} \sqcup \{E_i, \frac{1}{2} F_i \mid 1 \leq i \leq n\}$ of $\tilde{\bos}_R$ gives a presentation of $\GW$ as in Proposition \ref{prop:aQGenRel}, so that $\tilde{\bos}_R \simeq \GW$. In other words, $\Uqm$ is an unrolled quantization of a non-standard basis of $\GW$.

\section{The category of weight $\Uq$-modules}
\label{sec:weightMod}

We develop the representation theory of $\Uq$ and its modified variant $\Uqm$. Since all results for $\Uq$ carry over to $\Uqm$ with only minor changes, in Sections \ref{sec:basicDef}-\ref{sec:uniMod} we focus on $\Uq$ and explain the required modifications for $\Uqm$ in Section \ref{sec:moduleCatModUnrolled}.

\subsection{Weight $\Uq$-modules}
\label{sec:basicDef}

Let $V$ be a $\Uq$-module. A vector $v \in V$ is called a \emph{weight vector of weight $\lambda \in \bos^{\vee}$} if $Z v = \lambda(Z) v$ for all $Z \in \bos$. With respect to the fixed basis $\{Z_a \mid 1 \leq a \leq r\}$ of $\bos$, write the components of $\lambda$ as $\lambda_a$, $1 \leq a \leq r$. If $v$ is homogeneous, we often refer to the pair $(\lambda, \p v) \in \bos^{\vee} \times \Ztwo$ as the weight of $v$.

\begin{Def}
A \emph{weight $\Uq$-module} is a finite dimensional $\Uq$-module $V$ on which $\bos$ acts semisimply and $K_a v = q^{\lambda_a} v$ for all weight vectors $v$ of weight $\lambda$.
\end{Def}

Let $\cat_R$ be the category of weight $\Uq$-modules and their $\Uq$-linear maps of degree $\p 0$. The restriction to maps of degree $\p 0$ is crucial to the construction of a braiding on $\cat_R$ in Section \ref{sec:braiding}. The category $\cat_R$ is $\C$-linear, abelian and locally finite. The superbialgebra structure of $\Uq$ gives $\cat_R$ a monoidal structure with monoidal unit $\mathbb{I} = \C$ the trivial module.

Given $V \in \cat_R$ and $v \in V$ of weight $(\lambda, \p v) \in \bos^{\vee} \times \Ztwo$, relations \eqref{eq:roots} imply that $E_i v$ and $F_i v$ are of weight $(\lambda+Q_i, \p v + \p 1)$ and $(\lambda-Q_i, \p v + \p 1)$, respectively, while the operator equations $K_a=q^{Z_a}$ imply
\begin{equation}
\label{eq:unrolledCentralAct}
\mathcal{K}_i v = q^{\chi_i(\lambda)} v,
\qquad
1 \leq i \leq n.
\end{equation}
Call $v$ \emph{highest weight vector} (resp. \emph{lowest weight vector}) if $E_i v =0$ (resp. $F_i v=0$) for each $1 \leq i \leq n$.

\begin{Lem}
\label{lem:existHW}
Every non-zero object of $\cat_R$ has a homogeneous highest weight vector.
\end{Lem}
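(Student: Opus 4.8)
The plan is to exploit the finite dimensionality of $V$ together with the fact that the operators $E_i$ strictly shift $\bos$-weights by the roots $Q_i$, so that a suitable maximality argument produces a vector annihilated by all the $E_i$ simultaneously. First I would use that $\bos$ acts semisimply on $V$ (by definition of a weight module) to decompose $V = \bigoplus_{\lambda} V_\lambda$ into its finitely many nonzero weight spaces, and further refine each $V_\lambda$ into its even and odd parts, so that $V$ has a finite set $\mathrm{Wt}(V) \subset \bos^\vee$ of weights. By relations \eqref{eq:roots}, for a weight vector $v$ of weight $\lambda$ the vector $E_i v$ has weight $\lambda + Q_i$; hence $E_i$ maps $V_\lambda$ into $V_{\lambda + Q_i}$.

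The main step is to choose a weight $\lambda_0 \in \mathrm{Wt}(V)$ that is \emph{maximal} in an appropriate sense: since $\mathrm{Wt}(V)$ is finite, one can pick $\lambda_0$ so that $\lambda_0 + Q$ is not a weight of $V$ for any $Q$ lying in the sub-semigroup of $\bos^\vee$ generated by the $Q_i$ with $Q \neq 0$; concretely, choose a group homomorphism (or $\R$-linear functional) $\phi : \Lambda_R \to \R$ which is strictly positive on each nonzero $Q_i$ — or, if some $Q_i$ may vanish or the $Q_i$ need not be positively independent, argue instead via the finiteness of $\mathrm{Wt}(V)$ directly, picking $\lambda_0$ maximal along a maximal chain under the preorder $\mu \leq \mu'$ iff $\mu' - \mu$ is a nonnegative integer combination of the $Q_i$. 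Then for any $0 \neq v \in V_{\lambda_0}$ and any $i$, the vector $E_i v$ lies in $V_{\lambda_0 + Q_i}$; if $Q_i \neq 0$ this weight space is zero by maximality, forcing $E_i v = 0$. The case $Q_i = 0$ needs separate handling: there $E_i$ preserves $V_{\lambda_0}$, but $E_i^2 = 0$ (noted after Definition \ref{def:unrolledGW}) together with the commutativity $[E_i, E_j] = 0$ from \eqref{eq:nilpotent} means the $E_i$ with $Q_i = 0$ generate a finite-dimensional nilpotent (exterior) algebra acting on the finite-dimensional space $V_{\lambda_0}$, which therefore has a common nonzero kernel vector; replacing $v$ by such a vector (which still lies in $V_{\lambda_0}$, hence is killed by all $E_i$ with $Q_i \neq 0$ as well) yields a highest weight vector. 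Finally, decomposing this vector into homogeneous components and noting each component is again a weight vector killed by every $E_i$ (the $E_i$ being homogeneous of degree $\p 1$, a component of $E_i v = 0$ is $E_i$ applied to a homogeneous component of $v$), we obtain a \emph{homogeneous} highest weight vector.

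I expect the main obstacle to be the bookkeeping around roots $Q_i$ that vanish or fail to be positively linearly independent: the clean "pick a weight maximal along a chain in a partial order" argument works regardless, but one must be careful that the chain is finite (guaranteed by $|\mathrm{Wt}(V)| < \infty$) and that maximality is taken with respect to adding \emph{nonzero} elements of the monoid generated by the $Q_i$, so that the residual action of the zero-root $E_i$'s is dealt with by the nilpotency argument rather than the weight-shift argument. Everything else — semisimplicity of the $\bos$-action, the weight-shift relations \eqref{eq:roots}, $E_i^2 = 0$ and $[E_i,E_j]=0$ — is already available, so the proof is short once the maximal weight is correctly selected.
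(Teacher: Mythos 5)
Your primary route has a genuine gap at the ``maximal weight'' step. The preorder $\mu \leq \mu'$ iff $\mu'-\mu \in \sum_i \Z_{\geq 0} Q_i$ can have nontrivial cycles among \emph{distinct} weights whenever the nonzero roots satisfy a nonnegative integer relation, and such configurations are allowed by the Fundamental Identity: take $n=2$ with $Q_2=-Q_1\neq 0$ and $Q_1$ isotropic. For the Verma module $V_{(\lambda,\p 0)}$ the weight multiset is $\{\lambda,\ \lambda-Q_1,\ \lambda+Q_1,\ \lambda\}$, and every weight $\mu$ in it has $\mu+Q_1$ or $\mu+Q_2$ again a weight; moreover $E_2$ really does act nontrivially out of the ``top'' weight space $V_{\lambda+Q_1}$ (it sends $F_2v_\varnothing$ to a nonzero multiple of $v_\varnothing$ for typical $\lambda$). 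So there is no weight $\lambda_0$ killed by all the weight-shift arguments, all three weights are equivalent under your preorder, and ``maximal along a maximal chain'' selects nothing usable. A strictly positive functional $\phi$ on the nonzero $Q_i$ also fails to exist here, for the same reason. Note also that the lemma sits before Assumption \ref{assump:noZeroRoots}, so zero roots must indeed be allowed, as you anticipated.

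The repair is already contained in your own fallback: the nilpotency argument you reserve for the roots with $Q_i=0$ works uniformly for \emph{all} $i$ and makes the weight decomposition unnecessary. This is exactly the paper's proof: start from any non-zero homogeneous (weight) vector $v$; since $E_i^2=0$ and the $E_i$ anticommute by \eqref{eq:nilpotent}, the products $E_I$ are indexed by subsets $I\subset\{1,\dots,n\}$, so one may choose $I$ maximal with $E_Iv\neq 0$. Then for every $l$, either $l\in I$ and $E_lE_Iv=0$ by nilpotency, or $l\notin I$ and $E_lE_Iv=\pm E_{I\sqcup\{l\}}v=0$ by maximality; hence $E_Iv$ is a homogeneous highest weight vector (of weight $\lambda+Q_I$ and parity $\p v+\p I$). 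I recommend discarding the maximal-weight selection entirely and promoting the exterior-algebra argument to the whole proof.
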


\begin{proof}
Let $V \in \cat_R$ be non-zero and $v \in V$ a non-zero homogeneous vector. Because of the relations \eqref{eq:nilpotent}, there exists a unique maximal subset $I \subset \{1, \dots, n\}$ with the property that $E_I v \neq 0$. By maximality of $I$, the homogeneous vector $E_I v$ is highest weight.
\end{proof}

Given $V \in \cat_R$, define $V^{\vee} \in \cat_R$ to be the super vector space $\Hom_{\C}(V,\C)$ with $\Uq$-module structure
\[
(x \cdot f)(v) = (-1)^{\p f \p x} f(S(x)v),
\qquad
v \in V, \; f \in V^{\vee}, \; x \in \Uq.
\]
Let $\{v_i\}_i$ be a homogeneous basis of $V$ with dual basis $\{v_i^{\vee}\}_i$. Define
\begin{equation}\label{E:tcoev}
\tev_V(f \otimes v) = f(v),
\qquad
\tcoev_V(1)=\sum_i v_i \otimes v_i^{\vee}
\end{equation}
and
\begin{equation}\label{E:coev}
\ev_V(v \otimes f) = (-1)^{\p f \p v}f(\mathcal{K} v),
\qquad
\coev_V(1) =\sum_i (-1)^{\p v_i}v_i^{\vee} \otimes \mathcal{K}^{-1} v_i,
\end{equation}
where we have written $\mathcal{K}$ for $\mathcal{K}_{\{1, \dots, n\}}$.

\begin{Lem}\label{lem:catQPivot}
The maps \eqref{E:tcoev} and \eqref{E:coev} define a pivotal structure on $\cat_R$.
\end{Lem}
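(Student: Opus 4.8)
The claim is that the four maps defined in \eqref{E:tcoev}--\eqref{E:coev} make $\cat_R$ pivotal, so I need to check three things: (i) the maps are morphisms in $\cat_R$, i.e.\ $\Uq$-linear of degree $\p 0$; (ii) they satisfy the zig-zag (triangle) identities so that $(\tev_V,\tcoev_V)$ and $(\ev_V,\coev_V)$ exhibit $V^\vee$ as, respectively, a left and right dual of $V$; and (iii) the induced isomorphism $V \to V^{\vee\vee}$ is monoidal, i.e.\ the pivotal structure is a monoidal natural isomorphism $\id \Rightarrow (-)^{\vee\vee}$. Since the rigidity via $\tev,\tcoev$ is the standard duality coming from the antipode of a Hopf superalgebra, the content is concentrated in verifying that the \emph{second} pair of maps, built using the grouplike element $\mathcal K = \mathcal K_{\{1,\dots,n\}}$, is well defined and compatible.

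\textbf{Step 1: $\Uq$-linearity and parity of all four maps.} For $\tev_V$ and $\tcoev_V$ this is the classical computation: using $\Delta$, $S$ and $\varepsilon$ from Lemma \ref{lem:unrolledGWHopf}, one checks $x\cdot \tev_V(f\otimes v) = \tev_V(x\cdot(f\otimes v))$ and similarly for $\tcoev_V$, which amounts to $\sum (-1)^{\cdots} x_{(1)} S(x_{(2)}) = \varepsilon(x)$ and its counterpart — the Hopf axioms. For $\ev_V$ and $\coev_V$ the extra ingredient is that $\mathcal K$ is a central (Lemma \ref{lem:centralK}) grouplike element with $\Delta(\mathcal K)=\mathcal K\otimes\mathcal K$ and $S(\mathcal K)=\mathcal K^{-1}$, and that conjugation by $\mathcal K$ implements the square of the antipode: one verifies $S^2(x) = \mathcal K x \mathcal K^{-1}$ on generators using relations \eqref{eq:quantumRoots}, \eqref{eq:oddOdd} and the formulas for $S$ in Lemma \ref{lem:unrolledGWHopf} (e.g.\ $S^2(E_i) = S(-\mathcal K_i E_i) = -S(E_i)S(\mathcal K_i) = \mathcal K_i E_i \mathcal K_i^{-1} \cdot (\text{sign})$, and since $\mathcal K$ acts on a weight vector of weight $\lambda$ by $q^{2\sum_i \chi_i(\lambda)}$ and $\mathcal K_i$ is central, this matches $q^{2Q_i}$-conjugation correctly). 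Parity: $\mathcal K$ is even and the dual basis pairing preserves the stated degrees, so all four maps have degree $\p 0$; this is where the sign $(-1)^{\p f\p v}$ in \eqref{E:coev} is needed.

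\textbf{Step 2: the duality (zig-zag) identities.} For the left duality, $(\id_V\otimes\tev_V)\circ(\tcoev_V\otimes\id_V) = \id_V$ and $(\tev_V\otimes\id_{V^\vee})\circ(\id_{V^\vee}\otimes\tcoev_V)=\id_{V^\vee}$ follow immediately by plugging in dual bases, with the super-signs bookkept as in \cite{deligne1999}. For the right duality with $(\ev_V,\coev_V)$, the analogous bead-slide produces the composite $v \mapsto \sum_i (-1)^{\p v_i}\,\ev_V(v\otimes v_i^\vee)\,\mathcal K^{-1}v_i$ which telescopes to $\mathcal K^{-1}\mathcal K v = v$ using $\ev_V(v\otimes f)=(-1)^{\p f\p v}f(\mathcal K v)$; the signs are arranged precisely so the $\mathcal K$'s cancel. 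So $V^\vee$ is simultaneously a left and right dual.

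\textbf{Step 3: the pivotal (monoidal) condition.} Having both dualities, the standard recipe gives a natural isomorphism $\mathfrak p_V: V\to V^{\vee\vee}$, namely $\mathfrak p_V(v) = \big((\ev_V\otimes\id_{V^{\vee\vee}})\circ(\id\otimes\,??)\big)$; concretely $\mathfrak p_V(v)$ is the functional $g\mapsto (-1)^{\p v\p g} g(\mathcal K v)$ on $V^\vee$ — i.e.\ evaluation twisted by $\mathcal K$. One checks $\mathfrak p_V$ is $\Uq$-linear (again via $S^2 = \mathrm{Ad}_{\mathcal K}$), natural in $V$, and monoidal: $\mathfrak p_{V\otimes W} = \mathfrak p_V\otimes\mathfrak p_W$ under the canonical iso $(V\otimes W)^{\vee\vee}\simeq V^{\vee\vee}\otimes W^{\vee\vee}$, which reduces to $\Delta(\mathcal K) = \mathcal K\otimes\mathcal K$. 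This is the defining condition of a pivotal structure (\cite{etingof2015}).

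\textbf{Expected main obstacle.} None of the steps is deep, but the delicate point — and the one I would write out carefully — is Step 1 combined with the sign-tracking in Steps 2--3: verifying that $\ev_V,\coev_V$ really are $\Uq$-linear of degree $\p 0$ hinges on the identity $S^2(x) = \mathcal K x\mathcal K^{-1}$ holding in $\Uq$ (not just on $\cat_R$), and the Koszul signs $(-1)^{\p f\p v}$, $(-1)^{\p v_i}$ must be exactly those in \eqref{E:coev} for the zig-zag identities to close. Everything else is the formal fact that a Hopf superalgebra with a chosen grouplike $g$ satisfying $S^2 = \mathrm{Ad}_g$ makes its category of modules pivotal; so I would state that general principle and then just exhibit $g = \mathcal K$ and check $S^2 = \mathrm{Ad}_{\mathcal K}$ on the generators $Z_a, K_a^{\pm1}, E_i, F_i$.
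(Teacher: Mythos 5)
Your proposal is correct and takes essentially the same approach as the paper, but frames the key step more conceptually: you invoke the general Hopf-algebraic criterion that a grouplike element $g$ with $S^2 = \mathrm{Ad}_g$ yields a pivotal structure on the category of finite-dimensional modules, then exhibit $g = \mathcal K$. The paper instead performs a direct computation verifying $\Uq$-linearity of $\ev_V$ on the generators $E_i$ and $F_i$ and then states that the snake and pivotal conditions are omitted routine checks. Both lead to the same place.

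One imprecision worth flagging in your Step 1: since $\mathcal K = \mathcal K_1 \cdots \mathcal K_n$ is \emph{central} in $\Uq$ (Lemma \ref{lem:centralK}) and one computes directly that $S^2 = \id$ on generators (e.g.\ $S^2(E_i) = -S(E_i)S(\mathcal K_i) = \mathcal K_i E_i \mathcal K_i^{-1} = E_i$), the identity $S^2 = \mathrm{Ad}_{\mathcal K}$ is trivially satisfied --- both sides are the identity automorphism of $\Uq$. Your parenthetical remark about ``matching $q^{2Q_i}$-conjugation'' conflates conjugation by $\mathcal K$ in the algebra (trivial) with the \emph{action} of $\mathcal K$ on a weight vector (nontrivial scalar); the latter is what enters $\ev_V$, not the former. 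This does not invalidate the argument, but presenting $S^2 = \mathrm{Ad}_{\mathcal K}$ as a delicate verification is misleading. The genuine content is simply that $\mathcal K$ is a central grouplike (so the twisted evaluation/coevaluation are well-defined $\Uq$-module maps) and that the Koszul signs in \eqref{E:coev} are chosen so that the zig-zag identities close; the specific choice $\mathcal K$ among all central grouplikes is later justified by compatibility with the braiding in Theorem \ref{thm:ribbonCat}, not by the pivotal axioms alone.
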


\begin{proof}
We verify $\Uq$-linearity of $\ev_V$. It is immediate that $\ev_V$ commutes with the action of $Z_a$ and hence $K_a$. Using that $\mathcal{K}$ is central (Lemma \ref{lem:centralK}), we compute
\begin{eqnarray*}
\ev_V(E_i \cdot (v \otimes f))
&=&
\ev_V(E_i v \otimes \mathcal{K}_i^{-1} f + (-1)^{\p v} v \otimes E_i f) \\
&=&
(-1)^{(\p v +1) \p f} (\mathcal{K}_i^{-1} f )(\mathcal{K} E_i v) + (-1)^{\p v + \p v(\p f +1)}(E_i f)(\mathcal{K} v) \\
&=&
(-1)^{(\p v +1) \p f} f(\mathcal{K}_i \mathcal{K}E_i v) + (-1)^{(\p v+1)\p f}f(-\mathcal{K}_i \mathcal{K} E_i v) \\
&=&
0
\end{eqnarray*}
and
\begin{eqnarray*}
\ev_V(F_i \cdot (v \otimes f))
&=&
\ev_V(F_i v \otimes f + (-1)^{\p v} \mathcal{K}_i v \otimes F_i f) \\
&=&
(-1)^{(\p v +1) \p f} f(\mathcal{K} F_i v) + (-1)^{\p v + \p v(\p f +1)}(F_i f)(\mathcal{K} \mathcal{K}_i v) \\
&=&
(-1)^{(\p v +1) \p f} f(\mathcal{K} F_i v) + (-1)^{(\p v+1)\p f}f(-F_i \mathcal{K}_i^{-1} \mathcal{K} \mathcal{K}_i v) \\
&=&
0.
\end{eqnarray*}
We omit the straightforward verifications that the maps \eqref{E:tcoev} and \eqref{E:coev} satisfy the snake relations and pivotal conditions.
\end{proof}

\subsection{Verma and simple modules}
\label{sec:verma}

Given a subset $I \subset \{1, \dots, n\}$, write $Q_I \in \bos^{\vee}$ for the sum $\sum_{i \in I} Q_i$. 
 
Let $\UqN$ be the subalgebra of $\Uq$ generated by $\{Z_a, K_a^{\pm 1} \mid 1 \leq a \leq r\}$ and $\{E_i \mid 1 \leq i \leq n\}$. For each $(\lambda, \p p) \in \bos^{\vee} \times \Ztwo$, let $\C_{(\lambda, \p p)}$ be the vector space $\C$ concentrated in degree $\p p$ with $\UqN$-module structure
\[
Z_a \cdot 1 = \lambda_a,
\qquad
K_a \cdot 1 = q^{\lambda_a},
\qquad
E_i \cdot 1 =0.
\]

\begin{Def}
\label{def:Verma}
The \emph{Verma module of highest weight $(\lambda, \p p) \in \bos^{\vee} \times \Ztwo$} is
\[
V_{(\lambda, \p p)}
:=
\Uq \otimes_{\UqN} \C_{(\lambda, \p p)}.
\]
\end{Def}

The module $V_{(\lambda, \p p)}$ is generated by the highest weight vector $v_{\varnothing} = 1 \otimes 1$. The set $\{v_I := F_I v_{\varnothing} \mid I \subset \{1, \dots, n\} \}$ is a weight basis of $V_{(\lambda, \p p)}$ with $v_I$ having weight $(\lambda- Q_I, \p p + \p I)$, where we have written $\p I$ for $\overline{\vert I \vert}$. Using this basis, we verify that the quantum dimension of a Verma module vanishes:
\[
\qdim V_{(\lambda,\p p)}
=
\sum_I \ev_{V_{(\lambda,\p p)}}(v_I \otimes v_I^{\vee}) \\
=
(-1)^{\p p} q^{\sum_{i=1}^n \chi_i(\lambda)} \sum_I (-1)^{\p I}
=
0.
\]
Here and below, $\sum_I$ indicates a sum over the power set of $\{1, \dots, n\}$. We will see in Section \ref{sec:uniMod} below that, in many cases, there exists a modified trace on $\cat_R$ such that the modified dimension of a generic Verma module is non-zero.

Denote by $\lambda^{\vee} \in \bos^{\vee}$ the weight $-\lambda + Q_{\{1,\dots,n\}}$.

\begin{Lem}
\label{lem:dualVerma}
There is an isomorphism $V^{\vee}_{(\lambda, \p p)} \simeq V_{(\lambda^{\vee},\p p + \p n)}$ in $\cat_R$.
\end{Lem}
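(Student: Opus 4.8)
The plan is to exhibit an explicit highest weight vector inside $V^{\vee}_{(\lambda,\p p)}$ and then invoke the universal property of Verma modules together with a dimension count. Recall that $V_{(\lambda,\p p)}$ has weight basis $\{v_I = F_I v_{\varnothing} \mid I \subset \{1,\dots,n\}\}$ with $v_I$ of weight $(\lambda - Q_I, \p p + \p I)$, so the dual basis vector $v_M^{\vee} \in V^{\vee}_{(\lambda,\p p)}$ is homogeneous of weight $(-\lambda + Q_M, \p p + \p M)$. Taking $M = \{1,\dots,n\}$ gives a vector $w := v_{\{1,\dots,n\}}^{\vee}$ of weight $(\lambda^{\vee}, \p p + \p n)$, and the first step is to check that $w$ is highest weight, i.e.\ $E_i \cdot w = 0$ for all $i$.

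To verify this, evaluate on the basis: using $S(E_i) = -\mathcal{K}_i E_i$ and the definition of the dual action, $(E_i \cdot w)(v_I) = \pm\, w(\mathcal{K}_i E_i v_I)$ up to a Koszul sign. Since $E_i v_{\varnothing} = 0$, equation \eqref{eq:EwithFComm} shows $E_i v_I = E_i F_I v_{\varnothing}$ is a scalar multiple of $v_{I \setminus i}$; as $\mathcal{K}_i$ is central (Lemma \ref{lem:centralK}), $\mathcal{K}_i E_i v_I$ is again a multiple of $v_{I\setminus i}$, whose index set is strictly smaller than $\{1,\dots,n\}$, so $w$ annihilates it. Hence $E_i \cdot w = 0$. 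By the universal property of $V_{(\lambda^{\vee},\p p + \p n)}$ there is then a unique $\Uq$-linear map $\phi\colon V_{(\lambda^{\vee},\p p + \p n)} \to V^{\vee}_{(\lambda,\p p)}$ sending the generating highest weight vector to $w$. Both modules have dimension $2^n$ as super vector spaces, so it remains to show $\phi$ is surjective.

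For surjectivity one computes $F_i \cdot w$ directly: from $S(F_i) = -F_i\mathcal{K}_i^{-1}$, the definition of the dual action and the $F$-analogue of \eqref{eq:EwithE}, one finds that $F_i \cdot w$ is a \emph{nonzero} scalar multiple of $v^{\vee}_{\{1,\dots,n\}\setminus i}$, the scalar being $\pm q^{c}$ for some $c$ with no vanishing quantum-integer factor. Iterating, the submodule of $V^{\vee}_{(\lambda,\p p)}$ generated by $w$ contains every $v^{\vee}_J$ and hence equals $V^{\vee}_{(\lambda,\p p)}$; thus $\phi$ is onto, and therefore an isomorphism. The only mildly delicate part is the highest weight computation, which requires careful bookkeeping of the Koszul signs from the super-dual and of the shuffle signs $\sgn(I,J)$, but I expect no genuine obstacle.
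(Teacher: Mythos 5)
Your proposal is correct and follows essentially the same route as the paper: both identify $v^{\vee}_{\{1,\dots,n\}}$ as a homogeneous highest weight vector of weight $(\lambda^{\vee},\p p+\p n)$ in $V^{\vee}_{(\lambda,\p p)}$ and conclude via the resulting map between Verma modules of equal dimension $2^n$. You simply spell out the details (the vanishing $E_i\cdot w=0$ and the nonvanishing of $F_J\cdot w$ needed for surjectivity) that the paper leaves implicit.
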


\begin{proof}
Note that $v^{\vee}_I \in V^{\vee}_{(\lambda, \p p)}$ is of weight $(-\lambda + Q_I, \p p + \p I)$. In particular, $v^{\vee}_{\{1, \dots, n\}}$ has weight $(\lambda^{\vee},\p p+\p n)$ and is of highest weight. It follows that the assignment $v^{\vee}_{\{1, \dots, n\}} \mapsto v_{\varnothing}$ extends to an isomorphism $V^{\vee}_{(\lambda, \p p)} \xrightarrow[]{\sim} V_{(\lambda^{\vee},\p p + \p n)}$.
\end{proof}

\begin{Prop}
\label{prop:simpleVerma}
The module $V_{(\lambda, \p p)}$ is simple if and only if $\prod_{i=1}^n [\chi_i(\lambda)]_q \neq 0$.
\end{Prop}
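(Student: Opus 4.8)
The plan is to reduce everything to a completely explicit description of the $\Uq$-action on the weight basis $\{v_I = F_I v_\varnothing \mid I \subseteq \{1,\dots,n\}\}$ of $V_{(\lambda,\p p)}$. The crucial preliminary observation is that, since each $\mathcal{K}_i$ is central (Lemma \ref{lem:centralK}) and $V_{(\lambda,\p p)}$ is generated by $v_\varnothing$, on which $\mathcal{K}_i$ acts by $q^{\chi_i(\lambda)}$ by \eqref{eq:unrolledCentralAct}, the element $\mathcal{K}_i$ acts as the scalar $q^{\chi_i(\lambda)}$ on all of $V_{(\lambda,\p p)}$, so that $\tfrac{\mathcal{K}_i - \mathcal{K}_i^{-1}}{q-q^{-1}}$ acts as $[\chi_i(\lambda)]_q$. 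Feeding this into \eqref{eq:EwithFComm}, together with $E_i v_\varnothing = 0$, yields
\[
E_i v_M = \delta_{i,M}\,\sgn(\{i\}, M\setminus i)\,[\chi_i(\lambda)]_q\, v_{M\setminus i},
\]
while the $F$-analogue of \eqref{eq:EwithE} gives $F_j v_M = (1-\delta_{j,M})\,\sgn(\{j\},M)\,v_{M\sqcup j}$, and $Z_a$, $K_a$ act diagonally. Iterating the displayed formula shows that $E_I v_J \neq 0$ only if $I \subseteq J$, and that $E_I v_I = \pm\big(\prod_{i \in I}[\chi_i(\lambda)]_q\big)\, v_\varnothing$.

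For the implication $\Leftarrow$, assume $[\chi_i(\lambda)]_q \neq 0$ for all $i$, and let $W \subseteq V_{(\lambda,\p p)}$ be a nonzero submodule. Choose $0 \neq w = \sum_I c_I v_I \in W$ and pick $I_0$ with $c_{I_0}\neq 0$ of maximal cardinality. Since $E_{I_0} v_J \ne 0$ forces $I_0 \subseteq J$ and hence, by maximality of $|I_0|$, $J = I_0$, we obtain $E_{I_0} w = c_{I_0}\, E_{I_0} v_{I_0} = \pm\, c_{I_0}\big(\prod_{i\in I_0}[\chi_i(\lambda)]_q\big)\, v_\varnothing \neq 0$. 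Thus $v_\varnothing \in W$, and since $v_\varnothing$ generates $V_{(\lambda,\p p)}$ we conclude $W = V_{(\lambda,\p p)}$, so $V_{(\lambda,\p p)}$ is simple.

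For the implication $\Rightarrow$, assume $[\chi_{i_0}(\lambda)]_q = 0$ for some $i_0$ and set $W = \span_{\C}\{v_I \mid i_0 \in I\}$. Using the explicit formulas above, one checks directly that $W$ is a $\Uq$-submodule: it is visibly stable under $\bos$ and the $K_a$; it is stable under each $F_j$, since $F_j v_I$ is $\pm v_{I \sqcup j}$ or $0$ and $i_0 \in I$ implies $i_0 \in I \sqcup j$; it is stable under $E_j$ for $j \neq i_0$, since then $i_0$ survives in $I \setminus j$; and it is stable under $E_{i_0}$, since $E_{i_0} v_I = \pm[\chi_{i_0}(\lambda)]_q\, v_{I\setminus i_0} = 0$. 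As $W$ contains $v_{\{i_0\}}$ but not $v_\varnothing$, it is a proper nonzero submodule, so $V_{(\lambda,\p p)}$ is not simple.

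The sign bookkeeping and the claim ``$E_I v_J \neq 0 \Rightarrow I \subseteq J$'' are routine iterations of \eqref{eq:EwithFComm} and will not be the obstacle. The one step deserving real care is the reduction of $\tfrac{\mathcal{K}_i - \mathcal{K}_i^{-1}}{q-q^{-1}}$ to the constant $[\chi_i(\lambda)]_q$ on the entire Verma module: this rests on centrality of $\mathcal{K}_i$, which in turn uses the Fundamental Identity \eqref{eq:fundIden}. Without it, $E_i$ would act on $v_M$ through $[\chi_i(\lambda - Q_M)]_q$, and the submodule $W$ exhibited in the $\Rightarrow$ direction would fail to be closed under $\Uq$; so this is the place where the hypotheses on the input data genuinely enter.
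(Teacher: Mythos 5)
Your proof is correct and follows essentially the same route as the paper: both arguments rest on the explicit action of $E_l$ on the weight basis via equation \eqref{eq:EwithFComm} together with the scalar action of the central elements $\mathcal{K}_i$. Your ``apply $E_{I_0}$ for $I_0$ maximal'' step is just the unwound form of the paper's appeal to Lemma \ref{lem:existHW} plus the analysis of highest weight vectors, and your explicit submodule $\span_{\C}\{v_I \mid i_0 \in I\}$ plays the same role as the paper's $\langle v_{\{i_0\}}\rangle$.
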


\begin{proof}
Let $M \subset V_{(\lambda, \p p)}$ be a non-zero submodule. By Lemma \ref{lem:existHW}, there exists a highest weight vector $v \in M$. Writing $v = \sum_I c_I v_I$ in the standard weight basis of $V_{(\lambda, \p p)}$, we use equation \eqref{eq:EwithFComm} to compute
\[
0 = E_l v = \sum_I \delta_{l,I} \sgn(\{l\}, I \setminus l)[\chi_l(\lambda)]_q c_I v_{I \setminus l},
\qquad
1 \leq l \leq n.
\]
It follows that if $c_I \neq 0$, then $[\chi_i(\lambda)]_q=0$ for all $i \in I$.

We can now prove the proposition. If $\prod_{i=1}^n [\chi_i(\lambda)]_q \neq 0$, then $v$ is a non-zero multiple of $v_{\varnothing}$ and $M = V_{(\lambda, \p p)}$, proving simplicity of $V_{(\lambda, \p p)}$. Conversely, if $[\chi_i(\lambda)]_q=0$ for some $1 \leq i \leq n$, then $v_{\{i\}}$ is a homogeneous highest weight vector which generates a submodule of $V_{(\lambda, \p p)}$ which is strict, since it does not contain $v_{\varnothing}$.
\end{proof}

\begin{Def}
A weight $\lambda \in \bos^{\vee}$ is called \emph{typical} if
\[
\prod_{i=1}^n [\chi_i(\lambda)]_q \neq 0.
\]
Otherwise, $\lambda$ is called \emph{atypical}.
\end{Def}

In terms of the parameter $\hbar$, atypicality of $\lambda$ is the existence of an index $1 \leq i \leq n$ such that $\chi_i(\lambda) \in \frac{\pi \sqrt{-1}}{\hbar} \Z$.

\begin{Prop}
\label{prop:typicalExist}
There exists a typical weight if and only if the representation $R$ has no trivial summands.
\end{Prop}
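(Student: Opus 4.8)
The plan is to unwind the definition of typicality in terms of the linear functions $\chi_i$ and reduce the existence of a typical weight to a statement about whether finitely many hyperplanes (the atypicality loci) can cover $\bos^{\vee}$. Recall that $\lambda \in \bos^{\vee}$ is atypical precisely when $[\chi_i(\lambda)]_q = 0$ for some $i$, i.e.\ when $q^{2\chi_i(\lambda)} = 1$, equivalently $\chi_i(\lambda) \in \tfrac{\pi\sqrt{-1}}{\hbar}\Z$. So the set of atypical weights is the union $\bigcup_{i=1}^n \bigcup_{m \in \Z} H_{i,m}$, where $H_{i,m} = \{\lambda \mid \chi_i(\lambda) = \tfrac{\pi\sqrt{-1}}{\hbar} m\}$ is, when $\chi_i \not\equiv 0$, an affine hyperplane in $\bos^{\vee}$, and is either empty (if $m \neq 0$) or all of $\bos^{\vee}$ (if $m = 0$) when $\chi_i \equiv 0$.

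First I would handle the ``only if'' direction by contraposition: suppose $R$ has a trivial summand, say $Q_{i_0} = 0$ for some index $i_0$. Then $\chi_{i_0}(\lambda) = \met^{\vee}(Q_{i_0},\lambda) = 0$ for every $\lambda \in \bos^{\vee}$, so $[\chi_{i_0}(\lambda)]_q = [0]_q = 0$ identically, and hence $\prod_{i=1}^n [\chi_i(\lambda)]_q = 0$ for all $\lambda$; no typical weight exists. For the ``if'' direction, suppose $R$ has no trivial summands, so $Q_i \neq 0$ for all $i$. I claim each $\chi_i = \met^{\vee}(Q_i,-)$ is a nonzero linear functional on $\bos^{\vee}$: indeed $\met^{\vee}$ is nondegenerate (it is the inverse of the nondegenerate $\met$), so $\met^{\vee}(Q_i,-) = 0$ would force $Q_i = 0$. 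Thus each atypicality locus for a fixed $i$ is a countable union $\bigcup_{m\in\Z} H_{i,m}$ of parallel proper affine hyperplanes, and the full atypical set is a countable union of proper affine subspaces of the finite-dimensional complex vector space $\bos^{\vee}$. Since $\bos^{\vee}$ is nontrivial (we assumed $T$ is a non-trivial connected abelian Lie group, so $r \geq 1$) and $\C$ is uncountable, a countable union of proper affine subspaces cannot exhaust $\bos^{\vee}$ — e.g.\ apply the Baire category theorem, or note that each $H_{i,m}$ is defined by the vanishing of a nonzero affine polynomial and a finite product of such polynomials, over a fixed infinite field, has a nonvanishing point while a countable union still misses points by a cardinality/measure-zero argument. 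Hence there is $\lambda \in \bos^{\vee}$ with $\chi_i(\lambda) \notin \tfrac{\pi\sqrt{-1}}{\hbar}\Z$ for all $i$, i.e.\ $\lambda$ is typical.

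The only mild subtlety — not really an obstacle — is making the ``countable union of hyperplanes does not cover $\bos^{\vee}$'' argument clean; the cleanest route is to pick any vector $v \in \bos^{\vee}$ with $\chi_i(v) \neq 0$ for all $i$ (possible since the $\chi_i$ are finitely many nonzero functionals and $\C$ is infinite, e.g.\ by avoiding the finitely many hyperplanes $\ker\chi_i$), and then observe that along the line $\{tv \mid t \in \C\}$ the condition $\chi_i(tv) \in \tfrac{\pi\sqrt{-1}}{\hbar}\Z$ excludes only countably many values of $t$ for each $i$, hence all but countably many $t \in \C$ give a typical weight $tv$. This also makes transparent that ``generic'' weights are typical, matching the later use of genericity in Section \ref{sec:weightMod}.
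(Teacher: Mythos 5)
Your proof is correct and takes essentially the same approach as the paper: reduce typicality to the avoidance of countably many proper affine hyperplanes $\chi_i^{-1}\big(\tfrac{\pi\sqrt{-1}}{\hbar}\Z\big)$ in $\bos^{\vee}$, which is possible over $\C$; the key observation that nondegeneracy of $\met^{\vee}$ makes each $\chi_i = \met^{\vee}(Q_i,-)$ a nonzero functional when $Q_i \neq 0$ is the same. The paper isolates the hyperplane-avoidance step as a standalone lemma about nonzero linear functionals, whereas you argue inline and additionally offer the one-parameter-line refinement, but these are presentational differences only.
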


\begin{proof}
Note that $R$ has a trivial summand if and only if $Q_i=0$ for some $1 \leq i \leq n$. Recall that $\chi_i=\met^{\vee}(Q_i,-)$. If $Q_i$ is zero, then $\chi_i(\lambda)= 0$ for all $\lambda \in \bos^{\vee}$ and all weights are atypical. The converse follows from the proceeding lemma applied to the linear functionals $\frac{\hbar}{\pi \sqrt{-1}} \chi_i: \bos^{\vee} \rightarrow \C$, $1 \leq i \leq n$.
\end{proof}

\begin{Lem}
Let $W$ be a finite dimensional complex vector space and $\ell_1, \dots, \ell_n \in W^{\vee}$ non-zero linear functionals. There exists a vector $w \in W$ such that $\ell_i(w) \notin \Z$ for all $1 \leq i \leq n$.
\end{Lem}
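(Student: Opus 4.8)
The plan is to show that the union of the hyperplanes-with-integer-shifts associated to the $\ell_i$ cannot cover $W$, using a countability and elementary topology argument. For each $1 \leq i \leq n$ and each $m \in \Z$, the set $H_{i,m} := \{w \in W \mid \ell_i(w) = m\}$ is an affine hyperplane in $W$ (proper and closed, since $\ell_i \neq 0$). The bad set is $B := \bigcup_{i=1}^n \bigcup_{m \in \Z} H_{i,m}$, a countable union of proper affine subspaces of the finite dimensional complex vector space $W$. We must produce $w \in W \setminus B$.

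The cleanest way is a Baire category argument over $\R$: regard $W$ as a finite dimensional real vector space (of dimension $2 \dim_{\C} W$), which is a complete metric space. Each $H_{i,m}$ is a proper affine subspace, hence closed with empty interior, hence nowhere dense. By the Baire category theorem, the countable union $B$ has empty interior; in particular $B \neq W$, so any $w \in W \setminus B$ works. Alternatively, and perhaps more in keeping with an algebraic readership, one can argue by induction on $\dim_{\C} W$: pick any $w_0$ with $\ell_1(w_0) \notin \Z$ (possible since $\ell_1 \neq 0$, e.g. by scaling any vector not in $\ker \ell_1$ appropriately), and then for a generic vector $v$ in a complement, the affine line $w_0 + \C v$ meets each $H_{i,m}$ in at most one point (it meets $H_{i,m}$ in more than one point only if the whole line lies in $H_{i,m}$, which for fixed direction $v$ excludes only finitely many $i$ unless $\ell_i(v) = 0$), so only countably many values of the parameter are excluded; since $\C$ is uncountable, a good $w$ on this line exists. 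One should take a little care that for each $i$ the restriction $\ell_i|_{w_0 + \C v}$ is a non-constant affine function of the parameter, which holds as long as $\ell_i(v) \neq 0$; choosing $v \notin \bigcup_{i=1}^n \ker \ell_i$ (again possible as a finite union of proper subspaces over an infinite field) guarantees this for all $i$ simultaneously.

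I expect no real obstacle here; the only thing to be mildly careful about is not conflating "proper affine subspace over $\C$" with "measure zero" prematurely and instead invoking either Baire category or the uncountability of $\C$ cleanly. I would write the Baire category version, as it is shortest: note $W \cong \R^{2\dim_{\C} W}$ is a Baire space, each $H_{i,m}$ is nowhere dense because it is a proper closed affine subspace, there are countably many of them, so their union is not all of $W$, and any $w$ in the complement satisfies $\ell_i(w) \notin \Z$ for all $i$. This immediately feeds back into the proof of Proposition \ref{prop:typicalExist} by applying it to $W = \bos^{\vee}$ and $\ell_i = \frac{\hbar}{\pi\sqrt{-1}} \chi_i$, which are non-zero precisely when the $Q_i$ are all non-zero, i.e.\ when $R$ has no trivial summand.
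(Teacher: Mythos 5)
Your proof is correct and takes essentially the same approach as the paper: identify the bad set as a countable union of proper affine hyperplanes and conclude the complement is nonempty. The paper leaves the final nonemptiness step implicit, whereas you justify it explicitly via Baire category (with a clean alternative line argument); either way the mathematical content matches.
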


\begin{proof}
Note that $\ell_i^{-1}(\C \setminus \Z) = W \setminus \ell_i^{-1}(\Z)$ with $\ell_i^{-1}(\Z)$ a countable union of affine hyperplanes. It follows that
\[
\bigcap_{i=1}^n \ell_i^{-1}(\C \setminus \Z) 
=
W \setminus \bigcup_{i=1}^n \ell_i^{-1}(\Z)
\]
is the complement of countably many affine hyperplanes and thus non-empty.
\end{proof}

Motivated by Proposition \ref{prop:typicalExist}, we henceforth work under the following assumption.

\begin{Assump}
\label{assump:noZeroRoots}
The representation $R$ has no trivial summands.
\end{Assump}

In terms of roots, Assumption \ref{assump:noZeroRoots} is the statement that $Q_1, \dots, Q_n$ are each non-zero. If Assumption \ref{assump:noZeroRoots} does not hold, then, in view of Lemma \ref{lem:removeColumnQuantum}, the study of $\Uq$ reduces to that of $U_q^{\bos}(\bos_{R^{\prime}})$ and $\dim_{\C} \Hom_{\bos}(\C,R)$ copies of $U(\psloo)$, where $R^{\prime}$ is the quotient $R \slash \Hom_{\bos}(\C,R)$. Relative modular categories arising from the representation theory of $\psloo$ are treated in Section \ref{sec:pslooCS}. In particular, we will still be able to model Gaiotto--Witten theories which do not satisfy Assumption \ref{assump:noZeroRoots}.

\begin{Lem}
\label{lem:simpQuot}
Every simple object of $\cat_R$ is a quotient of a Verma module.
\end{Lem}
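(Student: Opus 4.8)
The plan is to show that any simple object $V \in \cat_R$ receives a nonzero $\Uq$-linear map from a Verma module, and then use simplicity to conclude that this map is surjective. The starting point is Lemma \ref{lem:existHW}, which guarantees that $V$ contains a homogeneous highest weight vector $v$, say of weight $(\lambda, \p p) \in \bos^{\vee} \times \Ztwo$. Being highest weight means $E_i v = 0$ for all $1 \leq i \leq n$, and being a weight vector in $\cat_R$ means $Z_a v = \lambda_a v$ and $K_a v = q^{\lambda_a} v$. Thus the one-dimensional space $\C v \subset V$ is stable under the subalgebra $\UqN$ and is isomorphic, as a $\UqN$-module, to $\C_{(\lambda, \p p)}$.

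The next step is to invoke the universal property of induction: the inclusion $\C_{(\lambda,\p p)} \hookrightarrow V$ of $\UqN$-modules extends uniquely to a $\Uq$-linear map
\[
\varphi: V_{(\lambda, \p p)} = \Uq \otimes_{\UqN} \C_{(\lambda, \p p)} \longrightarrow V,
\qquad
x \otimes 1 \mapsto x \cdot v.
\]
One should check that $\varphi$ is a morphism in $\cat_R$, i.e. that it has degree $\p 0$; this is immediate since $v_{\varnothing}$ and $v$ have the same parity $\p p$ and $\varphi$ sends $v_{\varnothing} \mapsto v$. The map $\varphi$ is nonzero because $\varphi(v_{\varnothing}) = v \neq 0$.

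Finally, since $V$ is simple, its only submodules are $0$ and $V$; as $\ima \varphi$ is a nonzero submodule of $V$, we get $\ima \varphi = V$, so $V$ is a quotient of the Verma module $V_{(\lambda, \p p)}$. I do not anticipate any serious obstacle here — the only point requiring a small amount of care is confirming that $\C v$ really is a $\UqN$-submodule isomorphic to $\C_{(\lambda,\p p)}$ (which uses precisely the defining relations of $\cat_R$ together with the highest weight condition) and that the induced map respects the $\Ztwo$-grading so that it lies in $\cat_R$ rather than merely being $\Uq$-linear of arbitrary degree.
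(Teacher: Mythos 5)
Your proof is correct and follows exactly the paper's argument: take a homogeneous highest weight vector (Lemma \ref{lem:existHW}), extend the assignment $v_{\varnothing} \mapsto v$ to a morphism $V_{(\lambda,\p p)} \to V$ via the universal property of induction, and conclude surjectivity from simplicity. The extra care you take in checking that $\C v$ is a $\UqN$-submodule and that the map has degree $\p 0$ is a fine elaboration of what the paper leaves implicit.
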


\begin{proof}
Let $V \in \cat_R$ be simple and $v \in V$ a homogeneous highest weight vector of weight $(\lambda, \p p)$. The assignment $v_{\varnothing} \mapsto v$ extends to a morphism $V_{(\lambda,\p p)} \rightarrow V$ in $\cat_R$ which, since $V$ is simple, is necessarily an epimorphism.
\end{proof}

For each $1 \leq i \leq n$, let $\langle v_{\{i\}} \rangle \subset V_{(\lambda, \p p)}$ be the submodule generated by $v_{\{i\}}$.

\begin{Prop}
\label{prop:vermaQuotient}
The module $V_{(\lambda, \p p)}$ has a unique simple quotient
\[
0 \rightarrow  \sum_{\substack{1 \leq i \leq n \\ [\chi_i(\lambda)]_q=0}} \langle v_i \rangle  \rightarrow V_{(\lambda, \p p)} \rightarrow S_{(\lambda, \p p)} \rightarrow 0.
\]
Moreover, the dimension of $S_{(\lambda, \p p)}$ is $2^{n-k}$, where $k$ is the number of indices $1 \leq i \leq n$ such that $[\chi_i(\lambda)]_q=0$.
\end{Prop}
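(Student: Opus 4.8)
The plan is to produce the maximal submodule of $V_{(\lambda,\p p)}$ by hand. Write $A=\{i\mid [\chi_i(\lambda)]_q=0\}$ for the set of atypical indices, so $k=|A|$, and set $N=\sum_{i\in A}\langle v_{\{i\}}\rangle$. First I would check that each $v_{\{i\}}$ with $i\in A$ is a highest weight vector: by the commutation relation \eqref{eq:EwithFComm} and $E_l v_\varnothing=0$ one gets $E_l v_{\{i\}}=\delta_{li}\,[\chi_i(\lambda)]_q\, v_\varnothing$, which vanishes when $i\in A$. Next, for fixed $i\in A$ the subspace $\span_{\C}\{v_J\mid i\in J\}$ is a submodule: $F_l v_J=\pm v_{J\cup l}$ (still containing $i$) or $0$, while $E_l v_J=\pm[\chi_l(\lambda)]_q v_{J\setminus l}$ still contains $i$ unless $l=i$, in which case it vanishes since $i\in A$; since it contains $v_{\{i\}}$ and is generated by it via the $F_l$'s, it equals $\langle v_{\{i\}}\rangle$. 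Hence $N=\span_{\C}\{v_J\mid J\cap A\neq\varnothing\}$, so $v_\varnothing\notin N$, $N$ is a submodule, and $\{v_J\mid J\subseteq\{1,\dots,n\}\setminus A\}$ descends to a basis of $V_{(\lambda,\p p)}/N$, giving $\dim_{\C}V_{(\lambda,\p p)}/N=2^{n-k}$. This yields the dimension formula once the quotient is shown to be simple.

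To prove $S_{(\lambda,\p p)}:=V_{(\lambda,\p p)}/N$ is simple I would rerun the argument of Proposition \ref{prop:simpleVerma} inside the quotient. On the basis $\{\bar v_I\mid I\subseteq\{1,\dots,n\}\setminus A\}$, $E_l$ acts as $0$ for $l\in A$ and, for $l\in I$, as a \emph{nonzero} multiple of $\bar v_{I\setminus l}$, since $[\chi_l(\lambda)]_q\neq 0$ when $l\notin A$. A nonzero submodule contains a homogeneous highest weight vector by Lemma \ref{lem:existHW}, which we may take to be a weight vector; applying the $E_l$ with $l\notin A$ forces it to be a nonzero multiple of $\bar v_\varnothing$, and $\bar v_\varnothing$ generates $S_{(\lambda,\p p)}$ because $F_l\bar v_\varnothing=\pm\bar v_{\{l\}}$ for $l\notin A$. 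Thus $S_{(\lambda,\p p)}$ is simple and nonzero, so $N$ is a maximal submodule and the displayed exact sequence makes sense.

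It remains to show $N$ is the \emph{unique} maximal submodule, equivalently that every proper submodule $M$ lies in $N$. Since $S_{(\lambda,\p p)}$ is simple it is enough to exclude $M+N=V_{(\lambda,\p p)}$ with $M$ proper. Writing $v_\varnothing=m+n$ with $m\in M$, $n\in N$ and passing to weight-$\lambda$ components (both $M$ and $N$ are $\bos$-stable) produces $w\in M$ of weight $\lambda$ whose coefficient on $v_\varnothing$ is $1$, say $w=v_\varnothing+\sum_{\varnothing\neq I,\,Q_I=0}c_I v_I$; the claim is $v_\varnothing\in\langle w\rangle$, which forces $M=V_{(\lambda,\p p)}$. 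I expect this to be the main obstacle, and it is genuinely delicate: the weight-$\lambda$ space of $V_{(\lambda,\p p)}$ need not be one-dimensional, because the Fundamental Identity \eqref{eq:fundIdenAbs} permits root relations such as $Q_i+Q_j=0$, and then $v_{\{i,j\}}$ also has weight $\lambda$, so the naive ``$v_\varnothing$ is the only highest weight vector of top weight'' reasoning fails.

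I would resolve this in two steps. First, since $\mathcal{K}_j$ is central, $E_jF_j+F_jE_j=[E_j,F_j]=\tfrac{\mathcal{K}_j-\mathcal{K}_j^{-1}}{q-q^{-1}}$ acts on $V_{(\lambda,\p p)}$ as the scalar $[\chi_j(\lambda)]_q$ by \eqref{eq:unrolledCentralAct}; a short computation with \eqref{eq:EwithFComm} and \eqref{eq:EwithE} then shows that on the weight-$\lambda$ space $E_jF_j$ is diagonal in the basis $\{v_I\mid Q_I=0\}$ with eigenvalue $[\chi_j(\lambda)]_q$ if $j\notin I$ and $0$ if $j\in I$. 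Hence for typical $j$ the operator $[\chi_j(\lambda)]_q^{-1}E_jF_j$ is the projection annihilating the $v_I$ with $j\in I$; applying the product of these over all typical $j$ sends $w$ to some $w'\in\langle w\rangle$ supported on $\{v_I\mid I\subseteq A\}$ and still with coefficient $1$ on $v_\varnothing$. Second, $\span_{\C}\{v_I\mid I\subseteq A\}$ is stable under the subalgebra generated by $\{E_l,F_l,Z_a,K_a^{\pm1}\mid l\in A\}$, on which every $E_l$ acts by $0$ (again $[\chi_l(\lambda)]_q=0$) and the $F_l$ generate an exterior-algebra action identifying this space with $\Lambda^{\bullet}(\C^{A})$ and $v_\varnothing$ with the unit $1$. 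Because $w'$ has nonzero constant term it is a unit in $\Lambda^{\bullet}(\C^{A})$, so $v_\varnothing=1\in\span_{\C}\{F_J w'\mid J\subseteq A\}\subseteq\langle w\rangle$. Since $v_\varnothing$ generates $V_{(\lambda,\p p)}$, this forces $M=V_{(\lambda,\p p)}$ and completes the proof.
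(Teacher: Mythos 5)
Your proof is correct, and while its skeleton (identify $N=\sum_{i\in A}\langle v_{\{i\}}\rangle$ with weight basis $\{v_J\mid J\cap A\neq\varnothing\}$, count dimensions, detect submodules via highest weight vectors) coincides with the paper's, your last two paragraphs supply a genuinely different, and substantially more complete, argument for the uniqueness of the maximal submodule. The paper's proof takes a homogeneous highest weight vector $n=\sum_I c_I v_I$ of a proper submodule, invokes the computation from Proposition \ref{prop:simpleVerma}, asserts that the sum runs over $I$ meeting the atypical set, and immediately concludes $N\subset M$. As you correctly observe, that computation only yields $c_I\neq 0\Rightarrow I\subseteq A$ and places no constraint on $c_\varnothing$; since the Fundamental Identity permits $Q_I=0$ with $I\neq\varnothing$, the $(\lambda,\p p)$-weight space can have dimension greater than one and vectors of the form $v_\varnothing+\sum_{I\neq\varnothing}c_Iv_I$ can genuinely be homogeneous highest weight vectors. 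Moreover, knowing that every highest weight vector of $N$ lies in $M$ does not by itself force $N\subseteq M$. Your two-step repair closes both gaps: the operators $[\chi_j(\lambda)]_q^{-1}E_jF_j$ for typical $j$ are indeed commuting idempotents on all of $V_{(\lambda,\p p)}$ (because $\mathcal{K}_j$ acts by the scalar $q^{\chi_j(\lambda)}$ throughout, by the Fundamental Identity), so they cut any element of $M$ with unit $v_\varnothing$-coefficient down to $\span_{\C}\{v_I\mid I\subseteq A\}$; and that span is the regular representation of the exterior algebra on $\{F_l\mid l\in A\}$, in which any element with nonzero constant term generates $v_\varnothing$. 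The cost of your route is length; what it buys is an argument that actually covers atypical weights with nontrivial root relations, which is precisely where the proposition has content.
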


\begin{proof}
Observe that the submodule $M$ of $V_{(\lambda,\p p)}$ appearing in the statement of the proposition has weight basis
\[
\{v_I \mid [\chi_i(\lambda)]_q=0 \mbox{ for some } i \in I\}.
\]
Let $0 \neq N \subsetneq V_{(\lambda, \p p)}$ be a submodule and $n \in N$ a homogeneous highest weight vector. As in the proof of Proposition \ref{prop:simpleVerma}, we have $n= \sum_I c_I v_I$, where the sum is over those subsets $I \subset \{1, \dots, n\}$ which contain at least one index $i$ such that $[\chi_i(\lambda)]_q=0$. Then $N \subset M$ and the first statement of the lemma follows. The second statement follows from the equalities $\dim_{\C} V_{(\lambda,\p p)} = 2^n$ and $\dim_{\C} M = 2^{n-k}(2^k-1)$, the latter of which follows from the stated basis of $M$.
\end{proof}

Note that $S_{(\lambda,\p p)}=V_{(\lambda,\p p)}$ when $\lambda$ is typical.

The above results immediately lead to the following highest weight classification of simple objects of $\cat_R$.

\begin{Thm}
\label{thm:simplesSplitAb}
A simple object of $\cat_R$ is isomorphic to exactly one of the following modules:
\begin{enumerate}
\item $V_{(\lambda, \p p)}$ where $\lambda \in \bos^{\vee}$ is typical and $\p p \in \Ztwo$.
\item $S_{(\lambda, \p p)}$, where $\lambda \in \bos^{\vee}$ is atypical and $\p p \in \Ztwo$.
\end{enumerate}
\end{Thm}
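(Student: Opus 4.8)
The plan is to deduce this classification by combining the preceding structural results: every simple object is a quotient of a Verma module (Lemma \ref{lem:simpQuot}), and every Verma module has a unique simple quotient (Proposition \ref{prop:vermaQuotient}), so the map $(\lambda,\p p) \mapsto S_{(\lambda,\p p)}$ already surjects onto isomorphism classes of simple objects. It therefore remains to show this assignment is, after the typical/atypical bookkeeping, injective on the two families listed. First I would record that a highest weight vector in a simple object is unique up to scalar: indeed if $V$ is simple with homogeneous highest weight vector $v$ of weight $(\lambda,\p p)$, then the weight-$(\lambda,\p p)$ space is one-dimensional, since any highest weight vector generates $V$ and distinct highest weight vectors of the same weight would give, after subtracting a scalar multiple, a highest weight vector generating a proper submodule (cf. the argument in Proposition \ref{prop:simpleVerma}). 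Hence the highest weight $(\lambda,\p p)$ is an isomorphism invariant of a simple object.

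Next I would observe that the highest weight determines whether we are in case (1) or case (2): by Proposition \ref{prop:simpleVerma}, $V_{(\lambda,\p p)}$ is simple precisely when $\lambda$ is typical, in which case $S_{(\lambda,\p p)} = V_{(\lambda,\p p)}$, while if $\lambda$ is atypical, then $S_{(\lambda,\p p)}$ is a proper quotient of $V_{(\lambda,\p p)}$, of dimension $2^{n-k} < 2^n$ by the dimension count in Proposition \ref{prop:vermaQuotient} (using $k \geq 1$). So no module of type (1) is isomorphic to a module of type (2). Within type (1), an isomorphism $V_{(\lambda,\p p)} \simeq V_{(\lambda',\p p')}$ forces $(\lambda,\p p) = (\lambda',\p p')$ by the uniqueness of highest weights established above; likewise within type (2). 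Conversely, each module listed is genuinely simple: type (1) by Proposition \ref{prop:simpleVerma}, type (2) by construction in Proposition \ref{prop:vermaQuotient}. Assembling these observations gives the theorem.

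I do not expect a serious obstacle here; the proof is essentially a synthesis of the already-established Lemmas \ref{lem:simpQuot}, \ref{lem:existHW} and Propositions \ref{prop:simpleVerma}, \ref{prop:vermaQuotient}. The one point requiring a small argument, which I would make explicit, is that the highest weight of a simple module is well-defined — that is, that Lemma \ref{lem:existHW} produces an essentially unique highest weight vector in the simple case. This follows because if $v, v'$ are homogeneous highest weight vectors of the same weight $(\lambda,\p p)$ in a simple $V$, then each generates $V$, so $v'$ lies in the span of the $F_I v$; matching the weight-$(\lambda,\p p)$ component shows $v'$ is a scalar multiple of $v$ plus lower-weight terms, and comparing weights forces $v' \in \C v$. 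With this in hand the classification is immediate.
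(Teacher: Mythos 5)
Your overall strategy is exactly the intended one (the paper itself offers no proof beyond ``the above results immediately lead to\dots''): surjectivity from Lemma \ref{lem:simpQuot} and Proposition \ref{prop:vermaQuotient}, separation of the two families by the dimension count $2^{n-k}<2^n$, and injectivity within each family from the fact that the highest weight is an isomorphism invariant. However, the way you justify that last fact has a genuine gap. You assert that the $(\lambda,\p p)$-weight space of a simple module is one-dimensional, and later that ``comparing weights forces $v'\in\C v$'' because the other basis vectors $F_I v$ are ``lower-weight terms.'' This is not true under the standing hypotheses: Assumption \ref{assump:noZeroRoots} only forbids $Q_i=0$ for individual $i$, not $Q_I=0$ for a nonempty subset $I$. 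For instance, with $n=2$ and $Q_2=-Q_1$ (which is compatible with the Fundamental Identity and Assumptions \ref{assump:quantAssumptDualInt}, \ref{assump:noZeroRoots}, though not with \ref{assump:unimodular} --- but that assumption is only introduced later and is not in force here), the vector $v_{\{1,2\}}=F_1F_2v_{\varnothing}$ has the same weight \emph{and parity} as $v_{\varnothing}$, so the top weight space of a typical Verma module is two-dimensional and weight comparison alone decides nothing.

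The conclusion you need is nonetheless true, and the correct argument is the $E_l$-computation already carried out in the proofs of Propositions \ref{prop:simpleVerma} and \ref{prop:vermaQuotient}: if $w=\sum_I c_I \p v_I$ is a highest weight vector of $S_{(\lambda,\p p)}$ (written in the images of the standard basis, so the sum runs over $I$ containing no index $i$ with $[\chi_i(\lambda)]_q=0$), then $0=E_l w$ forces $[\chi_l(\lambda)]_q\, c_I=0$ whenever $l\in I$, hence $c_I=0$ for all $I\neq\varnothing$. Thus the highest weight vectors of any simple object form a single line, its weight is an isomorphism invariant, and your injectivity argument goes through. You should replace the weight-space claim by this computation; with that substitution the proof is complete.
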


\begin{Lem}
\label{lem:projVerma}
Simple Verma modules are projective and injective objects of $\cat_R$.
\end{Lem}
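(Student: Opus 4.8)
The plan is to prove projectivity and injectivity separately, then observe they are equivalent for simple Verma modules by a duality argument. Suppose $\lambda \in \bos^{\vee}$ is typical, so that $V := V_{(\lambda,\p p)}$ is simple by Proposition \ref{prop:simpleVerma}. Since $\Uq$ is a free module over the subalgebra $\UqN$ generated by the $Z_a$, $K_a^{\pm 1}$ and $E_i$, the induction functor $V_{(-)} = \Uq \otimes_{\UqN} (-)$ is exact; moreover it is left adjoint to the restriction functor. The key point is that, after restriction to a suitable Cartan-type subalgebra, the weight module structure forces highest-weight vectors to behave rigidly. Concretely, I would show: if $W \in \cat_R$ and $\pi: W \twoheadrightarrow V$ is an epimorphism, then there is a section $V \hookrightarrow W$. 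Pick a homogeneous highest weight vector $\tilde v \in W$ lifting $v_\varnothing \in V$; such a vector exists because we can lift $v_\varnothing$ to any homogeneous preimage $w$, and since $\pi(E_i w) = E_i v_\varnothing = 0$ for each $i$, the vector $E_i w$ lies in $\ker \pi$. The obstacle is that $E_i w$ need not itself be zero — only in $\ker\pi$. This is where typicality enters.

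Here is how I would close that gap. Let $K = \ker \pi$, a weight module. The preimage $\pi^{-1}(\C v_\varnothing)$ is a weight module all of whose weights lie in $(\lambda - Q_I)$ for $I \subseteq \{1,\dots,n\}$, with the $\lambda$-weight space (of the appropriate parity) being one-dimensional modulo $K$. The claim is that within $\pi^{-1}(\C v_\varnothing)$ one can choose a genuine highest weight vector of weight $(\lambda, \p p)$. Suppose $w$ is a homogeneous lift of $v_\varnothing$ and $E_i w \neq 0$ for some $i$; then $E_i w \in K$ is a nonzero vector of weight $(\lambda + Q_i, \p p + \p 1)$. Now apply $F_i$: using relation \eqref{eq:oddOdd}, $F_i E_i w = -E_i F_i w + \frac{\mathcal{K}_i - \mathcal{K}_i^{-1}}{q - q^{-1}} w$, and the scalar by which $\frac{\mathcal{K}_i - \mathcal{K}_i^{-1}}{q-q^{-1}}$ acts on the weight-$\lambda$ vector $w$ is $[\chi_i(\lambda)]_q \neq 0$ by typicality (via \eqref{eq:unrolledCentralAct}). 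Iterating over all $i$ (using that the $E_i$ commute and square to zero, so there is a well-defined maximal subset on which the composite $E_I w$ is nonzero), one produces a modification $w' = w + (\text{terms in lower weight})$ that is a genuine highest weight vector; more carefully, one inductively subtracts appropriate $F_i$-multiples of the $K$-valued error terms. This yields a highest-weight vector $\tilde v$ of weight $(\lambda, \p p)$ in $W$ with $\pi(\tilde v) = v_\varnothing$, hence a map $V_{(\lambda,\p p)} \to W$ splitting $\pi$, because $V_{(\lambda,\p p)}$ is generated freely-from-the-top by its highest weight vector. This proves $V$ is projective.

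For injectivity, I would use the pivotal (in fact ribbon, by Theorem \ref{thm:ribbonCatInt}, though only duality is needed here) structure on $\cat_R$ from Lemma \ref{lem:catQPivot}: the contravariant functor $(-)^{\vee}$ is an exact anti-autoequivalence of $\cat_R$, so it carries projectives to injectives. By Lemma \ref{lem:dualVerma}, $V_{(\lambda,\p p)}^{\vee} \simeq V_{(\lambda^{\vee}, \p p + \p n)}$, and $\lambda$ is typical if and only if $\lambda^{\vee}$ is, since $\chi_i(\lambda^{\vee}) = \met^{\vee}(Q_i, -\lambda + Q_{\{1,\dots,n\}}) = -\chi_i(\lambda) + \sum_j \met^{\vee}(Q_i,Q_j) = -\chi_i(\lambda)$ by the Fundamental Identity \eqref{eq:fundIdenAbs}, so $[\chi_i(\lambda^{\vee})]_q = -[\chi_i(\lambda)]_q$. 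Hence the dual of a simple Verma module is again a simple Verma module, and projectivity of all simple Vermas (already established) gives injectivity of all simple Vermas.

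The main obstacle is the inductive straightening argument in the second paragraph: making precise that one can correct an arbitrary lift $w$ of the highest weight vector to a genuine highest weight vector, given only that the ``errors'' $E_i w$ lie in the kernel. The clean way to organize this is to work one index at a time, noting that the subalgebra generated by $E_i, F_i, \mathcal{K}_i^{\pm 1}$ is (a quotient of) a rank-one object for which the relevant $[\chi_i(\lambda)]_q$ being invertible makes $F_i$ act injectively on the span of $w$ modulo the part already killed; alternatively, one can invoke that in the weight-$\lambda$ subspace the operator $\sum_i F_i E_i$ acts, modulo lower-order terms, as an invertible scalar, so $w$ can be adjusted. I expect this to be the only place requiring genuine care; everything else is formal (exactness of induction, the adjunction, and the duality functor).
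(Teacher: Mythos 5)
Your proof is correct, and the first half is essentially the paper's argument: the paper also lifts the highest weight vector to a weight-$(\lambda,\p p)$ preimage $v$ with $E_l v \in \ker f$ and then corrects it, closing the ``straightening'' step you flag as delicate with the explicit ansatz $v' = v + \sum_{I \neq \varnothing} c_I F_I E_I v$ and the closed-form solution $c_I = (-1)^{\lfloor \vert I \vert /2 \rfloor} / \prod_{i \in I} [\chi_i(\lambda)]_q$, whose denominators are exactly where typicality enters. Your rank-one computation $F_iE_i w = -E_iF_iw + [\chi_i(\lambda)]_q w$ is the right mechanism and the multi-index bookkeeping does go through, so there is no gap here, only an unexecuted (but routine) computation. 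One small slip: the correction terms $F_I E_I w$ are \emph{not} of lower weight --- they again have weight $(\lambda, \p p)$, which is precisely why $w'$ remains a weight-homogeneous lift of $v_{\varnothing}$; what matters is that they lie in $\ker\pi$. Where you genuinely diverge from the paper is injectivity: you deduce it from exactness/contravariance of $(-)^{\vee}$ together with Lemma \ref{lem:dualVerma} and the observation $\chi_i(\lambda^{\vee}) = -\chi_i(\lambda)$ (so duality permutes the typical Vermas), whereas the paper simply invokes \cite[Proposition 6.1.3]{etingof2015} to conclude that projectives and injectives coincide in all of $\cat_R$. Your route is more self-contained and elementary, but it only yields injectivity of the simple Vermas; the paper's citation establishes the stronger global statement that projective and injective objects of $\cat_R$ coincide, which is reused later (for instance in the proof of Lemma \ref{lem:catUnimod}). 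If you adopt your version, that global coincidence would still need to be justified separately where it is used.
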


\begin{proof}
Let $f: V \rightarrow W$ be an epimorphism and $g: V_{(\lambda,\p p)}  \rightarrow W$ a non-zero morphism. The map $g$ is determined by the highest weight vector $g(v_{\varnothing})=w \in W$ of weight $(\lambda,\p p)$. Surjectivity of $f$ implies that $w$ has a preimage under $f$, say $v$, which is of weight $(\lambda,\p p)$ and satisfies $E_l v \in \ker f$ for each $1 \leq l \leq n$. For any constants $c_I \in \C$, $I \neq \varnothing$, the vector
\[
v^{\prime} = v + \sum_{I \neq \varnothing} c_I F_I E_I v \in V
\]
is of weight $(\lambda,\p p)$ and satisfies $f(v^{\prime})=w$. Using equations \eqref{eq:EwithE} and \eqref{eq:EwithFComm}, we compute
\[
E_l v^{\prime}
=
E_l v + \sum_{I \neq \varnothing} c_I \big( (-1)^{\p I} (1 - \delta_{l,I})\sgn(\{l\}, I) F_I E_{I \sqcup l} v + \sgn(\{l\},I \setminus \{l\}) \delta_{l,I} [\chi_l(\lambda)l]_q F_{I \setminus l} E_I v \big).
\]
We conclude that $E_l v^{\prime}=0$ if and only if
\[
(-1)^{\p I} \sgn(\{l\}, I) c_I +\sgn(\{l\}, I) [\chi_l(\lambda)]_q c_{I \sqcup l} =0
\]
for all subsets $I$ which do not contain $l$. The equations have the unique solution
\[
c_I = \frac{(-1)^{\lfloor \frac{\vert I \vert}{2} \rfloor}}{\prod_{i \in I} [\chi_i(\lambda)]_q},
\qquad
I \subset \{1, \dots, n\}.
\]
Note that typicality of $\lambda$ ensures the non-vanishing of the denominator of each $c_I$. A lift of $g$ to $V_{(\lambda, \p p)} \rightarrow V$ is then determined by the assignment $v_{\varnothing} \mapsto v^{\prime}$. It follows that $V_{(\lambda,\p p)}$ is projective.

Since $\cat_R$ satisfies the assumptions of \cite[Proposition 6.1.3]{etingof2015}, projective and injective objects coincide.
\end{proof}

\subsection{Braiding}
\label{sec:braiding}

Motivated by the connection between the Lie superalgebras $\GW$ and $\gloo$, as in Lemma \ref{lem:subAlg}, and known universal $R$-matrices for $\gloo$ \cite{kulish1989,khoroshkin1991}, we construct in this section a braiding on the category $\cat_R$.

Let $V, W \in \cat_R$. Define $\Upsilon_{V,W} \in \End_{\C}(V \otimes W)$ by
\begin{equation}
\label{eq:upsilonFactor}
\Upsilon_{V,W} (v \otimes w)
=
q^{-\met^{\vee}( \lambda_v, \lambda_w)} v \otimes w,
\end{equation}
where $v \in V$ and $w \in W$ are of weight $\lambda_v$ and $\lambda_w$, respectively. Let
\[
\tilde{R}
=
\exp \left( (q-q^{-1}) \sum_{i=1}^n E_i \mathcal{K}_i \otimes F_i \mathcal{K}_i^{-1} \right).
\]
Because of the relations \eqref{eq:nilpotent}, the element $\tilde{R}$ is a finite sum and so a well-defined element of $\Uq \otimes_{\C} \Uq$. Using centrality of $\mathcal{K}_i$, we find
\[
\tilde{R}
=
\sum_I (-1)^{{\p I \choose 2}}(q-q^{-1})^{\p I} E_I \mathcal{K}_I \otimes F_I \mathcal{K}_I^{-1},
\]
where ${\p I \choose 2} = \frac{\vert I \vert (\vert I \vert -1)}{2}$. Finally, let $c_{V,W} \in \Hom_{\C}(V \otimes W, W \otimes V)$ be the composition $\tau_{V,W} \circ \tilde{R}_{V,W} \circ \Upsilon_{V,W}$, where $\tau$ is the standard symmetric braiding on the category of super vector spaces which incorporates the Koszul sign rule and $\tilde{R}_{V,W}$ is multiplication by $\tilde{R}$ on $V \otimes W$.

\begin{Prop}
\label{prop:braiding}
The morphisms $\{c_{V,W} \mid V,W \in \cat_R\}$ define a braiding on $\cat_R$.
\end{Prop}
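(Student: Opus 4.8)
The plan is to verify the three defining axioms of a braiding: naturality of $c_{V,W}$ in both arguments, and the two hexagon identities relating $c_{V,W\otimes U}$ and $c_{V\otimes W, U}$ to compositions of the $c$'s. The strategy is the standard one for quasitriangular Hopf (super)algebras, but with the extra twist that our "$R$-matrix" is not a genuine element of $\Uq\otimes\Uq$ — it is the product of the honest element $\tilde R$ with the operator $\Upsilon_{V,W}$, which is defined via the $\bos$-weight grading and behaves like the exponential of a Cartan-type term $\sum_{a,b}\met^{ab}Z_a\otimes Z_b$ even though that element does not lie in $\Uq\otimes\Uq$. So I would first record the key commutation identities that $\Upsilon$ satisfies: for $x\in\Uq$ homogeneous of $\bos$-weight $\nu_x$ (meaning $[Z,x]=\nu_x(Z)x$), one has $\Upsilon_{V,W}\circ(x\otimes 1)=(x\otimes q^{-\met^\vee(\nu_x,-)})\circ\Upsilon_{V,W}$ and similarly on the second tensor factor, where $q^{-\met^\vee(\nu_x,-)}$ denotes the operator $w\mapsto q^{-\met^\vee(\nu_x,\lambda_w)}w$. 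Concretely $E_i$ has weight $Q_i$ and $F_i$ has weight $-Q_i$, so conjugating $E_i\otimes 1$ past $\Upsilon$ produces a factor $q^{-\met^\vee(Q_i,-)}$ on the second leg, which is exactly $\mathcal K_i^{-1}$ by \eqref{eq:unrolledCentralAct}; likewise conjugating $1\otimes F_i$ produces $\mathcal K_i^{-1}$ on the first leg. These identities are what make $c_{V,W}=\tau_{V,W}\circ\tilde R_{V,W}\circ\Upsilon_{V,W}$ interact correctly with the coproduct.

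For naturality, I would check directly that $\tilde R_{V,W}\circ\Upsilon_{V,W}\colon V\otimes W\to V\otimes W$ commutes with the action of $\Delta(x)$ in the sense that $\tilde R\cdot\Upsilon\cdot\Delta(x)=\Delta^{\mathrm{op}}(x)\cdot\tilde R\cdot\Upsilon$ as operators on $V\otimes W$, so that after applying $\tau$ it becomes $\Uq$-linear (degree-$\p0$ linearity is where the restriction to even morphisms in the definition of $\cat_R$ is used). It suffices to verify this on the algebra generators $Z_a, K_a^{\pm1}, E_i, F_i$. The cases $Z_a$ and $K_a$ are immediate since $\Delta$ is cocommutative on them, $\tilde R$ commutes with $\mathcal K_j^{\pm1}$, and $\Upsilon$ commutes with weight-preserving operators. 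The cases $E_i$ and $F_i$ are the content-bearing ones: using $\Delta E_i=E_i\otimes\mathcal K_i^{-1}+1\otimes E_i$, $\Delta^{\mathrm{op}}E_i=\mathcal K_i^{-1}\otimes E_i+E_i\otimes 1$, the $\Upsilon$-commutation identities above, the expansion of $\tilde R$ as a sum over subsets $I$, and the relations \eqref{eq:nilpotent}, \eqref{eq:oddOdd} (together with $E_i^2=0$), one matches the two sides termwise; the telescoping that occurs is the super/unrolled analogue of the standard $[E_i,\sum_k E_k\mathcal K_k\otimes F_k\mathcal K_k^{-1}]$ computation for $\gloo$, and indeed the map $U_q^H(\gloo)\to\Uq$ from the end of Section~\ref{sec:unrolledQuant} lets one import the one-variable computation and then sum over $i$, using the Fundamental Identity \eqref{eq:fundIden} (equivalently centrality of $\mathcal K_i$, Lemma~\ref{lem:centralK}) to see that the cross-terms for $i\neq j$ cause no trouble.

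For the hexagons, the familiar quasitriangularity identities to target are $(\Delta\otimes\id)(\tilde R)=\tilde R_{13}\tilde R_{23}$ and $(\id\otimes\Delta)(\tilde R)=\tilde R_{13}\tilde R_{12}$, which follow from the coproduct formula \eqref{eq:coprodEM}–\eqref{eq:coprodFM} for $E_M, F_M$ and the sign $\sgn(I,J)$ (this is precisely why those formulas were recorded); combined with the bicharacter property of $(\nu,\mu)\mapsto q^{-\met^\vee(\nu,\mu)}$, which gives the corresponding "$\Delta$-compatibility" of $\Upsilon$, namely $\Upsilon$ on a triple factors as the product of the three pairwise $\Upsilon$'s in the right order. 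Assembling these, together with the $\Upsilon$-conjugation identities to move the various $\mathcal K_i^{\pm1}$ tails into place, yields the two hexagon axioms; the weight-exponential factor $\Upsilon$ is responsible for the "Cartan part" of the hexagons and $\tilde R$ for the "nilpotent part," and they do not interfere because $\tilde R$ is built from weight-homogeneous elements.

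I expect the main obstacle to be bookkeeping rather than conceptual: keeping straight the Koszul signs from $\tau$, the shuffle signs $\sgn(I,J)$ and $(-1)^{\binom{\p I}{2}}$ in the expansion of $\tilde R$, and the placement of the $\mathcal K_i^{\pm1}$ factors that appear when $\Upsilon$ is commuted past $E_i, F_i$ and when $\Delta$ is applied — getting all of these to cancel in the naturality check for $E_i$ and $F_i$ is the delicate step. A clean way to organize it, which I would adopt, is to first prove the abstract statement that $\tilde R\cdot\Upsilon$ realizes a (topological/weight-completed) $R$-matrix for the Hopf superalgebra, i.e. verify the three identities $\tilde R\Upsilon\Delta=\Delta^{\mathrm{op}}\tilde R\Upsilon$, $(\Delta\otimes\id)(\tilde R\Upsilon)=(\tilde R\Upsilon)_{13}(\tilde R\Upsilon)_{23}$, $(\id\otimes\Delta)(\tilde R\Upsilon)=(\tilde R\Upsilon)_{13}(\tilde R\Upsilon)_{12}$ as operator identities on weight modules, and then invoke the general principle that such data produces a braiding $c_{V,W}=\tau\circ(\tilde R\Upsilon)_{V,W}$ on the module category; this isolates all the sign-chasing into three self-contained lemmas.
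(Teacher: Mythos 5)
Your overall plan --- verify $\Uq$-linearity and the two hexagon identities, using the coproduct formulas \eqref{eq:coprodEM}--\eqref{eq:coprodFM}, the $\Upsilon$-conjugation identities, and the Fundamental Identity \eqref{eq:fundIden} --- is essentially the paper's argument; the paper simply carries out the computations directly on the weight basis, whereas you propose to first package the three quasitriangularity identities for $\tilde R\Upsilon$ as self-contained operator lemmas. That reorganization is legitimate, and your observation that $\Uq$-linearity of $c_{V,W}$ amounts to $\tilde R\Upsilon\Delta=\Delta^{\mathrm{op}}\tilde R\Upsilon$ on weight modules is the right framing.

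However, one of your proposed lemmas is false as stated and a required check is missing. The identity $(\Delta\otimes\id)(\tilde R)=\tilde R_{13}\tilde R_{23}$ does not hold: since $\Delta(E_i\mathcal{K}_i)=E_i\mathcal{K}_i\otimes 1+\mathcal{K}_i\otimes E_i\mathcal{K}_i$, the left-hand side contains a $\mathcal{K}_i$ in the first tensor factor that is absent from $\tilde R_{13}\tilde R_{23}$. The true statement is the factorization for the full product $\tilde R\Upsilon$ as an operator on triple weight modules; the spurious $\mathcal{K}_i$ is exactly what is generated by commuting $\tilde R_{23}$ past $\Upsilon_{13}$ (your ``$\Upsilon$-conjugation identity''). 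In the paper's hexagon computation this is precisely the step where the $\mathcal{K}_J$ in \eqref{eq:coprodFM} acts as the scalar $q^{\sum_{j\in J}\chi_j(\lambda_v)}$ and is absorbed into the bicharacter factorization of $\Upsilon_{U,V\otimes W}$. You acknowledge the $\mathcal{K}_i$ tails must be ``moved into place,'' so you have the right intuition, but the self-contained lemma as written for $\tilde R$ alone would not be provable; state it for $\tilde R\Upsilon$. Separately, you never check that $c_{V,W}$ is invertible. A braiding is a natural \emph{isomorphism}, and invertibility does not follow formally from the three operator identities; the paper handles it by observing that $\Upsilon_{V,W}$ is plainly invertible and exhibiting the explicit inverse $\sum_I(-1)^{\binom{\p I}{2}+\p I}(q-q^{-1})^{\p I}E_I\mathcal{K}_I\otimes F_I\mathcal{K}_I^{-1}$ of $\tilde R$.
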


\begin{proof}
Naturality of $c_{V,W}$ is clear, keeping in mind that morphisms of $\cat_R$ are of degree $\p 0$; allowing morphisms of degree $\p 1$ would introduce signs which would invalidate naturality. Invertibility of $c_{V,W}$ follows from the observation that $\Upsilon_{V,W}$ is invertible and the direct verification that $\tilde{R}$ is a unit with inverse
\[
\sum_I (-1)^{{\p I \choose 2} + \p I}(q-q^{-1})^{\p I} E_I \mathcal{K}_I \otimes F_I \mathcal{K}_I^{-1}.
\]

To prove $\Uq$-linearity of $c_{V,W}$, it suffices to verify that $c_{V,W}$ commutes with the action of the generators of $\Uq$. That $c_{V,W}$ commutes with $Z_a$, and hence $K_a$, follows from the fact that $c_{V,W}$ is a composition of operators of weight zero. We verify that $c_{V,W}$ commutes with the action of $E_l$; the verification for $F_l$ is similar. Let $v \in V$ and $w \in W$ be of weight $(\lambda_v, \p v)$ and $(\lambda_w, \p w)$, respectively. Applying the definitions, we find that $E_l \cdot c_{V,W}(v \otimes w)$ is equal to
\begin{multline*}
\sum_I (-1)^{{\p I \choose 2} + \p v \p I + (\p v + \p I)(\p w + \p I)}(q-q^{-1})^{\p I}  q^{-\met^{\vee}(\lambda_v,\lambda_w) + \sum_{i \in I} \chi_i(\lambda_v-\lambda_w) - \chi_l(\lambda_v)} E_l F_I w \otimes E_I v + \\
\sum_I (-1)^{{\p I \choose 2} + \p v \p I + (\p v + \p I)(\p w + \p I) + (\p w + \p I)}(q-q^{-1})^{\p I} q^{-\met^{\vee}(\lambda_v,\lambda_w) + \sum_{i \in I} \chi_i(\lambda_v-\lambda_w) } F_I w \otimes E_l E_I v
\end{multline*}
and $c_{V,W}(E_l \cdot (v \otimes w))$ is equal to
\begin{multline*}
\sum_I (-1)^{{\p I \choose 2} + (\p v +1) \p I + (\p I + 1 + \p v)(\p I + \p w)} (q-q^{-1})^{\p I} q^{-\chi_l(\lambda_w) + \sum_{i \in I} \chi_i(\lambda_v+Q_l-\lambda_w) -\met^{\vee}(\lambda_v+Q_I, \lambda_w)}   F_I w \otimes E_I E_l v + \\
\sum_I (-1)^{\p v + {\p I \choose 2} + \p v \p I + (\p I + \p v)(\p I + 1 + \p w)} (q-q^{-1})^{\p I} q^{\sum_{i \in I} \chi_i(\lambda_w-\lambda_w-Q_l) -\met^{\vee}(\lambda_v, \lambda_w+Q_I)} F_I E_l w \otimes E_I v.
\end{multline*}
Using equation \eqref{eq:EwithFComm}, we see that the coefficients of $F_I E_l w \otimes E_I v$ in $E_l \cdot c_{V,W}(v \otimes w)$ and $c_{V,W}(E_l \cdot (v \otimes w))$ are
\[
(-1)^{{\p I \choose 2} + \p v \p I + (\p v + \p I)(\p w + \p I) + \p I}(q-q^{-1})^{\p I} q^{-\met^{\vee}(\lambda_v,\lambda_w) +\sum_{i \in I} \chi_i(\lambda_v-\lambda_w) - \chi_l(\lambda_v)}
\]
and
\[
(-1)^{\p v + {\p I \choose 2} + \p v \p I + (\p I + \p v)(\p I + 1 + \p w)} (q-q^{-1})^{\p I} q^{\sum_{i \in I} \chi_i(\lambda_v-\lambda_w-Q_l) - \met^{\vee}(\lambda_v,\lambda_w+Q_l)},
\]
respectively. By equation \eqref{eq:fundIden}, we can replace $\lambda_v-\lambda_w-Q_l$ with $\lambda_v-\lambda_w$ in the second expression. Using the definition of $\chi_i$, we then see that the coefficients are equal.

Continuing, the coefficient of $F_I w \otimes E_I E_l v$ in $c_{V,W}(E_l \cdot (v \otimes w))$ is
\[
(-1)^{{\p I \choose 2} + (\p v +1) \p I + (\p I + 1 + \p v)(\p I + \p w)} (q-q^{-1})^{\p I}
q^{-\chi_l(\lambda_w) + \sum_{i \in I} \chi_i(\lambda_v+Q_l-\lambda_w) -\met^{\vee}(\lambda_v, \lambda_w) - \chi_l(\lambda_w)}.
\]
There are two contributions to the coefficient of $F_I w \otimes E_I E_l v$ in $E_l \cdot c_{V,W}(v \otimes w)$. The first is from the coefficient of $F_I w \otimes E_l E_I v$, which appears with an additional factor of $(-1)^{\p I}$ due to the equality $E_I E_l = (-1)^{\p I} E_l E_I$, and so is equal to
\begin{equation}
\label{eq:contFromSec}
(-1)^{{\p I \choose 2} + \p v \p I + (\p v + \p I)(\p w + \p I) + (\p w + \p I) + \p I}(q-q^{-1})^{\p I} q^{-\met^{\vee}(\lambda_v,\lambda_w) + \sum_{i \in I}  \chi_i(\lambda_v-\lambda_w)}.
\end{equation}
The second is from the coefficient of $E_l F_{I^{\prime}} w \otimes E_{I^{\prime}} v$, where $I^{\prime} = I \sqcup \{l\}$, which appears due to equation \eqref{eq:EwithFComm} and with an additional factor of $(-1)^{\p I}$ due to the equation
\[
\sgn(\{l\},I)F_{I^{\prime} \setminus l} w \otimes E_{I^{\prime}} v
=
(-1)^{\p I} F_I w \otimes E_I E_l v.
\]
The second contribution is therefore
\[
(-1)^{{{\p I}^{\prime} \choose 2} + \p v {\p I}^{\prime} + (\p v + {\p I}^{\prime})(\p w + {\p I}^{\prime}) + \p I}(q-q^{-1})^{{\p I}^{\prime}} q^{-\met^{\vee}(\lambda_v,\lambda_w) + \sum_{i \in I^{\prime}} \chi_i(\lambda_v - \lambda_w) - \chi_l(\lambda_v)} [\chi_l(\lambda_w)]_q.
\]
Writing $[\chi_l(\lambda_w)]_q=\frac{q^{\chi_l(\lambda_w)}}{q-q^{-1}} - \frac{q^{-\chi_l(\lambda_w)}}{q-q^{-1}}$ so as to obtain a sum of two terms, we see that the first summand is the negative of \eqref{eq:contFromSec} while the second is equal to the coefficient of $F_I w \otimes E_I E_l v$ in $c_{V,W}(E_l \cdot (v \otimes w))$; each of these statements invokes equation \eqref{eq:fundIdenAbs} to cancel certain terms. This proves equality of the coefficients of $F_I w \otimes E_I E_l v$ and hence $E_l$-linearity of $c_{V,W}$.

Next, consider the hexagon identities, in which we suppress all associators. Let $U,V,W \in \cat_R$. We verify that $c_{U, V \otimes W} = (\id_V \otimes c_{U,W}) \circ (c_{U,V} \otimes \id_W)$; verification of the second hexagon identity is similar. Fix vectors $u\in U$, $v\in V$ and $w\in W$ of weights $(\lambda_u,\p u)$, $(\lambda_v,\p v)$ and $(\lambda_w,\p w)$, respectively. A direct computation gives for $(\id_V \otimes c_{U,W}) \circ (c_{U,V} \otimes \id_W)(u \otimes v \otimes w)$ the expression
\begin{multline*}
\sum_{I,J} (-1)^{{\p I \choose 2} + \p u \p I + (\p u + \p I)(\p v + \p I) + {\p J \choose 2} + (\p u + \p I) \p J + (\p u + \p I + \p J)(\p w + \p J)}(q-q^{-1})^{\p I + \p J  + (\p v+\p I) \p J} \cdot \\
q^{-\met^{\vee}( \lambda_u, \lambda_v) + \sum_{i \in I} \chi_i(\lambda_u-\lambda_v)-\met^{\vee}( \lambda_u+ \sum_{i \in I} Q_i, \lambda_w) + \sum_{j \in J} \chi_j(\lambda_u-\lambda_w)} F_I v \otimes F_J w \otimes E_J E_I u.
\end{multline*}
By the relations \eqref{eq:nilpotent}, a summand can be non-zero only if $I \cap J = \varnothing$. Similarly, we find that $c_{U,V \otimes W}(u \otimes v \otimes w)$ is equal to
\begin{eqnarray*}
\sum_M (-1)^{{\p M \choose 2} + \p u \p M + (\p u + \p M)(\p v + \p w + \p M)}(q-q^{-1})^{\p M}
q^{-\met^{\vee}( \lambda_u, \lambda_{v\otimes w}) + \chi_i(\lambda_u-\lambda_{v \otimes w})} F_M (v \otimes w) \otimes E_M u.
\end{eqnarray*}
Using equation \eqref{eq:coprodFM}, we can write
\[
F_M (v \otimes w)
=
\sum_{\substack{ I, J \\ I \sqcup J = M}} \sgn(I,J)(-1)^{\p w \p J} q^{\sum_{j \in J} \chi_j(\lambda_b)} F_I v  \otimes F_J w.
\]
Using the equalities $\lambda_{v \otimes w} = \lambda_v + \lambda_w$ and $E_J E_I = \sgn(I,J) E_{I \sqcup J}$ (see equation \eqref{eq:EwithE}), we conclude that $c_{U, V \otimes W} = (\id_V \otimes c_{U,W}) \circ (c_{U,V} \otimes \id_W)$.
\end{proof}

\subsection{Generic semisimplicity}
\label{sec:genSS}

Let $\chi: \bos^{\vee} \rightarrow \C^n$ be the linear map whose $i$\textsuperscript{th} component is $\chi_i(\lambda)$. Viewing the root matrix $Q$ as a linear map, equation \eqref{eq:fundIdenAbs} gives a complex
\[
0 \rightarrow \C^n \xrightarrow[]{Q} \bos^{\vee} \xrightarrow[]{\chi} \C^n \rightarrow 0.
\]
Let $\GrH$ be the image of $\chi$. For each $\xi \in \GrH$, let $\cat_{R,\xi} \subset \cat_R$ be the full subcategory of modules on which the central element $\mathcal{Z}_{i}$, defined in Lemma \ref{lem:centralElements}, acts by $\xi_i$ for each $1 \leq i \leq n$. Let
\[
\SSSY = \{(\xi_1, \dots, \xi_n) \in \GrH \mid \prod_{i=1}^n [\xi_i]_q =0\}.
\]
If $n \geq 1$, then Assumption \ref{assump:noZeroRoots} ensures that $\GrH$ is non-trivial. In this case $\SSSY$ is a small symmetric subset of $\GrH$.

\begin{Prop}
\label{prop:genericSSZWt}
If $n=0$, then the category $\cat_R$ is semisimple. If $n \geq 1$, then the $\GrH$-graded category
\[
\cat_R \simeq \bigoplus_{\xi \in \GrH} \cat_{R,\xi}
\]
is generically semisimple with small symmetric subset $\SSSY$. Moreover, for $\xi \in \GrH \setminus \SSSY$, a completely reduced dominating set of $\cat_{R,\xi}$ is
\[
\{V_{(\lambda,\p p)} \mid \chi(\lambda) = \xi, \; \p p \in \Ztwo \}.
\]
\end{Prop}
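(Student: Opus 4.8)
The $n=0$ case is immediate: relations \eqref{eq:roots}--\eqref{eq:oddOdd} are then absent, so $\Uq$ is the supercommutative algebra on $\{Z_a,K_a^{\pm 1}\}$, and a weight module---finite dimensional, with $\bos$ acting semisimply and $K_a=q^{Z_a}$---is a direct sum of the simple one-dimensional modules $\C_{(\lambda,\p p)}$, hence semisimple. So assume $n\geq 1$. I would organise the proof in three steps: (i) the $\GrH$-grading is well defined; (ii) $\cat_{R,\xi}$ is semisimple for $\xi\notin\SSSY$; (iii) the displayed set is a completely reduced dominating set of $\cat_{R,\xi}$.

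For (i), recall from Lemma \ref{lem:centralElements} that the $\mathcal{Z}_i$ are central, and observe directly from the definitions that $\mathcal{Z}_i$ acts on a weight-$\lambda$ vector by the scalar $\chi_i(\lambda)$. Any weight module carries a weight-space decomposition, and since $E_i,F_i$ shift weights by $\pm Q_i$ while $\chi\circ Q=0$ by the Fundamental Identity \eqref{eq:fundIdenAbs}, grouping weight spaces by the value of $\chi$ expresses every weight module as a direct sum of submodules indexed by $\xi\in\GrH=\ima\chi$; centrality of the $\mathcal{Z}_i$ makes $\Hom$ vanish between $\cat_{R,\xi}$ and $\cat_{R,\xi'}$ when $\xi\neq\xi'$. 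This yields $\cat_R\simeq\bigoplus_\xi\cat_{R,\xi}$, and the axioms of Definition \ref{def:Gstr} follow since $\chi$ is linear and $\chi(0)=0$ (so $\mathbb{I}\in\cat_{R,0}$, $V^{\vee}\in\cat_{R,-\xi}$ and $V\otimes V'\in\cat_{R,\xi+\xi'}$). Smallness and symmetry of $\SSSY$ follow from $[\xi_i]_q=0\iff\xi_i\in\tfrac{\pi\sqrt{-1}}{\hbar}\Z$ together with the fact (Assumption \ref{assump:noZeroRoots}) that each $\chi_i$ is a nonzero linear functional, so that $\SSSY$ and any finite union of its translates are countable unions of proper affine subspaces of the nonzero complex vector space $\GrH$.

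Step (ii) is the crux. Fix $\xi\in\GrH\setminus\SSSY$. Every weight $\lambda$ occurring in an object of $\cat_{R,\xi}$ satisfies $\chi(\lambda)=\xi$, hence $\prod_i[\chi_i(\lambda)]_q=\prod_i[\xi_i]_q\neq 0$ and $\lambda$ is typical. By the classification of simples (Theorem \ref{thm:simplesSplitAb}), together with the fact that the atypical simples $S_{(\mu,\p q)}$ have atypical highest weight and so cannot occur in $\cat_{R,\xi}$, every simple object of $\cat_{R,\xi}$ is isomorphic to a simple Verma module $V_{(\lambda,\p p)}$ with $\chi(\lambda)=\xi$; conversely each such Verma is simple (Proposition \ref{prop:simpleVerma}) and lies in $\cat_{R,\xi}$ because $\chi\circ Q=0$. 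By Lemma \ref{lem:projVerma} these simple Vermas are injective objects of $\cat_R$. The standard argument then finishes: given nonzero $V\in\cat_{R,\xi}$, choose a simple subobject $S$ (which exists as $V$ has finite length); injectivity of $S$ splits the inclusion, $V\simeq S\oplus V'$ with $V'\in\cat_{R,\xi}$ of smaller length, and induction on length shows $V$ is semisimple.

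For (iii), semisimplicity of $\cat_{R,\xi}$ shows that its objects are finite direct sums of simple subobjects, so the set of isomorphism classes of simples is dominating; by step (ii) this set is precisely $\{V_{(\lambda,\p p)}\mid\chi(\lambda)=\xi,\ \p p\in\Ztwo\}$, and distinct parameters give non-isomorphic modules since any isomorphism of degree $\p 0$ must match the one-dimensional spaces of highest weight vectors, which carry weights $(\lambda,\p p)$. It is completely reduced because $\C$ is algebraically closed and $\cat_R$ is locally finite, so Schur's Lemma gives $\dim_\C\Hom_{\cat}$ between two such modules equal to $1$ when the parameters coincide and $0$ otherwise. I expect the only real obstacle to be organising this chain of implications cleanly; conceptually the point is that passing to a typical central character forces every simple object to be a projective-injective Verma, whence semisimplicity and the description of the dominating set are formal.
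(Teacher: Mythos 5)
Your argument is correct and follows essentially the same route as the paper's: you establish the $\GrH$-grading via centrality of $\mathcal{Z}_i$ and $\Lambda_R \subset \ker\chi$, identify simple objects in a generic $\cat_{R,\xi}$ with typical Verma modules, and then split them off by injectivity (Lemma \ref{lem:projVerma}) and induct. The only cosmetic difference is that you invoke the full classification of simples (Theorem \ref{thm:simplesSplitAb}) and pick a simple subobject, whereas the paper works directly from a highest weight vector via Lemmas \ref{lem:existHW} and \ref{lem:simpQuot}; these are interchangeable.
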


\begin{proof}
When $n=0$ the category $\cat_R$ is semisimple by construction.

Assume then that $n \geq 1$. If $V \in \cat_R$ is indecomposable, then its weights lie in a single congruence class of $\Lambda_R$ in $\bos^{\vee}$. Equation \eqref{eq:fundIden} implies that $\Lambda_R \subset \ker \chi$. It follows from this and the definitions of the coproduct and antipode of $\Uq$ that $\cat_R \simeq \bigoplus_{\xi \in \GrH} \cat_{R,\xi}$ is an $\GrH$-grading. Let $\xi \in \GrH \setminus \SSSY$ and $V \in \cat_{R,\xi}$ non-zero. By Lemmas \ref{lem:existHW} and \ref{lem:simpQuot}, the submodule generated by a homogeneous highest weight vector of $V$ is isomorphic to a quotient of $V_{(\lambda,\p p)}$ for some $\lambda \in \bos^{\vee}$ with $\chi(\lambda) =\xi$. Since $\xi \notin \SSSY$, Proposition \ref{prop:simpleVerma} implies that this submodule is simple and so isomorphic to $V_{(\lambda,\p p)}$. By Lemma \ref{lem:projVerma}, $V_{(\lambda,\p p)}$ is injective, whence there exists a splitting $V\simeq V^{\prime} \oplus V_{(\lambda,\p p)}$ with $V^{\prime} \in \cat_{R,\xi}$ of dimension strictly less than that of $V$. Iterating this process shows that $\cat_{R,\xi}$ is semisimple with the claimed completely reduced dominating set.
\end{proof}

\subsection{Ribbon structure}
\label{sec:ribbStr}

Recall that the category $\cat_R$ is pivotal (Lemma \ref{lem:catQPivot}) and braided (Proposition \ref{prop:braiding}).

\begin{Thm}
\label{thm:ribbonCat}
The natural automorphism of the identity functor $\theta : \id_{\cat_R} \Rightarrow \id_{\cat_R}$ whose components are the right partial traces of the braiding,
\[
\theta_V = \ptr_R(c_{V,V}),
\qquad
V \in \cat_R
\]
gives $\cat_R$ the structure of a $\C$-linear ribbon category.
\end{Thm}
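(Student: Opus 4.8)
The plan is to show that $\theta$ as defined is a ribbon structure by checking three things: (i) $\theta_V$ is a well-defined natural automorphism of the identity functor on $\cat_R$; (ii) $\theta$ satisfies the compatibility with the braiding, i.e.\ $\theta_{V \otimes W} = (\theta_V \otimes \theta_W) \circ c_{W,V} \circ c_{V,W}$; and (iii) $\theta$ is compatible with duality/pivotal structure, i.e.\ $(\theta_V)^{\vee} = \theta_{V^{\vee}}$ with respect to the pivotal structure of Lemma~\ref{lem:catQPivot}. Since $\cat_R$ is already pivotal and braided, the only genuinely new content is that the partial trace of the braiding produces a \emph{balanced} (twist) morphism, and the cleanest route to (ii) is the argument of \cite{geer2018}: it suffices to verify the balancing identity on the generic homogeneous subcategories $\cat_{R,\xi}$, $\xi \in \GrH \setminus \SSSY$, which are semisimple by Proposition~\ref{prop:genericSSZWt}, and then invoke the general principle that a natural transformation of the identity functor which satisfies the balancing relation on a generically semisimple locus satisfies it everywhere.

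Concretely, I would first compute $\theta_V$ on a simple Verma module $V_{(\lambda,\p p)}$ with $\lambda$ typical. By the explicit formulas for $\tilde{R}$, $\Upsilon$, and the pivotal structure, $\theta_{V_{(\lambda,\p p)}}$ acts as a scalar; combining the exponential factor $q^{-\met^{\vee}(\lambda,\lambda)}$ from $\Upsilon$, the contributions from $\tilde R$ through the $E_I \mathcal{K}_I \otimes F_I \mathcal{K}_I^{-1}$ terms, and the $\mathcal{K}$-twist in $\ev$, one obtains a scalar of the form $(-1)^{\p p}\, q^{-\met^{\vee}(\lambda,\lambda) + \text{(correction)}}$ times a sum over subsets $I$; I expect the correction to involve $\sum_i \chi_i(\lambda)$ and the combinatorial sum to telescope using $\prod_i(1 + \text{something})$, so that $\theta_{V_{(\lambda,\p p)}}$ is invertible (here typicality, hence $[\chi_i(\lambda)]_q \neq 0$, is what guarantees invertibility). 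On such simple objects the balancing identity $\theta_{V \otimes W} = (\theta_V \otimes \theta_W) c_{W,V} c_{V,W}$ reduces — after decomposing $V_{(\lambda,\p p)} \otimes V_{(\mu, \p q)}$ into simples, which is possible in the semisimple locus — to a scalar identity of the shape $q^{-\met^{\vee}(\lambda+\mu,\lambda+\mu)} = q^{-\met^{\vee}(\lambda,\lambda)} q^{-\met^{\vee}(\mu,\mu)} q^{-2\met^{\vee}(\lambda,\mu)}$ together with bookkeeping of the $\chi_i$ corrections and signs; this is exactly the kind of bilinearity identity that holds by construction of $\Upsilon$, with the extra corrections matching because the double braiding $c_{W,V} c_{V,W}$ contributes the same $\mathcal{K}_i$-monodromy factors that appear in the partial-trace normalization.

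For step (iii), compatibility of $\theta$ with the pivotal structure is a formal diagrammatic consequence once (i) and (ii) hold, but to be safe I would check $(\theta_V)^{\vee} = \theta_{V^{\vee}}$ directly on Verma modules using Lemma~\ref{lem:dualVerma}, namely that $\theta$ on $V^{\vee}_{(\lambda,\p p)} \simeq V_{(\lambda^{\vee},\p p + \p n)}$ gives the same scalar as the dual of $\theta_{V_{(\lambda,\p p)}}$; the identity $\lambda^{\vee} = -\lambda + Q_{\{1,\dots,n\}}$ combined with the Fundamental Identity $\met^{\vee}(Q_i,Q_j) = 0$ should make $\met^{\vee}(\lambda^{\vee},\lambda^{\vee}) = \met^{\vee}(\lambda,\lambda)$ up to terms that cancel the parity shift $\p n$. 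Naturality (i) is immediate: $c_{V,V}$ is natural and $\ptr_R$ is functorial, and one checks $\theta$ commutes with morphisms because all ingredients do.

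\textbf{Main obstacle.} The hard part is the bookkeeping in step (ii): tracking the Koszul signs $(-1)^{{\p I \choose 2}}$, the parity-dependent signs from $\ev$, and the powers of $q$ coming from $\Upsilon$, $\tilde R$, and the $\mathcal{K}$-insertions simultaneously, and verifying that the balancing scalar identity holds on every simple summand of a tensor product of generic Vermas. The conceptual point — that this reduces to a bilinear-form identity for $\met^{\vee}$ plus a compatible $\chi_i$-correction — is clear, but the sign and exponent arithmetic is delicate; I would organize it by first treating the purely bosonic ($n=0$) case, where the statement is essentially that a metric abelian Lie algebra gives a ribbon (in fact symmetric-up-to-the-metric) category, and then carefully add the odd generators one index $i$ at a time, each contributing a rank-one $\psloo$-type factor as in Lemma~\ref{lem:subAlg}. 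Once balancing is verified generically, the appeal to \cite[Theorem ...]{geer2018} closes the argument globally, and the remaining ribbon axioms are formal.
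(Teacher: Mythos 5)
Your proposal has the right ingredients but gets the logical structure of the argument backwards, which would lead you to spend effort in the wrong place and to assert something false.

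The balancing identity $\theta_{V \otimes W} = c_{W,V} c_{V,W} (\theta_V \otimes \theta_W)$ — your item (ii), which you treat as the main obstacle requiring heavy sign and exponent bookkeeping — is in fact \emph{automatic} for $\theta_V = \ptr_R(c_{V,V})$ in any braided pivotal category. There is nothing to check; this is a formal diagrammatic consequence, valid with no semisimplicity whatsoever. The paper dispenses with it in a single sentence. Conversely, compatibility with duality, $\theta_{V^{\vee}} = (\theta_V)^{\vee}$, is emphatically \emph{not} ``a formal diagrammatic consequence once (i) and (ii) hold'' as you claim in step (iii). That equation is precisely the gap between a balanced braided pivotal category and a ribbon category, and it is the one condition that genuinely needs to be verified. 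Your phantom ``general principle that a natural transformation of the identity functor which satisfies the balancing relation on a generically semisimple locus satisfies it everywhere'' does not exist; the result of \cite[Theorem 2]{geer2018} that the paper invokes is not a generic-to-global principle for balancing, but a statement specifically about the duality-compatibility condition: if $\theta_{V^{\vee}} = \theta_V^{\vee}$ holds for all generic simple objects in a generically semisimple braided pivotal category with a partial-trace twist, then $\theta$ is a ribbon structure.

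Fortunately, you do propose the right computation ``to be safe'' at the end of (iii), and that computation — evaluating $\theta$ on the highest weight vector of a Verma module, using that off-diagonal braiding terms vanish, and comparing via Lemma~\ref{lem:dualVerma} and the Fundamental Identity $\met^{\vee}(Q_i,Q_j)=0$ — is exactly what the paper does. One small correction: the scalar is $\theta_{V_{(\lambda,\p p)}} = q^{-\met^{\vee}(\lambda,\lambda) + \sum_{i=1}^n \chi_i(\lambda)}\,\id$, with no $(-1)^{\p p}$ factor as you suggest; the Koszul sign from $\tau$ and the sign from $\ev$ both contribute $(-1)^{\p p}$ and cancel. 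You should also handle the $n=0$ case separately (where the category is honestly semisimple and one argues directly) since Proposition~\ref{prop:genericSSZWt} treats that case differently.
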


\begin{proof}
The definition of $\theta$ as the right partial trace of the braiding ensures that it satisfies the balancing conditions.

Assume first that $n \geq 1$. Give $\cat_R$ the generically semisimple structure of Proposition \ref{prop:genericSSZWt}. We are therefore in the setting of \cite[Theorem 2]{geer2018}, which asserts that if $\theta_{V^{\vee}} = \theta_V^{\vee}$ for all generic simple objects $V \in \cat_R$, then $\theta$ is a ribbon structure on $\cat_R$. By Theorem \ref{thm:simplesSplitAb} and the definition of $\SSSY$, generic simple objects are typical Verma modules. Consider then a (not necessarily simple) Verma module $V_{(\lambda,\p p)}$. Since $\End_{\cat_R}(V_{(\lambda,\p p)}) \simeq \C$, the morphism $\theta_{V_{(\lambda,\p p)}}$ is determined by the scalar by which it acts on the highest weight vector $v_{\varnothing}$. Using that the off-diagonal terms of the braiding do not contribute to $\theta_{V_{(\lambda,\p p)}} v_{\varnothing}$, we compute
\[
\theta_{V_{(\lambda,\p p)} }
=
q^{- \met^{\vee}(\lambda, \lambda) + \sum_{i=1}^n \chi_i(\lambda)}  \id_{V_{(\lambda,\p p)} }.
\]
Lemma \ref{lem:dualVerma} and repeated use of equation \eqref{eq:fundIdenAbs}  then gives $\theta_{V_{(\lambda,\p p)} ^{\vee}} = \theta^{\vee}_{V_{(\lambda,\p p)}}$.

When $n =0$, the calculation of the previous paragraph shows that $\theta_{V^{\vee}} = \theta_V^{\vee}$ for all simple objects $V$. (We may take $\SSS=\varnothing$, so that all objects are generic.) Semisimplicity of $\cat_R$ then implies that $\theta_{V^{\vee}} = \theta_V^{\vee}$ for all $V \in \cat_R$. 
\end{proof}

\begin{Ex}
\label{ex:oneDimMod}
By Proposition \ref{prop:vermaQuotient} and Theorem \ref{thm:simplesSplitAb}, one dimensional $\Uq$-modules are of the form $\C^{\bos}_{(k, \p p)}:=S_{(k, \p p)}$ for weights $(k, \p p)$ satisfying $[\chi_i(k)]_q=0$ for all $1 \leq i \leq n$. Explicitly, $\C^{\bos}_{(k, \p p)}$ is the one dimensional module of weight $(k, \p p)$ on which each $E_i$ and $F_i$ act by zero. There are canonical isomorphisms
\[
\C^{\bos}_{(k_1, \p p_1)} \otimes \C^{\bos}_{(k_2, \p p_2)} \simeq \C^{\bos}_{(k_1+k_2, \p p_1 + \p p_2)}.
\]
In particular, each one dimensional module is invertible. As in the proof of Theorem \ref{thm:ribbonCat}, we compute
\[
\theta_{\C^{\bos}_{(k, \p p)}} = q^{-\met^{\vee}(k,k) + \sum_{i=1}^n \chi_i(k)} \id_{\C^{\bos}_{(k, \p p)}}. \qedhere
\]
\end{Ex}

\subsection{Open Hopf link invariants}
\label{sec:openHopf}

Given $V, V^{\prime} \in \cat_R$, define
\[
\Phi_{V,V^{\prime}} = (\id_{V^{\prime}} \otimes \ev_V)\circ (c_{V,{V^{\prime}}}\otimes \id_{V^{\vee}})\circ (c_{{V^{\prime}},V}\otimes \id_{V^{\vee}})\circ (\id_{V^{\prime}}\otimes \tcoev_V)\in \End_{\cat_R}({V^{\prime}}).
\]
In topological terms, $\Phi_{V,V^{\prime}}$ is the value of the Reshetikhin--Turaev functor $F_{\cat_R}$ on the coloured open Hopf link
\[
\epsh{Sprime}{9ex}\put(4,0){\tiny $V$}\put(-30,18){\tiny $V^{\prime}$} \hspace{15pt}.
\]
Extend the definition of $\Phi_{V,V^{\prime}}$ to formal $\C$-linear combinations of objects of $\cat_R$ by bilinearity. When $\End_{\cat_R}(V^{\prime}) \simeq \C$, write $\langle \Phi_{V,{V^{\prime}}}\rangle \in \C$ for the scalar by which $\Phi_{V,{V^{\prime}}}$ acts. 

\begin{Lem}
\label{lem:Phis}
Let $(\lambda^{\prime}, \p p^{\prime}), (\lambda, \p p) \in \bos^{\vee} \times \Ztwo$. The following equality holds:
\[
\Phi_{V_{(\lambda^{\prime}, \p p^{\prime})},V_{(\lambda, \p p)}}
=
(-1)^{\p p^{\prime} + \p n} q^{-2 \met^{\vee}(\lambda^{\prime},\lambda) + \sum_{i=1}^n \chi_i(\lambda^{\prime}+\lambda)} \prod_{i=1}^n(q^{\chi_i(\lambda)} - q^{-\chi_i(\lambda)})\id_{V_{(\lambda, \p p)}}.
\]
\end{Lem}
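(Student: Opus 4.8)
The plan is to exploit that $\End_{\cat_R}(V_{(\lambda, \p p)}) \simeq \C$ (as used already in the proof of Theorem \ref{thm:ribbonCat}), so that $\Phi_{V_{(\lambda^{\prime}, \p p^{\prime})}, V_{(\lambda, \p p)}}$ is automatically a scalar multiple of $\id_{V_{(\lambda, \p p)}}$; to determine the scalar it suffices to apply the operator to the highest weight generator $v_{\varnothing} \in V_{(\lambda, \p p)}$ and read off the coefficient of $v_{\varnothing}$. Writing $\{w_J := F_J w_{\varnothing} \mid J \subseteq \{1, \dots, n\}\}$ for the standard weight basis of $V_{(\lambda^{\prime}, \p p^{\prime})}$ and $\{w_J^{\vee}\}$ for the dual basis, I would unwind the four structural morphisms in the definition of $\Phi$ in turn, using \eqref{eq:upsilonFactor} for $\Upsilon$, the finite expansion $\tilde{R} = \sum_I (-1)^{{\p I \choose 2}}(q-q^{-1})^{\p I} E_I \mathcal{K}_I \otimes F_I \mathcal{K}_I^{-1}$, and the (co)evaluations \eqref{E:tcoev}--\eqref{E:coev}, tracking Koszul signs throughout.

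Two observations reduce the whole computation to a single sum over subsets $J$, each amounting to the collapse of an occurrence of $\tilde{R}$ onto its $I = \varnothing$ term. In the braiding applied first (the factor $c_{V^{\prime},V}$ of the definition), the $E$-legs of $\tilde{R}$ act on $v_{\varnothing}$, which is highest weight; hence $E_I v_{\varnothing} = 0$ unless $I = \varnothing$, and this braiding reduces to $\Upsilon$ followed by the graded flip. In the braiding applied second, the $E$-legs of $\tilde{R}$ instead act on the vectors $w_J \in V_{(\lambda^{\prime},\p p^{\prime})}$, producing $E_I \mathcal{K}_I w_J$; but this tensor factor is subsequently contracted against $w_J^{\vee}$ by $\ev_{V_{(\lambda^{\prime},\p p^{\prime})}}$, and since $E_l w_{\varnothing} = 0$, equation \eqref{eq:EwithFComm} shows that $E_I w_J = E_I F_J w_{\varnothing}$ is a scalar multiple of $F_{J \setminus I} w_{\varnothing}$ (and zero unless $I \subseteq J$), so its $w_J$-component vanishes unless $I = \varnothing$. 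Only the $I = \varnothing$ contribution therefore survives.

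It then remains to assemble the surviving scalar. Collecting the $q$-powers --- $q^{-2\met^{\vee}(\lambda, \lambda^{\prime} - Q_J)}$ from the two $\Upsilon$-factors and $q^{\sum_{i=1}^n \chi_i(\lambda^{\prime} - Q_J)}$ from the $\mathcal{K}$ in $\ev_{V_{(\lambda^{\prime},\p p^{\prime})}}$ acting on the weight space of $w_J$ (cf.\ \eqref{eq:unrolledCentralAct}) --- together with the residual sign $(-1)^{\p p^{\prime} + \p J}$ (the signs involving $\p p$ cancel in pairs between the two flips), produces
\[
\Phi_{V_{(\lambda^{\prime}, \p p^{\prime})}, V_{(\lambda, \p p)}}
=
(-1)^{\p p^{\prime}} q^{-2\met^{\vee}(\lambda,\lambda^{\prime}) + \sum_{i=1}^n \chi_i(\lambda^{\prime})}
\Big( \sum_{J \subseteq \{1,\dots,n\}} (-1)^{\vert J \vert} q^{2\sum_{j \in J}\chi_j(\lambda)} \Big) \id_{V_{(\lambda, \p p)}},
\]
where the Fundamental Identity \eqref{eq:fundIdenAbs} in the form $\chi_i(Q_j) = \met^{\vee}(Q_i, Q_j) = 0$ has been used to rewrite $\sum_i \chi_i(\lambda^{\prime} - Q_J) = \sum_i \chi_i(\lambda^{\prime})$ and $\met^{\vee}(\lambda, \lambda^{\prime} - Q_J) = \met^{\vee}(\lambda,\lambda^{\prime}) - \sum_{j \in J}\chi_j(\lambda)$. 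Factoring the subset sum as $\prod_{j=1}^n\big(1 - q^{2\chi_j(\lambda)}\big) = (-1)^n q^{\sum_{j=1}^n \chi_j(\lambda)}\prod_{j=1}^n\big(q^{\chi_j(\lambda)} - q^{-\chi_j(\lambda)}\big)$ and using symmetry of $\met^{\vee}$ then yields the claimed formula.

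The one genuine difficulty is the sign bookkeeping: each of the four structural maps, the two graded flips and the evaluation contribute Koszul signs that must be matched against the parities $\p p^{\prime}$, $\p p$ and $\vert J \vert$. The conceptual input --- that each $\tilde{R}$ collapses to its $I = \varnothing$ term, once because $v_{\varnothing}$ is highest weight and once because of the partial trace --- is elementary.
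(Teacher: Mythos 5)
Your proposal is correct and follows essentially the same route as the paper's proof: reduce to the action on the highest weight vector $v_{\varnothing}$ via $\End_{\cat_R}(V_{(\lambda,\p p)})\simeq\C$, observe that each occurrence of $\tilde{R}$ contributes only its $I=\varnothing$ term (the paper phrases this as the off-diagonal terms of the braiding not contributing, while you justify the second collapse via the pairing against $w_J^{\vee}$ --- equivalent reasoning), and arrive at the same intermediate sum $\sum_J(-1)^{\p J}q^{2\sum_{j\in J}\chi_j(\lambda)}$ which factors as $\prod_{i=1}^n(1-q^{2\chi_i(\lambda)})$. The sign and $q$-power bookkeeping you outline, including the use of \eqref{eq:fundIdenAbs} to drop the $Q_J$-shifts, checks out against the stated formula.
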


\begin{proof}
Since $\End_{\cat_R}(V_{(\lambda, \p p)}) \simeq \C$, the morphism $\Phi_{V_{(\lambda^{\prime}, \p p^{\prime})},V_{(\lambda, \p p)}}$ maps the highest weight vector $v_{\varnothing} \in V_{(\lambda, \p p)}$ to a multiple of itself, whence the off-diagonal terms of the braiding do not contribute to $\Phi_{V_{(\lambda^{\prime}, \p p^{\prime})},V_{(\lambda, \p p)}} v_{\varnothing}$. With this comment in mind, we use the standard weight basis of Verma modules to compute
\[
\Phi_{V_{(\lambda^{\prime}, \p p^{\prime})},V_{(\lambda, \p p)}}
=
(-1)^{\p p^{\prime}} q^{-2 \met^{\vee}(\lambda^{\prime}, \lambda) + \sum_{i=1}^n \chi_i(\lambda^{\prime})} \sum_I (-1)^{\p I} q^{2\sum_{i \in I} \chi_i(\lambda)} \id_{V_{(\lambda, \p p)}}.
\]
It remains to apply the equality
\[
\sum_I (-1)^{\p I} q^{2 \sum_{i \in I}  \chi_i(\lambda)} = \prod_{i=1}^n(1- q^{2 \chi_i(\lambda)})
\]
and simplify the result.
\end{proof}

\subsection{Unimodularity and modified traces}
\label{sec:uniMod}

We henceforth add to Assumptions \ref{assump:quantAssumptDualInt} and \ref{assump:noZeroRoots} the following assumption.

\begin{Assump}
\label{assump:unimodular}
The trivial representation appears in $\Lambda^{\bullet} R \otimes_{\C} \Lambda^{\bullet} R$ with multiplicity one.
\end{Assump}

In terms of the roots $Q_1, \dots, Q_n$ of $R$, Assumption \ref{assump:unimodular} is the statement that the sum $Q_I + Q_J \in \bos^{\vee}$ vanishes if and only if $I=J=\varnothing$.

The proof of the next result is a modification of that of \cite[Theorem 3.31]{anghel2021}.

\begin{Lem}
\label{lem:catUnimod}
If the representation $R$ satisfies Assumption \ref{assump:unimodular}, then $\cat_R$ is unimodular.
\end{Lem}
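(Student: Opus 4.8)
The plan is to compute the distinguished invertible object (the "object of integrals" or modular object) of $\cat_R$ and show it is trivial. Recall that for a finite-type ribbon category built from a ribbon Hopf superalgebra, unimodularity can be detected by the right cointegral, or equivalently, by comparing the coevaluation/evaluation data: the category $\cat_R$ is unimodular precisely when the socle (equivalently, the top) of the projective cover of the monoidal unit is again $\mathbb{I}$, i.e. when the distinguished invertible object $\alpha \in \cat_R$ is isomorphic to $\mathbb{I}$. Since $\cat_R$ has enough projectives (Theorem \ref{thm:ribbonCatInt}) and every simple is a highest-weight quotient of a Verma module (Lemma \ref{lem:simpQuot}, Theorem \ref{thm:simplesSplitAb}), one can identify $\alpha$ by analyzing the projective cover $P(\mathbb{I})$ of the trivial module $\mathbb{I} = S_{(0,\p 0)}$.

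First I would set up the relevant Verma-module combinatorics at the highly atypical weight $\lambda = 0$. By Proposition \ref{prop:vermaQuotient}, since $\chi_i(0) = 0$ for every $i$, the module $S_{(0, \p 0)} = \mathbb{I}$ is one-dimensional, and the Verma module $V_{(0, \p 0)}$ has a $2^n$-dimensional composition series whose constituents are the one-dimensional modules $\C^{\bos}_{(-Q_I, \p I)}$ for $I \subseteq \{1, \dots, n\}$ (using the weight basis $v_I$ of weight $(-Q_I, \p I)$ together with Assumption \ref{assump:noZeroRoots} to see the $Q_I$ are genuinely distinct... but in fact distinctness is exactly what Assumption \ref{assump:unimodular} buys us in the strongest form). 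The key point is that $V_{(0,\p0)}$ and its dual $V_{(0,\p0)}^\vee \simeq V_{(Q_{\{1,\dots,n\}}, \p n)}$ (Lemma \ref{lem:dualVerma}) have opposite structure as highest/lowest weight modules, so the socle of the projective cover $P(\mathbb{I})$ is computed by locating which one-dimensional module $\C^{\bos}_{(k,\p p)}$ sits at the bottom of the "diamond" built from $V_{(0,\p0)}$ glued to $V_{(0,\p0)}^\vee$. Equivalently — following the structure of the argument in \cite[Theorem 3.31]{anghel2021} — the distinguished invertible object is $\C^{\bos}_{(k_0, \p n)}$ where $k_0$ is determined by the "extremal" weight appearing, and Assumption \ref{assump:unimodular} forces $k_0 = 0$ so that the socle of $P(\mathbb{I})$ is $\C^{\bos}_{(0, \p n)} \simeq \mathbb{I}$ when $n$ is even, and one must track the parity shift carefully.

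More precisely, the step I expect to carry the content is: show that $P(\mathbb{I})$ has a two-step filtration with top and bottom layers both isomorphic to Verma-type modules whose weights pair up under $I \mapsto \{1,\dots,n\} \setminus I$, and that the bottom composition factor is $\C^{\bos}_{(k, \p p)}$ with $k = -\sum_{i} Q_i + \sum_i Q_i$ type cancellations — i.e. $k$ ends up being $0$ exactly because $Q_I + Q_J = 0 \iff I = J = \varnothing$, which is the reformulation of Assumption \ref{assump:unimodular} given just after its statement. This non-vanishing/non-coincidence of the $Q_I$'s is precisely what prevents any "extra" morphism between the relevant one-dimensional pieces, pinning down the socle. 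I would then invoke that a locally finite ribbon category with enough projectives is unimodular iff its distinguished invertible object is $\mathbb{I}$, and conclude.

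The main obstacle will be bookkeeping the parity ($\Ztwo$-grading) and the precise identification of $P(\mathbb{I})$: one needs to know that $P(\mathbb{I})$ is built from exactly two Verma modules (rather than some larger indecomposable) and to correctly compute its socle. This requires either a direct construction of $P(\mathbb{I})$ via induction from a parabolic, or the extension/$\mathrm{Ext}^1$ computations between the one-dimensional modules $\C^{\bos}_{(-Q_I, \p I)}$; the decisive input is that $\dim \mathrm{Ext}^1(\C^{\bos}_{(-Q_I, \p I)}, \C^{\bos}_{(-Q_J, \p J)})$ is nonzero only when $|I \triangle J| = 1$, which again uses Assumption \ref{assump:noZeroRoots} and the injectivity content of Assumption \ref{assump:unimodular}. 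Adapting the argument of \cite[Theorem 3.31]{anghel2021}, which handles the analogous statement for unrolled quantum $\mathfrak{sl}(2)$-type categories, to this multi-root super setting is the technical heart, but it is essentially a direct, if careful, computation.
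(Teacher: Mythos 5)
There is a genuine gap. You correctly identify the target — show the distinguished invertible object is $\mathbb{I}$, equivalently that the projective cover $P(\mathbb{I})$ of the unit is self-dual — and you correctly locate where Assumption \ref{assump:unimodular} enters (it pins down the extremal weight spaces via $Q_I + Q_J = 0 \iff I = J = \varnothing$). But the route you propose, namely constructing $P(\mathbb{I})$ explicitly as an extension of two Verma modules glued at $\lambda = 0$ and then computing its socle, does not work for $n > 1$ and is not established even for $n = 1$. The claim that $P(\mathbb{I})$ has a two-step Verma filtration is unjustified; by analogy with BGG reciprocity one expects $2^n$ Verma factors, not $2$, so for $n \geq 2$ the proposed structure of $P(\mathbb{I})$ is simply wrong. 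Likewise the asserted $\mathrm{Ext}^1$ criterion ($|I \triangle J| = 1$) is not proved, and your remark that the socle is $\C^{\bos}_{(0,\p n)}$ "when $n$ is even" is a sign the picture is off: if the socle of $P(\mathbb{I})$ were $\C^{\bos}_{(0,\p 1)}$ for odd $n$, the category would fail to be unimodular, contradicting the lemma; in fact self-duality of $P(\mathbb{I})$ forces its socle to be $\mathbb{I}$ in all cases, since the top $\mathbb{I}$ is self-dual.

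The paper's proof (adapting \cite[Theorem 3.31]{anghel2021}, as you anticipated) sidesteps the need to build $P(\mathbb{I})$ at all. It works instead inside $V_{(\lambda,\p 0)} \otimes V_{(\lambda,\p 0)}^{\vee}$ for a \emph{typical} $\lambda$, so that $V_{(\lambda,\p 0)}$ is simple projective. Adjunction gives $\Hom(\C, V_{(\lambda,\p 0)} \otimes V_{(\lambda,\p 0)}^{\vee}) \simeq \C$, so the injective hull $P_0$ of $\C$ appears as a direct summand with multiplicity one. The weight of $v_I \otimes v_J^{\vee}$ is $-Q_I + Q_J$, so the extremal weights $\pm Q_{\{1,\dots,n\}}$ are achieved only by the vectors $v_+ = v_{\varnothing} \otimes v_{\{1,\dots,n\}}^{\vee}$ and $v_- = v_{\{1,\dots,n\}} \otimes v_{\varnothing}^{\vee}$; this is exactly the content of Assumption \ref{assump:unimodular}. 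Self-duality of $V_{(\lambda,\p 0)} \otimes V_{(\lambda,\p 0)}^{\vee}$ then shows the indecomposable summands containing $v_+$ and $v_-$ are dual to one another, while a direct computation (using equation \eqref{eq:EwithFComm} and typicality of $\lambda$) shows $F_{\{1,\dots,n\}} v_+$ and $E_{\{1,\dots,n\}} v_-$ are nonzero $\Uq$-invariant vectors, hence both lie in $P_0$. It follows that both $v_\pm$ sit in the same summand $P_0$, which is therefore self-dual. This argument requires no explicit description of $P_0$, no $\mathrm{Ext}$ computations, and no parity bookkeeping; you would do well to replace the Verma-gluing program with the tensor-product trick.
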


\begin{proof}
Since projectivity and injectivity coincide in $\cat_R$, it suffices to prove that the injective hull of the trivial module $\C$ is self-dual.

Let $\lambda \in \bos^{\vee}$ be typical. Then $V_{(\lambda, \p 0)}$ is projective and $V_{(\lambda, \p 0)} \otimes V_{(\lambda, \p 0)}^{\vee}$ is a direct sum of projective indecomposables, say $\bigoplus_{i=0}^m P_i$. Adjunction and simplicity of $V_{(\lambda, \p 0)}$ give isomorphisms
\[
\Hom_{\cat_R}(\C, V_{(\lambda, \p 0)} \otimes V_{(\lambda, \p 0)}^{\vee})
\simeq 
\End_{\cat_R}(V_{(\lambda, \p 0)})
\simeq \C.
\]
The injective hull of $\C$ therefore appears in $V_{(\lambda, \p 0)} \otimes V_{(\lambda, \p 0)}^{\vee}$ with multiplicity one; relabeling if necessary, denote this summand by $P_0$.

By Lemma \ref{lem:dualVerma}, the weight of $v_I \otimes v^{\vee}_J \in V_{(\lambda, \p 0)} \otimes V_{(\lambda, \p 0)}^{\vee}$ is $- Q_I +Q_J$. In particular, the vectors
\[
v_+=v_{\varnothing} \otimes v_{\{1,\dots,n\}}^{\vee},
\qquad
v_-=v_{\{1,\dots,n\}} \otimes v_{\varnothing}^{\vee}
\]
have weights $Q_{\{1,\dots,n\}}$ and $-Q_{\{1,\dots,n\}}$, respectively. Noting that $- Q_I +Q_J = Q_{\{1,\dots,n\}}$ if and only if $Q_I + Q_{\{1,\dots,n\} \setminus J}=0$ and $- Q_I +Q_J = -Q_{\{1,\dots,n\}}$ if and only if $Q_{\{1,\dots,n\} \setminus I} + Q_J=0$, we see that Assumption \ref{assump:unimodular} implies that $v_{\pm}$ spans the weight space of weight $\pm Q_{\{1,\dots,n\}}$. Hence, $v_+ \in P_i$ and $v_-\in P_j$ for some $i$ and $j$. Self-duality of $V_{(\lambda, \p 0)} \otimes V_{(\lambda, \p 0)}^{\vee}$ and the definitions of $v_+$ and $v_-$ then imply that $P_i^{\vee} \simeq P_j$. On the other hand, equation \eqref{eq:EwithFComm} and its counterpart with the roles of $E$ and $F$ reversed can be used to verify that $F_{\{1, \dots, n\}}v_+$ and $E_{\{1, \dots, n\}}v_-$ are $\Uq$-invariant vectors of $V_{(\lambda, \p 0)} \otimes V_{(\lambda, \p 0)}^{\vee}$. Moreover, $F_{\{1, \dots, n\}}v_+$ is non-zero; the coefficient of $v_{\{1, \dots, n\}} \otimes v_{\{1,\dots,n\}}^{\vee}$ in $F_{\{1, \dots, n\}}v_+$ is $1$. Similarly, $E_{\{1, \dots, n\}}v_-$ is non-zero; using equation \eqref{eq:EwithFComm}, the coefficient of $v_{\varnothing} \otimes v_{\varnothing}^{\vee}$ in $E_{\{1, \dots, n\}}v_-$ is seen to be
\[
(-1)^{\frac{n(n-1)}{2}} q^{\sum_{i=1}^n \chi_i(\lambda)} \prod_{i=1}^n [\chi_i(\lambda)]_q,
\]
which is non-zero by typicality of $\lambda$. We conclude that $F_{\{1, \dots, n\}}v_+$ and $E_{\{1, \dots, n\}}v_-$ are non-zero elements of $P_0$. It follows that $i=j$ and $P_0$ is self-dual.
\end{proof}

Assumption \ref{assump:unimodular} implies Assumption \ref{assump:noZeroRoots}. Nevertheless, we prefer to think of them as independent since Assumption \ref{assump:noZeroRoots} is necessary for relative modular constructions (see Proposition \ref{prop:typicalExist}), whereas Assumption \ref{assump:unimodular} is a technical condition which ensures unimodularity of $\cat_R$. In particular, all results which follow hold under Assumptions \ref{assump:quantAssumptDualInt} and \ref{assump:noZeroRoots} and the additional assumption that $\cat_R$ is unimodular.

\begin{Ex}
\begin{enumerate}
\item When $n=1$, Assumptions \ref{assump:noZeroRoots} and \ref{assump:unimodular} are equivalent.

\item Linear independence of the roots $Q_1, \dots, Q_n$, equivalently, injectivity of the linear map $Q: \C^n \rightarrow \C^r$, implies Assumption \ref{assump:unimodular}. This class of examples is complementary to the \emph{simple} abelian $\mathcal{N}=4$ gauge theories of \cite[\S 2.1]{ballin2023}, which in the present language correspond to those (integer) matrices $Q$ which define surjective linear maps $Q: \C^n \rightarrow \C^r$.

\item Let $r=1$ and $Q$ any root matrix which satisfies Assumptions \ref{assump:quantAssumptDualInt} and \ref{assump:noZeroRoots}. Such a theory is simple abelian. Then the root matrix $\vert Q \vert$ whose entries are the absolute values of those of $Q$ satisfies Assumptions \ref{assump:quantAssumptDualInt}, \ref{assump:noZeroRoots} and \ref{assump:unimodular}. Note that $Q$ and $\vert Q \vert$ define isomorphic symplectic representations $T^{\vee} R \simeq T^{\vee} \vert R \vert$ of $\bos$ and so isomorphic Lie superalgebras $\GW \simeq \bos_{\vert R \vert}$, as explained in Section \ref{sec:abGWClassical}. It follows that $\cat_R \simeq \cat_{\vert R \vert}$ is unimodular, even though $R$ may not satisfy Assumption \ref{assump:unimodular}. A concrete class of examples (which in fact satisfy Assumption \ref{assump:unimodular}) is given by the $n \times 1$ matrices $Q=(\begin{matrix} 1 & \cdots & 1 \end{matrix})$. This examples corresponds to $n$ copies of the $U(1)$-gauged hypermultiplet of weight $1$ or, in the language of \cite[\S 2.1.5]{ballin2023}, the B-twist of $\textnormal{SQED}_n$.

\item If no column of $Q$ is zero and each row of $Q$ has a well-defined sign, then Assumption \ref{assump:unimodular} holds. Arguing as in the previous example, it follows that any matrix $Q^{\prime}$ which differs from $Q$ by multiplying some of its columns by a sign defines a unimodular category $\cat_{R^{\prime}}$, even though $Q^{\prime}$ may not satisfy Assumption \ref{assump:unimodular}. \qedhere
\end{enumerate}
\end{Ex}

\begin{Prop}
\label{prop:mTrace}
Up to a global scalar, there exists a unique modified trace $\mt$ on the ideal of projective objects of $\cat_R$.
\end{Prop}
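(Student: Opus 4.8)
The plan is to build the modified trace on the tensor ideal generated by a single, carefully chosen, \emph{ambidextrous} simple projective object, following the treatment of the unrolled quantization of $\gloo$ in \cite{geerYoung2022}. Concretely, using the lemma following Proposition~\ref{prop:typicalExist} applied to the nonzero functionals $\frac{\hbar}{\pi\sqrt{-1}}\chi_i$ and $\frac{2\hbar}{\pi\sqrt{-1}}\chi_i$ (this is vacuous when $n=0$), I would first fix a weight $\lambda\in\bos^{\vee}$ with $\chi(\lambda)\notin\SSSY$ and $2\chi(\lambda)\notin\SSSY$, and set $V_0:=V_{(\lambda,\p 0)}$. Then $V_0$ is a simple projective object (Lemma~\ref{lem:projVerma}) lying in the semisimple subcategory $\cat_{R,\chi(\lambda)}$, and $V_0\otimes V_0$ lies in the semisimple subcategory $\cat_{R,2\chi(\lambda)}$. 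Since a simple object whose tensor square is semisimple is ambidextrous (the relevant criterion is in \cite[\S 3]{geer2011}), $V_0$ is ambidextrous, and \cite{geer2011} then produces a modified trace on the ideal $\langle V_0\rangle$ generated by $V_0$, unique once normalised by, say, $\qd(V_0)=1$. This step uses the generic semisimplicity of Proposition~\ref{prop:genericSSZWt} but not unimodularity.

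The remaining task is to show that $\langle V_0\rangle$ is the full ideal $\mathrm{Proj}(\cat_R)$ of projective objects; this is where unimodularity (Lemma~\ref{lem:catUnimod}) enters. Applied to the particular typical weight $\lambda$, the proof of Lemma~\ref{lem:catUnimod} exhibits the projective cover $P_0$ of $\mathbb{I}$ as a direct summand of $V_0\otimes V_0^{\vee}$, while the snake identities present $V_0$ as a retract of $(V_0\otimes V_0^{\vee})\otimes V_0$; hence $\langle V_0\rangle=\langle V_0\otimes V_0^{\vee}\rangle\supseteq\langle P_0\rangle$. Conversely, unimodularity makes $P_0$ self-dual, so dualising the surjection $P_0\twoheadrightarrow\mathbb{I}$ yields an embedding $\mathbb{I}\hookrightarrow P_0$; tensoring with an arbitrary projective $P$ gives a monomorphism $P\hookrightarrow P_0\otimes P$ which splits because projectives are injective in $\cat_R$ (as in the proof of Lemma~\ref{lem:projVerma}), so $P$ is a retract of the projective object $P_0\otimes P$ and therefore $\langle P_0\rangle=\mathrm{Proj}(\cat_R)$. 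Combining the two inclusions gives $\langle V_0\rangle=\mathrm{Proj}(\cat_R)$, so the modified trace constructed in the first step is precisely a modified trace on the ideal of projectives, and it is unique up to a global scalar.

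The step I expect to be the main obstacle is the passage from the classical modified-trace theory, usually formulated for finite tensor categories, to the present locally finite setting in which $\cat_R$ has infinitely many isomorphism classes of simple objects; this is exactly why the argument must be routed through one ambidextrous generic generator and Proposition~\ref{prop:genericSSZWt} rather than through a blanket ``unimodular implies unique modified trace'' statement. The place where Assumption~\ref{assump:unimodular} is essential is the identification $\langle V_0\rangle=\mathrm{Proj}(\cat_R)$: without unimodularity one expects a modified trace to be rescalable independently on distinct blocks of $\cat_R$, so that the uniqueness assertion (and possibly existence itself) would fail.
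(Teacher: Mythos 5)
Your architecture (construct the trace from an ambidextrous generic Verma module and then show it generates the ideal of projectives, following \cite{geer2011,geer2018}) is a legitimate alternative to the paper's proof, which is a one-line application of \cite[Corollary 5.6]{geer2022}: for a locally finite pivotal tensor category with enough projectives, a non-zero right modified trace on the ideal of projectives exists if and only if the category is unimodular, and is then unique up to a global scalar; the braiding of Proposition \ref{prop:braiding} upgrades the right trace to a two-sided one. In particular, the ``locally finite versus finite'' obstacle you anticipate is already absorbed by that citation, and the entire mathematical content of the proposition is Lemma \ref{lem:catUnimod}.

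The genuine gap is the ambidexterity step. There is no theorem in \cite[\S 3]{geer2011} asserting that a simple object with semisimple tensor square is ambidextrous, and no such criterion can hold in a general ribbon category. To see that something must be wrong, note first that your second half proves more than you claim and does not need unimodularity at all: for \emph{any} simple projective $V_0$, the epimorphism $\tev_{V_0}\otimes\id_P: V_0^{\vee}\otimes V_0\otimes P\to P$ splits whenever $P$ is projective, so $\langle V_0\rangle$ is the full ideal of projectives unconditionally. Your argument would therefore manufacture a modified trace on the projectives of $\cat_R$ from generic semisimplicity (Proposition \ref{prop:genericSSZWt}) alone, contradicting the ``only if'' direction of \cite[Corollary 5.6]{geer2022} in any non-unimodular instance; the tell is that, as written, your proof never uses Assumption \ref{assump:unimodular} in an essential way. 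Concretely, ambidexterity of a simple $V_0$ with $V_0\otimes V_0\simeq\bigoplus_i S_i$ is \emph{not} automatic: writing $e_i$ for the projectors onto the summands, it imposes genuine relations between the scalars $\ptr_{S_i}$-type values $\langle\ptr_L(e_i)\rangle$, $\langle\ptr_R(e_i)\rangle$ and the eigenvalues $\theta_{S_i}\theta_{V_0}^{-2}$ of the double braiding $c_{V_0,V_0}^{2}$, and these must either be computed explicitly (as is done for typical modules in \cite{costantino2015,blanchet2016,geerYoung2022}) or deduced from unimodularity via \cite{geer2022}. Supplying that verification is exactly the missing step.
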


\begin{proof}
The category $\cat_R$ is a locally finite pivotal $\C$-linear tensor category with enough projectives. By \cite[Corollary 5.6]{geer2022}, the ideal of projectives has a non-trivial right modified trace if and only if $\cat_R$ is unimodular, in which case the right modified trace is unique up to a global scalar. Unimodularity is proved in Lemma \ref{lem:catUnimod}. By Proposition \ref{prop:braiding}, $\cat_R$ is braided. Hence, this right modified trace is a modified trace. 
\end{proof}

Let $\lambda, \lambda^{\prime} \in \bos^{\vee}$ be typical. Cyclicity of the modified trace implies that
\[
\mt_{V_{(\lambda,\p p)} }(\Phi_{V_{(\lambda^{\prime}, \p p^{\prime})},V_{(\lambda,\p p)} })=\mt_{V_{(\lambda^{\prime}, \p p^{\prime})}}(\Phi_{V_{(\lambda,\p p)} ,V_{(\lambda^{\prime}, \p p^{\prime})}}).
\]
By this and the first part of Lemma \ref{lem:Phis}, we can normalize $\mt$ so that the modified dimension of a typical Verma module is
\begin{equation}
\label{eq:modDimVerma}
\qd(V_{(\lambda, \p p)})
=
(-1)^{\p p} \prod_{i=1}^n(q^{\chi_i(\lambda)}-q^{-\chi_i(\lambda)})^{-1}.
\end{equation}
For later use, note that when $\lambda$ is typical Lemma \ref{lem:Phis} can be written as
\begin{equation}
\label{eq:openHopfSimp}
\Phi_{V_{(\lambda^{\prime}, \p p^{\prime})},V_{(\lambda,\p p)} }
=
(-1)^{\p p + \p p^{\prime} + \p n} \frac{q^{-2 \met^{\vee}(\lambda^{\prime},\lambda) + \sum_{i=1}^n \chi_i(\lambda^{\prime} + \lambda)}}{\qd(V_{(\lambda, \p p)})} \id_{V_{(\lambda, \p p)}}.
\end{equation}

\subsection{Modifications for $\Uqm$}
\label{sec:moduleCatModUnrolled}

Consider again the modified unrolled quantum group $\Uqm$ of Definition \ref{def:modUnrolledGW}.

\begin{Def}
A \emph{weight $\Uqm$-module} is a finite dimensional $\Uqm$-module $V$ on which $\bos$ acts semisimply and $K_a v = q^{2\lambda_a} v$ for all weight vectors $v$ of weight $\lambda$.
\end{Def}

Let $\mcat_R$ be the category of weight $\Uqm$-modules and their $\Uqm$-linear maps of degree $\p 0$. Note that, in view of the modified relations \eqref{eq:modQuantumRoots}, the operator equations $K_a = q^{2 Z_a}$ are required to have weight modules of dimension greater than one.

All results in Sections \ref{sec:basicDef}-\ref{sec:uniMod}, and their proofs, carry over with only minor changes in the modified setting. The changes result from the fact that the equations $K_a = q^{2 Z_a}$ imply that the action of $\mathcal{K}_i$ on a vector $v$ of weight $\lambda$ is given by
\begin{equation}
\label{eq:modUnrolledCentralAct}
\mathcal{K}_i v = q^{2\chi_i(\lambda)} v.
\end{equation}
For this reason, all appearances of $\chi_i$ in Sections \ref{sec:basicDef}-\ref{sec:uniMod} must be replaced with $2 \chi_i$. We record only the most important changes which result:
\begin{itemize}
\item A weight $\lambda \in \bos^{\vee}$ is atypical if and only if $\chi_i(\lambda) \in \frac{\pi \sqrt{-1}}{2 \hbar} \Z$ for some $1 \leq i \leq n$.
\item The exponential factor \eqref{eq:upsilonFactor} in the braiding is replaced with
\[
\Upsilon_{V,W} (v \otimes w)
=
q^{-2\met^{\vee}( \lambda_v, \lambda_w)} v \otimes w.
\]
This is required to compensate for the additional factors of $2$ resulting from the actions of $\mathcal{K}_i$ which appear in, for example, $\tilde{R}_{V,W}$.

\item In view of the previous comment, additional factors of $2$ scale both $\met^{\vee}$ and $\chi_i$ in formulae for the ribbon structure, open Hopf link invariants and modified quantum dimensions. For example, on a one dimensional module $\C^{\bos}_{(k, \p p)}$, where now $\chi_i(k) \in \frac{\pi \sqrt{-1}}{2 \hbar} \Z$ for all $1 \leq i \leq n$, the ribbon automorphism is
\[
\theta_{\C^{\bos}_{(k, \p p)}} = q^{-2\met^{\vee}(k,k) + 2\sum_{i=1}^n \chi_i(k)} \id_{\C^{\bos}_{(k, \p p)}}.
\]
\end{itemize}

\section{B-twisted abelian Gaiotto--Witten theory with compact gauge group}
\label{sec:cptBosonic}

In this section we set $\hbar= \frac{\pi \sqrt{-1}}{2}$ so that $q= e^{\frac{\pi \sqrt{-1}}{2}} = \sqrt{-1}$ and $\frac{\pi \sqrt{-1}}{2 \hbar}=1$. We work with the modified unrolled quantum group $\Uqmin$ and its category $\mcat_R$ of weight modules, as in Sections \ref{sec:modUnrolledQuant} and \ref{sec:moduleCatModUnrolled}. We work under Assumptions \ref{assump:quantAssumptDualInt}, \ref{assump:noZeroRoots} and \ref{assump:unimodular}.

\subsection{Basic set-up}
\label{sec:genConsid}

Consider the following realization of the input data from Section \ref{sec:abGWClassical}. Let $(\Gamma, \met)$ be an integral lattice of rank $r \geq 1$. Let $\bos = \Gamma \otimes_{\Z} \C$ be the associated abelian Lie algebra and $\Bos_{\Gamma} = \bos \slash \Gamma$ the resulting complex torus. For later use, let also $\Bos_{c,\Gamma} = (\Gamma \otimes_{\Z} \R)\slash \Gamma$ be the associated compact torus. The metric $\met$ induces metrics on $\bos$ and the tori $\Bos_{\Gamma}$ and $\Bos_{c,\Gamma}$, all of which are again denoted by $\met$. Let
\[
\Gamma^{\vee}: =
\Hom_{\Z}(\Gamma,\Z)
\simeq
\{ \lambda \in \bos^{\vee} \mid \lambda(\gamma) \in \Z \; \; \forall \gamma \in \Gamma \}
\]
be the dual lattice. Representations of $\bos$ whose weights lie in $\Gamma^{\vee}$ are precisely those which integrate to representations of $\Bos_{\Gamma}$.

Let $R$ be a complex representation of $\bos$ with roots $Q_1 \dots Q_n \in \bos^{\vee}$. We assume that the root lattice satisfies
\begin{enumerate}[label=(A\arabic*)]
\item \label{ite:gradingCoch} {[$\Gr$-grading]} $\Lambda_R \subset \Gamma^{\vee}$.
\end{enumerate}
In representation theoretic terms, condition \ref{ite:gradingCoch} states that $R$ integrates to a representation of $\Bos_{\Gamma}$. Moreover, condition \ref{ite:gradingCoch} implies that $\GW$ is the adjoint representation of the complex Lie supergroup $G_{\Gamma}$ with bosonic subgroup $\Bos_{\Gamma}$ and Lie superalgebra $\GW$. In terms of the category $\mcat_R$, condition \ref{ite:gradingCoch} ensures that tracking weights modulo $\Gamma^{\vee}$ defines a grading of $\mcat_R$ by the torus $\Gr = \bos^{\vee} \slash \Gamma^{\vee}$ which is Langlands dual to $\Bos_{\Gamma}$. 

Let $\FR_0$ be a subgroup of $\bos^{\vee}$ and set $\FR = \FR_0 \oplus \Ztwo$, seen as a subgroup of $\bos^{\vee} \oplus \Ztwo$. Consider the following conditions on $\FR_0$:
\begin{enumerate}[label=(A\arabic*)]
\setcounter{enumi}{1}
\item \label{ite:deg0Coch} {[$\FR$ in degree $\p 0$]} $\FR_0 \subset \Gamma^{\vee}$.

\item \label{ite:psiCoch} {[Existence of $\psi$]} $\met^{\vee}(k,\lambda) \in \Z$ for all $k \in \FR_0$ and $\lambda \in \Gamma^{\vee}$.

\item \label{ite:ribbCoch} {[Trivial ribbon]} $-\met^{\vee}(k,k) + \sum_{i=1}^n \chi_i(k) \in 2 \Z$ for all $k \in \FR_0$.

\item \label{ite:finiteCoch} {[Finiteness]} The index of $\FR_0$ in $\Gamma^{\vee}$ is finite.
\end{enumerate}
Conditions \ref{ite:gradingCoch} and \ref{ite:psiCoch} imply that $\chi_i(\gamma) = \met^{\vee}(Q_i,\gamma)$ is an integer for each $1 \leq i \leq n$. Conditions \ref{ite:deg0Coch} and \ref{ite:psiCoch} imply that $\met^{\vee}(k,k)$ is an integer for all $k \in \FR_0$. This in turn implies that $-\met^{\vee}(k,k) + \sum_{i=1}^n \chi_i(k)$ is an integer, of which condition \ref{ite:ribbCoch} is a further refinement. As we will see below, conditions \ref{ite:gradingCoch}-\ref{ite:finiteCoch} are necessary for $\FR$ to be the weights of a free realization on $\mcat_{R,\p 0}$ which is a part of a relative pre-modular structure on $\mcat_R$. The labeling of each condition indicates its relevance from this perspective.

Recall the effective metric $\emet$ introduced in Section \ref{sec:effMet}.

\begin{Prop}
\label{prop:freeRealCoch}
Let $\FR_0$ be the image of $\Gamma$ under the map $\met^{\flat}: \bos \rightarrow \bos^{\vee}$. If the lattice $\Gamma$ is even integral with respect to the effective metric $\emet$, then $\FR= \FR_0 \oplus \Ztwo$ satisfies conditions \ref{ite:deg0Coch}-\ref{ite:finiteCoch}.
\end{Prop}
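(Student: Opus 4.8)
The plan is to verify conditions \ref{ite:deg0Coch}--\ref{ite:finiteCoch} one at a time, transporting the pairings on $\FR_0$ to pairings on $\Gamma$ via the adjoint isomorphism $\met^{\flat}: \bos \xrightarrow[]{\sim} \bos^{\vee}$. The first thing I would record are the three identities, valid for $\gamma, \gamma' \in \Gamma$ and $\lambda \in \bos^{\vee}$,
\[
\met^{\vee}(\met^{\flat}(\gamma), \lambda) = \lambda(\gamma),
\qquad
\met^{\vee}(\met^{\flat}(\gamma), \met^{\flat}(\gamma')) = \met(\gamma, \gamma'),
\qquad
\chi_i(\met^{\flat}(\gamma)) = Q_i(\gamma),
\]
the last because $\chi_i = \met^{\vee}(Q_i, -)$. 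Each follows immediately from $\met^{\flat} \circ \met^{\sharp} = \id$, symmetry of $\met$, and the definition of $\met^{\vee}$. These identities reduce every statement to be checked to an elementary statement about the integers $\met(\gamma, \gamma')$ and $Q_i(\gamma)$, the latter lying in $\Z$ by the standing condition \ref{ite:gradingCoch}.

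Granting the identities, conditions \ref{ite:deg0Coch}, \ref{ite:psiCoch} and \ref{ite:finiteCoch} require nothing beyond integrality of $(\Gamma, \met)$ and non-degeneracy of $\met$. For \ref{ite:deg0Coch}, $\met^{\flat}(\gamma)(\gamma') = \met(\gamma, \gamma') \in \Z$ for all $\gamma' \in \Gamma$, so $\FR_0 \subset \Gamma^{\vee}$. For \ref{ite:psiCoch}, if $k = \met^{\flat}(\gamma) \in \FR_0$ and $\lambda \in \Gamma^{\vee}$, then $\met^{\vee}(k, \lambda) = \lambda(\gamma) \in \Z$. For \ref{ite:finiteCoch}, since $\met^{\flat}$ is a linear isomorphism, $\FR_0$ is a rank $r$ sublattice of the rank $r$ lattice $\Gamma^{\vee}$ and hence of finite index (in fact the index equals $|\det(\met(\gamma_a, \gamma_b))|$ for a basis $\{\gamma_a\}$ of $\Gamma$, a nonzero integer by non-degeneracy of $\met$, but finiteness is all that is needed).

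The even-integrality hypothesis on $\emet$ enters only for the trivial ribbon condition \ref{ite:ribbCoch}. For $k = \met^{\flat}(\gamma) \in \FR_0$ the identities above give
\[
-\met^{\vee}(k, k) + \sum_{i=1}^n \chi_i(k) = -\met(\gamma, \gamma) + \sum_{i=1}^n Q_i(\gamma).
\]
Since $\emet = \met + \sum_{i=1}^n Q_i \otimes Q_i$ and $\Gamma$ is even integral with respect to $\emet$, we have $\met(\gamma, \gamma) \equiv \sum_{i=1}^n Q_i(\gamma)^2 \pmod 2$, so the right-hand side is congruent modulo $2$ to $\sum_{i=1}^n \big(Q_i(\gamma)^2 + Q_i(\gamma)\big) = \sum_{i=1}^n Q_i(\gamma)\big(Q_i(\gamma) + 1\big)$, which lies in $2\Z$ as a sum of products of consecutive integers. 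This establishes \ref{ite:ribbCoch}.

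I do not expect a genuine obstacle: the argument is essentially bookkeeping. The one place that repays care is \ref{ite:ribbCoch}, where one must use that \ref{ite:gradingCoch} makes each $Q_i(\gamma)$ an integer (so that $Q_i(\gamma)(Q_i(\gamma)+1)$ is unambiguously even) and that it is genuinely \emph{even} integrality of $\Gamma$ relative to $\emet$, not merely integrality, that pins down the parity of $\met(\gamma, \gamma)$.
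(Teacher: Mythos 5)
Your proof is correct and follows essentially the same route as the paper: verify each condition, reducing pairings via $\met^{\flat}$ to integral pairings on $\Gamma$, and for \ref{ite:ribbCoch} use that $\emet$-evenness forces $\met(\gamma,\gamma) \equiv \sum_i Q_i(\gamma)^2 \pmod 2$ together with the evenness of $Q_i(\gamma)(Q_i(\gamma)+1)$. The paper's proof presents the same computation by substituting $\emet$ directly into the expression so that the answer reads $-\emet(\gamma,\gamma) + \sum_i \chi_i(\omega_\gamma)(\chi_i(\omega_\gamma)+1)$, but this is a cosmetic rearrangement of your modulo-$2$ bookkeeping.
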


\begin{proof}
We verify each of the conditions. Given $\gamma \in \Gamma$, let $\omega_{\gamma} =\met^{\flat}(\gamma)$.
\begin{enumerate}[label=(A\arabic*)]
\setcounter{enumi}{1}
\item Since the lattice $\Gamma$ is integral with respect to $\met$, the map $\met^{\flat}: \bos \rightarrow \bos^{\vee}$ restricts to a group homomorphism $\met^{\flat}: \Gamma \rightarrow \Gamma^{\vee}$. Hence, $\omega_{\gamma} \in \Gamma^{\vee}$.

\item For each $\lambda \in \Gamma^{\vee}$, the definitions of $\met^{\vee}$ and $\met^{\flat}$ give
\[
\met^{\vee}(\omega_{\gamma},\lambda)
=
\met^{\vee}(\met^{\flat}(\gamma),\lambda)
=
\lambda(\gamma).
\]
It follows that $\met^{\vee}(\omega_{\gamma},\lambda) \in \Z$.

\item Using the definition of the effective metric, we compute
\[
-\met^{\vee}(\omega_{\gamma},\omega_{\gamma}) + \sum_{i=1}^n \chi_i(\omega_{\gamma})
=
-\emet(\gamma,\gamma) + \sum_{i=1}^n \chi_i(\omega_{\gamma})(\chi_i(\omega_{\gamma})+1).
\]
Since $(\Gamma, \emet)$ is even integral and $\chi_i(\omega_{\gamma})$ is an integer, it follows that $-\met^{\vee}(\omega_{\gamma},\omega_{\gamma}) + \sum_{i=1}^n \chi_i(\omega_{\gamma}) \in 2 \Z$.

\item This follows from non-degeneracy of $\met$. Indeed, the index in question is the discriminant of $\met$. \qedhere
\end{enumerate}
\end{proof}

\subsection{Non-zero matter}
\label{sec:nonzeroHyp}

In this section we assume that $n \geq 1$, so that $R$ is not the zero representation. The case $n=0$ is treated in Section \ref{sec:toralCS}. Continue with the setting of Section \ref{sec:genConsid} so that $\Gr = \bos^{\vee} \slash \Gamma^{\vee}$ and $\FR_0 = \ima(\met^{\flat}: \Gamma \rightarrow \Gamma^{\vee})$. As in Proposition \ref{prop:freeRealCoch}, assume that $(\Gamma,\emet)$ is even integral. Keeping the notation of Section \ref{sec:genSS}, let $\SSS$ be the image of $\chi^{-1}(\SSSY)$ under the quotient map $\bos^{\vee} \rightarrow \Gr$.

\begin{Prop}
\label{prop:genericSSCpt}
The $\Gr$-graded category $\mcat_R$ is generically semisimple with small symmetric subset $\SSS$. For $\p \lambda \in \Gr \setminus \SSS$, a completely reduced dominating set of $\mcat_{R,\p \lambda}$ is
\[
\{V_{(\lambda,\p p)} \mid \lambda \mbox{ in class } \p \lambda, \; \p p \in \Ztwo \}.
\]
\end{Prop}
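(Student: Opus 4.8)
The plan is to deduce Proposition \ref{prop:genericSSCpt} from the fine $\GrH$-graded generic semisimplicity already established in Proposition \ref{prop:genericSSZWt} (with its modified-setting variant from Section \ref{sec:moduleCatModUnrolled}), by pushing that structure forward along the natural surjection of grading groups. First I would record that condition \ref{ite:gradingCoch} guarantees that tracking weights modulo $\Gamma^\vee$ does define a $\Gr$-grading $\mcat_R \simeq \bigoplus_{\p\lambda \in \Gr} \mcat_{R,\p\lambda}$, with $\mcat_{R,\p\lambda}$ spanned by modules all of whose weights lie in the coset $\p\lambda$. The content is that this coarse grading refines the fine one: since $\Lambda_R \subset \Gamma^\vee$ and $\Lambda_R \subset \ker\chi$, the map $\chi: \bos^\vee \to \GrH$ descends to a well-defined surjective homomorphism $\bar\chi: \Gr = \bos^\vee/\Gamma^\vee \to \GrH$, and each homogeneous summand $\mcat_{R,\p\lambda}$ is contained in $\mcat_{R,\bar\chi(\p\lambda)}$ (in the notation of Section \ref{sec:genSS}), because $\mathcal{Z}_i$ acts on a weight-$\lambda$ vector by the scalar determined by $\chi_i(\lambda)$, which depends only on $\lambda \bmod \Gamma^\vee$.

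Next I would identify the singular locus. By definition $\SSS$ is the image of $\chi^{-1}(\SSSY)$ under $\bos^\vee \to \Gr$; since $\Lambda_R \subset \ker\chi$ and $\Lambda_R \subset \Gamma^\vee$, one checks $\chi^{-1}(\SSSY)$ is a union of $\Gamma^\vee$-cosets, so $\SSS = \bar\chi^{-1}(\SSSY)$. Now $\SSSY \subset \GrH$ is small and symmetric (Section \ref{sec:genSS}, using Assumption \ref{assump:noZeroRoots} and $n \geq 1$), $\bar\chi$ is surjective, so Lemma \ref{lem:sssPullback} gives that $\SSS \subset \Gr$ is small and symmetric. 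For $\p\lambda \in \Gr \setminus \SSS$ we have $\bar\chi(\p\lambda) = \xi \in \GrH \setminus \SSSY$, and $\mcat_{R,\p\lambda}$ is a full subcategory of the semisimple category $\mcat_{R,\xi}$, hence semisimple. It remains to pin down a completely reduced dominating set: by Proposition \ref{prop:genericSSZWt}, $\{V_{(\mu,\p p)} \mid \chi(\mu) = \xi,\ \p p \in \Ztwo\}$ dominates $\mcat_{R,\xi}$ and has $\dim \Hom = \delta$; intersecting with the subcategory $\mcat_{R,\p\lambda}$ amounts to restricting to those $\mu$ lying in the coset $\p\lambda$, which is exactly the claimed set $\{V_{(\lambda,\p p)} \mid \lambda \text{ in class } \p\lambda,\ \p p \in \Ztwo\}$. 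Complete reducedness is inherited, and dominance follows because any object of $\mcat_{R,\p\lambda}$, viewed in $\mcat_{R,\xi}$, decomposes into simples from the full set, each of which automatically lies in $\mcat_{R,\p\lambda}$ since its weights lie in the appropriate coset.

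I expect the only genuine subtlety — and thus the main thing to verify carefully rather than the "hard part" in any deep sense — is the compatibility bookkeeping: that $\chi^{-1}(\SSSY)$ and the fibers of $\bar\chi$ are genuinely $\Gamma^\vee$-stable, so that $\bar\chi$ is well-defined and $\SSS = \bar\chi^{-1}(\SSSY)$ exactly, and that a simple object appearing in the decomposition of an object of $\mcat_{R,\p\lambda}$ really does land back in $\mcat_{R,\p\lambda}$ rather than merely in $\mcat_{R,\xi}$. All of this rests on the two inclusions $\Lambda_R \subset \Gamma^\vee$ (condition \ref{ite:gradingCoch}) and $\Lambda_R \subset \ker\chi$ (equation \eqref{eq:fundIden}), together with the fact that $E_i$, $F_i$ shift weights by $\pm Q_i \in \Lambda_R$; once these are invoked the argument is essentially formal. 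The passage from $\Uq$ to $\Uqmin$ introduces no new difficulty, only the uniform replacement of $\chi_i$ by $2\chi_i$ noted in Section \ref{sec:moduleCatModUnrolled}, which does not affect any of the set-theoretic reasoning above.
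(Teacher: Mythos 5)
Your reduction hinges on the claim that $\chi$ descends to a homomorphism $\bar\chi\colon \Gr = \bos^{\vee}/\Gamma^{\vee} \to \GrH$, and this is false. For $\chi$ to factor through $\bos^{\vee}/\Gamma^{\vee}$ you would need $\chi(\Gamma^{\vee})=0$, i.e.\ $\met^{\vee}(Q_i,\gamma)=0$ for all $\gamma\in\Gamma^{\vee}$ and all $i$; since $\Gamma^{\vee}$ has full rank and $\met^{\vee}$ is non-degenerate, that would force every $Q_i=0$, contradicting Assumption \ref{assump:noZeroRoots}. The inclusions you invoke, $\Lambda_R\subset\Gamma^{\vee}$ and $\Lambda_R\subset\ker\chi$, control translation of weights by \emph{roots}, not by elements of $\Gamma^{\vee}$, which is what descent requires. (That is the situation of Section \ref{sec:kerGrad}, where $\Gr=\bos^{\vee}/\Lambda$ with $\Lambda\subset\ker\chi$; it is not the situation here.) Consequently several downstream assertions fail: $\chi_i(\lambda)$ does \emph{not} depend only on $\lambda \bmod \Gamma^{\vee}$; the subcategory $\mcat_{R,\p\lambda}$ is \emph{not} contained in a single $\mcat_{R,\xi}$ (e.g.\ $V_{(\lambda,\p 0)}$ and $V_{(\lambda+\gamma,\p 0)}$ lie in the same $\Gr$-degree but generically have different central characters); the identity $\SSS=\bar\chi^{-1}(\SSSY)$ is meaningless; and Lemma \ref{lem:sssPullback} cannot be applied as you apply it. Even the weaker claim that $\chi^{-1}(\SSSY)$ is a union of $\Gamma^{\vee}$-cosets is not available in general: conditions \ref{ite:gradingCoch} and \ref{ite:psiCoch} give $\chi_i(k)\in\Z$ only for $k\in\FR_0$, not for all $k\in\Gamma^{\vee}$, so the atypical locus is a priori only $\FR_0$-stable.

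The paper's argument repairs exactly these points. For smallness, one saturates: $\chi^{-1}(\SSSY)+\Gamma^{\vee}$ is a union of finitely many translates of $\chi^{-1}(\SSSY)$ (finitely many coset representatives of $\FR_0$ in $\Gamma^{\vee}$ by \ref{ite:finiteCoch}, each $\FR_0$-translate being absorbed since $\chi_i(\FR_0)\subset\Z$), hence is still a countable union of affine hyperplanes and so small in $\bos^{\vee}$ as in the proof of Proposition \ref{prop:typicalExist}; smallness of the $\Gamma^{\vee}$-saturated set then passes to its image $\SSS$ in the quotient. Symmetry of $\SSS$ is immediate. For the rest, one does not embed $\mcat_{R,\p\lambda}$ into any $\mcat_{R,\xi}$; instead one observes that, by the very definition of $\SSS$ as an image, \emph{every} lift of $\p\lambda\in\Gr\setminus\SSS$ is typical, and then reruns the argument of Proposition \ref{prop:genericSSZWt} directly in $\mcat_{R,\p\lambda}$: a highest weight vector generates a simple, projective and injective Verma module which splits off, and induction on dimension yields semisimplicity together with the stated completely reduced dominating set. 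Your final paragraph correctly flags the coset bookkeeping as the delicate point, but the verification you sketch for it is the step that does not go through.
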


\begin{proof}
Conditions \ref{ite:deg0Coch}, \ref{ite:psiCoch} and \ref{ite:finiteCoch} imply that $\chi^{-1}(\SSSY) + \FR_0$ is finitely many translates of $\chi^{-1}(\SSSY)$ and so a union countably many affine hyperplanes in $\bos^{\vee}$; see the proof of Proposition \ref{prop:typicalExist}. It follows that $\chi^{-1}(\SSSY) + \FR_0$ is small in $\bos^{\vee}$ which implies that $\SSS$ is small in $\Gr$.

Since the definition of $\SSS$ is such that any weight with image in $\Gr \setminus \SSS$ is typical, the remainder of the proposition can be proved in the same way as Proposition \ref{prop:genericSSZWt}.
\end{proof}

Let $D = \coker(\met^{\flat}: \Gamma \rightarrow \Gamma^{\vee})$ be the discriminant group of the metric $\met$. The group $D$ is finite abelian of order the absolute value of the determinant of any Gram matrix of $\met$. By an abuse of notation, we sometimes identify $D$ with a subset of representatives in $\Gamma^{\vee}$ which contains $0$.

\begin{Prop}
\label{prop:relPreModCompact}
The monoidal functor
\[
\sigma: \FR \rightarrow \mcat_R,
\qquad
(\omega_{\gamma}, \p p) \mapsto \sigma_{(\omega_{\gamma},\p p)} := \C^{\bos}_{(\omega_{\gamma}, \p p)}
\]
defines a free realization of $\FR$ in $\mcat_{R,\p 0}$ and gives $\mcat_R$ the structure of a pre-modular $\Gr$-category relative to $(\FR,\SSS)$.
\end{Prop}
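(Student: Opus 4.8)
The statement bundles together the axioms of Definitions \ref{def:free} and \ref{def:preMod}, and several of them are already in hand: by the $\mcat_R$-version of Theorem \ref{thm:ribbonCat} the category $\mcat_R$ is $\C$-linear ribbon, by condition \ref{ite:gradingCoch} it is $\Gr$-graded as explained in Section \ref{sec:genConsid}, by the $\mcat_R$-version of Proposition \ref{prop:mTrace} together with the unimodularity of Lemma \ref{lem:catUnimod} (which uses Assumption \ref{assump:unimodular}) it carries a non-zero modified trace on its ideal of projectives, and by Proposition \ref{prop:genericSSCpt} it is generically semisimple with singular locus $\SSS$. The plan is therefore to verify the three remaining points: that $\sigma$ is a free realization of $\FR$ valued in $\mcat_{R,\p 0}$; that the generic semisimplicity of Proposition \ref{prop:genericSSCpt} refines to the $\FR$-finite generic semisimplicity of Definition \ref{def:preMod}; and that the double braiding with the objects $\sigma_k$ is governed by a bicharacter $\psi$ as in \eqref{eq:psi}.

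For the free realization, I would first note that each $\C^{\bos}_{(\omega_\gamma,\p p)}$ is a genuine one-dimensional weight module in the sense of Example \ref{ex:oneDimMod}, because $\chi_i(\omega_\gamma) = Q_i(\gamma) \in \Z$ by conditions \ref{ite:gradingCoch} and \ref{ite:deg0Coch}; the canonical isomorphisms of Example \ref{ex:oneDimMod} then make $\sigma$ into a monoidal functor with $\sigma_0 = \mathbb{I}$, and condition \ref{ite:deg0Coch} puts every $\omega_\gamma$ in $\Gamma^\vee$, so $\sigma$ takes values in $\mcat_{R,\p 0}$. The twist on $\sigma_{(\omega_\gamma,\p p)}$ is $q^{-2\met^\vee(\omega_\gamma,\omega_\gamma) + 2\sum_i \chi_i(\omega_\gamma)}\id$ by the formula recorded in Section \ref{sec:moduleCatModUnrolled}; since $q = \sqrt{-1}$ this scalar equals $(-1)^{-\met^\vee(\omega_\gamma,\omega_\gamma)+\sum_i \chi_i(\omega_\gamma)}$, which is $1$ by condition \ref{ite:ribbCoch}, so $\theta_{\sigma_k}=\id_{\sigma_k}$. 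Finally, if $V$ is simple with highest weight $(\lambda,\p q)$, then $V\otimes\sigma_{(\omega_\gamma,\p p)}$ has highest weight $(\lambda+\omega_\gamma,\p q+\p p)$, so by the highest-weight classification of Theorem \ref{thm:simplesSplitAb} one has $V\otimes\sigma_k\simeq V$ only if $\omega_\gamma=0$ and $\p p=\p 0$; as $\met^\flat$ is injective (non-degeneracy of $\met$) this forces $\gamma=0$, i.e.\ $k=0$ in $\FR$.

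For $\FR$-finite generic semisimplicity, fix $\p\lambda\in\Gr\setminus\SSS$ with representative $\lambda_0\in\bos^\vee$, necessarily typical. Proposition \ref{prop:genericSSCpt} gives that $\{V_{(\lambda_0+\mu,\p p)}\mid \mu\in\Gamma^\vee,\ \p p\in\Ztwo\}$ is a completely reduced dominating set for $\mcat_{R,\p\lambda}$, and each such object is a regular simple object (a typical Verma module; regularity follows from simplicity of $\mathbb{I}$). Using $V_{(\mu,\p p)}\otimes\C^{\bos}_{(\omega_\gamma,\p p')}\simeq V_{(\mu+\omega_\gamma,\p p+\p p')}$, the group $\FR=\FR_0\oplus\Ztwo$ acts on this set, with $\Ztwo$ acting transitively on the parity label and $\FR_0=\ima(\met^\flat:\Gamma\to\bos^\vee)$ acting by translation inside the coset $\lambda_0+\Gamma^\vee$. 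Since $[\Gamma^\vee:\met^\flat(\Gamma)]=|D|$ is finite by condition \ref{ite:finiteCoch}, I would pick representatives $D\subset\Gamma^\vee$ (containing $0$) and set $\Theta(\p\lambda):=\{V_{(\lambda_0+d,\p 0)}\mid d\in D\}$; this is a finite family of regular simples whose $\FR$-translates recover exactly the dominating set above, and transversality of $D$ together with Theorem \ref{thm:simplesSplitAb} shows distinct translates are non-isomorphic, so $\Theta(\p\lambda)\otimes\sigma_\FR$ is completely reduced.

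For the bicharacter, the key observation is that every $E_i$ and $F_i$ acts by zero on a one-dimensional module, so the $R$-matrix factor $\tilde R$ acts as the identity on $V\otimes\sigma_k$ and on $\sigma_k\otimes V$; hence $c_{V,\sigma_k}$ and $c_{\sigma_k,V}$ reduce to the transposition times the scalar $\Upsilon$-factor, and a direct computation gives $c_{\sigma_k,V}\circ c_{V,\sigma_k}=q^{-4\met^\vee(\lambda_v,\omega_\gamma)}\,\id=e^{-2\pi\sqrt{-1}\,\lambda_v(\gamma)}\,\id$ for a weight vector $v$ of weight $\lambda_v$, using $\met^\vee(\lambda_v,\met^\flat(\gamma))=\lambda_v(\gamma)$ and $\hbar=\tfrac{\pi\sqrt{-1}}{2}$. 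Since the weights of $V\in\mcat_{R,g}$ all lie in one $\Gamma^\vee$-coset by definition of the $\Gr$-grading, $\lambda_v(\gamma)$ is well-defined modulo $\Z$ in terms of $g$ alone (this is where conditions \ref{ite:gradingCoch}--\ref{ite:psiCoch} enter), so $\psi(g,(\omega_\gamma,\p p)):=e^{-2\pi\sqrt{-1}\,\lambda(\gamma)}$ for any representative $\lambda$ of $g$ is a well-defined bicharacter $\Gr\times\FR\to\C^\times$ satisfying \eqref{eq:psi}. I expect the genuinely substantive part not to be any one of these computations — each is short — but the bookkeeping that ties them together: keeping track of the factor-of-two conventions of $\Uqmin$ and, above all, confirming that the relevant scalars (the twist on $\sigma_k$, the double braiding) are respectively trivial and well-defined modulo $\Gamma^\vee$, which is precisely what the integrality conditions \ref{ite:gradingCoch}--\ref{ite:finiteCoch}, verified for this choice of $\FR_0$ in Proposition \ref{prop:freeRealCoch}, are engineered to ensure.
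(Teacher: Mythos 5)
Your proposal is correct and follows essentially the same route as the paper: the same choice of $\Theta(\p\lambda)=\{V_{(\lambda+k,\p 0)}\mid k\in D\}$, the same twist computation reducing to condition \ref{ite:ribbCoch}, and the same double-braiding scalar $q^{-4\met^{\vee}(\lambda,\omega_{\gamma})}$ with well-definedness via condition \ref{ite:psiCoch}. If anything you are slightly more thorough than the paper, which does not explicitly verify condition (3) of Definition \ref{def:free} (that $V\otimes\sigma_k\simeq V$ forces $k=0$), whereas your highest-weight argument handles it cleanly.
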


\begin{proof}
Recalling the computations from Example \ref{ex:oneDimMod}, we see that condition \ref{ite:deg0Coch} ensures that each $\sigma_{(\omega_{\gamma}, \p p)}$ has $\Gr$-degree $\p 0$. Conditions \ref{ite:gradingCoch} and \ref{ite:psiCoch} imply that $\chi_i(\gamma) = \met^{\vee}(Q_i,\gamma) \in \Z$, $1 \leq i \leq n$, which translates to the statement that $\sigma_{(\omega_{\gamma}, \p p)}$ is one dimensional and hence invertible. Finally, condition \ref{ite:ribbCoch} ensures that $\theta_{\sigma_{(\omega_{\gamma},\p p)}}= \id_{\sigma_{(\omega_{\gamma},\p p)}}$.

Let $\p \lambda \in \Gr \setminus \SSS$ with chosen lift $\lambda \in \bos^{\vee}$. In view of the completely reduced dominating set of Proposition \ref{prop:genericSSCpt}, we can take $\Theta(\p \lambda) = \{V_{(\lambda+k, \p 0)} \mid k \in D\}$.

Finally, for $V \in \mcat_{R, \p \lambda}$, we compute
\[
c_{\sigma_{(\omega_{\gamma},\p p)}, V} \circ c_{V,\sigma_{(\omega_{\gamma}, \p p)}}
=
q^{-4 \met^{\vee}(\lambda, \omega_{\gamma})} \id_{V \otimes \sigma_{(\omega_{\gamma}, \p p)}},
\]
where $\lambda \in \bos^{\vee}$ is any lift of $\p \lambda$. We may therefore take for the required bicharacter
\[
\psi: \Gr \times \FR \rightarrow \C^{\times},
\qquad
(\p \lambda, (\omega_{\gamma}, \p p)) \mapsto q^{-4 \met^{\vee}(\omega_{\gamma},\lambda)}.
\]
Note that independence of $\psi$ on the choice of lift follows from condition \ref{ite:psiCoch}.

In view of Propositions \ref{prop:mTrace} and \ref{prop:genericSSCpt}, this completes the proof.
\end{proof}

Turning to relative modularity, let $W \in \mcat_{R,\p 0}$. Recall that a morphism $f \in \End_{\mcat_R}(W)$ is called \emph{transparent in $\mcat_{R,\p 0}$} if 
\[
\id_U \otimes f = c_{W,U} \circ (f \otimes \id_U) \circ c_{U,W}
\]
and
\[
f \otimes \id_V =  c_{V,W} \circ (\id_V \otimes f) \circ c_{W,V}
\]
for all $U,V \in \mcat_{R, \p 0}$.

The following result is a variation of \cite[Lemma 2.3]{derenzi2020}.

\begin{Lem}
\label{lem:transparentCpt}
Let $W \in \mcat_{R,\p 0}$ and $f \in \End_{\mcat_R}(W)$ be transparent in $\mcat_{R,\p 0}$. There exist an integer $m \geq 1$, one dimensional modules $\C^{\bos}_{(k_i,\p p_i)} \in \mcat_{R,\p 0}$ and morphisms $g_i \in \Hom_{\mcat_R}(W,\C^{\bos}_{(k_i,\p p_i)})$ and $h_i \in \Hom_{\mcat_R}(\C^{\bos}_{(k_i,\p p_i)},W)$, where $1 \leq i \leq m$, such that $f = \sum_{i=1}^m h_i \circ g_i$.
\end{Lem}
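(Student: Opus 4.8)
The plan is to exploit the generic semisimplicity of $\mcat_{R,\p 0}$ together with the structure of $f$ as a transparent endomorphism. First I would observe that $W$, being an object of the locally finite abelian category $\mcat_{R,\p 0}$, has a finite composition series, and $W$ itself need not be semisimple; however, the transparency of $f$ should force it to ``see'' only the one-dimensional composition factors. The key first step is to analyze what transparency means concretely: by Proposition~\ref{prop:braiding} the double braiding $c_{W,U}\circ c_{U,W}$ is computed from the $\tilde R$-matrix and the exponential factor $\Upsilon$, and transparency of $f$ in $\mcat_{R,\p 0}$ is a strong constraint. Following the strategy of \cite[Lemma 2.3]{derenzi2020}, I would pick a typical weight $\lambda$ and use the open Hopf link endomorphisms $\Phi_{V_{(\lambda,\p 0)},W}$; by Lemma~\ref{lem:Phis} (in its modified form for $\Uqmin$) these are diagonalizable with eigenvalues recording the $\met^\vee$-pairings of weights, and transparency of $f$ means $f$ commutes with all of them while itself being ``detected'' by the braiding only through central/weight data.

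The second step is to reduce to one-dimensional target modules. Since every simple object of $\mcat_{R,\p 0}$ is either a typical Verma module $V_{(\lambda,\p p)}$ or an atypical simple quotient $S_{(\lambda,\p p)}$, and since the one-dimensional modules $\C^{\bos}_{(k,\p p)}$ are precisely the $S_{(\lambda,\p p)}$ with $\chi_i(\lambda)\in\frac{\pi\sqrt{-1}}{2\hbar}\Z$ for all $i$, the claim is that transparency of $f$ forces its image to be a sum of such one-dimensional pieces. I would argue: if $f$ were nonzero on a composition factor of dimension $>1$, then (using the explicit braiding and an auxiliary typical module $U=V_{(\mu,\p 0)}$ of generic weight) the two transparency equations would be violated, because the off-diagonal terms of $\tilde R$ act nontrivially on higher-dimensional weight modules and cannot be cancelled by the diagonal $\Upsilon$-factor for all $\mu$ simultaneously. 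Concretely, evaluating the transparency relation $f\otimes\id_V = c_{V,W}\circ(\id_V\otimes f)\circ c_{W,V}$ on highest-weight vectors and tracking weights should pin down that $f$ factors through the maximal submodule/quotient on which all $E_i$ and $F_i$ act by zero.

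The third step is then purely formal: once we know $f$ factors as $W \xrightarrow{g} W_0 \xrightarrow{h} W$ where $W_0$ is a finite direct sum $\bigoplus_{i=1}^m \C^{\bos}_{(k_i,\p p_i)}$ of one-dimensional modules (with each $\C^{\bos}_{(k_i,\p p_i)}\in\mcat_{R,\p 0}$ by the grading hypothesis on $W$), we decompose $g = \bigoplus g_i$ and $h = \bigoplus h_i$ along the direct sum decomposition of $W_0$ and write $f = h\circ g = \sum_{i=1}^m h_i\circ g_i$ with $g_i\in\Hom_{\mcat_R}(W,\C^{\bos}_{(k_i,\p p_i)})$ and $h_i\in\Hom_{\mcat_R}(\C^{\bos}_{(k_i,\p p_i)},W)$, exactly as required. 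I would also note that we may assume $m\geq 1$ by taking any nonzero one-dimensional module and zero morphisms if $f=0$, or simply observing the statement is trivially true with $m=1$ and $g_1=h_1=0$ in the degenerate case.

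The main obstacle I anticipate is the second step: rigorously showing that transparency forces $f$ to kill every higher-dimensional composition factor. The subtlety is that $W$ is not assumed semisimple, so ``factoring through the one-dimensional part'' requires care with extensions — one must check that the relevant sub- and quotient-modules on which $E_i,F_i$ act trivially actually split off, or at least that $f$ itself, as opposed to an arbitrary endomorphism, has image and coimage landing in the semisimple one-dimensional isotypic part. I expect this to follow by combining the eigenvalue analysis of the $\Phi$-operators (Lemma~\ref{lem:Phis}) — which separates objects by their $\met^\vee$-weights and hence distinguishes one-dimensional modules from Verma modules of the same central character — with the observation that $f$ commutes with a sufficiently rich family of such transparent central operators to be block-diagonal, reducing us to the one-dimensional blocks where the factorization is immediate. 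This is precisely the mechanism in \cite[Lemma 2.3]{derenzi2020}, adapted to the present $\GW$-setting.
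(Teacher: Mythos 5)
Your overall plan is right in outline (transparency should force the image of $f$ to be annihilated by all $E_i$ and $F_i$, hence to decompose into one dimensional weight lines), but the decisive second step is never carried out, and the mechanism you propose for it would not work as stated. You want to test $f$ against ``an auxiliary typical module $U=V_{(\mu,\p 0)}$ of generic weight'' and use the eigenvalue analysis of the open Hopf link operators $\Phi$. However, transparency is only assumed against objects of $\mcat_{R,\p 0}$, i.e.\ modules whose weights lie in $\Gamma^{\vee}$, and in the setting where this lemma is applied (Theorem \ref{thm:relModCpt}) conditions \ref{ite:gradingCoch} and \ref{ite:psiCoch} force $\chi_i(\lambda)\in\Z$ for every $\lambda\in\Gamma^{\vee}$, so \emph{every} weight in degree $\p 0$ is atypical: there are no typical Verma modules available as test objects, and the $\Phi$-eigenvalue separation you invoke (which rests on Lemma \ref{lem:Phis} for typical colours) is not at your disposal. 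Your fallback phrase that the off-diagonal terms of $\tilde R$ ``cannot be cancelled by the diagonal $\Upsilon$-factor for all $\mu$ simultaneously'' is not an argument, and you yourself flag this step as the main obstacle.

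The actual argument is more elementary and avoids typicality entirely. Test one-sided transparency against the single Verma module $V_{(0,\p 0)}$ (which lies in $\mcat_{R,\p 0}$ since $\Lambda_R\subset\Gamma^{\vee}$): because the highest weight vector $v_{\varnothing}$ is killed by every $E_I$ with $I\neq\varnothing$, the braiding sends $v_{\varnothing}\otimes w$ to a multiple of $w\otimes v_{\varnothing}$, and transparency of $f$ together with the fact that $\{F_Iv_{\varnothing}\}$ is a basis forces $E_If(w)=0$ for all $I\neq\varnothing$. Repeating with the lowest weight Verma module $V^-_{(0,\p 0)}$ gives $F_If(w)=0$. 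Then $\mathcal{K}_i-\mathcal{K}_i^{-1}=(q-q^{-1})[E_i,F_i]$ annihilates $f(w)$, so $q^{4\chi_i(\lambda)}=1$ for the weight $\lambda$ of $f(w)$, and each homogeneous weight vector in $\operatorname{im}f$ spans a one dimensional submodule. This also dissolves your worry about extensions: one does not need the one dimensional composition factors of $W$ to split off, only that the \emph{image} of $f$ is a direct sum of one dimensional modules, which it is since it is a weight module each of whose weight vectors generates a one dimensional submodule; factoring $f$ through its image gives the required $g_i$ and $h_i$.
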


\begin{proof}
Consider $V_{(0,\p 0)} \in \mcat_{R, \p 0}$ with its highest weight vector $v_{\varnothing}$ and let $w \in W$ be a weight vector. Because $v_{\varnothing}$ is annihilated by $E_I$, $I \neq \varnothing$, the explicit form of the braiding shows that $c_{V_{(0,\p 0)},W}(v_{\varnothing} \otimes w)$ is proportional to $w \otimes v_{\varnothing}$. Because $f$ is transparent, $c_{W,V_{(0,\p 0)}}(f(w) \otimes v_{\varnothing})$ is proportional to $v_{\varnothing} \otimes f(w)$. Since $\{F_I v_{\varnothing}\ \mid I \subset \{1, \dots, n\}\}$ is a basis of $V_{(0,\p 0)}$, we conclude that $E_I f(w) =0$ for all $I \neq \varnothing$.

Reversing the roles of $E_i$ and $F_i$ in Definition \ref{def:Verma}, define the Verma module of lowest weight $(\lambda,\p p) \in \bos^{\vee} \times \Ztwo$ by
\[
V^-_{(\lambda, \p p)}
:=
\Uqmin \otimes_{\UqminNn} \C^{-}_{(\lambda, \p p)}.
\]
The module $V^-_{(\lambda, \p p)}$ has a homogeneous weight basis $\{E_I v^-_{\varnothing} \mid I \subset \{1, \dots, n\} \}$, where $v^-_{\varnothing} = 1 \otimes 1$. Applying the argument of the previous paragraph with $V_{(0,\p 0)}$ and its highest weight vector $v_{\varnothing}$ replaced with $V^-_{(0, \p 0)}$ and its lowest weight vector $v^-_{\varnothing}$, we conclude that $F_I f(w) =0$ for all $I \neq \varnothing$.

It follows from the previous two paragraphs that
\[
\mathcal{K}_i - \mathcal{K}^{-1}_i = (q-q^{-1})[E_i,F_i]
\]
annihilates $f(w)$. Writing $\lambda \in \bos^{\vee}$ for the weight of $f(w)$, we conclude that $q^{4\chi_i(\lambda)} =1$, $1 \leq i \leq n$. By Example \ref{ex:oneDimMod}, each homogeneous weight vector in the image of $f$ spans a one dimensional module, necessarily in degree $\p 0 \in \Gr$ since the same is true of $W$. Since $f$ factors through its image, which is a direct sum of its weight spaces, we obtain the desired one dimensional modules and factorization of $f$.
\end{proof}

\begin{Rem}
As is evident from its proof, Lemma \ref{lem:transparentCpt} does not require the full strength of transparency: it suffices for $f$ to satisfy one-sided versions of transparency only for some (anti-)Verma modules $U$ and $V$ in degree $\p 0$.
\end{Rem}

\begin{Thm}
\label{thm:relModCpt}
The category $\mcat_R$ is a modular $\Gr$-category relative to $(\FR, \SSS)$ with stabilization coefficients
\[
\Delta_{\pm}
=
\sum_{k \in D} q^{\mp 2\met^{\vee}(k,k)} \prod_{i=1}^n \frac{q^{2\chi_i(\lambda)} - q^{-2\chi_i(\lambda)}}{q^{2\chi_i(\lambda+k)} - q^{-2\chi_i(\lambda+k)}}
\]
and relative modularity parameter $\zeta = (-1)^n \vert D \vert$. Moreover, $\mcat_R$ is TQFT finite.
\end{Thm}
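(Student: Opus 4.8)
The plan is as follows. Pre-modularity of $\mcat_R$ relative to $(\FR, \SSS)$ is already established in Proposition~\ref{prop:relPreModCompact}, so three things remain: compute the stabilization coefficients $\Delta_{\pm}$, verify the relative modularity identity~\eqref{eq:mod}, and check TQFT finiteness; the modularity parameter is then $\zeta = \Delta_+ \Delta_-$. Each of these reduces to a finite computation because, by Proposition~\ref{prop:genericSSCpt}, every generic homogeneous subcategory $\mcat_{R, \p \lambda}$ is semisimple with completely reduced dominating set $\{V_{(\lambda+k, \p 0)} \mid k \in D\}$ indexed by the finite discriminant group $D = \coker(\met^{\flat} \colon \Gamma \rightarrow \Gamma^{\vee})$, and all twist and Hopf link invariants in $\mcat_R$ are known in closed form.

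For the stabilization coefficients, fix $\p \lambda \in \Gr \setminus \SSS$ with lift $\lambda$, take $\Theta(\p \lambda) = \{V_{(\lambda+k,\p 0)} \mid k \in D\}$, and use $V_{(\lambda, \p 0)}$ as the test object in Definition~\ref{def:ndeg}. Expanding the Kirby colour $\Omega_{\p \lambda} = \sum_{k \in D} \qd(V_{(\lambda+k,\p 0)}) V_{(\lambda+k,\p 0)}$ in the defining diagram — a $\pm 1$-framed Kirby-coloured circle encircling the $V_{(\lambda,\p 0)}$-strand — gives
\[
\Delta_{\pm}
=
\sum_{k \in D} \qd(V_{(\lambda+k,\p 0)}) \, \theta_{V_{(\lambda+k,\p 0)}}^{\pm 1} \, \langle \Phi_{V_{(\lambda+k,\p 0)}, V_{(\lambda,\p 0)}} \rangle .
\]
Into this one substitutes the modified quantum dimension~\eqref{eq:modDimVerma}, the ribbon twist computed in the proof of Theorem~\ref{thm:ribbonCat}, and the open Hopf link invariant~\eqref{eq:openHopfSimp}, each with the factor-of-two modifications of Section~\ref{sec:moduleCatModUnrolled}; simplifying the resulting power of $q$ with the Fundamental Identity~\eqref{eq:fundIdenAbs} cancels the $\lambda$-dependence in the exponent and leaves exactly the asserted formula. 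That $\Delta_{\pm}$ is independent of $\p \lambda$ and of the test object is \cite[Lemma~5.10]{costantino2014}; that the $k$-th summand depends only on the class of $k$ in $D$ follows from condition~\ref{ite:psiCoch} together with the even integrality of $(\Gamma, \emet)$.

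Next I would verify relative modularity and evaluate $\zeta$. Expanding $\Omega_{\p \lambda}$ and inserting~\eqref{eq:openHopfSimp} converts~\eqref{eq:mod} into a finite identity summed over $D$ whose essential content is that the exponential factor $q^{-4\met^{\vee}(k, -)}$ produced by the factor~\eqref{eq:upsilonFactor} of the braiding realizes the pairing that $\met^{\vee}$ induces on $D$; this pairing is non-degenerate because $\met$ is, so orthogonality of the characters of the finite abelian group $D$ produces the Kronecker delta on the right-hand side of~\eqref{eq:mod}, establishing modularity. The value of $\zeta = \Delta_+ \Delta_-$ is then the corresponding Gauss sum over $D$: the factor $\vert D \vert$ comes from character orthogonality, even integrality of $\emet$ trivializes the quadratic signs $q^{\mp 2\met^{\vee}(k,k)}$, and the parity sign $(-1)^{\p n} = (-1)^n$ carried by the $2^n$-dimensional Verma modules survives, yielding $\zeta = (-1)^n\vert D \vert$. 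I expect the precise diagrammatic reduction of~\eqref{eq:mod} to the character sum, and the bookkeeping of these signs, to be the main obstacle; the rest is substitution.

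Finally, TQFT finiteness amounts to conditions~\ref{ite:finCat1}--\ref{ite:finCat3} of Definition~\ref{def:relModFinite}. Every object of $\mcat_{R,\p \lambda}$ has weights in $\lambda + \Gamma^{\vee}$, and tensoring with an invertible module $\C^{\bos}_{(\omega_{\gamma}, \p p)}$, $\omega_{\gamma} \in \FR_0 = \met^{\flat}(\Gamma)$, is an autoequivalence of $\mcat_{R,\p \lambda}$ shifting weights by $\omega_{\gamma}$. Since $\mcat_R$ has enough projectives (Theorem~\ref{thm:ribbonCat}, via Section~\ref{sec:moduleCatModUnrolled}) and all its objects have finite length, Krull--Schmidt applies, every simple has a projective cover, and every projective indecomposable is the projective cover of its simple top; as $\FR_0$ has finite index in $\Gamma^{\vee}$ (condition~\ref{ite:finiteCoch}) the simples of $\mcat_{R,\p \lambda}$, hence also the projective indecomposables, form exactly $\vert D \vert$ orbits under tensoring by $\sigma_{\FR}$, which is~\ref{ite:finCat1}. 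Condition~\ref{ite:finCat2} holds because a projective $P$ is finite-dimensional, so $\Hom_{\mcat_R}(\sigma_k, P)$ is non-zero for only finitely many $k \in \FR$ and finite-dimensional for each. Condition~\ref{ite:finCat3} holds because finite length with enough projectives forces every projective to be a finite direct sum of indecomposables. Hence $\mcat_R$ is TQFT finite, and by \cite[Theorem~1.16]{geerYoung2022} the associated TQFT has finite-dimensional state spaces.
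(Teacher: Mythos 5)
The computation of $\Delta_{\pm}$ and the TQFT finiteness argument match the paper's proof in structure and substance. The genuine gap is in the relative modularity step. Equation \eqref{eq:mod} is an equality of \emph{morphisms} in $\End_{\mcat_R}(V_i \otimes V_j^{\vee})$, and this endomorphism space is not one-dimensional in general (it lives in the non-semisimple degree-$\p 0$ component). You propose to "expand $\Omega_{\p\lambda}$ and insert~\eqref{eq:openHopfSimp}" and then invoke character orthogonality over $D$, but~\eqref{eq:openHopfSimp} computes scalars (traces of endomorphisms of a simple Verma), so at best this shows equality of modified traces of both sides of~\eqref{eq:mod}, which does not determine the morphisms. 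You acknowledge the diagrammatic reduction as "the main obstacle" but do not supply the idea that makes it go through.

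What closes this gap in the paper is the transparency argument. By~\cite[Lemma 5.9]{costantino2014}, the endomorphism $f_{\p\gamma,(\alpha,\p 0),(\beta,\p 0)}$ defined by the left-hand side of~\eqref{eq:mod} is transparent in $\mcat_{R,\p 0}$. Lemma~\ref{lem:transparentCpt} — proved by braiding against Verma and anti-Verma modules to show that the image of a transparent endomorphism is annihilated by all $E_I$ and $F_I$, $I\neq\varnothing$, and hence is a sum of one-dimensional modules — lets one factor $f_{\p\gamma,(\alpha,\p 0),(\beta,\p 0)}$ through one-dimensional objects. Weight bookkeeping then forces the factoring to be through $\C^{\bos}_{(\alpha-\beta,\p 0)}$ alone, so $f_{\p\gamma,(\alpha,\p 0),(\beta,\p 0)}$ is \emph{a priori} a scalar multiple of $(\id_{\C^{\bos}_{(\alpha-\beta,\p 0)}}\otimes\tcoev_{V_{(\alpha,\p 0)}})\circ\ev_{V_{(\alpha,\p 0)}}$. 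Only at that point does taking modified traces and running the Fourier argument over $D$ — the character-orthogonality computation you sketch — determine the scalar and give $c_{\alpha,\beta}=0$ for $\alpha\neq\beta$ and $\zeta=(-1)^n|D|$. Without the transparency/factorization step the trace computation proves nothing about the morphism identity.

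A secondary point: you state $\zeta=\Delta_+\Delta_-$ at the outset and propose to compute $\zeta$ as a Gauss sum, but that identity is a \emph{consequence} of relative modularity; the paper instead reads $\zeta$ off directly from the coefficient produced by the modularity argument. Your route is not wrong once modularity is established, but be careful not to present $\zeta=\Delta_+\Delta_-$ as logically prior to proving~\eqref{eq:mod}.
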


\begin{proof}
The stabilization coefficients can be computed directly using Lemma \ref{lem:Phis} and the formula for the ribbon automorphism given in the proof of Theorem \ref{thm:ribbonCat}.

For relative modularity, consider Definition \ref{def:modG} with $h = \p \gamma$ and $g = \p \lambda$ and $V_i=V_{(\alpha,\p 0)},V_j= V_{(\beta,\p 0)} \in I_{\p \lambda}$. The morphism $f_{\p \gamma,(\alpha,\p 0),(\beta,\p 0)} \in \End_{\mcat_R} (V_{(\alpha,\p 0)} \otimes V_{(\beta,\p 0)}^{\vee})$ determined by the left hand side of diagram \eqref{eq:mod} is transparent in $\mcat_{R, \p 0}$; see \cite[Lemma 5.9]{costantino2014}. Lemma \ref{lem:transparentCpt} therefore ensures the existence of one dimensional modules $\C^{\bos}_{(k_i,\p p_i)} \in \mcat_{R, \p 0}$ such that
\[
f_{\p \gamma,(\alpha,\p 0),(\beta,\p 0)}
=
\sum_{i=1}^m h_{\p \gamma,(\alpha,\p 0),(\beta,\p 0),i} \circ g_{\p \gamma,(\alpha,\p 0),(\beta,\p 0),i}
\]
for some morphisms
\[
g_{\p \gamma,(\alpha,\p 0),(\beta,\p 0),i} \in \Hom_{\mcat_R}(V_{(\alpha,\p 0)} \otimes V_{(\beta,\p 0)}^{\vee},\C^{\bos}_{(k_i,\p p_i)}),
\]
\[
h_{\p \gamma,(\alpha,\p 0),(\beta,\p 0),i} \in \Hom_{\mcat_R}(\C^{\bos}_{(k_i,\p p_i)},V_{(\alpha,\p 0)} \otimes V_{(\beta,\p 0)}^{\vee}).
\]
The set $I_{\p \lambda}$ is defined so that
\[
\Hom_{\mcat_R}(V_{(\alpha,\p 0)} \otimes V^{\vee}_{(\beta,\p 0)}, \C^{\bos}_{(k_i,\p p_i)})
\simeq
\Hom_{\mcat_R}(V_{(\alpha,\p 0)}, V_{(\beta,\p 0)} \otimes \C^{\bos}_{(k_i,\p p_i)})
\]
is non-zero for a unique $\C^{\bos}_{(k_i,\p p_i)}$. Comparing weights gives $\p p_i=0$ and $\alpha = \beta + k_i$. It follows that 
\begin{equation}
\label{eq:relModCpt}
f_{\p \gamma,(\alpha,\p 0),(\beta,\p 0)}
=
c_{\alpha,\beta} \id_{\C^{\bos}_{(\alpha-\beta,\p 0)}} \otimes \tcoev_{V_{(\alpha,\p 0)}} \circ \ev_{V_{(\alpha,\p 0)}}
\end{equation}
for some scalar $c_{\alpha,\beta} \in \C$. The modified trace of the right hand side of equation \eqref{eq:relModCpt} is $q^{2\sum_{i=1}^n \chi_i(\alpha-\beta)} c_{\alpha,\beta} \qd(V_{(\alpha,\p 0)})$, which vanishes if and only if $c_{\alpha,\beta}$ does. Fix a lift of $\p \gamma$ to $\gamma \in \bos^{\vee}$. By isotopy invariance and the defining  properties of the modified trace, the modified trace of the left hand side of equation \eqref{eq:relModCpt} is
\begin{eqnarray*}
&&
\qd(V_{(\alpha,\p 0)}) \sum_{k \in D} \qd(V_{(\gamma+k,\p 0)}) \mt_{V_{(\alpha,\p 0)} \otimes V_{(\beta,\p 0)}^{\vee}}(f_{(\gamma+k,\p 0),(\alpha,\p 0),(\beta,\p 0)}) \\
&=&
\qd(V_{(\alpha,\p 0)}) \sum_{k \in D} \qd(V_{(\gamma+k,\p 0)}) \mt_{V_{(\gamma+k,\p 0)}}(\Phi_{V^{\vee}_{(\beta,\p 0)},V_{(\gamma+k,\p 0)}} \circ \Phi_{V_{(\alpha,\p 0)},V_{(\gamma+k,\p 0)}}) \\
&=&
\qd(V_{(\alpha,\p 0)}) \sum_{\beta \in D} \qd(V_{(\gamma+k,\p 0)})^2 \langle \Phi_{V^{\vee}_{(\beta,\p 0)},V_{(\gamma+k,\p 0)}} \rangle \langle \Phi_{V_{(\alpha,\p 0)},V_{(\gamma+k,\p 0)}} \rangle \\
&=&
(-1)^n q^{2\sum_{i=1}^n \chi_i(\alpha-\beta)-4 \met^{\vee}(\alpha-\beta,\gamma)} \qd(V_{(\alpha,\p 0)}) \sum_{k \in D} q^{-4 \met^{\vee}(\alpha-\beta,k)},
\end{eqnarray*}
the final equality following from equation \eqref{eq:openHopfSimp}. The canonical isomorphism $\Gamma \simeq \Gamma^{\vee \vee}$ implies that
\[
q^{-4\met^{\vee}(\alpha-\beta,-)} = e^{-2\pi \sqrt{-1}\met^{\vee}(\alpha-\beta,-)} : D \rightarrow \C^{\times}
\]
is the trivial character if and only if $\alpha-\beta \in \ima \met^{\flat}$. Note however that the definition of $I_{\p \lambda}$ implies that $\alpha-\beta \in \ima \met^{\flat}$ if and only if $\alpha=\beta$. Fourier theory for the finite abelian group $D$ then gives
\[
\mt_{V_{(\alpha,\p 0)} \otimes V_{(\beta,\p 0)}^{\vee}} (f_{\p \gamma,(\alpha,\p 0),(\beta,\p 0)})
=
\begin{cases}
(-1)^n \qd(V_{(\alpha,\p 0)}) \vert D \vert & \mbox{if }  \alpha=\beta, \\
0 & \mbox{otherwise}.
\end{cases}
\]
It follows that $c_{\alpha,\beta}=0$ unless $\alpha=\beta$ and we conclude that $\zeta = (-1)^n \vert D \vert$ is a relative modularity parameter.

Turning to TQFT finiteness (see Definition \ref{def:relModFinite}), note that $\mcat_R$ is locally finite abelian with enough injectives and projectives. In particular, $\mcat_R$ is Krull--Schmidt and its isomorphism classes of projective indecomposable objects are in bijection with its isomorphism classes of simple objects \cite[Lemma 3.5]{krause2015}. Since the latter are in bijection with $\bos^{\vee} \times \Ztwo$ by Theorem \ref{thm:simplesSplitAb}, property \ref{ite:finCat1} holds by condition \ref{ite:finiteCoch}. Property \ref{ite:finCat2} follows from the fact that objects of $\mcat_R$ are finite dimensional weight modules and property \ref{ite:finCat3} follows from the Krull--Schmidt property.
\end{proof}

Denote the relative modular category of Theorem \ref{thm:relModCpt} by $\mcat_{R,\Gamma}$ and the TQFT resulting from Theorem \ref{thm:relModTQFT} by $\TQFT_{\mcat_{R,\Gamma}}$. In the remainder of this section, we argue that $\TQFT_{\mcat_{R,\Gamma}}$ models the homological truncation of Chern--Simons-Rozansky--Witten theory of the Hamiltonian $\Bos_{c,\Gamma}$-manifold $T^{\vee} R$ at level $\met$ or, equivalently, the truncation of Chern--Simons theory with gauge supergroup $G_{c,\Gamma}$ at level $\met$.

\subsubsection{Topological flavour symmetry}
\label{sec:magneticsymmetry}
The grading group $\Gr$ of the category $\mcat_{R,\Gamma}$ agrees with the topological flavour symmetry group (also known as the magnetic flavour symmetry group) of the $\Bos_{c,\Gamma}$-gauge theory underlying the Chern--Simons-Rozansky--Witten theory of $T^{\vee} R$. Following the general discussion of such symmetries in \cite[\S 4]{gaiotto2014}, this can be seen as follows. The current associated to the topological flavour symmetry is the curvature $F$ of the $\Bos_{c,\Gamma}$-connection $A$; it is conserved due the Bianchi identity. The infinitesimal action of this $0$-form symmetry is implemented by surface integrals of $\frac{1}{2 \pi} F$. For example, its action on a local operator $\mathcal{O}(x)$ is realized by integrating $\frac{1}{2 \pi} F$ over a $2$-sphere $S^2$ linking the point $x$ and hence measures the Chern number of the gauge bundle sourced by $\mathcal{O}(x)$. In other words, operators charged under this symmetry are magnetic monopoles. The weights for the topological flavour symmetry group, that is, the allowed Chern numbers of gauge bundles on $S^2$, are identified with the cocharacter lattice $\Gamma \simeq \pi_1(\Bos_{c,\Gamma})$ of $\Bos_{c,\Gamma}$ and hence the topological flavour symmetry group is identified with Pontryagin dual $\hat{\Gamma} \simeq \bos^{\vee}/\Gamma^{\vee} \simeq \Gr$, as claimed
	
	Finite symmetry transformations are implemented by the topological surface operators given by exponentiated integrals of the curvature,
	\[ \mathcal{U}_{\p g}[\Sigma] = \exp\bigg(\sqrt{-1} \xi \int_\Sigma F\bigg)\,,\]
	where $\Sigma$ is a surface in $3$-dimensional spacetime and $\p g = \xi + \Gamma^{\vee} \in \Gr$. Importantly, if $\Sigma$ has non-empty boundary, then the surface operator is bounded by an improperly-quantized Wilson line \cite{alford1992,gaiotto2014}; schematically, this is due to the relation
	\[ \mathcal{U}_{\p g}[\Sigma] = \exp\bigg(\sqrt{-1} \xi \int_\Sigma F\bigg) = \exp\bigg(\sqrt{-1} \xi \oint_{\partial \Sigma} A\bigg)\,.\]
	The way to interpret this formula is that a Wilson line labelled by the infinitesimal weight $\xi$, that is, an object of the subcategory $\mcat_{R,\Gamma,\xi}$, is not gauge invariant on its own unless $\xi \in \Gamma^{\vee}$, that is, $\xi$ is a weight of $\Bos_{c,\Gamma}$. If $\xi \notin \Gamma^{\vee}$, then the Wilson line must be attached to the corresponding magnetic surface operator $\mathcal{U}_{\p g}$, whose support $\Sigma$ is extra data necessary to define the Wilson line. The attached surface $\Sigma$ can be thought of as the worldsheet of the electric analogue of a Dirac string \cite{alford1992}, which is visible to magnetically charged local operators unless the Wilson line is properly quantized.
	
	Line operators that are well-defined without the additional data of a topological surface defect are called \emph{genuine} while those that require this additional data are \emph{non-genuine} \cite[\S 2]{kapustin2014}. A consistent network of such topological surface defects precisely encodes (the holonomies of) a flat $\Gr$-connection on the complement of the support of these line operators or, equivalently, the Poincar\'{e} dual of this network realizes a $\Gr$-valued cohomology class $\coh$. This is the physical origin of the cohomology classes which decorate objects and morphisms of $\CobAd_{\mcat_{R,\Gamma}}$.

\subsubsection{Screening by gauge vortex lines}
To describe the full, non-perturbative, theory the effects of screening by gauge vortex lines must be included. Namely, the physically inequivalent line operators in the non-perturbative theory are identified with $\FR$-orbits of objects in $\mcat_{R,\Gamma}$. In particular, simple line operators are labelled by their weights modulo screening, that is, elements of $\bos^{\vee} / \FR_0$. We note that all of these simple Wilson lines introduce a monodromy in the bosonic gauge fields, a phenomenon familiar to compact Chern--Simons theories \cite[\S 3.3]{witten1989}. The monodromy induced by simple line operators is controlled by the map $\bos^{\vee}/\FR_0 \to \Bos_{\Gamma}$ induced by $\met^\sharp: \bos^{\vee} \to \bos$. The simple line operators in degree $\p 0$, that is, those line operators which source a flat connection for the topological flavour symmetry with trivial holonomy, are thus genuine and labelled by elements of the discriminant group $D \simeq \Gamma^{\vee}/\FR_0$, as expected.

\subsubsection{Orientations and spin structures}
\label{sec:spinExpect}
The boundary vertex operator algebras which generalize those of \cite{garner2023b} to the present setting have boundary monopole operators with half-integral conformal weights/spins if and only if the lattice $(\Gamma, \emet)$ is not even. In particular, if $(\Gamma, \emet)$ is even, then the physical theory is defined on oriented $3$-manifolds; other wise a spin structures is required. Our evenness assumption in Proposition \ref{prop:freeRealCoch} therefore matches physics predictions. When the lattice $(\Gamma,\emet)$ is odd, we expect a spin analogue of Theorem \ref{thm:relModCpt} and the resulting TQFT $\TQFT_{\mcat_{R,\Gamma}}$, along the lines of \cite{blanchet2014}.

\subsubsection{Verlinde formula}
\label{sec:verlindeCpt}

Let $\D= \sqrt{(-1)^n\vert D \vert}$. We compute the value of $\TQFT=\TQFT_{\mcat_{R,\Gamma}}$ on trivial circle bundles over closed surfaces with insertions and relate the result to Euler characteristics of state spaces of $\TQFT$, yielding a Verlinde formula for $\TQFT$.

Let $\CS=(\Sigma,\{p_1,\ldots, p_m\},\coh, \mathcal{L})$ be a decorated connected closed surface of genus $g$. Since the Lagrangian subspace $\mathcal{L}$ plays no role in what follows, we henceforth ignore it. For each $\p \beta \in \Gr$, consider the decorated closed $3$-manifold
\[
\CS \times S^1_{\p \beta}=(\Sigma \times S^1,T=\{p_1, \dots, p_m\}\times S^1,\coh \oplus \p \beta),
\]
where we use the isomorphism
\[
H^1((\Sigma \times S^1) \setminus T; \Gr) \simeq H^1(\Sigma \setminus \{p_i\}; \Gr) \oplus \Gr
\]
to extend $\coh$ to $\omega \oplus \p \beta$. Assuming that all colours of $\CS \times S^1_{\p \beta}$ have degree in $\Gr \setminus \SSS$, the partition function $\TQFT(\CS \times S^1_{\p \beta})$ can be computed by the following surgery presentation:
\begin{equation*}
\label{diag:surgPres}
\epsh{fig53}{30ex}
\put(-230,64){\ms{\Omega_{\p{\beta}_1}}}\put(-300,40){\ms{\Omega_{\p{\alpha}_1}}}\put(-83,64){\ms{\Omega_{\p{\beta}_g}}}
\put(-155,40){\ms{\Omega_{\p{\alpha}_g}}}\put(0,-29){{\ms{\Omega_{\p{\beta}}}}}\put(-205,-25){\ms{V_1}}
\put(-140,-25){\ms{V_m}}
\;\;.
\end{equation*}
Here $V_i=V_{(\mu_i,\p q_i)}$ is the colour of the point $p_i$ and $\p{\alpha}_j = \coh(a_j), \p{\beta}_j=\coh(b_j)$ for a symplectic basis $\{a_j, b_j \mid 1 \leq j \leq g\}$ of $H_1(\Sigma;\Z)$. By applying equation \eqref{eq:mod} first to the $\Omega_{\p{\alpha}_j}$-coloured strand and then to the $\Omega_{\p{\beta}_j}$-coloured strand, we simplify the surgery presentation using the equalities
\[
\epsh{fig50}{20ex}
\put(-13,40){\ms{\Omega_{\p{\alpha}_j}}}\put(-22,-12){\ms{\Omega_{\p{\beta}_j}}}\put(-80,7){\ms{V_{(\beta+k,\p 0)}}}
=-\frac{\zeta}{\qd(V_{(\beta+k,\p 0)}^{\vee})}\epsh{fig51}{20ex}
\put(-1,-12){\ms{\Omega_{\p{\beta}_j}}}\put(-5,-31){\ms{V_{(\beta+k,\p 0)}}}\put(-60,-25){\ms{V_{(\beta+k,\p 0)}}}
= \frac{\zeta^2}{\qd(V_{(\beta+k,\p 0)}^{\vee})^2}\id_{V_{(\beta+k,\p 0)}},
\]
where $V_{(\beta+k,\p 0)}$ is any colour appearing in $\Omega_{\p \beta}$. Applying this simplification for $j=1, \dots, g$ reduces the surgery presentation to an $\Omega_{\p \beta}$-coloured unknot encircled by $m$ pairwise unlinked unknots coloured by $V_1, \dots, V_m$, whose associated scalar we denote by $\langle \Phi_{\{V_i\},\Omega_{\p \beta}} \rangle$. Evaluating the simplified diagram gives
\[
\TQFT(\CS \times S^1_{\p \beta})
=
\D^{-2g-2} \zeta^{2g} \sum_{k \in D} \qd(V_{(\beta+k,\p 0)})^{2-2g} \langle \Phi_{\{V_i\},\Omega_{\p \beta}} \rangle.
\]
Setting $\mu = \sum_{i=1}^m \mu_i$ and $\p q = \sum_{i=1}^m \p q_i$, we use Lemma \ref{lem:Phis} to compute
\begin{multline}
\label{eq:verlindeCpt}
\TQFT(\CS \times S^1_{\p \beta})
=
(-1)^{(g+1+m)n + \p q} \vert D \vert^{g-1} q^{-4\met^{\vee}(\beta, \mu) +2m \sum_{i=1}^n \chi_i(\beta) + 2\sum_{i=1}^n \chi_i(\mu)} \cdot \\
\sum_{k \in D} \prod_{i=1}^n (q^{2\chi_i(\beta+k)}-q^{-2\chi_i(\beta+k)})^{2g-2+m}.
\end{multline}
Since $\TQFT(\CS \times S^1_{\p \beta})$ is holomorphic in $\coh$, equation \eqref{eq:verlindeCpt} holds whenever $\CS$ is admissible.

Let $s$ be a variable and $t = (t_1, \dots, t_r)$ a multivariable. For $\lambda=(\lambda_1, \dots, \lambda_r) \in \bos^{\vee}$, introduce the notation $t^{\lambda} = t_1^{\lambda_1} \cdots t_r^{\lambda_r}$. Given $\beta \in \bos^{\vee}$, we take specializing $t$ to $q^{-4 \met^{\vee}(-,\beta)}$ to mean setting each $t^{\lambda}$, $\lambda \in \bos^{\vee}$, equal to $q^{-4 \met^{\vee}(\lambda,\beta)}$. Write $\TQFT_{(k,\p p)}(\CS)$ for the homogeneous summand of $\TQFT(\CS)$ of degree $(k,\p p) \in \FR$ and define the generating function of graded dimensions
\[
\dim_{(t,s)} \TQFT(\CS) = \sum_{(k, \p p) \in \FR} (-1)^{\p p} \dim_{\C} \TQFT_{(k,\p p)}(\CS) t^k s^{\p p}.
\]

\begin{Thm}[Verlinde formula]
\label{thm:verlindeCpt}
There is an equality
\[
\TQFT(\CS \times S^1_{\p \beta}) = \dim_{(q^{-4 \met^{\vee}(-,\beta)},1)} \TQFT(\CS).
\]
\end{Thm}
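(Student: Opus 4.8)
The plan is to compute both sides of the claimed equality independently and match them. The left-hand side $\TQFT(\CS \times S^1_{\p \beta})$ has already been evaluated in equation \eqref{eq:verlindeCpt} via surgery, so the real work is in the right-hand side: computing the graded dimension of the state space $\TQFT(\CS)$. The standard tool here is that for a TQFT arising from a relative modular category, the state space of a surface is obtained by gluing: $\TQFT(\Sigma \times S^1_{\p \beta})$ computes a (super)trace of the identity operator on $\TQFT(\CS)$ twisted by the mapping-class-group element coming from the $S^1$-direction, but since we take the \emph{trivial} circle bundle, this reduces to $\TQFT(\Sigma \times S^1)$ computing precisely a graded trace of the identity on $\TQFT(\CS)$, i.e. a graded dimension. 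More carefully, the extra flat connection $\p \beta$ around the $S^1$ factor inserts, up to the general formalism of \cite{derenzi2022} (see the once-extended theory $\check{\TQFT}_{\mcat_{R,\Gamma}}$ and \cite[Proposition 7.1]{derenzi2022}), the action of an invertible object on the category attached to the circle, and tracking how this scales the $\FR$-graded pieces is exactly the substitution $t \mapsto q^{-4\met^{\vee}(-,\beta)}$.

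First I would recall the decomposition of $\TQFT(\CS)$ into its $\FR$-homogeneous summands and invoke the universal-construction description of state spaces from \cite{derenzi2022}: $\TQFT(\CS)$ is spanned by classes of admissible bordisms $\varnothing \to \CS$, with the pairing given by gluing along $\CS$ and applying $\CGP_{\mcat_{R,\Gamma}}$. Then I would observe that $\TQFT(\Sigma \times S^1_{\p \beta})$, viewed as a closed bordism, factors as the composition of a bordism $\varnothing \to \CS \sqcup \overline{\CS}$ with its reverse, modified by the mapping cylinder of the identity; this identifies it with $\sum_{(k,\p p)} (-1)^{\p p} q_{\p\beta}(k,\p p)\dim_\C \TQFT_{(k,\p p)}(\CS)$, where $q_{\p\beta}(k,\p p)$ is the scalar by which the flat connection $\p\beta$ acts on the degree-$(k,\p p)$ summand. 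The key computation is that this scalar is $q^{-4\met^{\vee}(k,\beta)}$ (independent of $\p p$), which follows from the bicharacter $\psi$ computed in the proof of Proposition \ref{prop:relPreModCompact}: $\psi(\p\beta,(\omega_\gamma,\p p)) = q^{-4\met^{\vee}(\omega_\gamma,\lambda)}$, precisely the monodromy of a degree-$\p\beta$ object around a $\sigma_{(\omega_\gamma,\p p)}$-labelled circle, which is what the $S^1_{\p\beta}$ factor sees when the mapping torus is decorated in degree $(k,\p p)\in\FR$.

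The main obstacle will be making the gluing/trace argument precise in the relative modular (non-semisimple, non-rigid) setting, where the category $\CobAd_{\mcat_{R,\Gamma}}$ is not rigid and one cannot blindly use the usual ``partition function over $\Sigma\times S^1$ = dimension of state space'' slogan. Specifically, one must justify that admissibility is preserved throughout, that the relevant gluing anomalies (powers of $\D$ and $\Delta_-$ in \eqref{eq:cgpInvt}) assemble correctly, and that the trivial circle bundle genuinely produces an \emph{untwisted} graded trace rather than a trace twisted by a nontrivial mapping class. I expect this to be handled exactly as in the corresponding Verlinde-type arguments of \cite{derenzi2022} and \cite{geerYoung2022} — indeed TQFT finiteness of $\mcat_{R,\Gamma}$, established in Theorem \ref{thm:relModCpt}, guarantees the state spaces are finite-dimensional so the graded dimensions make sense — after which matching the two closed-form expressions is a direct comparison: one checks that the $\FR$-graded dimension generating function, specialized at $t=q^{-4\met^{\vee}(-,\beta)}$ and $s=1$, reproduces the sum $\sum_{k\in D}\prod_{i=1}^n(q^{2\chi_i(\beta+k)}-q^{-2\chi_i(\beta+k)})^{2g-2+m}$ together with its prefactor in \eqref{eq:verlindeCpt}, using that the simple summands of $\TQFT(\CS)$ in each degree are indexed (modulo $\FR$) by $D$ with the modified dimensions \eqref{eq:modDimVerma} supplying the exponents $2g-2+m$ via the standard genus-$g$ handle count.
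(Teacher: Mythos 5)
Your proposal is correct and follows essentially the same route as the paper, which itself omits the details and defers to \cite[Theorem 5.9]{blanchet2016} and \cite[Theorem 3.5]{geerYoung2022}: realize $\CS\times S^1_{\p\beta}$ as the mapping torus of the identity, so that its partition function is the $\FR$-graded (super)trace of the cylinder operator, which acts on the degree-$(k,\p p)$ summand by the bicharacter value $\psi(\p\beta,(k,\p p))=q^{-4\met^{\vee}(k,\beta)}$, yielding exactly the specialization $t=q^{-4\met^{\vee}(-,\beta)}$, $s=1$. The only stylistic remark is that the identity is structural — one does not need to match the closed form \eqref{eq:verlindeCpt} against an independently computed right-hand side, as your last paragraph suggests; rather \eqref{eq:verlindeCpt} is used afterwards to extract the graded dimensions, as in Example \ref{ex:VerlindeCpt}.
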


\begin{proof}
Since this can be proved in the same way as \cite[Theorem 5.9]{blanchet2016} and \cite[Theorem 3.5]{geerYoung2022}, we omit the details.
\end{proof}

More explicitly, the right hand side of the equality of Theorem \ref{thm:verlindeCpt} reads
\[
\sum_{\gamma \in \Gamma} \chi( \TQFT_{(\omega_{\gamma},\bullet)}(\CS)) q^{-4 \met^{\vee}(\omega_{\gamma},\beta)},
\]
where $\chi( \TQFT_{(\omega_{\gamma},\bullet)}(\CS))$ denotes the Euler characteristic of the $\Ztwo$-graded subspace of $\TQFT(\CS)$ of $\FR_0$-degree $\omega_{\gamma}$.

\begin{Ex}
\label{ex:VerlindeCpt}
Setting $m=0$ in equation \eqref{eq:verlindeCpt} gives
\[
\TQFT(\CS \times S^1_{\p \beta})
=
(-1)^{(g+1)n} \vert D \vert^{g-1} \sum_{k \in D} \prod_{i=1}^n (q^{2\chi_i(\beta+k)}-q^{-2\chi_i(\beta+k)})^{2g-2}.
\]
Applying Theorem \ref{thm:verlindeCpt}, we conclude that
\[
\chi(\TQFT(\CS))
:=
\sum_{\gamma \in \Gamma} \chi( \TQFT_{(\omega_{\gamma},\bullet)}(\CS))
=
\lim_{\beta \rightarrow 0} \TQFT(\CS \times S^1_{\p \beta}) \\
=
\vert D \vert^{g-1} \sum_{k \in D} \prod_{i=1}^n\Big( 2 \sin \big(\chi_i(k) \pi \big) \Big)^{2g-2}.
\]
In particular, $\dim_{\C} \TQFT(\CS) = \chi(\TQFT(\CS)) = \vert D \vert$ when $g=1$.
\end{Ex}

\begin{Rem}
\label{rem:Bethe}
The Euler characteristic $\chi(\TQFT(\CS))$ from Example \ref{ex:VerlindeCpt} can be reproduced physically by way of supersymmetric localization. See \cite{closset2019} and references therein for a detailed overview of localization techniques and \cite[\S 5]{creutzig2021} for an implementation in a related context. In brief, this Euler characteristic can be realized as a sum over solutions $\{y^*\}$ to the Bethe vacuum equations of the form
\[
	\chi(\TQFT(\CS)) = \sum_{\{y^*\}} \mathcal{H}(y^*)^{g-1},
\]
where $\mathcal{H}$ denotes the handle-gluing operator. In the present context, the Bethe vacuum equations take the form
\[
	\prod_{b=1}^r y_b^{\met_{ab}} = (-1)^{\sum_{i=1}^n Q_{ai}},
	\qquad
	1 \leq a \leq r
\]
and the handle-gluing operator is
\[
\mathcal{H}(y) = \vert D \vert \prod_{i=1}^{n}\bigg(\big(1- \prod_{a=1}^r y_a^{Q_{ai}}\big)\big(1- \prod_{a=1}^r y_a^{-Q_{ai}}\big)\bigg),
\]
from which the above formula for $\chi(\TQFT(\CS))$ is a straightforward computation.
\end{Rem}

\subsection{Compact global forms of $\gloo$-Chern--Simons theory}
\label{sec:uooCS}

Consider the special case of Section \ref{sec:nonzeroHyp} corresponding to Example \ref{ex:gloo}, so that $r=2$ and $n=1$ with weight $Q= \left(\begin{smallmatrix} 1 \\ 0 \end{smallmatrix} \right)$ and metric $\met= \left(\begin{smallmatrix} 0 & 1 \\ 1 & 0 \end{smallmatrix}\right)$. The effective metric is $\emet=\left(\begin{smallmatrix} 1 & 1 \\ 1 & 0 \end{smallmatrix}\right)$. We identify $\GW$ with $\gloo$, so that $\Lambda_R = \Z \cdot N^{\vee}$ and $\chi_1(\lambda)=-\lambda_E$.

Let $\Gamma \subset \bos$ be a full rank lattice which is integral with respect to $\met$ and satisfies condition \ref{ite:gradingCoch}. As argued in \cite[\S 4.2]{mikhaylov2015}, there exist unique integers $s,t \in \Z_{> 0}$ and half integer $u \in \frac{1}{2} \Z \slash s \Z$ such that $\Gamma = \Z \cdot \gamma_1 \oplus \Z \cdot \gamma_2$, where
\[
\gamma_1 = \frac{s}{t} E,
\qquad
\gamma_2 = t N + \frac{u}{t} E.
\]
We furthermore assume that $(\Gamma,\emet)$ is even, which translates to the condition that $t^2+2u$ is an even integer. In the basis $\{\gamma_1,\gamma_2\}$, the Gram matrix of $\met$ is $B=\left(\begin{smallmatrix} 0 & s \\ s & 2u \end{smallmatrix} \right)$, whence $\vert D \vert = s^2$. 
The discriminant group is
\[
D \simeq \Z \slash d_1 \Z \oplus \Z \slash d_2 \Z,
\]
where $d_1 = \gcd(s,2u)$ and $d_2 = \frac{s^2}{d_1}$. We compute
\[
\gamma_1^{\vee} = - \frac{u}{ts} N^{\vee} + \frac{t}{s}E^{\vee},
\qquad
\gamma_2^{\vee} = \frac{1}{t}N^{\vee}
\]
and
\[
\omega_{\gamma_1}=\frac{s}{t} N^{\vee},
\qquad
\omega_{\gamma_2} = \frac{u}{t}N^{\vee} + t E^{\vee}.
\]

Denote by $\mcat_{R,\Gamma}$ the relative modular category constructed in Theorem \ref{thm:relModCpt}. The TQFT $\TQFT_{\mcat_{R,\Gamma}}$ models Chern--Simons theory with gauge supergroup $G_{c,\Gamma}$ at level $\met$, that is, a global form of $\uoo$-Chern--Simons theory. Let $Y \in GL(2,\Z)$ be a right Smith multiplier for the Gram matrix $B$. Using the material recalled in Appendix \ref{sec:lattices} with the dual basis $\{\gamma_1^{\vee},\gamma_2^{\vee}\}$, the result of Example \ref{ex:VerlindeCpt} can be written as
\[
\chi(\TQFT_{\mcat_{R,\Gamma}}(\CS))
=
s^{2g-2} \sum_{i=0}^{d_1-1} \sum_{j=0}^{d_2-1} \Big( 2 \sin \big((Y_{11} i + Y_{21} j)\frac{t}{s} \pi \big) \Big)^{2g-2}.
\]

\begin{Ex}
Suppose that $(s,t,u)=(1,t,u)$ with $t^2+2u \in 2 \Z$. Then $D$ is the trivial group and
\[
\chi(\TQFT(\CS))
=
\begin{cases}
0 & \mbox{if } g \geq 2, \\
1 &\mbox{if } g =1.
\end{cases} \qedhere
\]
\end{Ex}

\begin{Ex}
\label{ex:diagonalSuper}
Let $(s,t,u) = (s,1,\frac{s}{2})$ with $s$ odd, so that $d_1 = d_2=s$ and $D \simeq \Z \slash s \Z \oplus \Z \slash s \Z$. A right Smith multiplier for $B=\left(\begin{smallmatrix} 0 & s \\ s & s\end{smallmatrix} \right)$ is $Y = \left(\begin{smallmatrix} 1 & 0 \\ -1 & 1\end{smallmatrix} \right)$ so that
\[
\chi(\TQFT(\CS))
=
s^{2g-2} \sum_{i,j=0}^{s-1} \Big( 2 \sin \big(\frac{(i -j)\pi}{s} \big) \Big)^{2g-2}
=
s^{2g-1} \sum_{k=0}^{s-1} \Big( 2 \sin \big(\frac{k\pi}{s} \big) \Big)^{2g-2}
=
s^{2g} {2g-2 \choose g-1},
\]
the final equality following from \cite[Eqn. (4.4.2.1)]{prudnikov1986}. In physical terms, this example is Chern--Simons theory of a pair of $U(1)$ gauge fields $A^{\pm}$ at levels $\pm s$ coupled to a B-twisted hypermultiplet $Z^{\pm}$ of $(T_+,T_-)$-weights $\pm (1,-1)$. When the underlying $U(1)^2$-bundle is trivial, the action for a closed $3$-manifold $M$ reads
\[
S(M) = \int_M \left( \frac{s}{4 \pi}(A^+ \wedge dA^+ - A^- \wedge dA^-) + Z^- \wedge d_A Z^+ \right). \qedhere
\]
\end{Ex}

\begin{Ex}
Take $(s,t,u)=(s,1,\frac{s-1}{2})$ with $s$ even, so that $d_1 = 1$ and $d_2 =s^2$ and $D \simeq \Z \slash s^2 \Z$. A right Smith multiplier for $B=\left(\begin{smallmatrix} 0 & s \\ s & s-1 \end{smallmatrix} \right)$ is $Y = \left(\begin{smallmatrix} s & s-1 \\ -s+1 & -s+2\end{smallmatrix} \right)$ so that
\[
\chi(\TQFT(\CS))
=
s^{2g-2} \sum_{j=0}^{s^2-1} \Big( 2 \sin \big(\frac{(-s+1)j \pi}{s} \big) \Big)^{2g-2}
=
s^{2g-1} \sum_{j=0}^{s-1} \Big( 2 \sin \big(\frac{j\pi}{s} \big) \Big)^{2g-2}
=
s^{2g} {2g-2 \choose g-1}. \qedhere
\]
\end{Ex}

\begin{Rem}
\label{rem:hyperbolicSuper}
Another example of physical interest is $(s,t,u) = (s,1,0)$, which  corresponds to the Chern--Simons theory of a hyperbolic pair of $U(1)$ gauge fields $A^N$, $A^E$ at level $s$ coupled to a B-twisted hypermultiplet $Z^{\pm}$ of $(N,E)$-weights $\pm (1,0)$. The action is
\[
S(M) = \int_M  \left( \frac{s}{4 \pi}(A^N \wedge dA^E + A^E \wedge dA^N) + Z^- \wedge d_{A^N \oplus A^E} Z^+ \right).
\]
However, this example does not satisfy our assumptions since $\emet$ is not even integral: $t^2+2u=1$ is odd. Instead, we expect this example to define a spin TQFT.
\end{Rem}

\subsection{Zero matter and toral Chern--Simons theory}
\label{sec:toralCS}

Consider the degenerate setting of Section \ref{sec:genConsid} in which $R$ is the zero representation, that is, there is no matter. In this case $\GW=\bos$ and there are no effective corrections, that is, $\met=\emet$.

Since $\UqTor$ is concentrated in degree $\p 0$, we consider its category $\mcat$ of finite dimensional ungraded weight $\UqTor$-modules. The category $\mcat$ is semisimple with isomorphism classes of simple objects given by the one dimensional modules $V_{\lambda} = \C^{\bos}_{\lambda}$ of weight $\lambda \in \bos^{\vee}$ (with no condition on $\lambda$). As in Section \ref{sec:nonzeroHyp}, grade $\mcat$ by the dual torus $\Gr = \bos^{\vee} \slash \Gamma^{\vee}$ and set $\FR = \ima(\met^{\flat}: \Gamma \rightarrow \Gamma^{\vee})$.

\begin{Thm}
\label{thm:relModToralCS}
The category $\mcat$ is a relative modular $\Gr$-category with respect to $(\FR, \SSS=\varnothing)$ with with stabilization coefficients
\[
\Delta_{\pm} = \sum_{k \in D} q^{\mp 2\met^{\vee}(k,k)}
\]
and relative modularity parameter $\zeta = \vert D \vert$. Moreover, $\mcat$ is TQFT finite.
\end{Thm}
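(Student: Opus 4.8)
The plan is to specialise the proof of Theorem \ref{thm:relModCpt} to the degenerate case $n=0$, where it simplifies drastically: every object of $\mcat$ is a finite direct sum of the one-dimensional simple modules $V_\lambda=\C^\bos_\lambda$, $\lambda\in\bos^\vee$, so all transparency and unimodularity considerations become trivial. First I would assemble the pre-modular data. The category $\mcat$ is ribbon (Theorem \ref{thm:ribbonCat}) with $\theta_{V_\lambda}=q^{-2\met^\vee(\lambda,\lambda)}\id$ (the $n=0$ specialisation of the formula in Section \ref{sec:moduleCatModUnrolled}), and tracking weights modulo $\Gamma^\vee$ grades it by $\Gr=\bos^\vee\slash\Gamma^\vee$ as in Section \ref{sec:nonzeroHyp}, condition \ref{ite:gradingCoch} being vacuous. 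Since $\mcat$ is semisimple, so is every $\mcat_{\p\lambda}$, hence one may take $\SSS=\varnothing$, which is vacuously small and symmetric. Identifying the discriminant group $D=\coker(\met^\flat\colon\Gamma\to\Gamma^\vee)$ with coset representatives in $\Gamma^\vee$ containing $0$, the set $\Theta(\p\lambda):=\{V_{\lambda+k}\mid k\in D\}$ (for any lift $\lambda$ of $\p\lambda$) consists of regular simple objects, and since $\Gamma^\vee=\bigsqcup_{k\in D}(k+\FR_0)$ and $\met^\flat$ is injective (non-degeneracy of $\met$), the family $\Theta(\p\lambda)\otimes\sigma_{\FR}$ is a completely reduced dominating set for $\mcat_{\p\lambda}$; this is $\FR$-finite generic semisimplicity. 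The functor $\sigma\colon\FR\to\mcat_0$, $\omega_\gamma\mapsto\C^\bos_{\omega_\gamma}$, is monoidal with $\sigma_0=\mathbb{I}$ (Example \ref{ex:oneDimMod}), has $\theta_{\sigma_{\omega_\gamma}}=\id$ because $(\Gamma,\met)$ is even integral (the $n=0$ case of \ref{ite:ribbCoch}, via Proposition \ref{prop:freeRealCoch}), and satisfies $V_\lambda\otimes\sigma_{\omega_\gamma}\simeq V_\lambda\Rightarrow\gamma=0$, so it is a free realization. A non-zero modified trace on the ideal of projectives of $\mcat$, which is all of $\mcat$, is furnished by the quantum trace itself, $\qdim V_\lambda=1\neq0$; normalise so that $\qd(V_\lambda)=1$. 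Finally a direct computation on one-dimensional modules gives $c_{\sigma_{\omega_\gamma},V}\circ c_{V,\sigma_{\omega_\gamma}}=q^{-4\met^\vee(\omega_\gamma,\lambda)}\id$ for $V\in\mcat_{\p\lambda}$ and $\lambda$ a lift of $\p\lambda$, so $\psi(\p\lambda,\omega_\gamma):=q^{-4\met^\vee(\omega_\gamma,\lambda)}$ is the required bicharacter, well defined by condition \ref{ite:psiCoch}. Thus $\mcat$ is pre-modular relative to $(\FR,\varnothing)$.

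Next I would compute the stabilization coefficients and verify relative modularity, again following the proof of Theorem \ref{thm:relModCpt}. The Kirby colour is $\Omega_{\p\gamma}=\sum_{k\in D}V_{\gamma+k}$, and by the $n=0$ case of Lemma \ref{lem:Phis} (in the modified normalisation) one has $\langle\Phi_{V_{\lambda'},V_\lambda}\rangle=q^{-4\met^\vee(\lambda',\lambda)}$. Evaluating the diagrams of Definition \ref{def:ndeg} with $g=\p 0$ and $V=\mathbb{I}$, using $\theta_{V_k}=q^{-2\met^\vee(k,k)}\id$ and $\langle\Phi_{V_k,\mathbb{I}}\rangle=1$, yields $\Delta_\pm=\sum_{k\in D}q^{\mp 2\met^\vee(k,k)}$. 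For relative modularity, take $V_i=V_\alpha$, $V_j=V_\beta$ in $\Theta(\p\lambda)$ and Kirby colour $\Omega_{\p\gamma}$; the endomorphism $f$ of $V_\alpha\otimes V_\beta^\vee$ determined by the left-hand side of \eqref{eq:mod} is a scalar, as $V_\alpha\otimes V_\beta^\vee\simeq V_{\alpha-\beta}$ is one-dimensional, and, by the chain of equalities in the proof of Theorem \ref{thm:relModCpt} using $V_\beta^\vee\simeq V_{-\beta}$ (Lemma \ref{lem:dualVerma}) and cyclicity of the modified trace, this scalar equals
\[
\sum_{k\in D}\langle\Phi_{V_{-\beta},V_{\gamma+k}}\rangle\,\langle\Phi_{V_\alpha,V_{\gamma+k}}\rangle
=
q^{-4\met^\vee(\alpha-\beta,\gamma)}\sum_{k\in D}q^{-4\met^\vee(\alpha-\beta,k)}.
\]
The map $k\mapsto q^{-4\met^\vee(\alpha-\beta,k)}=e^{-2\pi\sqrt{-1}\,\met^\vee(\alpha-\beta,k)}$ is a well-defined character of $D\simeq\Gamma^\vee\slash\FR_0$ (since $\met^\vee(\alpha-\beta,\FR_0)\subset\Z$ as $\alpha-\beta\in\Gamma^\vee$), and it is trivial precisely when $\alpha-\beta\in\ima\met^\flat=\FR_0$, which under the chosen representatives forces $\alpha=\beta$. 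Fourier analysis on the finite abelian group $D$ then shows the scalar is $\vert D\vert\,\delta_{\alpha\beta}$, so \eqref{eq:mod} holds with $\zeta=\vert D\vert\neq0$; $\mcat$ is therefore modular relative to $(\FR,\varnothing)$, and $\zeta=\Delta_+\Delta_-$ by \cite[Proposition 1.2]{derenzi2022}.

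It remains to verify TQFT finiteness (Definition \ref{def:relModFinite}). Since $\mcat$ is semisimple and locally finite, its projective indecomposables are exactly the simples $V_\lambda$, and they dominate the projectives, giving \ref{ite:finCat3}. For \ref{ite:finCat1}, writing an arbitrary $\lambda'\in\p\lambda$ uniquely as $\lambda+k+\omega_\gamma$ with $k\in D$ and $\omega_\gamma\in\FR_0$ exhibits $V_{\lambda'}\simeq V_{\lambda+k}\otimes\sigma_{\omega_\gamma}$, and $\vert D\vert<\infty$ by condition \ref{ite:finiteCoch}. For \ref{ite:finCat2}, $\Hom_{\mcat}(\sigma_{\omega_\gamma},V_\lambda)$ is non-zero, and then one-dimensional, for at most one $\gamma$, so $\bigoplus_{\gamma\in\Gamma}\Hom_{\mcat}(\sigma_{\omega_\gamma},V_\lambda)$ is finite-dimensional; the case of a general projective follows by additivity.

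The only step demanding genuine care, though it is hardly an obstacle, is the modularity computation, where one must confirm that the Gauss-type sum over the discriminant group $D$ collapses to $\vert D\vert\,\delta_{\alpha\beta}$; this rests on identifying $\ima\met^\flat$ with the common kernel of the relevant characters of $D$, hence on the precise coset representatives built into the index sets $I_{\p\lambda}$. All remaining steps are direct specializations to $n=0$ of arguments already established for Theorem \ref{thm:relModCpt}.
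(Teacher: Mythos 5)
Your proposal is correct and takes essentially the same approach as the paper's proof, which simply says to take the standard trace (since $\mcat$ is semisimple), invoke Proposition \ref{prop:freeRealCoch}, and ``argue as in Section \ref{sec:nonzeroHyp}''. You carry out the $n=0$ specialization of Theorem \ref{thm:relModCpt} explicitly; every step you write is the intended one.
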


\begin{proof}
Since $\mcat$ is semisimple, we can take for the modified trace the standard trace. Proposition \ref{prop:freeRealCoch} ensures that conditions \ref{ite:deg0Coch}-\ref{ite:finiteCoch} hold. We can therefore argue as in Section \ref{sec:nonzeroHyp} to prove that $\mcat$ is relative modular with the stated numerical invariants.
\end{proof}

\begin{Rem}
There is an analogue of Theorem \ref{thm:relModToralCS} for the category $\cat$ of ungraded weight modules over the $U_{-1}^{\bos}(\bos)$; see Remark \ref{rem:weakenAssump}. The results which follow can also be proved in this context.
\end{Rem}

Let $\TQFT_{\mcat_{\Gamma}}$ be the $\FR$-graded TQFT associated to the relative modular structure of Theorem \ref{thm:relModToralCS}. Since $\SSS = \varnothing$, all decorated bordisms are admissible, that is $\CobAd_{\mcat}=\Cob_{\mcat}$. In particular, $\TQFT_{\mcat_{\Gamma}}$ produces invariants of arbitrary decorated closed $3$-manifolds.

Working in the same topological setting as Section \ref{sec:verlindeCpt}, we find for the partition of trivial circle bundles with insertions
\begin{equation}
\label{eq:toralPartitionCircleBund}
\TQFT_{\mcat_{\Gamma}}(\CS \times S^1_{\p \beta})
=
\vert D \vert^{g-1}\sum_{k \in D} q^{-2\met^{\vee}(\beta+k, \mu)}.
\end{equation}

\begin{Prop}
\label{prop:stateSpaceToralCS}
Let $\CS$ be a decorated surface of genus $g \geq 1$ with no marked points. Then $\TQFT_{\mcat_{\Gamma}}(\CS)$ is concentrated in degree $0 \in \FR$, where it has dimension $\vert D \vert^g$.
\end{Prop}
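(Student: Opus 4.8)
The plan is to read off the graded dimension of $\TQFT_{\mcat_{\Gamma}}(\CS)$ from a Verlinde-type formula, exactly along the lines of Section \ref{sec:verlindeCpt}. Write $\TQFT = \TQFT_{\mcat_{\Gamma}}$, and for $k \in \FR$ let $\TQFT_{k}(\CS)$ be the degree $k$ summand of the $\FR$-graded vector space $\TQFT(\CS)$; recall that in the toral case $\FR = \ima(\met^{\flat}\colon \Gamma \to \Gamma^{\vee})$ and that, $\mcat$ being ungraded, these summands are ordinary vector spaces. The first step is to compute the partition function of $\TQFT$ on the trivial circle bundle over $\CS$. Since $\SSS = \varnothing$ here, every decorated surface and bordism is admissible, so equation \eqref{eq:toralPartitionCircleBund} applies to $\CS \times S^1_{\p\beta}$ for all $\p\beta \in \Gr$; specialising it to the present case of no marked points, so that $\mu = 0$, gives
\[
\TQFT(\CS \times S^1_{\p\beta}) = \vert D \vert^{g-1} \sum_{k \in D} q^{-2\met^{\vee}(\beta+k,\,0)} = \vert D \vert^{g-1} \sum_{k \in D} 1 = \vert D \vert^{g}
\]
for every $\p\beta \in \Gr$, and independently of the cohomology class $\coh$ on $\Sigma$: in the surgery presentation of Section \ref{sec:verlindeCpt} the Kirby-coloured strands attached to the handles of $\Sigma$ are eliminated in pairs via equation \eqref{eq:mod} irrespective of their $\Gr$-degrees.

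The second step is to invoke the Verlinde formula, that is, the toral analogue of Theorem \ref{thm:verlindeCpt}, whose proof is the same as there. Writing $\omega_{\gamma} = \met^{\flat}(\gamma)$, and using that $\mcat$ is ungraded so that there is no parity variable, it reads
\[
\TQFT(\CS \times S^1_{\p\beta}) = \sum_{\gamma \in \Gamma} \dim_{\C} \TQFT_{\omega_{\gamma}}(\CS)\, q^{-4 \met^{\vee}(\omega_{\gamma}, \beta)} .
\]
Combining this with the first step yields
\[
\sum_{\gamma \in \Gamma} \dim_{\C} \TQFT_{\omega_{\gamma}}(\CS)\, q^{-4 \met^{\vee}(\omega_{\gamma}, \beta)} = \vert D \vert^{g}
\qquad \text{for all } \p\beta \in \Gr .
\]

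The final step is a linear independence argument. Non-degeneracy of $\met$ makes $\met^{\flat}$ injective, so $\gamma \mapsto \omega_{\gamma}$ is injective on $\Gamma$; moreover $\met^{\vee}(\omega_{\gamma}, \lambda) = \lambda(\gamma) \in \Z$ for $\lambda \in \Gamma^{\vee}$, so each $\p\beta \mapsto q^{-4 \met^{\vee}(\omega_{\gamma}, \beta)}$ descends to a well-defined character of $\Gr$, and these characters are pairwise distinct as $\gamma$ ranges over $\Gamma$ because the pairing between $\Gamma$ and $\Gr = \bos^{\vee}/\Gamma^{\vee}$ is perfect. Distinct characters of a group being linearly independent over $\C$, comparing the two sides of the last display — whose right-hand side is $\vert D \vert^{g}$ times the trivial character $q^{-4 \met^{\vee}(\omega_{0}, \beta)}$ — forces $\dim_{\C} \TQFT_{\omega_{0}}(\CS) = \vert D \vert^{g}$ and $\dim_{\C} \TQFT_{\omega_{\gamma}}(\CS) = 0$ for every $\gamma \neq 0$. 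Since $\omega_{0} = 0 \in \FR$, this is exactly the assertion. The step I expect to require the most care is the verification of the Verlinde formula in this semisimple setting, together with the correct identification of the specialisation $q^{-4\met^{\vee}(-,\beta)}$, so that the resulting family of characters of $\Gr$ genuinely separates the points of $\Gamma$; granted that, the rest is a one-line Fourier comparison.
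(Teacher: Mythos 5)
Your argument is correct, but for the key claim --- concentration in degree $0 \in \FR$ --- it takes a genuinely different route from the paper. The paper also starts from $\TQFT_{\mcat_{\Gamma}}(\CS \times S^1_{\p\beta}) = \vert D\vert^g$, but it only uses the Verlinde formula at $\p\beta = \p 0$, to conclude $\chi(\TQFT_{\mcat_{\Gamma}}(\CS)) = \vert D \vert^g$; the concentration in degree $\p 0$ is then proved separately by a skein-theoretic argument: adapting \cite[Theorem 3.3]{geerYoung2022}, the degree $-k$ summand is spanned by degree-$k$ colourings of a trivalent spine of a handlebody bounded by $\CS$, and pointedness of $\mcat$ forces the recursive balancing equations at the nodes to admit solutions only when $k = \p 0$. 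You instead exploit the full $\p\beta$-dependence of the Verlinde formula together with Dedekind--Artin linear independence of the characters $\p\beta \mapsto q^{-4\met^{\vee}(\omega_{\gamma},\beta)} = e^{-2\pi\sqrt{-1}\,\beta(\gamma)}$ of $\Gr$, which delivers the concentration and the dimension in one stroke. Two points you should make explicit: (i) the character sum must be known to be \emph{finite} before linear independence can be invoked --- this follows from TQFT finiteness, which Theorem \ref{thm:relModToralCS} establishes and which guarantees $\dim_{\C}\TQFT_{\mcat_{\Gamma}}(\CS) < \infty$; (ii) Theorem \ref{thm:verlindeCpt} is only stated for $n \geq 1$, so you genuinely need its toral analogue at all $\p\beta$, not merely at $\p\beta = \p 0$ as the paper does --- its proof is uniform over relative modular categories, so this is a legitimate (if slightly heavier) dependence. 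Your approach buys a shorter, purely Fourier-theoretic proof of concentration; the paper's spine argument is more self-contained and produces an explicit spanning set of the state space, which is also what one would want in order to describe $\TQFT_{\mcat_{\Gamma}}(\CS)$ beyond its dimension.
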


\begin{proof}
Specializing equation \eqref{eq:toralPartitionCircleBund} to $m=0$ gives $\TQFT_{\mcat_{\Gamma}}(\CS \times S^1_{\p \beta}) = \vert D \vert^g$. Taking the limit $\beta \rightarrow 0$ then gives $\chi(\TQFT_{\mcat_{\Gamma}}(\CS)) = \vert D \vert^g$. It remains to prove that $\TQFT_{\mcat_{\Gamma}}(\CS)$ is concentrated in degree $\p 0 \in \FR$. A direct modification of the proof of \cite[Theorem 3.3]{geerYoung2022} applies in the present setting to show that the homogeneous summand $\TQFT_{\mcat_{\Gamma},-k}(\CS)$ of degree $-k \in \FR$ is spanned by colourings of degree $k$ of a particular oriented trivalent spine, denoted in \emph{loc. cit.} by $\tilde{\Gamma}$, of a handlebody bounded by $\CS$. See \cite[Definition 3.2]{geerYoung2022}. Degree $-k$ colourings are determined by solving a system of recursive equations, with each equation governing a balancing condition associated to a node of $\tilde{\Gamma}$, that is, the condition that the colour of the incoming edge appears as a summand of the tensor product of the colours of the outgoing edges. Since the category $\mcat$ is pointed, the colour of any two edges incident to a node uniquely determines the third. From this one can verify directly that this system of equations admits a solution only if $k=\p 0$.
\end{proof}

The theory $\TQFT_{\mcat_{\Gamma}}$ models $\Bos_{c,\Gamma}$-Chern--Simons theory at level $\met$. Note that the latter theory is semisimple, so that no homological truncation is needed. More precisely, whereas standard mathematical models of toral Chern--Simons theory incorporate only genuine Wilson line operators, $\TQFT_{\mcat_{\Gamma}}$ incorporates also non-genuine line operators, as discussed in Section \ref{sec:magneticsymmetry}.

Recall that the standard Reshetikhin--Turaev approach to $\Bos_{c,\Gamma}$-Chern--Simons theory at level $\met$ is via the modular tensor category $\vect_{\C}(D,q)$ whose underlying abelian category is finite dimensional $D$-graded vector spaces and whose associator and braiding are determined by the quadratic form
\[
q : D \rightarrow \Q \slash \Z,
\qquad
d \mapsto \met^{\vee}(d,d),
\]
interpreted as an element of the abelian cohomology $H^3_{\textnormal{ab}}(D; \Q \slash \Z)$ \cite{joyal1993,stirling2008,kapustin2011}. Physically, $\vect_{\C}(D,q)$ is the category of genuine Wilson line operators. For other approaches to toral Chern--Simons theory, again incorporating only genuine line operators, see \cite{dijkgraaf1990,manoliu1998,freed2010}. For a generalization to spin TQFTs when the lattice $(\Gamma,\met)$ is odd, see \cite{belov2005}.

We explain how to recover $\vect_{\C}(D,q)$ from the relative modular category $\mcat_{\Gamma}$ and hence the Reshetikhin--Turaev model from $\TQFT_{\mcat_{\Gamma}}$. First, note that the ribbon subcategory $\mcat_{\Gamma,\p 0}$ is naturally identified with the category of finite dimensional representations of $\Bos_{c,\Gamma}$ and so as labels of genuine Wilson line operators in classical $\Bos_{c,\Gamma}$-Chern--Simons theory. Since the group $\FR$ acts freely on isomorphism classes of objects of $\mcat_{\Gamma}$, we can form the orbit category $\mcat_{\Gamma,\p 0} \slash \FR$. Explicitly, $\mcat_{\Gamma,\p 0} \slash \FR$ has the same objects as $\mcat_{\Gamma,\p 0}$ and morphisms
\[
\Hom_{\mcat_{\Gamma,\p 0} \slash \FR}(V,W) = \bigoplus_{k \in \FR} \Hom_{\mcat_{\Gamma,\p 0}}(\sigma_k \otimes V,W).
\]
Isomorphism classes of simple objects of $\mcat_{\Gamma,\p 0} \slash \FR$ are then in bijection with $\Gamma^{\vee} \slash \FR \simeq D$. It follows that the category of $D$-graded vector spaces is a skeleton of $\mcat_{\Gamma,\p 0} \slash \FR$. Transferring the ribbon structure from $\mcat_{\Gamma,\p 0}$ to this skeleton recovers the modular tensor category $\vect_{\C}(D,q)$. Passing to the orbit category $\mcat_{\Gamma,\p 0} \slash \FR$, and so identifying classical line operators labeled representations in the same $\FR$-orbit, is a non-perturbative quantum effect implemented by monopole operators \cite[\S 3.1]{kapustin2011}. 

A standard model for the state space of a genus $g$ surface $\Sigma_g$ in $\Bos_{c,\Gamma}$-Chern--Simons theory at level $\met$, obtained by geometric quantization, is the space of holomorphic sections of a $\Theta$-line bundle over the moduli space of flat $\Bos_{c,\Gamma}$-bundles on $\Sigma_g$. This space of sections is of dimension $\vert D \vert^g$, in agreement with Proposition \ref{prop:stateSpaceToralCS}.

\section{Additional relative modular structures}
\label{sec:absRelMod}

We continue to set $\hbar= \frac{\pi \sqrt{-1}}{2}$ and work with the category $\mcat_R$. We work under Assumptions \ref{assump:quantAssumptDualInt}, \ref{assump:noZeroRoots} and \ref{assump:unimodular}. In this section, we consider various modifications of Section \ref{sec:genConsid}. Since many of the proofs are similar to those of Section \ref{sec:cptBosonic}, we will at points be brief.

\subsection{Abelian gauged $\mathcal{N}=4$ hypermultiplets}
\label{sec:abelianHypers}

Consider again Example \ref{ex:gaugedAbHypers}. Fix a basis $\bosSm \simeq \bigoplus_{a=1}^s \C \cdot N_a$ and let $\Gamma=\bigoplus_{a=1}^s \Z \cdot N_a$. Assumption \ref{assump:quantAssumptDualInt} holds if and only if the matrix $Q^{(\bosSm)}$ has integer entries. Assumptions \ref{assump:noZeroRoots} and \ref{assump:unimodular} hold if and only if they hold with $Q$ replaced by $Q^{(\bosSm)}$. The effective metric on $\bos = \bosSm \oplus \bosSm^{\vee}$ is
\[
\emet = \left( \begin{matrix} c_2(R) & \id_{\bosSm^{\vee}} \\ \can_{\bosSm} & 0 \end{matrix} \right),
\]
where $c_2(R) \in \End_{\C}(\bosSm)$ is the quadratic Casimir of $R$; with respect to the above basis of $\bosSm$, its entries are $c_2(V)_{ab} = \sum_{i=1}^n Q^{(\bosSm)}_{ai} Q^{(\bosSm)}_{bi}$.

Since $Q^{(\bosSm)}$ has integer entries, $R$ lifts to a representation of the complex torus $T^{(\bosSm)}_{\Gamma}=(\Gamma \otimes_{\Z} \C)\slash \Gamma$. The above data determines a theory of $T^{(\bosSm)}_{c,\Gamma}$-gauged $\mathcal{N}=4$ hypermultiplets with charge matrix the transpose of $Q^{(\bosSm)}$ \cite[\S 2]{ballin2023} or, equivalently, Chern--Simons-Rozansky--Witten theory with non-compact gauge group $T^{(\bosSm)}_{c,\Gamma} \times \bosSm^{\vee}$ at level $\met$ and holomorphic symplectic target $T^{\vee} R$. A vertex operator algebra whose (not necessarily local) modules model the category $\widehat{\cat}_R$ of (not necessarily genuine) line operators in such a theory was proposed in \cite[\S 8]{ballin2023}; a Kazhdan--Lusztig correspondence for these boundary vertex operator algebras is currently being developed by Creutzig--Niu \cite{creutzig2024a}. We note that the associated quantum groups can be derived directly from the underlying field theory without passing through an auxiliary vertex operator algebra \cite{creutzig2024b}, and should be related to the ones described here by uprolling \cite{creutzig2022a}. As we explain below, we expect that the derived category of the full ribbon subcategory $\mcat_R^{\Int}$ of $\mcat_R$ consisting of $\Uqmin$-modules whose $\bosSm$-weights lie in the integral lattice $\Gamma^{\vee} \subset \bosSm^{\vee}$ is a full subcategory of $\widehat{\cat}_R$ after quotienting by the free realization $\FR$ below. We emphasize that objects of $\widehat{\cat}_R$ are not subject to any semisimplicity assumption on the action of the subalgebra $\bosSm^{\vee} \subset \bos$.

Since the lattice $\Gamma \subset \bos$ is not full rank, the present setting is not that of Section \ref{sec:genConsid}. In particular, the groups $\Gamma^{\vee}$ and
\[
\{ \lambda \in \bos^{\vee} \mid \lambda(\gamma) \in \Z \; \; \forall \gamma \in \Gamma \}
\simeq
\Gamma^{\vee} \oplus \bosSm
\]
differ. Here, and below, we identify $\bosSm^{\vee \vee}$ with $\bosSm$. Since $\Gamma^{\vee} \subset \Gamma^{\vee} \oplus \bosSm$, we can grade $\mcat_R$ by the group $\Gr = \bos^{\vee} \slash \Gamma^{\vee} \oplus \bosSm \simeq \bosSm^{\vee} \slash \Gamma^{\vee}$, which is known to be the topological flavour symmetry of abelian gauged hypermultiplets.

\begin{Prop}
\label{prop:freeRealGaugedAbelHyp}
Let $\FR_0$ be the image of $\Gamma$ under the map $\met^{\flat}: \bos \rightarrow \bos^{\vee}$. If the integral lattice $\Gamma$ is even with respect to $\emet$, then $\FR= \FR_0 \oplus \Ztwo$ satisfies conditions \ref{ite:deg0Coch}-\ref{ite:ribbCoch} but fails condition \ref{ite:finiteCoch}, where in each condition the group $\Gamma^{\vee}$ of Section \ref{sec:genConsid} is replaced with $\Gamma^{\vee} \oplus \bosSm$.
\end{Prop}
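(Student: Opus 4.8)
The plan is to verify conditions \ref{ite:deg0Coch}--\ref{ite:ribbCoch} exactly as in Proposition \ref{prop:freeRealCoch}, using the hyperbolic structure of $\met = \left(\begin{smallmatrix} 0 & \id_{\bosSm^{\vee}} \\ \can_{\bosSm} & 0 \end{smallmatrix}\right)$ on $\bos = \bosSm \oplus \bosSm^{\vee}$, and then to exhibit an explicit infinite family witnessing the failure of \ref{ite:finiteCoch}. Throughout, for $\gamma \in \Gamma$ write $\omega_\gamma = \met^{\flat}(\gamma)$; since $\Gamma = \bigoplus_a \Z \cdot N_a \subset \bosSm$ and $\met$ pairs $\bosSm$ with $\bosSm^{\vee}$, we have $\omega_\gamma = \met^{\flat}(\gamma) = \can_{\bosSm}(\gamma) \in \bosSm^{\vee} \subset \bos^{\vee}$, so in particular $\omega_\gamma$ annihilates $\bosSm$ and pairs integrally against $\Gamma$. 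This gives \ref{ite:deg0Coch}: $\FR_0 = \met^{\flat}(\Gamma) \subset \Gamma^{\vee} \subset \Gamma^{\vee} \oplus \bosSm$. For \ref{ite:psiCoch}, I would compute $\met^{\vee}(\omega_\gamma, \lambda) = \lambda(\gamma)$ for $\lambda \in \Gamma^{\vee} \oplus \bosSm$ (the $\bosSm$-part of $\lambda$ pairs trivially with $\omega_\gamma$ since the latter lies in $\bosSm^{\vee}$ and $\met^{\vee}$ is again hyperbolic), and $\lambda(\gamma) \in \Z$ by integrality of $\Gamma$ with respect to $\met$.

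For \ref{ite:ribbCoch}, the key computation is the identity already used in the proof of Proposition \ref{prop:freeRealCoch}: $-\met^{\vee}(\omega_\gamma,\omega_\gamma) + \sum_{i=1}^n \chi_i(\omega_\gamma) = -\emet(\gamma,\gamma) + \sum_{i=1}^n \chi_i(\omega_\gamma)(\chi_i(\omega_\gamma)+1)$, which follows from the definition $\emet = \met + \sum_i Q_i \otimes Q_i$ and the Fundamental Identity. Here one must check that each $\chi_i(\omega_\gamma) = \met^{\vee}(Q_i, \omega_\gamma)$ is an integer: since the charge matrix $Q = \left(\begin{smallmatrix} Q^{(\bosSm)} \\ 0 \end{smallmatrix}\right)$ is supported on the $\bosSm^{\vee}$-block of $\bos^{\vee}$ and $\omega_\gamma \in \bosSm^{\vee}$, the hyperbolic pairing $\met^{\vee}$ gives $\met^{\vee}(Q_i,\omega_\gamma) = Q_i(\gamma) \in \Z$ because $Q^{(\bosSm)}$ has integer entries and $\gamma \in \Gamma$. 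Then evenness of $(\Gamma, \emet)$ forces $\emet(\gamma,\gamma) \in 2\Z$, and since $\chi_i(\omega_\gamma)(\chi_i(\omega_\gamma)+1)$ is always even, the whole expression lies in $2\Z$. So \ref{ite:ribbCoch} holds.

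The remaining point, and the only substantive departure from Proposition \ref{prop:freeRealCoch}, is that \ref{ite:finiteCoch} \emph{fails}: I would show the index of $\FR_0$ in $\Gamma^{\vee} \oplus \bosSm$ is infinite. Indeed $\FR_0 = \met^{\flat}(\Gamma) \subset \bosSm^{\vee}$ is a finite-index sublattice of $\Gamma^{\vee}$ (its index being the discriminant of the Gram matrix of $Q^{(\bosSm)\,T} Q^{(\bosSm)}$ restricted appropriately, or more simply the fact that $\met^{\flat}\colon \Gamma \to \Gamma^{\vee}$ is injective with finite cokernel since $\met$ is nondegenerate on $\bosSm \oplus \bosSm^{\vee}$ but $\Gamma$ only spans $\bosSm$), but $\Gamma^{\vee} \oplus \bosSm$ contains the entire infinite-dimensional complex vector space $\bosSm$ as a direct summand — or at any rate the infinite-rank-over-$\Z$ group $\bosSm$ — none of which meets $\FR_0 \subset \bosSm^{\vee}$ except at $0$. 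Hence the quotient $(\Gamma^{\vee} \oplus \bosSm)/\FR_0$ surjects onto $\bosSm$, which is infinite, so the index is infinite. I do not expect any real obstacle here: the argument is a sign/integrality bookkeeping exercise plus the structural observation that $\Gamma$ fails to span $\bos$. The conceptual content — that this failure of \ref{ite:finiteCoch} is exactly why the relative modular framework as developed in Section \ref{sec:cptBosonic} does not directly apply to gauged hypermultiplets — is then discussed in the surrounding text rather than in the proof itself.
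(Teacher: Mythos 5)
Your argument reaches the correct conclusions for \ref{ite:deg0Coch}--\ref{ite:ribbCoch} and for the failure of \ref{ite:finiteCoch}, and its overall structure mirrors the paper's. However, there is a persistent error in locating $\omega_\gamma = \met^\flat(\gamma)$ inside $\bos^\vee = \bosSm^\vee \oplus \bosSm^{\vee\vee}$. Since $\met$ is hyperbolic and $\gamma \in \Gamma \subset \bosSm$, the functional $\met^\flat(\gamma) = \met(\gamma,-)$ only sees the $\bosSm^\vee$-component of its argument, so $\omega_\gamma = \can_\bosSm(\gamma)$ lies in the $\bosSm^{\vee\vee} \simeq \bosSm$ summand, \emph{not} in $\bosSm^\vee$ as you repeatedly write. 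This matters: $\Gamma^\vee = \Hom_\Z(\Gamma,\Z)$ in this section sits naturally inside $\bosSm^\vee$, whereas $\FR_0 = \met^\flat(\Gamma)$ sits inside the complementary $\bosSm$ summand of $\Gamma^\vee \oplus \bosSm$. The two intersect only in $0$. Consequently your intermediate claims that $\FR_0 \subset \Gamma^\vee$, and that $\met^\flat\colon \Gamma \to \Gamma^\vee$ is ``injective with finite cokernel,'' are both false; the correct verification of \ref{ite:deg0Coch} is simply that $\omega_\gamma \in \bosSm \subset \Gamma^\vee \oplus \bosSm$, which is the paper's one-line argument.

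The same confusion distorts your check of \ref{ite:ribbCoch}: you assert that both $Q_i$ (supported on $\bosSm^\vee$) and $\omega_\gamma$ live in $\bosSm^\vee$ and that the hyperbolic form then gives $\met^\vee(Q_i,\omega_\gamma) = Q_i(\gamma)$. But the hyperbolic metric $\met^\vee$ vanishes on $\bosSm^\vee \times \bosSm^\vee$, so if both vectors really did live there the pairing would be $0$, not $Q_i(\gamma)$. The correct computation is $\met^\vee(Q_i,\omega_\gamma) = \met^\vee(\met^\flat(\gamma), Q_i) = Q_i(\gamma)$ using the general identity $\met^\vee(\met^\flat(X),\lambda) = \lambda(X)$ (the same one you used correctly for \ref{ite:psiCoch}), and it works precisely because $Q_i$ and $\omega_\gamma$ live in \emph{opposite} summands. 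Your final answers for \ref{ite:psiCoch}--\ref{ite:finiteCoch} are all correct and, fortunately, robust to this mix-up---in particular, the index of $\FR_0$ in $\Gamma^\vee \oplus \bosSm$ is uncountably infinite regardless of which summand contains $\FR_0$---but the intermediate reasoning should be repaired to reflect that $\FR_0$ is a lattice in $\bosSm$, while $\Gamma^\vee$ is a lattice in $\bosSm^\vee$.
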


\begin{proof}
We discuss each condition. Given $\gamma \in \Gamma$, let $\omega_{\gamma} =\met^{\flat}(\gamma)$.
\begin{enumerate}[label=(A\arabic*)]
\addtocounter{enumi}{1}
\item Since $\met$ is hyperbolic and $\Gamma \subset \bosSm$, we have $\omega_{\gamma} \in \bosSm^{\vee \vee} \simeq \bosSm \subset \bos^{\vee}$. Since $\bosSm \subset \Gamma^{\vee} \oplus \bosSm$, the condition holds.

\item For each $\lambda \in \Gamma^{\vee} \oplus \bosSm$, we have $\met^{\vee}(\omega_{\gamma},\lambda) = \lambda(\gamma) \in \Z$.

\item As in Proposition \ref{prop:freeRealCoch}, this follows from the even integrality of $\emet$.

\item This condition fails since the index in question is (uncountably) infinite. \qedhere
\end{enumerate}
\end{proof}

In particular, we do not obtain a relative modular structure on $\mcat_R$ or, for the same reasons, its integral subcategory $\mcat_R^{\Int}$. Theorem \ref{thm:simplesSplitAb} implies that $\FR$-orbits of isomorphism classes of simple objects of $\mcat_{R,\p 0}^{\Int}$ are in bijection with $\Gamma^{\vee} \times \bosSm \slash \Gamma$. This agrees with the classification of simple objects of the vertex algebraic model $\widehat{\cat}_R$ \cite[Corollary 8.3.1]{ballin2023}. The braiding and twist of $\mcat_R^{\Int}$ are compatible with those of $\widehat{\cat}_R$.

\begin{Rem}
The category $\mcat_R$ includes line operators sourcing background flat connections for the topological flavour symmetry group $\Gr$ but not the Higgs branch flavour symmetry group. While the inclusion of the latter is a crucial ingredient in reproducing physical expectations, it will not resolve the above lack of finiteness. See \cite[\S 2]{creutzig2021} for such an example (that violates Assumption \ref{assump:noZeroRoots}).
\end{Rem} 

\subsection{Free realizations via $\ker \chi$}
\label{sec:kerGrad}

We consider a slight generalization of Section \ref{sec:genConsid}. Fix subgroups $\Lambda_0$ and $\Lambda$ of $\bos^{\vee}$. Consider the following conditions, where $k \in \Lambda_0$ and $\lambda \in \Lambda$:
\begin{enumerate}[label=(B\arabic*)]
\item \label{ite:grading} {[$\Gr$-grading]} $\Lambda_R \subset \Lambda$.

\item \label{ite:deg0} {[$\FR$ in degree $\p 0$]} $\Lambda_0 \subset \Lambda$.

\item \label{ite:psi} {[Existence of $\psi$]} $\met^{\vee}(k,\lambda) \in \Z$.

\item \label{ite:ribb} {[Trivial ribbon]} $-\met^{\vee}(k,k) + \sum_{i=1}^n \chi_i(k) \in 2 \Z$.

\item \label{ite:fin} {[Finiteness]}  The index of $\Lambda_0$ in $\Lambda$ is finite.

\item \label{ite:invert} {[Invertibility of $\FR$]} $\chi_i(k) \in \Z$ for all $1 \leq i \leq n$.
\end{enumerate}

As in Section \ref{sec:genConsid}, the above conditions are necessary for tracking weights modulo $\Lambda$ to define a grading of $\mcat_R$ by the group $\Gr = \bos^{\vee} \slash \Lambda$ and for $\FR = \Lambda_0 \oplus \Ztwo$ to be the weights of a free realization on $\mcat_{R,\p 0}$ which is a part of a relative pre-modular structure on $\mcat_R$. In the restricted setting of Section \ref{sec:genConsid}, where $\Lambda = \Gamma^{\vee}$ and $\Lambda_0 = \ima(\met^{\flat}: \Gamma \rightarrow \Gamma^{\vee})$, the analogue of condition \ref{ite:invert} follows from conditions \ref{ite:gradingCoch} and \ref{ite:psiCoch}.

Assume that $\Lambda$ is a subgroup of $\ker (\chi: \bos^{\vee} \rightarrow \C^n)$ and let $\Lambda_0 = \Lambda$. Conditions \ref{ite:deg0}, \ref{ite:psi}, \ref{ite:fin} and \ref{ite:invert} then hold automatically. We assume that conditions \ref{ite:grading} and \ref{ite:ribb} hold as well. Since $\Lambda \subset \ker \chi$, there is an induced group homomorphism $\overline{\chi}: \Gr \rightarrow \GrH$.  By Lemma \ref{lem:sssPullback}, the preimage of $\SSSY$ under $\overline{\chi}$ is a small symmetric subset of $\Gr$ which is also a subgroup; call it $\SSS$.

\begin{Thm}
\label{thm:relModKernel}
The monoidal functor
\[
\sigma: \FR \rightarrow \mcat_R,
\qquad
(k, \p p) \mapsto \sigma_{(k,\p p)} := \C^{\bos}_{(k, \p p)}
\]
defines a free realization of $\FR$ on $\mcat_{R,\p 0}$ and gives $\mcat_R$ the structure of a modular $\Gr$-category relative to $(\FR,\SSS)$ with stabilization coefficients $\Delta_{\pm}=(\pm 1)^n$ and relative modularity parameter $\zeta = (-1)^n$.
\end{Thm}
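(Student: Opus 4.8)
The plan is to verify the conditions of Definition~\ref{def:preMod} and then Definition~\ref{def:modG}, following closely the argument of Theorem~\ref{thm:relModCpt} but simplified by the hypothesis $\Lambda = \Lambda_0 \subset \ker\chi$. First I would check the free realization axioms of Definition~\ref{def:free}: $\sigma_0 = \mathbb{I}$ is clear, and the computation of $\theta_{\C^{\bos}_{(k,\p p)}}$ from Example~\ref{ex:oneDimMod} (in its modified form with $\chi_i$ replaced by $2\chi_i$) together with condition~\ref{ite:ribb} gives $\theta_{\sigma_{(k,\p p)}} = \id_{\sigma_{(k,\p p)}}$; the condition that $V \otimes \sigma_k \simeq V$ forces $k = 0$ for simple $V$ follows from weight considerations, since tensoring with $\sigma_{(k,\p p)}$ shifts the highest weight by $k$ and the parity by $\p p$. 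Invertibility of each $\sigma_{(k,\p p)}$ needs $\chi_i(k) \in \Z$, which is condition~\ref{ite:invert}, automatic here since $\Lambda \subset \ker\chi$. That $\sigma$ lands in $\mcat_{R,\p 0}$ is condition~\ref{ite:deg0}. The $\Gr$-grading is well-defined by condition~\ref{ite:grading} as in Proposition~\ref{prop:genericSSCpt}.

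Next I would establish $\FR$-finite generic semisimplicity. The key point, different from Section~\ref{sec:cptBosonic}, is that $\Lambda \subset \ker\chi$ means each homogeneous subcategory $\mcat_{R,\p\lambda}$ for $\p\lambda \in \Gr\setminus\SSS$ decomposes as a \emph{single} block $\mcat_{R,\xi}$ with $\xi = \overline{\chi}(\p\lambda) \in \GrH\setminus\SSSY$ in the notation of Proposition~\ref{prop:genericSSZWt}, so it is semisimple with completely reduced dominating set the typical Verma modules of the appropriate central character. Since $\Lambda_0 = \Lambda$ acts on weights within a single $\ker\chi$-coset, the $\FR$-orbits of these Vermas are parametrized by a quotient that is finite precisely because of the way $\SSSY$ and the grading interact — more carefully, one takes $\Theta(\p\lambda)$ to be a set of representatives for the (necessarily finite) set of $\Lambda_0$-orbits among the typical Vermas in degree $\p\lambda$; I should check this finiteness, which comes down to the lattice $\Lambda_0$ together with condition~\ref{ite:fin}. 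The smallness of $\SSS$ in $\Gr$ follows from Lemma~\ref{lem:sssPullback} applied to $\overline{\chi}: \Gr \twoheadrightarrow \GrH$ and the small symmetric set $\SSSY$. The bicharacter $\psi$ of Definition~\ref{def:preMod}\eqref{def:compat} is obtained from the double-braiding computation $c_{\sigma_{(k,\p p)},V}\circ c_{V,\sigma_{(k,\p p)}} = q^{-4\met^{\vee}(\lambda,k)}\id$, which is well-defined on $\Gr$ by condition~\ref{ite:psi}. Existence of the modified trace is Proposition~\ref{prop:mTrace} (unimodularity from Assumption~\ref{assump:unimodular} via Lemma~\ref{lem:catUnimod}), and it is a genuine modified trace since $\mcat_R$ is braided.

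For the stabilization coefficients and relative modularity I would run the argument of Theorem~\ref{thm:relModCpt} verbatim, with the crucial simplification that now there is essentially one generic simple object per degree up to $\FR$-action, so the sum over $D$ collapses to a single term. Concretely, $\Delta_{\pm}$ is computed from Lemma~\ref{lem:Phis} and the ribbon formula; because $\Lambda_0 = \Lambda \subset \ker\chi$ the exponential prefactors involving $\chi_i(\lambda)$ and $\met^{\vee}(k,k)$ either telescope or contribute trivially, leaving $\Delta_{\pm} = (\pm 1)^n$. For relative modularity, I would invoke transparency (Lemma~\ref{lem:transparentCpt}, whose proof goes through unchanged since it only uses the structure of Verma modules in degree $\p 0$) to factor the relevant endomorphism through one-dimensional modules, then compute its modified trace; the orthogonality that forced $\alpha = \beta$ in Theorem~\ref{thm:relModCpt} is here immediate because distinct simple objects in $\Theta(\p\lambda)$ already lie in distinct $\Lambda_0$-cosets, so no Fourier-theoretic averaging over $D$ is needed, and one reads off $\zeta = \Delta_+\Delta_- = (-1)^n$. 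The main obstacle I anticipate is not any single computation but rather pinning down precisely the finiteness in Definition~\ref{def:preMod}\eqref{def:genSS}: one must confirm that within a fixed $\Gr$-degree the typical Verma modules form finitely many $\FR$-orbits, which requires carefully combining condition~\ref{ite:fin} with the description of atypical weights as a countable union of affine hyperplanes, and checking that the ``generic'' degrees (those outside $\SSS$) are exactly the ones where this count is both finite and nonzero.
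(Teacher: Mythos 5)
Your proposal follows the same overall template as the paper's argument, but it misses the decisive simplification that the hypothesis $\Lambda_0 = \Lambda$ provides, and this leads you to misidentify where the difficulty lies.

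The key point is this: since $\Lambda_0 = \Lambda$ and $\Gr = \bos^{\vee}\slash\Lambda$, the typical Verma modules in a fixed degree $\p\lambda \in \Gr \setminus \SSS$ form \emph{exactly one} $\FR_0$-orbit — any two lifts of $\p\lambda$ differ by an element of $\Lambda = \Lambda_0$, and since $\Lambda \subset \ker\chi$ all lifts have the same $\chi$-value, hence are all typical. Together with the $\Ztwo$ factor of $\FR$ (which handles parity, $V_{(\lambda,\p 1)} \simeq V_{(\lambda,\p 0)}\otimes\sigma_{(0,\p 1)}$), this forces $\Theta(\p\lambda) = \{V_{(\lambda,\p 0)}\}$ to be a singleton. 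You instead write of ``a set of representatives for the (necessarily finite) set of $\Lambda_0$-orbits'' and flag ``pinning down precisely the finiteness in Definition~\ref{def:preMod}\eqref{def:genSS}'' as the ``main obstacle,'' invoking condition~\ref{ite:fin} and an argument about affine hyperplanes. This is a red herring: condition~\ref{ite:fin} is trivially satisfied (the index of $\Lambda_0=\Lambda$ in itself is $1$), and there is nothing to count. You have effectively retained the complexity of Theorem~\ref{thm:relModCpt}, where $\Theta(\p\lambda)$ really is indexed by the discriminant group, without noticing that it collapses here.

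This oversight propagates to your treatment of relative modularity. You write that orthogonality is ``immediate because distinct simple objects in $\Theta(\p\lambda)$ already lie in distinct $\Lambda_0$-cosets,'' but since $\Theta(\p\lambda)$ has a single element the statement is vacuous and the actual argument is different and simpler: after applying transparency, the relevant morphism factors through one-dimensional modules $\C^{\bos}_{(k_i,\p p_i)} \in \mcat_{R,\p 0}$, and a one-line weight calculation with $\Hom_{\mcat_R}(V_{(\lambda,\p 0)}\otimes V_{(\lambda,\p 0)}^{\vee},\C^{\bos}_{(k_i,\p p_i)}) \simeq \Hom_{\mcat_R}(V_{(\lambda,\p 0)},V_{(\lambda,\p 0)}\otimes\C^{\bos}_{(k_i,\p p_i)})$ forces $(k_i,\p p_i)=0$; hence the morphism is a scalar multiple of $\tcoev\circ\ev$, and that scalar must be $\Delta_+\Delta_-$ by \cite[Proposition 1.2]{derenzi2022}. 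No Fourier analysis and no cross-term cancellation are involved. Similarly your remark that the exponential prefactors in $\Delta_{\pm}$ ``telescope or contribute trivially'' \emph{because} $\Lambda_0 \subset \ker\chi$ is not quite right — the computation of $\Delta_{\pm}$ from Lemma~\ref{lem:Phis} is a pointwise evaluation at a single Verma module and its cancellations do not appeal to $\Lambda_0 \subset \ker\chi$; what that hypothesis buys is that the Kirby colour is a single Verma rather than a sum over $D$.

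The rest of what you outline — free realization axioms via Example~\ref{ex:oneDimMod} and conditions~\ref{ite:deg0}, \ref{ite:invert}, \ref{ite:ribb}; smallness of $\SSS$ via Lemma~\ref{lem:sssPullback} applied to the surjection $\overline{\chi}:\Gr\twoheadrightarrow\GrH$; the bicharacter $\psi$ from the double braiding and condition~\ref{ite:psi}; modified trace from Lemma~\ref{lem:catUnimod} and Proposition~\ref{prop:mTrace}; TQFT finiteness — agrees with the paper's proof. The gap is conceptual rather than computational: you did not fully internalize that setting $\Lambda_0 = \Lambda$ reduces the Kirby colour to a single typical Verma, which is exactly what makes this theorem short.
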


\begin{proof}
The definition of $\SSS$ is such that any weight with image in $\Gr \setminus \SSS$ is typical. Generic semisimplicity of $\mcat_R$ can therefore be proved in the same way as Proposition \ref{prop:genericSSZWt}. Conditions \ref{ite:deg0} and \ref{ite:invert} ensure that each $\sigma_{(k, \p p)}$ has $\Gr$-degree $\p 0$ and is one dimensional, respectively. Condition \ref{ite:ribb} ensures that $\theta_{\sigma_{(k,\p p)}}= \id_{\sigma_{(k,\p p)}}$ for all $(k, \p p) \in \FR$. 
Let $\p \lambda \in \Gr \setminus \SSS$ with chosen lift $\lambda \in \bos^{\vee}$; any other lift is of the form $\lambda + k$ for a unique $k \in \Lambda$. A completely reduced dominating set of $\cat_{R,\p \lambda}$ is
\[
\{V_{(\lambda,\p p)}  \mid \lambda \mbox{ in class } \p \lambda, \; \p p \in \Ztwo \}
\]
and we may take $\Theta(\p \lambda) = \{V_{(\lambda, \p 0)}\}$. The required bicharacter is
\[
\psi: \Gr \times \FR \rightarrow \C^{\times},
\qquad
(\p \lambda, (k, \p p)) \mapsto q^{-4 \met^{\vee}(k,\lambda)}.
\]
This proves relative pre-modularity of $\mcat_R$. 

A direct computation using Lemma \ref{lem:Phis} shows that for $\p \lambda \in \Gr \setminus \SSS$ with lift $\lambda \in \bos^{\vee}$, we have
\[
\Delta_-
=
\qd(V_{(\lambda, \p 0)}) q^{2 \met^{\vee}(\lambda, \lambda) - 4 \sum_{i=1}^n \chi_i(\lambda)} \langle \Phi_{V_{(\lambda,\p 0)},V_{(\lambda,\p 0)}} \rangle
=
(-1)^n
\]
and, similarly, $\Delta_+ = 1$.

It remains to establish the existence of a relative modularity parameter. Consider Definition \ref{def:modG} with $h = \p \gamma$ and $g = \p \lambda$ with $V_i, V_j=V_{(\lambda,\p 0)} \in I_{\p \lambda}$. Applying the direct analogue of Lemma \ref{lem:transparentCpt}, we can write the endomorphism of $V_{(\lambda,\p 0)} \otimes V_{(\lambda,\p 0)}^{\vee}$ determined by the left hand side of diagram \eqref{eq:mod} as
\[
f_{\p \gamma,(\lambda,\p 0),(\lambda,\p 0)}
=
\sum_{i=1}^m h_{\p \gamma,(\lambda,\p 0),(\lambda,\p 0),i} \circ g_{\p \gamma,(\lambda,\p 0),(\lambda,\p 0),i}
\]
for some
\[
g_{\p \gamma,(\lambda,\p 0),(\lambda,\p 0),i} \in \Hom_{\mcat_R}(V_{(\lambda,\p 0)} \otimes V_{(\lambda,\p 0)}^{\vee},\C^{\bos}_{(k_i,\p p_i)}),
\]
\[
h_{\p \gamma,(\lambda,\p 0),(\lambda,\p 0),i} \in \Hom_{\mcat_R}(\C^{\bos}_{(k_i,\p p_i)},V_{(\lambda,\p 0)} \otimes V_{(\lambda,\p 0)}^{\vee})
\]
and one dimensional modules $\C^{\bos}_{(k_i,\p p_i)} \in \mcat_{R,\p 0}$. Clearly
\[
\Hom_{\mcat_R}(V_{(\lambda,\p 0)} \otimes V^{\vee}_{(\lambda,\p 0)}, \C^{\bos}_{(k_i,\p p_i)})
\simeq
\Hom_{\mcat_R}(V_{(\lambda,\p 0)}, V_{(\lambda,\p 0)} \otimes \C^{\bos}_{(k_i,\p p_i)})
\]
vanishes unless $(k_i,\p p_i) =0$. It follows that
\[
f_{\p \gamma,(\lambda,\p 0),(\lambda,\p 0)}
\in
\Hom_{\mcat_R}(V_{(\lambda,\p 0)} \otimes V^{\vee}_{(\lambda,\p 0)}, \C)
\simeq
\End_{\mcat_R}(V_{(\lambda,\p 0)}) \simeq \C,
\]
whence $f_{\p \gamma,(\lambda,\p 0),(\lambda,\p 0)}$ is proportional to $\tcoev_{V_{(\lambda,\p 0)}} \circ \ev_{V_{(\lambda,\p 0)}}$. By \cite[Proposition 1.2]{derenzi2022}, this proportionality constant is necessarily $\zeta=\Delta_+ \Delta_- =(-1)^n$.

TQFT finiteness is proved as in Theorem \ref{thm:relModCpt}.
\end{proof}

\subsubsection{Verlinde formula}
\label{sec:verlindeKer}

Write $\mcat_{R,\chi}$ for the relative modular structure of Theorem \ref{thm:relModKernel}. Considering the analogue of Section \ref{sec:verlindeCpt}, we find that $\TQFT_{\mcat_{R,\chi}}(\CS \times S^1_{\p \beta})$ is equal to
\[
(-1)^{(g+1+m)n + \p q} q^{-4\met^{\vee}(\beta, \mu) +2m \sum_{i=1}^n \chi_i(\beta) + 2\sum_{i=1}^n \chi_i(\mu)} \prod_{i=1}^n (q^{2\chi_i(\beta)}-q^{-2\chi_i(\beta)})^{2g-2+m}.
\]
The Verlinde formula takes the form $\TQFT_{\mcat_{R,\chi}}(\CS \times S^1_{\p \beta}) = \dim_{(q^{-4 \met^{\vee}(-,\beta)},1)} \TQFT_{\mcat_{R,\chi}}(\CS)$.

\begin{Ex}
\label{ex:VerlindeArb}
Setting $m=0$ gives
\[
\TQFT_{\mcat_{R,\chi}}(\CS \times S^1_{\p \beta})
=
(-1)^{(g+1)n + \p q}  \prod_{i=1}^n (q^{2\chi_i(\beta)}-q^{-2\chi_i(\beta)})^{-\chi(\CS)}.
\]
Applying the Verlinde formula, we conclude that $\dim_{\C} \TQFT_{\mcat_{R,\chi}}(\CS) = 2^{n(2g-2)}$ and
\[
\chi(\TQFT_{\mcat_{R,\chi}}(\CS))
=
\begin{cases}
1 & \mbox{if } g=1, \\
0 & \mbox{if } g\geq 2.
\end{cases}\qedhere
\]
\end{Ex}

\subsection{$\psloo$-Chern--Simons theory}
\label{sec:pslooCS}

Consider again the setting of Example \ref{ex:gloo}, so that $\GW \simeq \gloo$; see also Section \ref{sec:uooCS}. We have $\Lambda_R = \Z \cdot N^{\vee}$. The linear map $\chi: \C^2 \rightarrow \C$ is represented by the matrix $\left(\begin{matrix} 0 & 1 \end{matrix} \right)$ so that $\Lambda:= \ker \chi = \C \cdot N^{\vee}$ contains $\Lambda_R$. This verifies condition \ref{ite:grading}. It follows that the restriction of $\met^{\vee}$ to $\ker \chi$ is trivial. In particular, $\ker \chi$ is even integral with respect to $\met^{\vee}$. Condition \ref{ite:ribb} therefore also holds and we obtain a relative modular category $\mcat_{R,\chi}$. The grading $\Gr$ is by $E$-weights and the free realization $\FR$ consists of all one dimensional modules with vanishing $E$-weight. The relative modular category $\mcat_{R,\chi}$ recovers that constructed from the representation theory of the unrolled quantum group $U_q^H(\gloo)$ at arbitrary parameter $q$ \cite[Theorem 2.16]{geerYoung2022}. The TQFT $\TQFT_{\mcat_{R,\chi}}$ models Chern--Simons theory with gauge supergroup $\psloo$ coupled to background flat $\C^{\times}$-connections, as studied in \cite{mikhaylov2015}. Alternatively, $\TQFT_{\mcat_{R,\chi}}$ models the Rozansky--Witten theory of $T^{\vee} \C$ with Higgs branch flavour symmetry $\C^{\times}$ which acts on $T^{\vee} \C$ with weight $1$ on the base and weight $-1$ on the fibre.

In particular, the results of this section allow us to effectively circumvent Assumption \ref{assump:noZeroRoots} in the construction of abelian linear Gaiotto--Witten theories with compact gauge group, given in Section \ref{sec:cptBosonic}. Indeed, if Assumption \ref{assump:noZeroRoots} does not hold, then the theory splits as a product of the Gaiotto--Witten theory associated to the quotient representation $R \slash \Hom_{\bos}(\C,R)$ and $\dim_{\C} \Hom_{\bos}(\C,R)$-many copies of the Rozansky--Witten theory of $T^{\vee} \C$. The former theory is treated in Section \ref{sec:cptBosonic} while the latter---when considered with Higgs branch flavour symmetry---is treated in this section.

\appendix

\section{Reminders on lattices}
\label{sec:lattices}

Let $\met$ be a non-degenerate symmetric bilinear form on $\R^r$. Let $\Gamma \subset \R^r$ be a full rank integral lattice with basis $\{\gamma_1, \dots, \gamma_r\}$. The $r \times r$ Gram matrix of $(\Gamma, \met)$ is $B=(B_{ij})=( \met( \gamma_i,\gamma_j ))$ with inverse $B^{-1}=(B^{ij})$. Integrality of the lattice is the statement that $B$ has integer entries. If the diagonals of $B$ are even integers, then $\Gamma$ is called even. The dual lattice $\Gamma^{\vee}$ has basis $\{\gamma^{\vee}_1, \dots, \gamma^{\vee}_r\}$. There is an isomorphism
\begin{equation}
\label{eq:dualLattice}
\Gamma^{\vee} \rightarrow \{v \in \Gamma \otimes_{\Z} \Q \mid \met( v, \gamma ) \in \Z \;\, \forall \gamma \in \Gamma\},
\qquad
\gamma_i^{\vee} \mapsto \sum_{j=1}^r B^{ij} \gamma_j.
\end{equation}
Let $B = X\tilde{B}Y$ be the Smith normal form of $B$, so that $\tilde{B} = \mbox{diag}(d_1, \dots, d_r)$ is diagonal with entries satisfying $d_1 \vert d_2 \vert \cdots \vert d_r$ and the left and right Smith multipliers $X$ and $Y$, respectively, are in $GL(r,\Z)$. The isomorphism \eqref{eq:dualLattice} gives
\[
\sum_{k=1}^r X^{ij} \gamma_k \mapsto d_i \sum_{j=1}^r Y_{ij} \gamma_j^{\vee},
\]
where $X^{-1} = (X^{ij})$. It follows that $\delta_i := \sum_{j=1}^r Y_{ij} \gamma_j^{\vee}$ generates the $i$\textsuperscript{th} factor of the discriminant group
\[
D=\coker(\met^{\flat}: \Gamma \rightarrow \Gamma^{\vee}) \simeq \bigoplus_{i=1}^r \Z \slash d_i \Z.
\]

\newcommand{\etalchar}[1]{$^{#1}$}


\end{document}